\renewcommand{\epsilon}{\varepsilon}
\newcommand{\Z}{\mathbb{Z}}
\newcommand{\R}{\mathbb{R}}
\newcommand{\C}{\mathbb{C}}
\renewcommand{\P}{\mathbb{P}}
\newcounter{mtheorem}
\newtheorem{mtheorem}[mtheorem]{Theorem}
\renewcommand{\P}{\mathbb{P}}
\newcommand{{\vol}}{\rm vol}
\newcommand{\p}{\partial}
\newcommand{\Ric}{\operatorname{Ric}}
\newcommand{\bigslant}[2]{{\raisebox{.2em}{$#1$}\left/\raisebox{-.2em}{$#2$}\right.}}
\def \C {\mathbb C}
\def \Z {\mathbb Z}
\def \R {\mathbb R}
\def \P {\mathbb P}
\def \T {\mathbb T}
\def \p {\partial}
\def \bp {\bar{\partial}}
\def \O {\mathcal{O}}
\def \Cstar {\mathbb C^*}
\def \p {\partial}
\def \bp {\bar{\partial}}
\def \Ric {\text{Ric}}
\def \t {\mathfrak{t}}
\def \Cstar{\mathbb{C}^*}
\def \t {\mathfrak{t}}
\def \bp {\bar{\partial}}
\def \hatM {\,\widehat{\! M}}
\def\Ric{\operatorname{Ric}}
\def\vol{\operatorname{vol}}
\newtheoremstyle{fancy}{}{}{\itshape}{}{\textbf\bgroup}{.\egroup}{ }{}
\newtheoremstyle{fancy2}{}{}{\rm}{}{\textbf\bgroup}{.\egroup}{ }{}
\theoremstyle{fancy}
\newtheorem{theorem}{Theorem}[section]
\newtheorem{lemma}[theorem]{Lemma}
\newtheorem{prop}[theorem]{Proposition}
\newtheorem{conj}[theorem]{Conjecture}
\theoremstyle{fancy2}
\newtheorem{example}[theorem]{Example}
\newtheorem{remark}[theorem]{Remark}
\newtheorem{claim}[theorem]{Claim}
\theoremstyle{remark}
 \newcounter{steps}[theorem]
\newtheorem{step}{Step}[steps]
\setlist{leftmargin=*}
\numberwithin{equation}{section}
\begin{document}
\title[Complete CY metrics and KRS on direct sum bundles]{Explicit complete Ricci-flat metrics and K\"{a}hler-Ricci solitons on direct sum bundles}
\date{\today}

\author{Charles Cifarelli}
\address{Mathematics Department, Stony Brook University, Stony Brook, NY 11794, USA}
\email{charles.cifarelli@stonybrook.edu}

\begin{abstract}
Let $B$ be a K\"ahler-Einstein Fano manifold, and $L \to B$ be a suitable root of the canonical bundle. We give a construction of complete Calabi-Yau metrics and gradient shrinking, steady, and expanding K\"ahler-Ricci solitons on the total space $M$, $\dim_\C M = n$ of certain vector bundles $E \to B$, composed of direct sums of powers of $L$. We employ the theory of hamiltonian 2-forms \cite{ACGT1, ACGT} as an Ansatz, thus generalizing recent work of the author and Apostolov on $\C^n$ \cite{AC}, as well as that of Martelli-Sparks \cite{MartelliSparks:Hamiltonian} and of Cao, Koiso, Feldman-Ilmanen-Knopf, Futaki-Wang, and Chi Li \cite{Caosoliton, Koiso, FIK, FutWang, ChiLiexamples} when $E$ has Calabi symmetry. As a result, we obtain new examples of asymptotically conical K\"ahler shrinkers, Calabi-Yau metrics with ALF-like volume growth, and steady solitons with volume growth $R^{\frac{4n-2}{3}}$.
\end{abstract}

\maketitle
\tableofcontents

\section{Introduction}

In the seminal paper \cite{Calabi-ansatz}, Calabi described a procedure for constructing $U(r)$-invariant K\"ahler metrics on certain rank-$r$ vector bundles $E \to B$ over a K\"ahler base $(B, \omega_B)$ (or disc subbundles/projective completions thereof). The simplest case is when $E = L$ is a line bundle, and when $\omega_B$ lies in a multiple of $c_1(L)$. In the special case where $B$ is Fano, $\omega_B$ is K\"ahler-Einstein, and $L = K_B$ is the canonical bundle of $B$, Calabi showed that the total space admits a complete Ricci-flat K\"ahler metric, generalizing the well-known Eguchi-Hanson metric on the total space of the cotangent bundle of $\P^1$:

\begin{theorem}[\cite{Calabi-ansatz}]\label{theorem:calabi}
Let $(B, \omega_B)$ be a K\"ahler-Einstein Fano manifold. Then the total space $M$ of the canonical bundle $K_B \to B$ admits a complete Ricci-flat K\"ahler metric.
\end{theorem}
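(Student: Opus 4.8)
The plan is to use the Calabi ansatz (the ``momentum construction''), which reduces the Ricci-flat equation on $M$ to an elementary first-order ODE. First I would normalize $\omega_B$ so that $\Ric(\omega_B)=\omega_B$ — possible since $(B,\omega_B)$ is K\"ahler-Einstein with positive Einstein constant — and equip $K_B$ with the Hermitian fibre metric $h$ induced by $\omega_B$ on $\Lambda^{n-1,0}T^{*}B$, where $n-1=\dim_{\C}B$ and $n=\dim_{\C}M$. Let $s\colon M\to[0,\infty)$ be the associated squared-norm function, whose zero locus is the zero section $B_{0}\subset M$, and on $M\setminus B_{0}$ set $\tau=\log s$. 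The one identity everything rests on is
\[
\sqrt{-1}\,\p\bp\,\tau \;=\; \pi^{*}\omega_{B}\qquad\text{on } M\setminus B_{0},
\]
which holds because $\log|\xi|^{2}$ is pluriharmonic in any local holomorphic trivialization while the curvature of $(K_B,h)$ is $-\pi^{*}\Ric(\omega_{B})=-\pi^{*}\omega_{B}$; this is precisely where the Fano / K\"ahler-Einstein hypothesis and the choice $L=K_B$ get used.

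Next I would look for the metric in the form
\[
\omega_{\phi}\;=\;\phi\,\pi^{*}\omega_{B}\;+\;\phi'\,\sqrt{-1}\,\p\tau\wedge\bp\tau ,
\]
with $\phi=\phi(\tau)$ smooth, positive and increasing (these last two conditions give positivity of $\omega_\phi$ on $M\setminus B_0$). Rewriting $\omega_\phi$ in the fibre coordinate, one sees it closes up to a smooth K\"ahler metric across $B_0$, with $\omega_\phi|_{B_0}=\phi(0)\,\omega_B$, provided $\phi$ is smooth in $s$ near $s=0$ with $\phi(0)>0$ and $\phi'|_{s=0}>0$ — the standard Calabi boundary conditions. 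Then I would compute the Ricci form: using the identity above together with $\sqrt{-1}\,\p\bp\log\det\omega_B=-\pi^{*}\omega_B$ and $\omega_\phi^{\,n}=n\,\phi^{\,n-1}\phi'\,(\pi^{*}\omega_B)^{n-1}\wedge\sqrt{-1}\,\p\tau\wedge\bp\tau$, one gets
\[
\Ric(\omega_{\phi})\;=\;\bigl(1-[\log(\phi^{\,n-1}\phi')]'\bigr)\,\pi^{*}\omega_{B}\;-\;[\log(\phi^{\,n-1}\phi')]''\,\sqrt{-1}\,\p\tau\wedge\bp\tau .
\]
Hence $\Ric(\omega_\phi)=0$ is equivalent to $[\log(\phi^{\,n-1}\phi')]'\equiv 1$, i.e. $\phi^{\,n-1}\phi'=Ce^{\tau}=Cs$, i.e. $\tfrac1n(\phi^{n})'=Cs$, giving the explicit one-parameter family
\[
\phi(s)\;=\;\bigl(a^{n}+nCs\bigr)^{1/n},\qquad a,C>0 .
\]

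Finally I would verify admissibility of this $\phi$: it is smooth for $s\ge 0$ with $\phi(0)=a>0$ and $\phi'|_{s=0}=Ca^{1-n}>0$, so $\omega_\phi$ is a bona fide Ricci-flat K\"ahler metric on $M$; and since $B_0$ is compact while $\phi\sim(nCs)^{1/n}$ and $\phi'(\tau)=Cs/\phi^{\,n-1}\sim\mathrm{const}\cdot e^{\tau/n}$ as $s\to\infty$, the radial (fibre) distance to infinity is $\int^{\infty}\sqrt{\phi'(\tau)}\,d\tau=\infty$, so $(M,\omega_\phi)$ is complete. The PDE-to-ODE reduction is essentially formal once the ansatz and the identity $\sqrt{-1}\,\p\bp\tau=\pi^{*}\omega_B$ are in hand; I expect the only real technical content to be the two pieces of boundary analysis — smooth closure of $\omega_\phi$ over the zero section, and completeness at infinity — where the explicit $\phi=(a^{n}+nCs)^{1/n}$ has to be examined with some care.
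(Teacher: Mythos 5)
Your proposal is correct and is essentially the classical argument: the paper does not reprove Theorem \ref{theorem:calabi} (it quotes Calabi's original result), and your momentum-construction proof --- the potential identity $i\p\bp\tau=\pi^*\omega_B$, the ansatz $\omega_\phi=\phi\,\pi^*\omega_B+\phi'\,i\p\tau\wedge\bp\tau$, the reduction to $\phi^{n-1}\phi'=Cs$ with solution $\phi=(a^n+nCs)^{1/n}$, and the boundary/completeness checks --- is exactly the route of the cited reference, and coincides with the $\ell=1$ (Calabi-symmetry) special case of the hamiltonian 2-form Ansatz used throughout the paper. No gaps: your smooth-extension conditions ($\phi$ smooth in $s$, $\phi(0)>0$, $d\phi/ds(0)>0$) and the completeness criterion $\int^\infty\sqrt{\phi'(\tau)}\,d\tau=\infty$ are the standard and sufficient ones.
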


Since Calabi's original work, the construction has become ubiquitous in K\"ahler geometry. In particular, Starting with the work of Koiso and Cao \cite{Caosoliton, Koiso}, it became clear that this approach was useful to produce examples of \emph{K\"ahler-Ricci solitons}. A K\"ahler-Ricci soliton $(M, \omega, X)$ is a K\"ahler metric $\omega$ together with a holomorphic vector field $X$ satisfying 
\begin{equation}\label{krs}
	\Ric_\omega + \frac{1}{2}\mathcal{L}_X \omega = \lambda \omega, 
\end{equation}
for $\lambda \in \{-1, 0, 1\}$, in which case we say that $(\omega, X)$ is \emph{expanding}, \emph{steady}, and \emph{shrinking}, respectively. If $X = \nabla^g f$ for a smooth function $f$ on $M$, where $g$ is the associated Riemannian metric, we say that $(\omega, X)$ is \emph{gradient}. 

In the case when $\omega$ is a complete, then $X$ is a complete vector field \cite{Zha}. Moreover, combining the flow of $X$ with a suitable time-dependent rescaling (depending on $\lambda$) gives rise to a self-similar solution of the K\"ahler-Ricci flow:
\[ \frac{\partial \omega_t}{\partial t} = - Ric_{\omega_t}. \]
Thus, complete K\"ahler-Ricci solitons, especially in the shrinking and steady case, represent models for the possible formation of singularities of the K\"ahler-Ricci flow \cite{Hamilton-formation, Cao-limits, Naber, EMT}. As such, they are objects of intensive study.

Starting with the work of Koiso and Cao \cite{Caosoliton, Koiso}, it became clear that Calabi's theorem \ref{theorem:calabi} could be generalized to find solutions to equation \ref{krs} as well. In particular, over the course of several years, the following was established: 
\begin{theorem}[\cite{Caosoliton, FIK, FutWang, ChiLiexamples}]\label{theorem:calabi:krs}
Let $B$ be a K\"ahler-Einstein Fano manifold of dimension, and suppose that $L \to B$ is a line bundle and $i_B \in \mathbb{Z}_{\geq 1}$ which have the property that $L^{i_B} = K_B$. Let $M$ be the total space of the line bundle $L^{m}$ for $m \in \mathbb{Z}_{\geq 1}$. Then: 
\begin{itemize}
	\item if $m < i_B$, then $M$ admits a complete shrinking gradient K\"ahler-Ricci soliton,
	\item if $m = i_B$, then $M$ admits a one-parameter family of complete steady gradient K\"ahler-Ricci solitons, and
	\item if $m > i_B$, then $M$ admits a one-parameter family of complete expanding gradient K\"ahler-Ricci solitons. 
\end{itemize}
\end{theorem}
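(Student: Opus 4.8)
The natural line of attack is the Calabi Ansatz in momentum form, which converts the soliton equation \eqref{krs} on $M$ into a single first-order ODE that can be solved in closed form. Normalize the K\"ahler--Einstein metric so that $\Ric(\omega_B)=\omega_B$. Since $L^{i_B}=K_B$, one may choose a Hermitian metric $h$ on $L^{m}$ with Chern curvature $-a\,\omega_B$, where $a:=m/i_B>0$. Let $t:=\log\abs{\cdot}_h^{2}$ be the fibrewise log-norm on $M^{\times}:=M\setminus B$ ($B$ the zero section), and look for $U(1)$-invariant K\"ahler metrics $\omega_\Phi := p^{*}\omega_B + i\p\bp\,\Phi(t)$ with $\Phi$ strictly convex; then $\omega_\Phi=(1+a\tau)\,p^{*}\omega_B+\phi(\tau)\,i\p t\wedge\bp t$, where $\tau:=\Phi'(t)$ is the moment map for fibrewise rotation and the momentum profile $\phi(\tau):=\Phi''(t)$ is regarded as a function of $\tau$. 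A standard boundary analysis shows that $\omega_\Phi$ is the restriction to $M^{\times}$ of a \emph{complete} K\"ahler metric on all of $M$ if and only if: $\phi>0$ on $(0,\infty)$; $\phi(0)=0$ and $\phi'(0)=1$ (smooth closure across the zero section); the momentum interval is $[0,\infty)$; and $\int^{\infty}d\tau/\sqrt{\phi(\tau)}=\infty$. Note $1+a\tau>0$ for all $\tau\ge 0$ automatically.

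Next, seek $X=\nabla^{g}f$ with $f$ a function of $\tau$ alone. Since $X-iJX$ is a constant multiple of $f'(\tau)$ times the holomorphic vector field $V-iJV$, where $V$ generates the fibrewise $S^{1}$-action, $X$ is holomorphic precisely when $f$ is affine; put $f=c\,\tau$, which makes $X$ a gradient holomorphic vector field (complete because $M$ is, cf.\ \cite{Zha}). Computing $\Ric(\omega_\Phi)=-i\p\bp\log\det\omega_\Phi$ and using that $\det\omega_\Phi$ is $\phi(\tau)(1+a\tau)^{n-1}$ times a fixed background volume form whose $-i\p\bp\log$ reduces, via $\Ric(\omega_B)=\omega_B$ and the identification of the curvature of $L^{m}$, to $p^{*}\omega_B$, one finds that the $p^{*}\omega_B$- and $i\p t\wedge\bp t$-components of $\Ric(\omega_\Phi)+\tfrac12\mathcal{L}_X\omega_\Phi=\lambda\,\omega_\Phi$ are related by differentiation in $\tau$, so the whole equation reduces to the scalar ODE
\[
\phi'(\tau)+\Bigl(\tfrac{(n-1)a}{1+a\tau}-c\Bigr)\phi(\tau)=\tfrac{1-\lambda(1+a\tau)}{a},\qquad \phi(0)=0 .
\]
Evaluating at $\tau=0$ gives $\phi'(0)=(1-\lambda)/a$; combined with the smoothness constraint $\phi'(0)=1$ this forces $\lambda=1-m/i_B$, in agreement with the cohomological computation that, via $K_M|_B\cong K_B\otimes L^{-m}$, one must have $[\Ric_{\omega}|_B]=(1-m/i_B)[\omega_B]$ at the zero section. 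Hence $\lambda$ is positive, zero, or negative according as $m<i_B$, $m=i_B$, or $m>i_B$, and when $\lambda\neq 0$ a rescaling of $\omega$ normalizes $\lambda\in\{-1,1\}$; this is the asserted trichotomy.

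The integrating factor $\mu(\tau)=(1+a\tau)^{n-1}e^{-c\tau}$ then gives
\[
\phi(\tau)=\frac{e^{c\tau}}{a\,(1+a\tau)^{n-1}}\int_{0}^{\tau}(1+a\sigma)^{n-1}e^{-c\sigma}\bigl(1-\lambda(1+a\sigma)\bigr)\,d\sigma ,
\]
and it remains to determine the $c\in\R$ for which $\phi>0$ on $(0,\infty)$ with at most quadratic growth. If $m\ge i_B$ the factor $1-\lambda(1+a\sigma)$ is nonnegative on $[0,\infty)$, and every $c\le 0$ is admissible: for $m=i_B$ one gets a one-parameter family of complete steady solitons, with $c=0$ recovering Calabi's Ricci-flat metric of Theorem~\ref{theorem:calabi} and $c<0$ giving $\phi\to 1/\abs{c}$; for $m>i_B$ one likewise obtains a one-parameter family of complete expanding solitons, now with $\phi$ of linear growth. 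If $m<i_B$ the factor $1-\lambda(1+a\sigma)$ changes sign, and positivity of $\phi$ throughout $(0,\infty)$ together with completeness forces the improper integral $\int_{0}^{\infty}(1+a\sigma)^{n-1}e^{-c\sigma}(1-\lambda(1+a\sigma))\,d\sigma$ to vanish, a single condition pinning $c$ to a unique positive value and yielding one complete (asymptotically conical) shrinking soliton.

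I expect the reduction of the tensor equation to the displayed scalar ODE to be the computational heart of the argument — it is exactly there that the K\"ahler--Einstein hypothesis and the root relation $L^{i_B}=K_B$ are used — while the genuinely delicate point is the end-behaviour analysis: showing that the explicit $\phi$ above stays positive on all of $(0,\infty)$ and neither vanishes nor grows super-quadratically at infinity, and isolating the admissible set of $c$ (a half-line when $m\ge i_B$, a single value when $m<i_B$). This amounts to an elementary but careful study of the sign and asymptotics, as $\tau\to\infty$, of the integral $\int_0^{\tau}(1+a\sigma)^{n-1}e^{-c\sigma}(1-\lambda(1+a\sigma))\,d\sigma$.
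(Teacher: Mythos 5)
Your proposal is correct in substance, but note that the paper never proves Theorem \ref{theorem:calabi:krs}: it is quoted as background and attributed to \cite{Caosoliton, FIK, FutWang, ChiLiexamples}, so there is no in-paper proof to match. What you have written is essentially the classical argument of those references (the Calabi/momentum-construction route): reduce the $U(1)$-invariant soliton equation to the first-order linear ODE $\phi'+\bigl(\tfrac{(n-1)a}{1+a\tau}-c\bigr)\phi=\tfrac{1-\lambda(1+a\tau)}{a}$ with $\phi(0)=0$, read off $\lambda=1-m/i_B$ from $\phi'(0)=1$, and select the admissible $c$ by positivity and growth of the explicit solution. It is worth observing that this is exactly the $\ell=1$ degenerate case of the machinery the paper actually uses for its generalizations: your ODE is, after an affine change of variable $t\leftrightarrow 1+a\tau$ and with $d_B=n-1$, the equation $F'(t)+(d_Bt^{-1}+a)F(t)=2p_c(t)q(t)$ of Lemma \ref{lemma:hamiltonian:weightedKRS}, with the linear polynomial $q$ encoding $\lambda=-q_\ell$ and your parameter $c$ playing the role of the paper's $a$; your vanishing-integral condition in the shrinking case is precisely condition (ii) of Proposition \ref{shrinkersgeneral}, and the ``one value of $c$ versus a half-line of $c$'' dichotomy mirrors the paper's discussion that the shrinker vector field is rigid while steady/expanding solitons come in families.

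Two small points if you were to write this out in full. In the shrinking case you should justify existence of $c>0$ with $\int_0^\infty(1+a\sigma)^{n-1}e^{-c\sigma}(1-\lambda(1+a\sigma))\,d\sigma=0$ (e.g.\ the integral is negative as $c\to0^+$ and positive for $c$ large, so the intermediate value theorem applies); the uniqueness of $c$ that you assert is not needed for the statement. In the expanding case, the $c=0$ member of your family is a complete negative K\"ahler--Einstein metric (a trivial expander, with borderline quadratic growth of $\phi$ but still complete), so the genuine one-parameter family of expanders corresponds to $c<0$; this does not affect the claimed conclusion.
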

Moreover (see \cite{ChiLiexamples}), both Theorem \ref{theorem:calabi} and Theorem \ref{theorem:calabi:krs} hold if we replace $L^m$ itself with its $(d+1)$-fold product 
\[ E:= L^m \oplus \dots \oplus L^{m} \to B.  \]
That is, if $M$ is the total space of $E$, we have existence of: shrinking solitons on $M$ if $(d + 1)m < i_B$, steady solitons and complete Ricci-flat metrics if $(d+1)m = i_B$, and expanding solitons if $(d+1)m > i_B$. 

Motivated by this, in this paper we will give generalizations of this construction to the case of direct sum line bundles $E \to B$ where the degree of each factor need not be the same. In this case the Calabi Ansatz is no longer suitable, and consequently the resulting metrics will not have the full $U(r)$ symmetry, but rather that of a strict subgroup $G \subset U(r)$. Let $B$ be a K\"ahler-Einstein Fano manifold of dimension $d_B$ with Fano index $i_B$, and suppose that $L \to B$ has the property that $L^{i_B} = K_B$. Then we establish the following existence results:

\begin{mtheorem}\label{mtheoremType1}
   Let $m_1 > m_2 > 0 \in \Z$, $d_1, d_2 \geq 0 \in \Z$ satisfy 
  \begin{equation*}
  (d_1 + 1) m_1 + (d_2 + 1)m_2 = \delta, 
   \end{equation*}
	 and suppose that $M$ is the total space of the rank $r = d_1+d_2 + 2$ vector bundle 
    \begin{equation*}
      E := \left(\bigoplus_{k = 0}^{d_1}L^{m_1}\right) \oplus \left(\bigoplus_{k = 0}^{d_2}L^{m_2}\right) \to B.
  \end{equation*}
  Then we have:
  \begin{enumerate}
 \item  If $\delta = i_B$, then $M$ admits:
\begin{enumerate}
	\item[(\textnormal{i}a)] a complete asymptotically conical Calabi-Yau metric, and 
	\item[(\textnormal{i}b)] a one parameter family of complete steady gradient K\"ahler-Ricci solitons with volume growth $\vol_g \sim R^n$. 
\end{enumerate}
\item If $\delta < i_B$ and $d_1 = d_2 = 0$, then $M$ admits a complete asymptotically conical shrinking gradient K\"ahler-Ricci soliton. 
\item If $\delta > i_B$ and $d_1 = d_2 = 0$, then $M$ admits a one-parameter family of complete asymptotically conical expanding gradient K\"ahler-Ricci solitons.  
  \end{enumerate}
\end{mtheorem}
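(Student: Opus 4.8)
The plan is to place $M$ inside the class of K\"ahler manifolds carrying a hamiltonian $2$-form of order two, in the sense of \cite{ACGT1,ACGT}, just as in the $\C^n$ case treated in \cite{AC}, and then to reduce the soliton equation \eqref{krs} to a pair of uncoupled ODEs whose solutions can be matched to the boundary behaviour forced by completeness and smooth compactification. \emph{Setting up the Ansatz.} Deleting the two zero sections from $M$ leaves a $(\C^*)^2$-bundle $M^0$ over $\widehat B := B\times\P^{d_1}\times\P^{d_2}$ (the fibrewise projectivisations of the two blocks are projectively trivial), with the $(\C^*)^2$-structure twisted by $L^{m_1}$ and $L^{m_2}$; the relevant torus action on $M$ itself is the diagonal $U(1)$-scaling in each block. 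Since $B$, $\P^{d_1}$, $\P^{d_2}$ all carry K\"ahler--Einstein metrics (Fubini--Study on the projective factors), $\widehat B$ is a semisimple K\"ahler product in the sense of \cite{ACGT} and one has the associated ACGT toric-bundle construction over it: a $T^2$-invariant K\"ahler metric on $M$ determined by the base data together with two momenta $x_1,x_2$ (the fibrewise norms in the two blocks) and two profile functions $F_1(x_1),F_2(x_2)$ of one variable, with the horizontal part rescaled by factors affine in the $x_i$ that record the weights $m_1,m_2$ and $i_B$. Positivity of $F_i$ on the interior of the momentum interval, together with the conditions $F_i=0$ and $F_i'=\pm2$ at each finite endpoint (suitably normalised where a $\P^{d_i}$ sits), is exactly what makes the metric extend to a smooth complete metric on all of $M$.

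\emph{Reducing to ODEs.} Because the metric carries a hamiltonian $2$-form its Ricci form is of the same type, so imposing \eqref{krs} with $X$ taken to be a fixed real-holomorphic combination of the two torus generators turns the soliton equation into a decoupled pair of second-order ODEs (of Monge--Amp\`ere type) for $F_1$ and $F_2$. Each is solvable by a single quadrature: the solution is a polynomial when $\lambda=0$ and involves the exponential $e^{\lambda x}$ (an incomplete-Gamma-type primitive) when $\lambda=\pm1$, and it carries two constants of integration. The balance condition $(d_1+1)m_1+(d_2+1)m_2=\delta$ is precisely what makes the Ricci potential compatible with \eqref{krs} for $\lambda=\operatorname{sgn}(i_B-\delta)$: it fixes the leading growth of the $F_i$ so that $c_1(M)$, or its $X$-modification, lines up with $\omega$.

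\emph{Matching the boundary data --- the main step.} One now has to choose the momentum intervals and the integration constants so that simultaneously: (a) $F_i>0$ on the interior; (b) $F_i$ vanishes with the correct first derivative at each finite endpoint, closing the metric smoothly across the zero sections; and (c) the metric is complete, with the claimed asymptotics, at the non-compact end $x_i\to\infty$. When $\delta=i_B$ one works with $\lambda=0$: among the admissible configurations one finds the distinguished asymptotically conical Calabi--Yau metric (with $X=0$) and, separately, a one-parameter family of complete steady gradient solitons (with $X\ne0$). When $\delta\ne i_B$ one is forced onto a half-line $[x_-,\infty)$, and this is where the hypothesis $d_1=d_2=0$ enters: with no projective factors there is a single finite endpoint, so $F_i(x_-)=0$ and $F_i'(x_-)=2$ can be imposed together with positivity of $F_i$ on all of $[x_-,\infty)$, yielding the (unique) shrinker for $\lambda=1$ and the one-parameter family of expanders for $\lambda=-1$; the additional endpoint condition that a $\P^{d_i}$ would contribute makes the boundary-value problem solvable only in the balanced case, which is why parts (2) and (3) are stated with $d_1=d_2=0$.

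\emph{Asymptotics.} Finally, reading off the leading exponents of $F_i$ as $x_i\to\infty$ identifies the asymptotic model. For the steady solitons the volume form is proportional to $F_1(x_1)F_2(x_2)$ times a fixed positive density, which integrates to $\vol_g(B_R)\sim R^n$. For the Calabi--Yau metric and the shrinking/expanding solitons the metric is asymptotic to the K\"ahler cone over the link of the associated affine cone (a Sasaki--Einstein space, possibly with orbifold singularities), which gives asymptotic conicity; for the shrinker one also verifies the curvature decay $|\Rm|_g=O(R^{-2})$ directly from the explicit $F_i$. The real difficulty is concentrated in the third step: showing that the finitely many free constants can always be tuned so that interior positivity, smooth closure over the zero sections, and completeness at infinity hold simultaneously, and that the two profile functions --- linked through $B$ via $L^{i_B}=K_B$ --- are jointly admissible only in the configurations that produce precisely the trichotomy of the theorem.
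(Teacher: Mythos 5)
Your high-level framework --- hamiltonian $2$-forms of order two over $B\times\P^{d_1}\times\P^{d_2}$, reduction to ODEs for one-variable profile functions, then boundary matching, completeness and asymptotics --- is the same as the paper's, but as written it is internally inconsistent and, more importantly, the central step is asserted rather than proved. If your momenta are literally the fibrewise norms of the two blocks with separable profiles $F_1(x_1),F_2(x_2)$, the soliton equation does \emph{not} decouple into two ODEs when $m_1\neq m_2$ --- this is exactly the failure of the Calabi Ansatz that motivates the paper; the separation happens only in the orthotoric coordinates $(\xi_1,\xi_2)$, which range over $(\alpha_1,\alpha_2)\times(\alpha_2,\infty)$, so there are \emph{two} finite endpoints $\alpha_1,\alpha_2$ (one for each zero section) even when $d_1=d_2=0$. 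Consequently your explanation of the trichotomy --- ``a single finite endpoint when there are no projective factors,'' and the claim that extra $\P^{d_i}$ endpoint conditions make (ii),(iii) solvable only in the balanced case --- is wrong: the paper's general shrinking and expanding statements (Propositions \ref{shrinkersgeneral}, \ref{expandersgeneral}) allow arbitrary $d_j$, and the restriction to $d_1=d_2=0$ in parts (ii),(iii) is only because the parameter matching is verified computationally in rank two (cf.\ Conjecture \ref{conjecture}).

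The main gap is that the existence argument itself is missing. After Proposition \ref{prop:Estructure} everything reduces to choosing $(\alpha_1,\alpha_2)$, the drift parameter $a$, and the interpolation polynomial $q$ so that simultaneously $q(0)=\pm i_B$, the partial degrees satisfy $\delta_j(q,\alpha_1,\alpha_2)=m_j$ (the integrality needed to compactify over $E$), the positivity of $\Theta$ holds on each interval, and the profile has the growth required for completeness. You acknowledge this is ``the real difficulty'' but give no proof that the constants can be tuned; in the paper this is the content of Lemma \ref{l:CYSteadyrank2aux} (a continuity argument showing $\delta_1$ sweeps out $\bigl(\tfrac{i_B}{d_1+d_2+2},\tfrac{i_B}{d_1+1}\bigr)$ as $\alpha$ varies) for (i$a$),(i$b$), and of the considerably harder Lemmas \ref{l:rank2shrinkers} and \ref{l:rank2expanders} for (ii),(iii). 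In particular, in the shrinking case the soliton parameter is \emph{not} free: positivity on the unbounded interval and completeness force the additional constraint $\int_{\alpha_2}^{\infty}x^{d_B}e^{ax}q(x)p_c(x)\,dx=0$, which couples $a$ to $\alpha$ (this is precisely why shrinkers are isolated while steadies and expanders come in one-parameter families), and the two-parameter asymptotic analysis showing the coupled system still achieves $\delta_1=m_1$ is the core of part (ii); your proposal does not see this constraint at all. Finally, asymptotic conicity is not read off from leading exponents alone: the paper proves quadratic curvature decay (Proposition \ref{type1quadratic}) and combines it with the volume growth, obtaining only a weak AC statement in the expanding case, and the asymptotic cone is not Sasaki--Einstein in the shrinking or expanding cases.
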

In fact, in case $(\textnormal{iii})$ we are being somewhat imprecise with the terminology ``asymptotically conical.'' The condition that we check is an a priori weaker statement, which is that the metrics have \emph{quadratic curvature decay} and \emph{euclidean volume growth}
 \[ \left|{\rm Rm}\right|_g < \frac{C}{R^2}, \qquad \vol_g(B_g(p_0, R)) \geq cR^{2n}, \qquad R(x):={\rm dist}_g(p_0, x). \]
 %Given that $M$ is by definition a resolution of an affine cone, 
The volume growth rate follows from Proposition \ref{prop:generalvolume}, whereas we check the quadratic curvature decay condition in the Appendix (Proposition \ref{type1quadratic}). This implies that $g$ is asymptotically conical in the usual sense in the Calabi-Yau and shrinking case \cite{ConDerSun, DonaldsonSun2, Liu:compactification, SongJunsheng-nosemi}.  For expanders on the other hand, the conditions above imply the existence of a $C^{1,\alpha}$ tangent cone at infinity with $g$ converging (in $C^0$) to this cone at a quadratic rate \cite[Proof of Theorem 3.2]{Deruelle:exp} (see also \cite{ChenDeruelle, ConDerSun}). To guarantee asymptotically conical in the usual sense one would need to check that the curvature decays \emph{with derivatives} (see \cite{ConDerSun}). This is likely to be true in our case, but we leave this computation for future work. Our estimates in Proposition \ref{type1quadratic} also apply in the steady case, showing that the curvature of the metrics in part $(\textnormal{i}b)$ decays linearly.

\begin{remark}
The metrics from $(\textnormal{i}a)$ in the case when $d_1 = d_2 = 0$ (i.e., when the rank of the bundle $E \to B$ is equal to 2), were obtained previously by Martelli and Sparks \cite{MS:resolutions}, also using hamiltonian 2-forms. Our setup here in particular unifies their construction with that of \cite{AC} and the case of Calabi symmetry of Theorem \ref{theorem:calabi:krs}. 
\end{remark}

% We sketch the argument below in the case when $d_1 = d_2 = 0$ (in which case as noted above, the metrics \emph{also} coincide with the construction of \cite{MS:resolutions}), where our computations in the Appendix imply quadratic curvature decay. 

\begin{remark}
In addition, there is an alternative construction in the settings of parts (i) and (iii) of Theorem \ref{mtheoremType1}. To see this, we begin with a more general construction. Let $\ell \geq 1,\, m_j > 0,\, d_j \geq 0$ be chosen such that the direct sum bundle $E:= \bigoplus_{j=1}^\ell \left( \bigoplus_{k=0}^{d_j} L^{m_j} \right) \to B$ satisfies $c_1(\det E \otimes K_B^{\vee}) = 0$. If we denote by $M$ the total space of $E$ as usual, we recover the situation of Theorem \ref{mtheoremType1} (i) if $\ell = 2$. We claim that in this case, there is a candidate Calabi-Yau cone metric at infinity. Indeed, if one sets $N:= \P(E)$, then we have by definition that the tautological bundle $\O_E(-1) \to N$ is isomorphic as a complex manifold to $M$, when we remove the corresponding zero sections. Another way to say this is that the contractions $\O_E(-1)^\times$, $E^\times$ of the corresponding zero sections are both isomorphic to the same affine variety $M^\times$. In this context, $M^\times$ always admits a Calabi-Yau cone metric. Indeed, notice that we have 
\[ K_N = K_B \otimes (\det E)^\vee  \otimes \O_E(-r) =\O_E(-r), \]
where recall that $r$ is the rank of $E$, and we have suppressed the obvious pullbacks. It follows that we can identify the affine variety $K_N^\times$ with the $\Z_r$-quotient $M^\times/\Z_r$. The space $K_N^\times$ admits a Calabi-Yau cone metric \cite{FOW, ApJuLa}, and this can thus be pulled back to $M^\times$. Then Conlon-Hein \cite{CH1} and Conlon-Deruelle \cite{ConDer} furnish the desired AC Calabi-Yau metric and family of steady solitons on $M$ respectively. 

The expanding case of (iii) is even simpler to see. Here we take $\ell \geq 1,\, m_j > 0,\, d_j \geq 0$ such that the corresponding direct sum bundle $E$ as before satisfies  $c_1(\det E \otimes K_B^{\vee}) < 0$. In this case the results of \cite{ConDerexp} apply to say that in fact there is a K\"ahler expander on $M$ asymptotic to \emph{any} given K\"ahler cone metric on $M^\times$. 

In both cases, it's natural to expect that the metrics of Theorem \ref{mtheoremType1} coincide with the ones considered above. In the expanding case, this would follow from a verification that the metrics from part (iii) are asymptotically conical in the strong sense, which in turn follows from the corresponding decay of the higher derivatives of ${\rm Rm}_g$. In the Calabi-Yau case of part (i), this amounts to showing that the affine variety underlying the tangent cone at infinity of $g$ is isomorphic to $M^\times$. Indeed, recent work of Esparza \cite[Theorem 1.4]{Carlos:uniqueness} would then allow us to conclude that tangent cone at infinity and the model cone metric considered above are related by an automorphism of $M^\times$. 

 Note that in the shrinking case (ii), no general existence result starting with a prescribed cone at infinity, as we have in the Calabi-Yau, steady, and expanding cases \cite{CH1, ConDerexp, ConDer}, is currently known. Indeed there is a conjectural K-stability condition \cite{ConDerSun, SunZhang:fanofib} which should pose a nontrivial obstruction. 
 
\end{remark}

%\begin{theorem}\label{mtheoremType2rank2}
%   Let $m_1, m_2 > 0 \in \Z$, $d_1\geq 0 \in \Z$ satisfy 
%  \begin{equation*}
%  (d_1 + 1) m_1 + m_2 = i_B, 
%   \end{equation*}
%	 and suppose that $M$ is the total space of the rank $d_1 + 2$ vector bundle 
%    \begin{equation*}
%      E := \left(\bigoplus_{k_1 = 0}^{d_1}L_0^{m_1}\right) \oplus L_0^{m_2} \to B.
%  \end{equation*}
%Then $M$ admits:
%\begin{enumerate}
%	\item a complete Calabi-Yau metric with ALF-like volume growth $\vol_g \sim R^{2n-1}$, and 
%	\item a one parameter family of complete steady gradient K\"ahler-Ricci solitons with volume growth $\vol_g \sim R^{\frac{4n-2}{3}}$. 
%\end{enumerate}
%\end{theorem}
%
%\begin{theorem}\label{mtheoremType2rank3}
%   Let $m_1 > m_2 > 0, m_3 > 0 \in \Z$, $d_1, d_2 \geq 0 \in \Z$ satisfy 
%  \begin{equation*}
%  (d_1 + 1) m_1 + (d_2 + 1)m_2 + m_3 = i_B, 
%   \end{equation*}
%	 and suppose that $M$ is the total space of the rank $d_1+d_2 + 3$ vector bundle 
%    \begin{equation*}
%      E := \left(\bigoplus_{k_1 = 0}^{d_1}L_0^{m_1}\right) \oplus \left(\bigoplus_{k_2 = 0}^{d_2}L_0^{m_2}\right) \oplus L_0^{m_3} \to B.
%  \end{equation*}
%Then $M$ admits:
%\begin{enumerate}
%	\item a complete Calabi-Yau metric with ALF-like volume growth $\vol_g \sim R^{2n-1}$, and 
%	\item a one parameter family of complete steady gradient K\"ahler-Ricci solitons with volume growth $\vol_g \sim R^{\frac{4n-2}{3}}$. 
%\end{enumerate}
%\end{theorem}

In a similar way to \cite{AC}, a variant of the construction gives rise to metrics with a very different geometry at infinity. It turns out to be impossible to produce K\"ahler shrinkers or expanders in this way (see Lemma \ref{l:shrinkingnoType2}), but we do obtain new metrics in the case when $c_1(M) = 0$:  

\begin{mtheorem}\label{mtheorem:Type2}
Suppose that $M$ is either  
\begin{enumerate}
	\item the total space of the rank $r = d_1 + 2$ bundle
	\begin{equation*}
      E := \left(\bigoplus_{k = 0}^{d_1}L^{m_1}\right) \oplus L^{m_2} \to B,
  \end{equation*} 
  where $m_1, m_2 > 0 \in \Z$, $d_1\geq 0 \in \Z$ satisfy 
  \begin{equation*}
  (d_1 + 1) m_1 + m_2 = i_B, 
   \end{equation*}
   \end{enumerate}
   or, 
  \begin{enumerate}
  \item[(\textnormal{ii})] the total space of the rank $r = d_1 + d_2 + 3$ bundle
  \begin{equation*}
      E := \left(\bigoplus_{k = 0}^{d_1}L^{m_1}\right) \oplus \left(\bigoplus_{k = 0}^{d_2}L^{m_2}\right) \oplus L^{m_3} \to B,
  \end{equation*}
  where $m_1 > m_2 > 0, m_3 > 0 \in \Z$, $d_1, d_2 \geq 0 \in \Z$ satisfy 
  \begin{equation*}
  (d_1 + 1) m_1 + (d_2 + 1)m_2 + m_3 = i_B.
  \end{equation*}
\end{enumerate}
Then $M$ admits:
\begin{enumerate}
	\item[(a)] a complete Calabi-Yau metric with ALF-like volume growth $\vol_g \sim R^{2n-1}$, and 
	\item[(b)] a one parameter family of complete steady gradient K\"ahler-Ricci solitons with volume growth $\vol_g \sim R^{\frac{4n-2}{3}}$. 
\end{enumerate}
\end{mtheorem}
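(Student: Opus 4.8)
The plan is to realize these metrics from the hamiltonian 2-form Ansatz of \cite{ACGT1, ACGT} in the \emph{Type~2} configuration, paralleling the treatment of $\C^n$ in \cite{AC} but with the coordinate subspaces replaced by the sub-bundles $L^{m_j}\to B$ and with the base of the admissible structure taken to be $B$ together with the projective spaces $\CP^{d_j}$ produced by the multiplicities. Concretely, one introduces a hamiltonian 2-form of order equal to the number of distinct exponents present (order $2$ in case (i) and order $3$ in case (ii)), with one momentum variable $x_j$ for each block $\bigoplus_{k=0}^{d_j}L^{m_j}$ together with one for the distinguished line bundle $L^{m_2}$ (resp.\ $L^{m_3}$), the fibrewise torus action, and the $U(d_j+1)$ factors acting on the respective blocks. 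Writing $\omega$ in the resulting admissible form, the base form $\omega_B$ and the Fubini-Study forms on the $\CP^{d_j}$ enter multiplied by affine functions of the momenta, and the only remaining freedom is a collection of one-variable profile functions $\Theta_j=\Theta_j(x_j)$, one per momentum variable.

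With this Ansatz the K\"ahler condition holds automatically, and by the Ricci-form formula for admissible metrics the equation $\Ric_\omega=0$ for part (a), or the steady soliton equation $\Ric_\omega+\tfrac12\mathcal{L}_X\omega=0$ with $X$ built from the fibre-scaling vector fields for part (b), reduces to a system of ordinary differential equations for the $\Theta_j$. These are solved explicitly: the $\Theta_j$ are polynomials in the Calabi-Yau case and combinations of polynomials and exponentials in the steady case. The integration constants are then fixed by three requirements: compatibility with the vanishing Einstein constant $\lambda=0$, which is precisely where the hypotheses $(d_1+1)m_1+m_2=i_B$ and $(d_1+1)m_1+(d_2+1)m_2+m_3=i_B$ enter; positivity of each $\Theta_j$ on the interior of its momentum interval, so that $g$ is a genuine Riemannian metric; and the Calabi-type boundary conditions at the finite endpoints of the momentum box, which force $g$ to compactify smoothly across the divisors coming from the zero sections of the sub-bundles. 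The key structural difference from the construction behind Theorem~\ref{mtheoremType1} appears here: in the Type~2 configuration the momentum box is unbounded in the direction of the $\bigoplus_k L^{m_1}$ block but bounded in the direction of the distinguished line bundle, so that only finitely many facets require the smooth-compactification analysis, one momentum variable runs off to $+\infty$, and the profile attached to the bounded direction is chosen so as to keep the conjugate circle of finite length at infinity. What survives after imposing all the constraints is exactly the one-parameter family claimed in (b).

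It remains to describe the non-compact end and to prove completeness and the volume growth. With the explicit $\Theta_j$ in hand one reads off the asymptotics of $g$ as the unbounded momentum variable tends to $+\infty$: the circle associated to the bounded momentum variable stabilizes to finite length, which is the source of the ALF-like geometry, and a volume computation in the spirit of Proposition~\ref{prop:generalvolume} gives $\vol_g(B_g(p_0,R))\sim R^{2n-1}$ in the Calabi-Yau case and $\sim R^{\frac{4n-2}{3}}$ in the steady case, the fractional exponent reflecting the precise polynomial-times-exponential growth of the steady profile, exactly as for $\C^n$ in \cite{AC}. Completeness at this end follows once one checks that the distance function $R$ diverges there, again a direct consequence of the $\Theta_j$-asymptotics, while completeness elsewhere is ensured by the boundary conditions of the previous step. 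That the same recipe cannot be pushed to produce shrinkers or expanders is the content of Lemma~\ref{l:shrinkingnoType2}: for $\lambda=\pm1$ the sign of the soliton term obstructs either positivity of the $\Theta_j$ or divergence of $R$, so only the Calabi-Yau and steady cases survive.

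I expect the main obstacle to be this last step: extracting the \emph{exact} volume-growth exponents together with completeness requires controlling the fine behaviour of the explicit but somewhat involved profiles $\Theta_j$ near the infinite facet and, simultaneously, verifying that the candidate metric $g$ stays positive-definite all the way out to infinity. By comparison, the bookkeeping of the Calabi-type boundary conditions at the finite facets, although lengthy, is essentially the standard admissible-metric argument, and once the Ansatz has been set up the ODE integration is routine.
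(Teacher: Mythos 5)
Your overall scheme---Type 2 hamiltonian 2-form Ansatz of order $2$ (case (i)) or $3$ (case (ii)), reduction of the Ricci-flat/steady equation to ODEs for profile functions, polynomial solutions in the Calabi--Yau case and polynomial-plus-exponential solutions in the steady case, boundary conditions at the finite endpoints for smooth compactification---is the same as the paper's. But there are two genuine problems. First, your structural picture of the Type 2 momentum domain is wrong: in the paper's construction the domain \eqref{domain2} is unbounded in \emph{two} directions ($\xi_1\to-\infty$ and $\xi_\ell\to+\infty$), and the distinguished line-bundle factor (the one with $d_\ell=0$) is attached to the \emph{unbounded} interval $(\alpha_\ell,\infty)$, while every zero-section facet sits at a finite $\alpha_j$. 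Your claim that only one momentum variable escapes to infinity and that the distinguished direction is bounded is not a harmless reparametrization: a single unbounded direction is exactly the Type 1 configuration, and with the degree bookkeeping of Propositions \ref{generaldistane} and \ref{prop:generalvolume} it cannot produce the exponents $2n-1$ and $\tfrac{4n-2}{3}$; those arise precisely from the double integral over the two unbounded variables together with profiles of degree $\ell-1$ (CY) resp.\ $\ell-2$ (steady). The ``finite circle at infinity'' heuristic is fine as intuition, but it is not the mechanism that carries the proof.

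Second, and more importantly, you treat the fixing of integration constants as routine and locate the main difficulty in the asymptotic analysis, whereas in the paper the asymptotics (completeness, distance, volume growth) are disposed of once and for all by the general Lemmas \ref{l:generalcompleteness}, \ref{generaldistane}, \ref{prop:generalvolume}, and the genuinely delicate step is the one you omit: showing that the prescribed integers $m_j$ can actually be realized as the degrees $\delta_j(q,\alpha_1,\dots,\alpha_\ell)$ of Proposition \ref{prop:Estructure}. The hypothesis $(d_1+1)m_1+m_2=i_B$ (resp.\ $(d_1+1)m_1+(d_2+1)m_2+m_3=i_B$) only pins down the \emph{total} degree via Lemma \ref{l:totaldegre}; matching each individual $\delta_j$ to $m_j$ is an extra system of equations in the free parameters $\alpha_j$. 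For $\ell=2$ this is solved explicitly (Proposition \ref{thm:Type2CYrank2}), but for case (ii) with $\ell=3$ it requires the two-parameter continuity argument of Lemma \ref{l:CYSteadyrank3aux} (limits of $q_1$ as $\alpha_1\to-1,-\infty$ and $\alpha_3\to\infty$, and the image of $\vec\delta$ sweeping the interior of a triangle), in both the $a=0$ and $a>0$ cases. Without an argument of this kind your proof only produces \emph{some} Calabi--Yau metrics and steady solitons on \emph{some} Type 2 bundles, not on the specific bundle $E$ with the given splitting type, so the statement as claimed is not established.
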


Unlike for the situation in Theorem \ref{mtheoremType1} where we have \cite{CH1, ConDer}, there are currently no general existence results to our knowledge that can produce Calabi-Yau metrics or steady solitons from the data we have here, partly because there isn't a priori a clear choice of model metric at infinity. There is nonetheless interesting recent work of Min in this direction \cite{Min-HK, Min:asymptotics}, which, in particular, can be potentially used in many cases to produce higher-dimensional ALF Calabi-Yau metrics once a good such model is identified. 

\begin{example}
The simplest case in which Theorem \ref{mtheorem:Type2} can be applied is when $M$ is the total space of the rank 2 vector bundle 
\[ E := \O(-1) \oplus \O(-1) \to \C\P^1. \]
Thus we have a complete Calabi-Yau metric of volume growth $\vol_g \sim R^{5}$ and a one-parameter of steady solitons with volume growth $\vol_g \sim R^{\frac{10}{3}}$. The former case can be understood intuitively as a locally warped product, where the fiber $\C^2$ is equipped with a metric asymptotic to the Taub-NUT metric. This can be seen by a direct computation using the explicit form of the metric \eqref{first:gtotalspace} (see also section \ref{section:Type2CY}) together with the description of the Taub-NUT metric given in \cite{AC}.
\end{example}

To prove the main theorems, we appeal to the theory of \emph{hamiltonian 2-forms}, introduced by Apostolov-Calderbank-Gauduchon-T{\o}nneson-Friedman \cite{ACGT1, ACGT}. This theory has been used extensively in the compact case to study various scalar and Ricci curvature type equations in K\"ahler geometry, especially extremal metrics \cite{ACGT-overcurve, BoyerTF:S3overRiemann, Legendre:quadrilaterals, LegendreTF,  MaschTF}. In the non-compact setting, this technique has been used to construct ALE scalar-flat metrics in \cite{ApostolovRollin}, and was recently employed in \cite{AC} to produce new infinite families of complete steady solitons as well as Calabi-Yau metrics on $\C^n$ for any $n \geq 2$ (in the Calabi-Yau case, the metrics are only new when $n \geq 3$). 

The proof of Theorems \ref{mtheoremType1} and \ref{mtheorem:Type2} can be outlined as follows. First, let $(B, \omega_B), i_B, d_B, L$ be as above, and suppose that $m_1, \dots, m_\ell$ are are arbitrary positive integers, and $d_1, \dots, d_\ell$ nonnegative integers, and set 
\begin{equation}\label{Edef:intro}
	 E = \bigoplus_{j=1}^\ell \left( \bigoplus_{k=0}^{d_j} L^{m_j} \right) \to B.
\end{equation}
 We exploit the structure of $E$ as associated to a principal $\T = \T^\ell$-bundle 
\[  E = P \times_{\T} \C^r \to B, \]
where $\T$ acts on $\C^r$ via the decomposition $\C^r = \C^{d_1 + 1} \times \dots \times \C^{d_\ell +1}$. We begin by focusing on the abstract fiber $\C^r$, and search for a K\"ahler metric $\omega_F$ here with the properties that 
\begin{itemize}
	\item the $\T$-action is hamiltonian with respect to $\omega_F$ with moment map $\mu_F:\C^r \to \R^\ell$, and 
	\item $\omega_F$ satisfies the \emph{weighted Monge-Amp\`ere equation} 
	\begin{equation}\label{weightedmongeampere1}
		\Ric_{\omega_F} - \lambda \omega_F = i\p\bp \left( a \, \langle \mu_F, b_1 \rangle + d_B \log\langle \mu_F, b_\ell \rangle \right),
	\end{equation}
	for constants $a \in \mathbb{R}, b_1, b_\ell \in \R^\ell$. 
\end{itemize}
Of course to make sense of \eqref{weightedmongeampere1}, we must have that $\langle \mu_F, b_\ell \rangle > 0$ on $\C^r$. When $\lambda = a = 0$, toric solutions to \eqref{weightedmongeampere1} give rise to solutions to a \emph{non-Archimedean Monge-Amp\`ere equation} \cite{CollinsLi}, see Remark \ref{rem:nonarchimedean}. The relationship between the fiber and global geometries can be summarized as:
% In the cases that we are interested in, we will show (c.f. \cite[Lemma 5.11]{ApJuLa} when $\lambda = 1$).
\begin{lemma}\label{l:ApJuLaLemma511}
Suppose that $\omega_F$ is a K\"ahler metric on $\C^r$ satisfying \eqref{weightedmongeampere1}, for $b_\ell = (m_1, \dots, m_\ell)$, under a suitable normalization of the moment map $\mu_F$. Then there is a naturally induced K\"ahler metric on the total space $M$ of $E$ \eqref{Edef:intro} solving 
\[ \Ric_\omega + \frac{1}{2}\mathcal{L}_{-aX}\omega = \lambda \omega,  \]
where $X = \nabla^{\omega}\langle\mu_\omega, \, b_1  \rangle$ is a real holomorphic vector field determined by $b_1$. 
\end{lemma}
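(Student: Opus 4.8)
The plan is to produce $\omega$ by the generalized Calabi (momentum) construction for bundles associated to principal torus bundles, in the form developed in \cite{ACGT1, ACGT} (compare also \cite{ApJuLa}), and then to extract the soliton equation from a direct computation of the Ricci form of the resulting metric. First, equip each summand $L^{m_j}$ with the Hermitian metric induced from $\omega_B$; by $L^{i_B} = K_B$ and the K\"ahler--Einstein normalization $\Ric_{\omega_B} = \omega_B$, its Chern connection has curvature a fixed multiple of $\omega_B$. Assembling the fibrewise-diagonal circle bundles produces the principal $\T = \T^\ell$-bundle $P \to B$ together with a connection $\theta = (\theta_1, \dots, \theta_\ell)$ whose $\R^\ell$-valued curvature $d\theta$ pairs with any $v \in \R^\ell$ to give a multiple of $\pi^*\omega_B$; for $v = b_\ell = (m_1, \dots, m_\ell)$ one normalizes $\mu_F$ by a constant shift in $\R^\ell$ so that the ``base block'' of the metric equals $\tfrac1{i_B}\langle\mu_F, b_\ell\rangle\,\pi^*\omega_B$. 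Feeding $(\omega_B, \theta, \omega_F)$ into the construction yields a closed real $(1,1)$-form $\omega$ on $M$ which, over a local trivialization of $P$, is $\omega_F$ twisted by $\theta$ and warped along $\pi^*\omega_B$ by $\tfrac1{i_B}\langle\mu_F, b_\ell\rangle$; positivity of $\omega$ follows from positivity of $\omega_F$ and the hypothesis $\langle\mu_F, b_\ell\rangle > 0$, and since $\theta$, $\mu_F$, $\omega_F$ are globally defined the local data glue to a genuine K\"ahler metric on $M$ compatible with its complex structure as the total space of $E$. By construction $\T$ acts on $(M, \omega)$ Hamiltonianly with moment map $\mu_\omega$ restricting to $\mu_F$ on each fibre.

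Next, I would compute $\Ric_\omega$. The volume form of $\omega$ factors, up to a constant, as $\langle\mu_F, b_\ell\rangle^{d_B}\,\vol_{\omega_F} \wedge \pi^*\vol_{\omega_B}$ --- the power $d_B = \dim_\C B$ appears because the base is warped by the single scalar $\tfrac1{i_B}\langle\mu_F, b_\ell\rangle$ --- so taking $-i\p\bp\log$ and using $\Ric_{\omega_B} = \omega_B$ yields the Calabi--type identity
\begin{equation*}
\Ric_\omega \;=\; \widetilde{\Ric_{\omega_F}} \;+\; \pi^*\omega_B \;-\; d_B\, i\p\bp \log\langle\mu_\omega, b_\ell\rangle \;+\; \Theta ,
\end{equation*}
where $\widetilde{(\,\cdot\,)}$ denotes the natural lift to $M$ and $\Theta$ is an explicit sum of $\mu_\omega$-affine multiples of $\pi^*\omega_B$ produced when one compares the fibrewise and the total-space $i\p\bp$ of functions of $\mu_F$ through the curvature $d\theta$. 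Establishing this identity is the one genuine computation, and it is exactly the content of the cited lemma of \cite{ApJuLa}.

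It then remains to substitute the weighted Monge--Amp\`ere equation. Lifting \eqref{weightedmongeampere1} gives $\widetilde{\Ric_{\omega_F}} = \lambda\widetilde{\omega_F} + i\p\bp\big(a\langle\mu_\omega, b_1\rangle + d_B\log\langle\mu_\omega, b_\ell\rangle\big) + \Theta'$, with a correction $\Theta'$ of the same shape as $\Theta$. Plugging this into the identity above, the $d_B\, i\p\bp\log\langle\mu_\omega, b_\ell\rangle$ terms cancel, and the remaining $\pi^*\omega_B$-contributions --- $\pi^*\omega_B$ itself together with $\Theta - \Theta'$, all affine in $\mu_\omega$ --- combine, using the moment-map normalization from the first step and the relation $\lambda\cdot(\text{base block of }\omega) = \tfrac{\lambda}{i_B}\langle\mu_\omega, b_\ell\rangle\,\pi^*\omega_B$, with $\lambda\widetilde{\omega_F}$ to reconstitute $\lambda\omega$. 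What is left over is $\Ric_\omega - \lambda\omega = a\, i\p\bp\langle\mu_\omega, b_1\rangle$. Finally, $h := \langle\mu_\omega, b_1\rangle$ is a Killing potential for a subtorus of $\T$, so $X = \nabla^\omega h$ is a real holomorphic vector field and $\mathcal{L}_X\omega = 2i\p\bp h$; hence $\Ric_\omega - \lambda\omega = a\, i\p\bp h$ is precisely $\Ric_\omega + \tfrac12\mathcal{L}_{-aX}\omega = \lambda\omega$, which is the assertion.

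The main obstacle is the Ricci identity together with the bookkeeping of the substitution: the Calabi--type formula must be verified in the present not-quite-rigid setting, where $\omega_F$ is assumed only $\T^\ell$-invariant rather than $U(r)$-invariant, and the various $\mu_\omega$-affine and constant contributions coming from $\pi^*\omega_B$, $\Theta - \Theta'$ and $\lambda\cdot(\text{base block})$ must be shown to cancel after the shift of $\mu_F$ --- this cancellation is exactly what forces the ``suitable normalization of $\mu_F$'' in the statement. Besides this, one needs the routine (but convention-sensitive) checks that the construction produces a metric compatible with the given complex structure on $M$ and that the soliton field $-aX$ is complete; the latter is automatic here since $X$ is, up to $J$, a generator of the compact torus action.
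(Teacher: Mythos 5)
Your proposal is correct and is essentially the route the paper itself takes: the paper proves Lemma \ref{l:ApJuLaLemma511} by invoking \cite[Lemma 5.11]{ApJuLa} --- the semisimple principal fibration construction together with its Ricci identity --- and modifying it for noncompact fiber and $\lambda \leq 0$ under a suitable normalization of the moment map, which is precisely the fibration-plus-Ricci-computation argument you reconstruct, including the correct identification that the normalization of $\mu_F$ is what makes the affine $\pi^*\omega_B$-terms cancel and the observation $\tfrac12\mathcal{L}_{-aX}\omega = -a\, i\p\bp\langle\mu_\omega, b_1\rangle$. The only difference worth noting is that the paper also records a second, self-contained derivation specific to the hamiltonian 2-form Ansatz via Lemma \ref{hamiltonian:equationprep} (see Lemmas \ref{lemma:hamiltonian:globalCY} and \ref{lemma:hamiltonian:globalKRS}), where the cancellation is carried out on the explicit local potentials $\kappa$, $H$, $y_r$; your version defers the key Ricci identity to the cited lemma but applies to any $\T$-invariant fiber metric satisfying \eqref{weightedmongeampere1}, which matches the generality of the statement.
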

When $\lambda = 1$, this follows from \cite[Lemma 5.11]{ApJuLa}, and in fact their proof can be modified to give the same result when $\lambda \leq 0$, taking care that the moment map is suitably normalized (note that the case $\lambda \leq 0$, $a \neq 0$ is not relevant in \cite{ApJuLa}, where the fiber is always \emph{compact}). In the situation of the present paper, Lemma \ref{l:ApJuLaLemma511} can be deduced directly from Lemma \ref{hamiltonian:equationprep} (see also Lemmas \ref{lemma:hamiltonian:globalCY} and \ref{lemma:hamiltonian:globalKRS}). To find solutions to \eqref{weightedmongeampere1}, we appeal to the aforementioned hamiltonian 2-form Ansatz. For existence, the situation is the easiest to state when $\lambda = 0$:
\begin{prop}\label{prop:unobstructed:CYandsteady}
Let $\vec{\alpha} = (\alpha_1, \dots, \alpha_\ell)$ for $0 < \alpha_1 < \dots < \alpha_\ell$ or $\alpha_1 < \dots < \alpha_{\ell -1} < 0 < \alpha_\ell$. Then for \emph{any} such choice of $\vec{\alpha}$, there exists a complete K\"ahler metric $\omega_{\vec{\alpha}}$ on $\C^r$ given by the hamiltonian 2-form Ansatz, together with $b_1(\vec{\alpha})$, $b_\ell(\vec{\alpha})$ satisfying 
\[ \Ric_{\vec{\alpha}}  = i\p\bp \left( a \, \langle \mu_{\vec{\alpha}}, b_1(\vec{\alpha}) \rangle + d_B \log\langle \mu_{\vec{\alpha}}, b_\ell(\vec{\alpha}) \rangle \right). \]
Moreover, $b_1(\vec{\alpha})$ and $b_\ell(\vec{\alpha})$ are explicitly computable in terms of $\vec{\alpha}$. 
\end{prop}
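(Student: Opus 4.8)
The plan is to construct $\omega_{\vec\alpha}$ directly from the semisimple hamiltonian $2$-form Ansatz of \cite{ACGT1, ACGT}, applied to the torus $\T = \T^\ell$ acting on the abstract fibre $\C^r = \C^{d_1+1}\times\cdots\times\C^{d_\ell+1}$ block by block, in parallel with the $\C^n$ construction of \cite{AC}. Recall that such a $\prod_j U(d_j+1)$-invariant K\"ahler metric is described by $\ell$ momenta $\xi_1,\dots,\xi_\ell$, where $\xi_j$ varies over an interval whose left endpoint $\alpha_j$ is the value at which the $j$-th block $\C^{d_j+1}$ collapses onto its origin, together with $\ell$ smooth one-variable momentum profiles $\Theta_1,\dots,\Theta_\ell$. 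In this description the K\"ahler form, the moment map $\mu_{\vec\alpha}$ (once normalised as in Lemma \ref{l:ApJuLaLemma511}), and the Ricci form are all explicit in the $\xi_j$, the $\Theta_j$, and the first derivatives of the $\Theta_j$; and the data glues to a \emph{smooth, complete} K\"ahler metric on all of $\C^r$ exactly when, besides $\Theta_j > 0$ on the interior of each interval, one has the collapse conditions $\Theta_j(\alpha_j) = 0$ with $\Theta_j'(\alpha_j)$ equal to the prescribed value forcing smoothness across the origin of $\C^{d_j+1}$, and a growth condition on the $\Theta_j$ at the non-compact end ensuring geodesic completeness together with properness of $\langle\mu_{\vec\alpha}, b_\ell\rangle$.

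With these normalisations fixed, I would translate \eqref{weightedmongeampere1} with $\lambda = 0$ into an ODE problem. Writing $\Ric_{\vec\alpha} = -i\p\bp\log\!\big(\omega_{\vec\alpha}^{\,r}/r!\big)$ and using that the volume form of the Ansatz metric factors through $\prod_j\Theta_j(\xi_j)$, the products of momentum differences $\xi_i - \xi_j$ weighted by the multiplicities $d_j$, and the Vandermonde of the $\xi_j$, the equation $\Ric_{\vec\alpha} = i\p\bp\big(a\,\langle\mu_{\vec\alpha}, b_1\rangle + d_B\log\langle\mu_{\vec\alpha}, b_\ell\rangle\big)$ forces the profiles to coincide with a single elementary function $F$ --- a polynomial in the Calabi-Yau case $a = 0$, and a polynomial times $e^{c\xi}$ (with $c$ proportional to $a$) in the steady case $a\neq0$ --- satisfying one explicit second-order ODE in which $d_B$ and the $d_j$ enter only through its coefficients. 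This is the origin of the word \emph{unobstructed}: because each $\xi_j$ ranges over a half-line rather than a bounded interval, the only conditions to impose on $F$ are the $\ell$ vanishings $F(\alpha_j)=0$ together with the $\ell$ normalisations of $F'(\alpha_j)$, and these, combined with the ODE and the normalisation of the moment map, can be solved for the remaining constants, for $a$, and for the vectors $b_1(\vec\alpha), b_\ell(\vec\alpha)$, for \emph{every} admissible $\vec\alpha$; here the requirement $\langle\mu_{\vec\alpha}, b_\ell\rangle > 0$, needed to make sense of the $\log$, is what pins down $b_\ell(\vec\alpha)$. (In the shrinking and expanding cases a further algebraic compatibility must be satisfied, which is why those constructions are obstructed in general.)

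What remains --- and what I expect to be the main obstacle --- is to check that, for $\vec\alpha$ in one of the two stated ranges, the resulting $F$ is strictly positive on the interior of every momentum interval and grows appropriately at the non-compact end; granting this, the conditions recalled in the first paragraph, via Lemma \ref{hamiltonian:equationprep}, assemble the data into the complete K\"ahler metric $\omega_{\vec\alpha}$ on $\C^r$ solving the displayed equation with $\langle\mu_{\vec\alpha}, b_\ell(\vec\alpha)\rangle > 0$. This positivity analysis amounts to controlling the sign of an explicit polynomial along a union of intervals uniformly in the parameters, and it is precisely here that the hypotheses $0 < \alpha_1 < \cdots < \alpha_\ell$ and $\alpha_1 < \cdots < \alpha_{\ell-1} < 0 < \alpha_\ell$ are used: they single out the two chambers in which the leading behaviour of $F$ and of $\langle\mu_{\vec\alpha}, b_\ell(\vec\alpha)\rangle$ has the correct sign (outside them $F$ acquires an interior zero, or $\langle\mu_{\vec\alpha}, b_\ell\rangle$ changes sign, and the construction breaks down). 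The final claim, that $b_1(\vec\alpha)$ and $b_\ell(\vec\alpha)$ are explicitly computable in terms of $\vec\alpha$, is then immediate, since they are read off from the coefficients of $F$, which are rational in $\vec\alpha$ and in the fixed integers $d_B, d_1, \dots, d_\ell$.
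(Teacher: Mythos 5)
Your overall strategy (reduce the weighted equation to an ODE for the momentum profiles of the hamiltonian $2$-form Ansatz, solve it in elementary terms, impose vanishing at the $\alpha_j$, and check positivity in the two chambers) is the paper's strategy in the Type~1 chamber, but the plan contains a step that fails in the second chamber $\alpha_1<\dots<\alpha_{\ell-1}<0<\alpha_\ell$. The equation does \emph{not} force the profiles to coincide with a single elementary function $F$: it only forces each $F_j$ to satisfy the same first-order linear ODE $F_j'+(d_Bt^{-1}+a)F_j=2p_c\,q$, so different $F_j$ may (and here must) differ by multiples of the homogeneous solution $t^{-d_B}e^{-at}$. If you insist on one $F$ and impose vanishing at all $\ell$ of the $\alpha_j$, a parameter count forces $q$ to have degree $\ell-1$, hence $\Theta=F/p_c$ has degree $\ell$; but then the leading term makes the two end conditions $\Theta_\ell>0$ on $(\alpha_\ell,\infty)$ and $(-1)^{\ell-1}\Theta_1>0$ on $(-\infty,\alpha_1)$ contradictory, and in the steady case $a\neq 0$ a nonzero homogeneous part grows exponentially as $t\to-\infty$, destroying completeness on $(-\infty,\alpha_1)$. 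The paper's construction avoids this by taking $q$ of degree $\ell-2$ in that chamber, defining $F_1=\dots=F_{\ell-1}$ by interpolation with vanishing (to order $d_j+1$) only at $\alpha_1,\dots,\alpha_{\ell-1}$, and choosing a genuinely different profile on the last interval, $F_\ell(t)=P(t)-\alpha_\ell^{d_B}P(\alpha_\ell)t^{-d_B}$ (or its $e^{-at}$ analogue), which adjusts only the homogeneous part so that $F_\ell(\alpha_\ell)=0$; this is exactly the freedom your ``single $F$'' assumption throws away.

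The second issue is that the positivity of the profiles on the interiors of the momentum intervals, which you defer as ``the main obstacle,'' is in fact the core of the proof and is not a uniform sign check of an explicit polynomial. The paper's mechanism is: the vanishing conditions are equivalent to moment conditions $\int_{\alpha_j}^{\alpha_{j+1}}x^{d_B}e^{ax}p_c(x)q(x)\,dx=0$ on the bounded intervals, which force $q$ to have exactly one simple root in each such interval and hence alternating signs $q(\alpha_j)$; then a first-order-ODE argument for $\Theta_j$ (using $\Theta_j(\alpha_j)=0$, $(d_j+1)\Theta_j'(\alpha_j)=2q(\alpha_j)$ and a first-interior-zero analysis) rules out zeros of $\Theta_j$ inside the intervals. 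Without this argument the existence claim for \emph{every} $\vec{\alpha}$ is unsupported. Two smaller inaccuracies: the profile ODE is first order, not second order, and the derivative normalisations at the $\alpha_j$ are automatic consequences of the ODE rather than additional conditions to be solved for; also $a$ is a free parameter of the construction (with a sign restriction for completeness), not a quantity determined by the normalisations.
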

The reason for choosing $\vec{\alpha} \in \R^\ell$ in this way will become evident in the subsequent sections. Given Proposition \ref{prop:unobstructed:CYandsteady}, the problem of finding steady solitons and Calabi-Yau metrics on $E$ is completely reduced to solving 
\[ b_\ell(\vec{\alpha}) = (m_1, \dots, m_\ell). \]
 When $\lambda \neq 0$ the existence part is slightly more delicate, for the precise statements that we prove in these cases we refer to Propositions \ref{shrinkersgeneral} and \ref{expandersgeneral}. Especially interesting is the shrinking case, where along with $b_1, b_\ell \in \R^\ell$, the constant $a$ becomes coupled to the choice of $\alpha_1, \dots, \alpha_\ell$. This is related to the fact that for shrinkers, the soliton vector field $X$ is uniquely determined as the minimizer of the \emph{weighted volume functional} \cite{ConDerSun}. This is why we do not see these metrics arising in families as we do in the steady and expanding cases.

Regardless, the difficulties in extending the results of Theorems \ref{mtheoremType1} and \ref{mtheorem:Type2} seem to be essentially computational in nature. 
\begin{conj}\label{conjecture}
Let $B$, $L \to B$, be as above, $m_1, \dots, m_\ell$ be any positive integers, $d_1, \dots, d_\ell$ be any nonnegative integers, and $M$ be the total space of the bundle 
\[ E:= \bigoplus_{j=1}^\ell \left( \bigoplus_{k=0}^{d_j} L^{m_j} \right) \to B. \]
Then the conclusions of Theorem \ref{mtheoremType1} hold for $M$, depending on the sign of $c_1(\det E \otimes K_B^{\vee})$. Similarly, if $d_\ell = 0$ and $\det E \otimes K_B^{\vee} \to B$ is trivial, then the conclusions of Theorem \ref{mtheorem:Type2} hold on $M$. In both cases, the resulting metrics admit a hamiltonian 2-form of order $\ell$. 
\end{conj}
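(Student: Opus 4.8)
We outline a possible route to Conjecture~\ref{conjecture}, granting the results of the preceding sections. The bundle $E = \bigoplus_{j=1}^{\ell}\bigl(\bigoplus_{k=0}^{d_j} L^{m_j}\bigr)$ is associated to a principal $\T^{\ell}$-bundle $P \to B$ through the standard $\T^{\ell}$-action on $\C^r = \prod_{j=1}^{\ell}\C^{d_j+1}$, $r = \sum_j(d_j+1)$, so by Lemma~\ref{l:ApJuLaLemma511} it suffices to produce on the abstract fibre $\C^r$ a complete $\T^{\ell}$-invariant K\"ahler metric $\omega_F$ solving the weighted Monge-Amp\`ere equation~\eqref{weightedmongeampere1} with $b_\ell = (m_1, \dots, m_\ell)$, where $\lambda$ is chosen according to whether $\delta := \sum_j(d_j+1)m_j$ exceeds, equals, or is less than $i_B$ (equivalently, by the sign of $c_1(\det E \otimes K_B^{\vee})$), and in the Type~2 regime one imposes in addition $\delta = i_B$, $\lambda = 0$, $d_\ell = 0$. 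Now Proposition~\ref{prop:unobstructed:CYandsteady} (for $\lambda = 0$) and Propositions~\ref{shrinkersgeneral} and~\ref{expandersgeneral} (for $\lambda \neq 0$) already supply, for \emph{every} admissible parameter $\vec{\alpha}$ in the appropriate region ($0 < \alpha_1 < \dots < \alpha_\ell$ for Type~1, $\alpha_1 < \dots < \alpha_{\ell-1} < 0 < \alpha_\ell$ for Type~2), a complete fibre metric $\omega_{\vec{\alpha}}$ on $\C^r$ carried by an order-$\ell$ hamiltonian $2$-form and solving~\eqref{weightedmongeampere1} for explicitly computable $b_1(\vec{\alpha})$, $b_\ell(\vec{\alpha})$, and, when $\lambda \neq 0$, $a(\vec{\alpha})$. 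Hence the conjecture comes down to two things: the \emph{matching problem} of showing that $\vec{\alpha} \mapsto b_\ell(\vec{\alpha})$ is onto the cone of admissible slope tuples, and the verification of the claimed asymptotic geometry on $M$, uniformly in $\vec{\alpha}$.

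To make the matching problem concrete, expand the order-$\ell$ Ansatz: $\omega_{\vec{\alpha}}$ is encoded by $\ell$ profile functions $F_1, \dots, F_\ell$ of one variable each, $F_j$ attached to the $\C^{d_j+1}$ factor and living on an interval $I_j$ with endpoints among $\{\alpha_1, \dots, \alpha_\ell, 0, +\infty\}$. Feeding this into~\eqref{weightedmongeampere1}, using the formula for $\Ric_{\omega_F}$ from \cite{ACGT} and the fact that the weight $a\langle \mu_F, b_1 \rangle + d_B\log\langle \mu_F, b_\ell \rangle$ is affine-plus-log-affine in $\mu_F$, the equation decouples into $\ell$ first-order ODEs which integrate to closed forms for the $F_j$ (rational in the variable up to logarithmic terms, with coefficients polynomial in $\vec{\alpha}$ and $a$). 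The requirement that $\omega_F$ extend smoothly across the loci where the $\C^{d_j+1}$ collapse forces $F_j$ to vanish to the order prescribed by $d_j$ at the relevant finite endpoint of $I_j$; imposing this, together with positivity of $F_j$ on the interior, pins down $b_1(\vec{\alpha})$ and $b_\ell(\vec{\alpha})$, and, when $\lambda \neq 0$, couples $a$ to $\vec{\alpha}$. The net output is an explicit formula for $b_\ell(\vec{\alpha})$, which is what one must invert.

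The surjectivity of $\vec{\alpha} \mapsto b_\ell(\vec{\alpha})$ is the crux, and I expect it to be the real obstacle: it is essentially computational, and it is precisely what restricts Theorems~\ref{mtheoremType1} and~\ref{mtheorem:Type2} to $\ell \le 3$ (and to $d_1 = d_2 = 0$ in the shrinking and expanding cases). The plan would be to show that, after projectivizing (quotienting by the common rescaling of $\vec{\alpha}$ and of $\omega_{\vec{\alpha}}$), $b_\ell$ is a proper local diffeomorphism between the two $(\ell-1)$-dimensional cells obtained by projectivizing the parameter region and the target cone $\{m_1 > \dots > m_\ell > 0\}$ (respectively its Type~2 variant, in which $m_\ell$ is not constrained relative to $m_{\ell-1}$); since both cells are contractible it is then a diffeomorphism, so in particular it hits every rational ray $(m_1 : \dots : m_\ell)$, after which one rescales to the integer tuple determined by $E$. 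For the local assertion one computes the Jacobian $\partial b_\ell/\partial\vec{\alpha}$ and shows it is non-degenerate throughout the parameter region; I would expect this to reduce to a positivity (total-positivity) statement for an explicit Cauchy- or Vandermonde-type determinant built from the integral representations of the $F_j$, in the spirit of the $\ell = 2,3$ computations carried out in the body of the paper. Properness would come from a boundary analysis: as $\vec{\alpha}$ approaches a wall of its region, either two consecutive $\alpha_i$ merge --- in which case $b_\ell(\vec{\alpha})$ should tend to the face where the corresponding two slopes merge, degenerating to an $(\ell-1)$-slope problem and permitting an induction on $\ell$ --- or an extreme $\alpha_i$ runs to $0$ or $+\infty$, forcing $b_\ell(\vec{\alpha})$ to the boundary of the target cone. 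In the shrinking (respectively expanding) case the map to invert is $\vec{\alpha} \mapsto (b_\ell(\vec{\alpha}), a(\vec{\alpha}))$ under the normalization imposed by $\lambda = -1$ (respectively $\lambda = 1$); there one must also check that this normalization is compatible with $\delta < i_B$ (respectively $\delta > i_B$), that the parameter region is non-empty, and, in the shrinking case, that the resulting soliton vector field is the unique minimizer of the weighted volume functional \cite{ConDerSun}, explaining the absence of a one-parameter family there.

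Given an admissible $\vec{\alpha}$ with $b_\ell(\vec{\alpha}) = (m_1, \dots, m_\ell)$, the fibre metric $\omega_{\vec{\alpha}}$ is complete by construction --- the unbounded interval $I_\ell$, on which $F_\ell > 0$, accounts for completeness at infinity, and the prescribed vanishing of the $F_j$ at finite endpoints for smoothness on all of $\C^r$ --- and Lemma~\ref{l:ApJuLaLemma511} transplants completeness to $M$. For the asymptotic geometry one argues uniformly in $\vec{\alpha}$: in the Type~1 case, Proposition~\ref{prop:generalvolume} gives euclidean volume growth and the estimates behind Proposition~\ref{type1quadratic} --- whose structure is insensitive to $\ell$, the leading behaviour of each $F_j$ being a power law read off from $\vec{\alpha}$ --- give quadratic curvature decay, hence asymptotic conicality in the Calabi-Yau and shrinking cases and a $C^{1,\alpha}$ tangent cone in the expanding case; in the Type~2 case, where $d_\ell = 0$ and the final $\C^*$-direction opens up at infinity, Proposition~\ref{prop:generalvolume} gives ALF-like growth $\vol_g \sim R^{2n-1}$ in the Calabi-Yau case and $\vol_g \sim R^{\frac{4n-2}{3}}$ in the steady case, exactly as in the proof of Theorem~\ref{mtheorem:Type2}. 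By construction the resulting metric on $M$ carries a hamiltonian $2$-form of order $\ell$. The single step I do not see how to shortcut for general $\ell$ is the surjectivity of $b_\ell$: it amounts to an explicit but delicate family of determinant-positivity statements indexed by the combinatorial data $(\ell; d_1, \dots, d_\ell)$, and replacing the case-by-case inversions available for $\ell \le 3$ by a uniform argument is where the genuine work lies.
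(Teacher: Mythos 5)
The statement you are addressing is a \emph{conjecture}: the paper offers no proof of it, and explicitly locates the obstruction in the ``essentially computational'' problem of solving the matching equations for general $(\ell; d_1,\dots,d_\ell)$. Your proposal reproduces the reduction that the paper already carries out — pass to the abstract fibre, solve the weighted Monge--Amp\`ere equation \eqref{weightedmongeampere1} by the order-$\ell$ hamiltonian $2$-form Ansatz, and compactify via Proposition \ref{prop:Estructure} once $\delta_j(\vec{\alpha}) = m_j$ — and then defers exactly the step that constitutes the conjecture: the surjectivity of $\vec{\alpha} \mapsto b_\ell(\vec{\alpha})$ (equivalently, solvability of $\delta_j(q,\alpha_1,\dots,\alpha_\ell) = m_j$ under the normalization $q(0) = \pm i_B$ and, for $\lambda \neq 0$, the coupled conditions on $a$). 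You say so yourself in the last sentence. The Jacobian/total-positivity and properness-of-the-boundary-map strategy you sketch is plausible but nowhere substantiated: no formula for $\partial b_\ell/\partial \vec{\alpha}$ is computed, no degeneration analysis at the walls is carried out, and the induction on $\ell$ when two $\alpha_i$ merge is not set up (note that merging $\alpha_i$'s changes the multiplicity structure of $p_c$, so the limit is not literally an $(\ell-1)$-parameter problem of the same type). So this is a research program, not a proof, and it leaves open precisely what the paper leaves open. For comparison, the cases the paper does settle ($\ell = 2$, and $\ell = 3$ in Type 2, with $d_1 = d_2 = 0$ for shrinkers and expanders) are handled not by Jacobian non-degeneracy but by explicit boundary-limit/intermediate-value arguments (Lemmas \ref{l:CYSteadyrank2aux}, \ref{l:CYSteadyrank3aux}, and the two-step continuity argument in Lemma \ref{l:rank2shrinkers}); extending those case-by-case computations is the unsolved problem.

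Two further inaccuracies in your setup. First, for $\lambda \neq 0$ it is not true that Propositions \ref{shrinkersgeneral} and \ref{expandersgeneral} ``already supply, for every admissible $\vec{\alpha}$,'' a complete fibre metric: those propositions are conditional existence statements whose hypotheses already include the matching conditions $\delta_j = m_j$ together with, in the shrinking case, the additional integral constraint $\int_{\alpha_\ell}^{\infty} x^{d_B} e^{ax} q(x) p_c(x)\,dx = 0$ coupling $a$ to $\vec{\alpha}$ (this is what both kills the one-parameter family and guarantees completeness). Only the $\lambda = 0$ statement (Proposition \ref{prop:unobstructed:CYandsteady}) is unobstructed in $\vec{\alpha}$. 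Second, your appeal to the estimates behind Proposition \ref{type1quadratic} ``uniformly in $\vec{\alpha}$'' overreaches: that computation is carried out only for the Type 1 Ansatz with $\ell = 2$, and the claim that its structure ``is insensitive to $\ell$'' is asserted, not verified; for general $\ell$ the curvature decay (and hence asymptotic conicality) would itself require a new computation. If you want partial unconditional results, the paper's own remark after Theorem \ref{mtheoremType1} shows that the existence statements in cases (i) and (iii) follow for arbitrary $(\ell; d_j, m_j)$ from the external results of Conlon--Hein, Conlon--Deruelle, and the expanding analogue — but that route says nothing about the shrinking case (ii), nor about the Type 2 geometries, nor about the assertion that the metrics admit a hamiltonian $2$-form of order $\ell$, which is the part of the conjecture genuinely tied to this construction.
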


Putting together the results of \cite{ApJuLa, CH1, ConDer, ConDerexp, Carlos:uniqueness} as in the discussion after Theorem \ref{mtheoremType1}, we see that Conjecture \ref{conjecture} does indeed hold in cases (i) and (iii) of that theorem, aside from the statement that these metrics admit a hamiltonian 2-form. Moreover, using the techniques of \cite{uniqueness, Cweighted}, one can easily show that in case (ii) there always exists a candidate soliton vector field $X$ for a hypthetical shrinker on $M$ by minimizing a variant of the weighted volume functional which lives on the abstract fiber $\C^r$ of $E$. 

This paper is organized as follows. In section \ref{section:Ham}, we recall the basics of the hamiltonian 2-form construction from \cite{ACGT1, ACGT} and explain its connections to the present setup. We begin by giving a summary of how to use the technique from \cite{AC} to construct K\"ahler metrics on $\R^{2r}$ with a hamiltonian 2-form, and give conditions under which such a metric gives rise to a global metric on a disk bundle over $B$. We also derive some estimates involving intrinsic geometric quantities such as the distance and volume, in terms of Ansatz data. Finally, we give further conditions on when the fiber K\"ahler structures are biholomorphic to $\C^r$ (and thus the global structure is the total space of a holomorphic vector bundle $E$), and on how to characterize the topology of $E$. In section \ref{section:CY}, we introduce Ricci curvature equations into the picture, and use the setup of section \ref{section:Ham} to give a method for constructing complete Calabi-Yau metrics, and proving the first parts of Theorems \ref{mtheoremType1} and \ref{mtheorem:Type2}. In section 4, we generalize this picture to K\"ahler-Ricci solitons, completing the proofs of the main theorems. Finally in the Appendix, we compute the curvature of the metrics from Theorem \ref{mtheoremType1}, and in particular show quadratic decay in the Calabi-Yau, shrinking, and expanding cases. 

%Similar to \cite{AC}, we combine the Ansatz with a blowdown construction to give sufficient conditions for  In particular, we give conditions 

%\textcolor{violet}{remark:
%It is natural to expect that in cases...the metrics coincide with...As evidence for this, we show in...that in case...the curvature decays quadratically...and hence the metrics are asymptotically conical. Since the metrics are explicit, it should be possible to make this identification directly. Considering that the current paper is already somewhat lengthy, we leave the explicit identification of the geometry at infinity in these cases to future work.}
%
%It turns out that this fiber metric admitting a hamiltonian 2-form implies the same for the global metric, giving us a straightforward way to see that solutions to \eqref{...} on the fiber give rise to the metrics on $M$ that we seek. \\
%
%Reduces to a calculus problem...see (bunch of propositions) for the precise statements in each case. \\
%
%
%state the general result, existence of $q, \alpha$s, $a$ in shrinking case implies existence of metric. 
%
%remark: 
%should work for arbitrary rank. To establish this using the techniques of this paper would require a more systematic study of the condition(s)....as opposed to the somewhat more ad hoc techniques used in this paper. 

\subsection{Acknowledgements} 
This work stemmed from conversations with Abdellah Lahdili at CIRGET in Montr\'eal during the Spring of 2024 for the thematic semester on Geometric Analysis organized by the CRM. I would like to begin by thanking him for his insights, particularly with respect to the connections between equation \ref{eqn:CYvsoliton} and the semisimple principal fibration construction. I am greatly appreciative for the hospitality provided by CIRGET as well as the support from the CRM during my stay. I would also like to thank Vestislav Apsotolov for countless discussions and invaluable guidance, and Alix Deruelle and Junsheng Zhang for helpful comments. This work was partially completed while I was in residence during Fall 2024 at the Simons Laufer Mathematical Sciences Institute in Berkeley, California, and thus supported by the NSF grant DMS-1928930. I would also like to thank SLMath for their hospitality and ideal working environment.

%...Abdellah Lahdili ... stemmed from conversations ... especially the relevance of the weighted fiber equation...Vestislav Apostolov...numerous dissussions...would not have been possible.

\section{The hamiltonian 2-form Ansatz}\label{section:Ham}

We recall the hamiltonian 2-form Ansatz of Apostolov-Calderbank-Gauduchon-T{\o}nneson-Friedman \cite{ACGT1, ACGT}. Let $(B, g_B, \omega_B):= \prod_{a=1}^N(B_a, \check{g}_a, \check{\omega}_a)$ be a product of K\"ahler manifolds. We associate to each $(B_a, \check{g}_a, \check{\omega}_a)$ a sign constant $\varepsilon_a \in \{-1, 1\}$, and a real number $\eta_a$, and we suppose that the $\R^{\ell}$-valued $1$-form $\theta:=(\theta_1, \ldots, \theta_{\ell})$ defined by
\begin{equation}\label{theta}
d\theta_r = \sum_{a=1}^N (-1)^r\varepsilon_a \eta_a^{\ell-r}\check{\omega}_a\end{equation}
is a connection $1$-form  of a principal $\T$-bundle $P$ over $B$. Given real numbers
\[ -\infty \le \alpha_1< \beta_1\le \alpha_2<\beta_2\le \dots \le \alpha_{\ell} < \beta_{\ell}\le +\infty\]
and smooth functions of one variable $F_i(t)$,  defined respectively over the intervals $(\alpha_i, \beta_i)$,  we consider the tensors on 
\[ M^{0}:= (\alpha_1, \beta_1)\times \cdots \times (\alpha_{\ell}, \beta_{\ell}) \times P\]
defined by 
\begin{equation}\label{k-order-ell}
\begin{split}
g&=\sum_{a=1}^N \varepsilon_a p_{nc}(\eta_a) \check{g}_a + \sum_{j=1}^{\ell} \left(\frac{p_{c}(\xi_j)\Delta(\xi_j)}{F_j(\xi_j)}\right) d\xi_j^2+ \sum_{j=1}^{\ell}\left(\frac{F_j(\xi_j)}{p_c(\xi_j)\Delta(\xi_j)}\right)\left(\sum_{r=1}^{\ell}\sigma_{r-1}(\hat \xi_j)\theta_r\right)^2, \\
\omega &= \sum_{a=1}^N \varepsilon_a p_{nc}(\eta_a) \check{\omega}_a + \sum_{r=1}^\ell d\sigma_r\wedge \theta_r.
\end{split}
\end{equation}
In the above formulae
\begin{enumerate}
\item[$\bullet$] $\varepsilon_a\in \{-1, 1\}, a=1, \ldots, N$ is a the sign constant appearing above.
\item[$\bullet$] $\xi_i \in (\alpha_i, \beta_i), i=1, \ldots, \ell$ are free variables and $\sigma_r$ (resp. $\sigma_r(\hat \xi_i)$) denotes  the $r$-th elementary symmetric function of $\{\xi_j \}$ (resp. of $\{\xi_j : j\neq i\}$). % we set $\sigma_{\ell+1}= \sigma_{\ell}(\hat \xi_j)=1$.
\item[$\bullet$] $p_{nc}(t):= \prod_{j=1}^{\ell}(t-\xi_j)$ and $p_c(t):= \prod_{a=1}^N (t-\eta_a)^{{\rm dim}_{\C}(B_a)}$.
\item[$\bullet$] $\Delta(\xi_j):= \prod_{i\neq j} (\xi_j - \xi_i)$;
\item[$\bullet$] $\theta_r$ are the components of the connection $1$-form on $P$ defined in \eqref{theta}.
\end{enumerate}
 It is shown in \cite{ACGT1} that if $\eta_a, \alpha_i, \beta_i$  and $F_i(t)$ are such that  
 \begin{equation}\label{inequality}
 \varepsilon_a p_{nc}(\eta_a)>0 \, \, \textrm{on} \, \, (\alpha_1, \beta_1)\times \cdots \times (\alpha_{\ell}, \beta_{\ell}), \qquad (-1)^{\ell-i}F_i(x)p_c(x)>0 \, \, \textrm{on} \, \, (\alpha_i, \beta_i), \end{equation}
 then \eqref{k-order-ell} defines a K\"ahler structure  on $M^{0}$ with complex structure given by
 \begin{equation}\label{J2}
 Jd\xi_j = \left(\frac{F_j(\xi_j)}{p_c(\xi_j)\Delta(\xi_j)}\right)\left(\sum_{r=1}^{\ell}\sigma_{r-1}(\hat \xi_j)\theta_r\right), \qquad  
 J\theta_r = (-1)^r\sum_{j=1}^\ell\frac{{p_c}(\xi_j)}{F_j(\xi_j)} \xi_j^{\ell-r} d\xi_j.
 \end{equation}
 Furthermore, $(g, J, \omega)$ is $\T^{\ell}$-invariant and the smooth functions $\sigma_r, r=1, \ldots, \ell$ are momenta for the $\T^{\ell}$-action. Recall the following results from \cite{ACGT1}:
 \begin{enumerate}
 \item[$\bullet$] \cite[p.~391]{ACGT1} Denote by  $\check{h}_a$  a local potential for $\check{\omega}_a$, i.e. $dd^c_B \check{h}_a= \check{\omega}_a$. Then  the smooth functions \[y_r:=-\sum_{a=1}^N(-1)^r\epsilon_a \eta_a^{\ell-r}\check{h}_a - \sum_{j=1}^{\ell}\int^{\xi_j}\frac{(-1)^r p_c(t)t^{\ell-r}}{F_j(t)} dt, \qquad r=1, \dots, \ell \]
 are pluriharmonic on $M^{0}$, i.e. satisfy $dd^c_M y_r=0$.
 \item[$\bullet$] \cite[p.~394]{ACGT1} If $\check{\kappa}_a$ denotes a local Ricci potential for the K\"ahler form $\check{\rho}_a$ of the K\"ahler metric $\check{\omega}_a$ on $B_a$, i.e. $\check{\rho}_a = dd^c_{B_a} \check{\kappa}_a$, then
 \[ \kappa := \sum_{a=1}^N \check{\kappa}_a -\frac{1}{2}\sum_{j=1}^\ell\log|F_j(\xi_j)|\]
 is a local Ricci potential of $(g, \omega)$, i.e. the Ricci form $\rho$ of $(g, \omega)$ satisfies $\rho=dd^c_M \kappa$.
  \item[$\bullet$] the metric $\omega$ also has a local K\"ahler potential given by \cite[Theorem 1]{ACGT1} 
 \begin{equation}\label{eqn:globalpotential}
    H = \sum_{a = 1}^N \varepsilon_a \eta_a^\ell \check{h}_a + \sum_{j=1}^{\ell} \int^{\xi_j} \frac{p_c(x) t^\ell}{F_j(x)} \, dx 
\end{equation}
 \end{enumerate}

 \begin{lemma}\label{hamiltonian:equationprep}
    Let $q(t) = \sum_{r = 1}^{\ell}  q_{\ell - r} t^{\ell -r}$ be a degree $\ell -1$ polynomial such that $-\epsilon_a q(\eta_a) > 0$, and suppose that $(B_a, \check{\omega}_a)$ are K\"ahler-Einstein with scalar curvature 
    \[ Scal_a = -2\epsilon_a d_a q(\eta_a).\] 
    Then 
    \begin{equation}\label{eq:hamiltonian:equationprep}
        \kappa + q_\ell H - \sum_{r=1}^\ell (-1)^r q_{\ell - r} y_r = \sum_{j=1}^\ell \left( - \frac{1}{2}\log\left|F_j(\xi_j) \right|  + \int^{\xi_j} \frac{p_c(t) q(t)}{F_j(t)} dt \right). 
    \end{equation}
\end{lemma}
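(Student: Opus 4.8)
The assertion is a pointwise identity between the explicit local functions on $M^0$ recalled just above --- the local Ricci potential $\kappa$, the local K\"ahler potential $H$, and the pluriharmonic functions $y_r$ --- so the plan is simply to substitute those formulas into the left-hand side, expand, and collect like terms. Writing the polynomial as $q(t)=q_\ell t^{\ell}+q_{\ell-1}t^{\ell-1}+\cdots+q_0$, the left-hand side splits, after substitution, into three kinds of contributions: (i) logarithmic terms $-\tfrac12\log|F_j(\xi_j)|$, (ii) terms built from the base K\"ahler potentials $\check h_a$, and (iii) one-variable integrals $\int^{\xi_j}\tfrac{p_c(t)t^{k}}{F_j(t)}\,dt$. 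We match each kind against the corresponding piece of the right-hand side separately.

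The logarithmic terms (i) come only from $\kappa=\sum_a\check\kappa_a-\tfrac12\sum_j\log|F_j(\xi_j)|$, and $-\tfrac12\sum_j\log|F_j(\xi_j)|$ already agrees with the logarithmic part of the right-hand side. For the integrals (iii): the integral part of $q_\ell H$ contributes $q_\ell\sum_j\int^{\xi_j}\tfrac{p_c(t)t^{\ell}}{F_j(t)}\,dt$, while in $-\sum_{r=1}^{\ell}(-1)^r q_{\ell-r}y_r$ the two sign factors multiply to $(-1)^{2r}=1$, so the integral part contributes $\sum_{r=1}^{\ell}q_{\ell-r}\sum_j\int^{\xi_j}\tfrac{p_c(t)t^{\ell-r}}{F_j(t)}\,dt$; adding these and recombining $q_\ell t^{\ell}+\sum_{r=1}^{\ell}q_{\ell-r}t^{\ell-r}=q(t)$ gives exactly $\sum_j\int^{\xi_j}\tfrac{p_c(t)q(t)}{F_j(t)}\,dt$, the integral part of the right-hand side. (This is precisely the purpose for which $H$ and the $y_r$ were introduced in \cite{ACGT1}.)

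It remains to see that the contributions of type (ii) cancel. The $\check h_a$-terms from $q_\ell H$ and from $-\sum_r(-1)^r q_{\ell-r}y_r$ combine --- after the same sign cancellation, and using $\sum_{r=1}^{\ell}q_{\ell-r}\eta_a^{\ell-r}=q(\eta_a)-q_\ell\eta_a^{\ell}$ so that the $\eta_a^{\ell}$ pieces drop out --- into $\sum_a\varepsilon_a q(\eta_a)\check h_a$, leaving the term $\sum_a\check\kappa_a$ from $\kappa$ untouched. Here is where the hypotheses enter: since $(B_a,\check\omega_a)$ is K\"ahler--Einstein with $\Scal_a=-2\varepsilon_a d_a q(\eta_a)$, and a K\"ahler--Einstein metric $\check\rho_a=\lambda_a\check\omega_a$ on a $d_a$-dimensional manifold has scalar curvature $2d_a\lambda_a$, its Einstein constant is $\lambda_a=-\varepsilon_a q(\eta_a)$ (positive by the assumption $-\varepsilon_a q(\eta_a)>0$). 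Thus $\check\rho_a=\lambda_a\check\omega_a=dd^c_{B_a}(\lambda_a\check h_a)$, and choosing the local Ricci potential compatibly as $\check\kappa_a=\lambda_a\check h_a=-\varepsilon_a q(\eta_a)\check h_a$ we get $\sum_a\check\kappa_a+\sum_a\varepsilon_a q(\eta_a)\check h_a=0$. Assembling (i)--(iii) yields the identity. (If one instead makes arbitrary choices of the local potentials $\check h_a,\check\kappa_a$, the two sides differ by a pluriharmonic function; since in the applications only $dd^c_M$ of the identity --- which reads $\rho+q_\ell\,\omega$ on the left --- is used, this is harmless, but with the compatible choices equality holds on the nose.)

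The computation is elementary; the only points needing care are the bookkeeping of the signs $(-1)^r$, which is what makes the bare coefficients $q_{\ell-r}$ (rather than $\pm q_{\ell-r}$) reappear in both (iii) and the base terms, and matching the normalisation of the scalar-curvature hypothesis $\Scal_a=-2\varepsilon_a d_a q(\eta_a)$ with the Einstein constant $\lambda_a$ of $\check\omega_a$. There is no analytic input.
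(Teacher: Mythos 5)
Your proposal is correct and follows essentially the same route as the paper: substitute the explicit formulas for $\kappa$, $H$, and the $y_r$, use the $(-1)^{2r}=1$ sign cancellation to recombine the coefficients into $q(t)-q_\ell t^{\ell}$, and invoke the K\"ahler--Einstein hypothesis to cancel $\sum_a\check\kappa_a$ against $\sum_a\varepsilon_a q(\eta_a)\check h_a$, the leftover $q_\ell$-terms reassembling into $-q_\ell H$. The only difference is that you make explicit the compatible choice of local Ricci potential $\check\kappa_a=-\varepsilon_a q(\eta_a)\check h_a$ (equivalently, that the identity holds modulo pluriharmonic functions), a point the paper's computation uses silently in its final line.
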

\begin{proof}
    \begin{equation*}
    \begin{split}
        \kappa - \sum_{r=1}^\ell (-1)^r q_{\ell - r} y_r  &= \sum_{a=1}^N \check{\kappa}_a -\frac{1}{2}\sum_{j=1}^\ell\log|F_j(\xi_j)|  %\\ & \qquad \qquad  
        + \sum_{a=1}^N \epsilon_a \left(\sum_{r =1}^\ell q_{\ell -r} \eta_a^{\ell-r}\right) \check{h}_a +  \sum_{j=1}^{\ell}\int^{\xi_j}\frac{\sum_{r=1}^\ell q_{\ell - r} p_c(t)t^{\ell-r}}{F_j(t)} dt \\
        & = \sum_{a=1}^N\left( \check{\kappa}_a + \epsilon_a \big( q(\eta_a) - q_\ell \eta_a^\ell \big) \check{h}_a \right) +\sum_{j=1}^\ell\left( -\frac{1}{2}\log|F_j(\xi_j)| + \int^{\xi_j}\frac{ p_c(t)\big(q(t) - q_\ell t^{\ell}\big)}{F_j(t)} dt \right) \\
        &= - q_\ell \left(\sum_{a = 1}^N \varepsilon_a \eta_a^\ell \check{h}_a + \sum_{j=1}^{\ell} \int^{\xi_j} \frac{p_c(x) t^\ell}{F_j(x)} \, dx \right) + \sum_{j=1}^\ell \left( - \frac{1}{2}\log\left|F_j(\xi_j) \right|  + \int^{\xi_j} \frac{p_c(t) q(t)}{F_j(t)} dt \right).
    \end{split}
    \end{equation*}
\end{proof}

\subsection{Construction of fiber metrics} \label{section:fibermetrics}
In this section we will explain how to use the technique of \cite{AC} to construct metrics of the form \eqref{k-order-ell} on $\C^r$. The starting point is the choice of a suitable domain $\mathcal{D} \subset \R^\ell$, which in our setting (as in \cite{AC}) will fall into one of two types. The first, which we call \emph{Type 1}, begins with a choice of
\[ 0 < \alpha_1 < \alpha_2 < \dots < \alpha_\ell < +\infty.\]
Then we define 
\begin{equation}\label{domain1}
    \mathcal{D}_1 = (\alpha_1, \alpha_2) \times (\alpha_2, \alpha_3) \times \dots \times (\alpha_{\ell -1}, \alpha_\ell) \times (\alpha_\ell , \infty).
\end{equation}
For the second, which we call \emph{Type 2}, we choose 
\[ -\infty < \alpha_1  < \dots < \alpha_{\ell -1}< 0 < \alpha_\ell < +\infty,\] 
and set
\begin{equation}\label{domain2}
    \mathcal{D}_2 = (-\infty, \alpha_1) \times (\alpha_1, \alpha_2) \times \dots \times (\alpha_{\ell -2}, \alpha_{\ell-1}) \times (\alpha_\ell , \infty).
\end{equation}
 To streamline the notation, we will also order the intervals appearing in either $\mathcal{D}_1$ or $\mathcal{D}_2$ by $I_1, \dots, I_\ell$. In this way we have that $I_1 = (\alpha_1, \alpha_2)$ in the Type 1 case and $I_1 = (-\infty, \alpha_1)$ in the Type 2 case. 

We fix a partition 
\begin{equation}\label{fiberpartition}
    r = \ell + \sum_{j =1}^{\ell} d_j, \qquad \ell \in \mathbb{Z}_{\geq 1}, \qquad d_j \in \mathbb{Z}_{\geq 0},
\end{equation}
and we define 
\begin{equation}
    B_F := \prod_{j=1}^\ell \P^{d_j},
\end{equation}
where we include the case $\P^{0} := \{ pt\}$. Moreover, we let $P_F \to B_F$ be the principal $\T^\ell$-bundle associated to 
\[  \bigoplus_{j=1}^\ell \mathcal{O}_{\mathbb{P}^{d_j}}(-1) \to B_F.\]
 The construction splits into two cases, which we call Type 1 and Type 2 respectively, arising from partial compactifications of 
 \[ \mathcal{D}_1 \times P_F \to B_F, \quad \textnormal{Type 1 case} \qquad \mathcal{D}_2 \times P_F \to B_F, \quad \textnormal{Type 2 case}.\] 
 To begin, we define sign constants $\varepsilon_j$ by 
 \begin{equation}\label{first:epsilonjdef}
     \varepsilon_j = (-1)^{\ell - j} \quad \textnormal{Type 1 case}, \qquad \varepsilon_j = (-1)^{\ell - j + 1} \quad \textnormal{Type 2 case}
 \end{equation}
 Let $q(t)$ be a polynomial of degree $\leq \ell$:
\begin{equation}\label{first:qdef}
    q(t) := \sum_{r=0}^{\ell} q_r t^{r},
\end{equation}
satisfying the positivity conditions 
\begin{equation}\label{first:qpositivity1}
    (-1)^{\ell - j} q(\alpha_j) > 0
\end{equation}
in the Type 1 case and 
\begin{equation}\label{first:qpositivity2}
    \begin{array}{cc}
          (-1)^{\ell - j +1} q(\alpha_j) > 0  & j = 1, \dots, \ell -1\\
           \qquad \qquad  q(\alpha_\ell) > 0 & \\
    \end{array}
\end{equation}
in the Type 2 case. The key point for this section is the way in $q(t)$ encodes a certain compatibility between the fiber data (the functions $F_1, \dots, F_\ell$ of \eqref{k-order-ell}) and the metric on the base $B_F$.
To this end, we define a metric $\check{\omega}$ on $B_F$ by
\begin{equation*}
    \check{\omega} = \sum_{j=1}^\ell \check{\omega}_j, 
\end{equation*}
where $\check{\omega}_j$ is the Fubini-Study metric whose scalar curvature is equal to 
\[ Scal_{\check{\omega}_j} = -2\varepsilon_j d_j q(\alpha_j). \]
Note that $Scal_{\check{\omega}_j}$ is indeed positive by Lemma \ref{l:qflips}. Moreover, we have that 
\begin{equation}\label{eqn:fubinistudy}
    \check{\omega}_j = - \varepsilon_j\left( \frac{d_j+1}{q(\alpha_j)}\right) \check{\omega}_j^0, 
\end{equation}
where $\check{\omega}_j^0$ is the Fubini-Study metric whose class generates $H^2(\mathbb{P}^{d_j}, \Z)$ (i.e. of scalar curvature $2d_j(d_j + 1)$). For $j = 1, \dots, \ell$, define 
\begin{equation}\label{eqn:vjdef}
    v_j := \left(\frac{d_j + 1}{q(\alpha_j)}\right) \big(\alpha_j^{\ell -1}, \dots, (-1)^{r - 1} \alpha_j^{\ell -r}, \dots, (-1)^{\ell - 1} \big) \in \R^\ell.
\end{equation}
This forms a basis of $\R^\ell$, and we define $\Gamma_v$ to be the integral lattice spanned by this basis. We then define the $\ell$-dimensional torus to be 
\[ \T^\ell := \R^\ell/2\pi \Gamma_v.\]
 Now equip each line bundle $\mathcal{O}_{\mathbb{P}^{d_j}}(-1) \to \mathbb{P}^{d_j}$ with the hermitian metric $h^0_j$ whose curvature is equal to $\check{\omega}_j^0$, and let $h^0$ be the induced metric on $\bigoplus_{j=1}^\ell \mathcal{O}_{\mathbb{P}^{d_j}}(-1)$. In this way we can identify $P_F$ with the corresponding $U(1)^\ell$ bundle. Further, we identify this $U(1)^\ell$ with $\T^\ell$ by an appropriate choice of basis, identifying the generator of the $S^1$-action on $\mathcal{O}_{\mathbb{P}^{d_j}}(-1)$ with $v_j$ (notice that, by definition, each $v_j$ generates an $S^1$-action in $\T^\ell$). It follows that the connection $1$-form $\theta$ on $P$ associated to the hermitian metric $\oplus_{j=1}^\ell h_j^0$ on $E$ has curvature 
\begin{equation}\label{eqn:fibertheta1}
    d\theta = \sum_{j=1}^\ell \check{\omega}^0_j \otimes v_j.
\end{equation}
In the basis $(v_1, \dots, v_\ell)$, we write $\theta = (\theta_1, \dots, \theta_\ell)$, and this is equivalent to 
\begin{equation*}
    d\theta_r = \sum_{j=1}^\ell (-1)^r \alpha_j^{\ell - r} \varepsilon_j \check{\omega}_j,
\end{equation*}
which is precisely condition \eqref{theta}.

Suppose then that we have functions $F_1(t), \dots, F_\ell(t)$ satisfying the conditions in \eqref{k-order-ell}, and therefore give rise to a well defined K\"ahler structure on 
\begin{equation*}
    F^0 = \mathcal{D} \times P \to \prod_{j=1}^\ell \mathbb{P}^{d_j},
\end{equation*}
where $\mathcal{D}$ refers to either $\mathcal{D}_1$ or $\mathcal{D}_2$. The polynomial $p_c(t)$ in this case is given by 
\begin{equation}\label{fiberpcdef}
    p_c(t) := \prod_{j=1}^\ell (t - \alpha_j)^{d_j}. 
\end{equation}
Finally, we define 
\begin{equation}\label{first:Thetadef}
    \Theta_j(t) := \frac{F_j(t)}{p_c(t)},
\end{equation}
which is a priori only a smooth function defined on $I_j$. 

With this in place, we can state the following consequence of \cite[Lemma 5.1]{AC}: 
\begin{lemma}\label{l:smoothextension}
    In either of the two cases above, let $F_j(t)$ be functions on $I_j$ giving rise to a K\"ahler structure on $M^0$ via \eqref{k-order-ell}. Suppose that the corresponding functions $\Theta_j(t)$ extend to $C^1$ functions on $\bar{I}_j$, and further satisfy
    \[ \Theta_j(\alpha_k) = 0, \qquad (d_k+1)\Theta_j'(\alpha_k) = 2q(\alpha_k), \qquad \alpha_k \in \partial \bar{I}_j. \]
    Then the metric on $M^0 \to \prod_{j=1}^\ell \mathbb{P}^{d_j}$ defined by choosing $\check{\omega}_j$ and $v_j$ as in \eqref{eqn:fubinistudy}, \eqref{eqn:vjdef} extends to a smooth K\"ahler structure on $\R^{2r}$, compatible with the standard symplectic structure. Moreover, the moment map 
    \begin{equation}\label{fibermomentmap}
    	\mu = (\sigma_1, \dots, \sigma_\ell): \C^r \to \mathfrak{t}^\vee
    \end{equation}
    is a proper map whose image, for the appropriate choice of basis and up to translation, is equal to the standard positive orthant $\{x_i \geq 0\} \subset \R^\ell$. 
\end{lemma}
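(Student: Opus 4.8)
The plan is to obtain the statement as a corollary of \cite[Lemma 5.1]{AC}, which carries out precisely this compactification analysis for metrics of the form \eqref{k-order-ell} over the product base $B_F = \prod_{j=1}^\ell \P^{d_j}$ with domain $\mathcal{D}_1$ or $\mathcal{D}_2$. So the work divides into three parts: (1) translate the present normalizations into the hypotheses of that lemma; (2) invoke it to produce the smooth extension to $\R^{2r}$; and (3) read off the symplectic type of the extended manifold and the image of the moment map. The genuine analytic content all sits in step (2), which is why the result is quoted rather than proved from scratch.

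For step (1), the key point is that the two conditions $\Theta_j(\alpha_k) = 0$ and $(d_k+1)\Theta_j'(\alpha_k) = 2q(\alpha_k)$ are exactly the first-order boundary conditions of \cite[Lemma 5.1]{AC} at a finite endpoint $\alpha_k \in \partial\bar{I}_j$, once $\check{\omega}_j$, $v_j$, $p_c$ and $\Theta_j$ are chosen as in \eqref{eqn:fubinistudy}, \eqref{eqn:vjdef}, \eqref{fiberpcdef}, \eqref{first:Thetadef}. Geometrically: along the face $\{\xi_j = \alpha_k\}$ the coefficient $F_j/(p_c\Delta)$ of $\bigl(\sum_r \sigma_{r-1}(\hat\xi_j)\theta_r\bigr)^2$ is the squared length of the circle in $\T^\ell$ generated by $v_k$, so $\Theta_j(\alpha_k)=0$ makes that circle collapse, and the normalized derivative condition fixes the rate of collapse to be the smooth one, the constant $d_k+1 = \dim_\C\P^{d_k}+1$ counting the complex directions that get filled in — so that together with the $\P^{d_k}$ factor and the factor $(\,\cdot-\alpha_k)^{d_k}$ of $p_c$ the neighbourhood of the face becomes a neighbourhood of the zero section of $\O_{\P^{d_k}}(-1)$, i.e. a $\C^{d_k+1}$-block — while $q(\alpha_k)$ enters through the $q$-encoded compatibility between $\check{\omega}_k$ and the fiber data already built into \eqref{eqn:fubinistudy}.

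For steps (2)--(3): \cite[Lemma 5.1]{AC} then yields a smooth $\T^\ell$-invariant K\"ahler structure on the manifold obtained from $\mathcal{D}\times P_F$ by filling in a $\C^{d_k+1}$-block along each finite face; doing this for all of them turns $\mathcal{D}\times P_F\to\prod_j\P^{d_j}$ into $\prod_{j=1}^\ell\C^{d_j+1} = \R^{2r}$, with $\T^\ell$ acting by scaling the $j$-th factor. Since the symplectic form in \eqref{k-order-ell} is $\sum_a\varepsilon_a p_{nc}(\eta_a)\check{\omega}_a + \sum_r d\sigma_r\wedge\theta_r$ and the $\sigma_r$ are momenta for the $\T^\ell$-action, the extended form is the standard symplectic form on $\R^{2r}$ and $\mu=(\sigma_1,\dots,\sigma_\ell)$ is its moment map. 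Finally $\mu(\R^{2r}) = \sigma(\bar{\mathcal{D}})$, and because the symmetrization map $(\xi_1,\dots,\xi_\ell)\mapsto(\sigma_1,\dots,\sigma_\ell)$ does not distinguish the $\xi$'s, each of the one or two faces with $\xi_j=\alpha_k$ lands in the single hyperplane $\{\alpha_k^\ell - \sigma_1\alpha_k^{\ell-1}+\dots+(-1)^\ell\sigma_\ell = 0\}$, with conormal proportional to $(\alpha_k^{\ell-1},-\alpha_k^{\ell-2},\dots,(-1)^{\ell-1})$, i.e. to $v_k$ of \eqref{eqn:vjdef}; so $\mu(\R^{2r})$ is a polyhedron with exactly $\ell$ facets, inward conormals $v_1,\dots,v_\ell$, and since $\{v_k\}$ is a $\Z$-basis of $\Gamma_v$ by construction it is, after the induced change of basis of $\mathfrak{t}^\vee$ and a translation, the standard orthant $\{x_i\ge 0\}$. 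Properness of $\mu$ is then immediate, since the only way to leave every compact subset of $\bar{\mathcal{D}}$ is $\xi_\ell\to+\infty$ (and, in Type 2, also $\xi_1\to-\infty$), which forces $|\sigma|\to\infty$.

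The main obstacle is step (2), i.e. the local analysis inside \cite[Lemma 5.1]{AC}: one must check not merely that a single circle collapses at rate $1$, but that a whole smooth $\C^{d_k+1}$-block assembles at each face, and that at the corners of $\mathcal{D}$ — where several $\xi_j$ reach endpoints simultaneously — these blocks fit together smoothly (a unimodularity/Delzant-type condition on the $v_k$ meeting there). This is where the precise constant $d_k+1$ and the simultaneous matching of $p_c$, $v_k$ and $\check{\omega}_k$ are essential; it works because the $\alpha_j$ are distinct, so that the $v_j$, given by a Vandermonde-type formula, are unimodular — but it is the one genuinely non-formal ingredient, and the reason the statement is phrased as a consequence of \cite[Lemma 5.1]{AC}.
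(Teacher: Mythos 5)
Your proposal is correct and follows the same route as the paper: the paper gives no separate proof of this lemma, stating it precisely as a consequence of \cite[Lemma 5.1]{AC} after the preceding setup (the scalings \eqref{eqn:fubinistudy}, the basis $v_j$ of \eqref{eqn:vjdef} with lattice $\Gamma_v$, and the connection $\theta$) has been arranged to match that lemma's hypotheses, which is exactly your step (1)--(2). Your additional discussion of the boundary conditions as smooth collapse of the $v_k$-circles and the identification of $\mu(\R^{2r})$ with the orthant via the facets $p_{nc}(\alpha_k)=0$ is consistent with the cited lemma and fills in details the paper leaves to \cite{AC}.
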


For the moment, we will ignore the issue of whether the complex structure on $\mathbb{R}^{2r}$ is indeed biholomorphic to $\C^r$. This will be the case in all of the situations we consider in this paper, and we will treat them individually below.

\subsection{Global compactification}\label{section:globalcompactification}

Suppose that we have a metric $\omega_F$ on $\C^r$ constructed as in the previous section via either the Type 1 or Type 2 case. In particular, we have intervals $I_1, \dots, I_\ell$ defined in terms of choices $\alpha_1, \dots, \alpha_\ell$, and functions $F_j(t)$ defined on $I_j$. 

We now suppose that we have the additional data of a K\"ahler base manifold $B$ of complex dimension $d_B$, together with a principal $\T^\ell$-bundle $P \to B$. We also assume that $\omega_B \in 2\pi c_1(L^\vee)$ for a line bundle $L \to B$, in particular there is an associated $U(1)$-bundle $U_0 \to B$ with a connection 1-form 
\begin{equation}\label{eqn:theta0}
  \theta_0:TU_0 \to \R  \qquad \textnormal{with} \qquad d\theta_0 = \omega_B.  
\end{equation}
 In this section we will see that, under certain compatibility conditions between the fiber metric $\omega_F$, the bundle $P$, and a choice of base metric $\omega_B$, there is an induced metric on the total space of the vector bundle 
\[ E := P \times_{\T^\ell} \C^r \to B. \]
Our assumption on the structure group implies that $E$ is necessarily a direct sum of line bundles, and in fact we will need to assume later on (see the condition \eqref{cond:Bcompatibility}) that each of those line bundles have the same first Chern class. In this paper we are mainly concerned with the case when $B$ is K\"ahler-Einstein Fano, in which case it follows that our line bundles will in fact be \emph{proportional}. As such, we will assume that $P$ is a principal $\T = \T^\ell$ bundle given by the $\ell$-fold fiberwise product of $U(1)$ bundles of the form $U_0^{m} \to B$, where $U_0^{m}$ is the $U(1)$ bundle associated to the line bundle $L^{m}$. Recall that we are always assuming that $\T = \R^\ell/2\pi\Gamma_v$, so that the vector field $X_j \in \t$ associated to the $S^1$-action on each factor of $P$ is identified with the basis element $v_j$. 

To see how to obtain this induced metric on $E$, we begin by constructing a new metric of the form \eqref{k-order-ell} using the data of $\omega_F$. We repeat the construction of the previous section, now replacing the original base $B_F := \prod_{j=1}^\ell \P^{d_j}$ with the product $B \times B_F$. In particular, we have new constants $\eta_B, \varepsilon_B$. As we will see later on, there is in fact no loss in assuming that $\eta_B = 0$, whereas $\varepsilon_B \in \{-1, 1\}$ will be chosen later. This setup will have its own functions $\tilde{p}_c,\, \tilde{p}_{nc}, \, \tilde{F}_j,$ etc. which we denote with a tilde to distinguish from the fiber data. With these choices, we have that 
\begin{equation}\label{first:tildepcdef}
    \tilde{p}_c(t) = t^{d_B}p_c(t) = t^{d_B} \prod_{j=1}^\ell(t-\alpha_j)^{d_j}. 
\end{equation}
We set 
\begin{equation}\label{first:tildeFdef}
    \tilde{F}_j(t) :=  t^{d_B}F_j(t),
\end{equation}
where $F_j(t)$ are the functions defining the fiber metric $\omega_F$. Notice that also 
\begin{equation*}
    \tilde{p}_{nc}(0) = \prod_{j=1}^\ell (-\xi_j) = (-1)^\ell \sigma_\ell.
\end{equation*}
We choose $\varepsilon_B$ at this point, so that 
\[ (-1)^\ell \varepsilon_B \sigma_\ell = \varepsilon_B \tilde{p}_{nc}(0) > 0\]
on $\mathcal{D} = I_1 \times \dots \times I_\ell$. Notice that by our choice of $\mathcal{D}$ being either $\mathcal{D}_1$ or $\mathcal{D}_2$, $\sigma_\ell = \xi_1 \dots \xi_\ell$ will always have a sign and therefore this is always possible. We remark also that $\tilde{p}_{nc}(t) = p_{nc}(t)$, but we continue to use this notation to distinguish between the situations on the fiber and on the total space.

We let $F^0 \subset F := \C^r$ denote the open-dense subset where the $\T^\ell$-action is free, which can be identified with the corresponding open-dense subset $\tilde{F}^0$ of
\[ \tilde{F} := \prod_{j=1}^\ell \O_{\mathbb{P}^{d_j}}(-1) = P_F \times_{\T^\ell} \C^\ell.\]
Similarly, we denote 
\begin{equation*}
  E = P \times_{\T^\ell} \C^r, \qquad \tilde{E} =  P \times_{\T^\ell}  (P_F \times_{\T^\ell} \C^\ell)
\end{equation*}
and 
\[ E^0 \subset E, \qquad \tilde{E}^0 \subset \tilde{E} \]
the corresponding open-dense subsets. Then we also have a natural identification $E^0 \cong \tilde{E}^0$. We define a principal $\T^\ell$-bundle  
\begin{equation*}
    \tilde{P} \to B \times B_F, 
\end{equation*}
by setting $\tilde{P} := (P \times P_F)/\T_2$ where $\T_2 \cong \T^\ell$ acts on the product by 
\[ t\cdot (p, p_F) := (tp, t^{-1}p_F).\]
The quotient $\tilde{P}$ is then itself a principal $\T$-bundle with action
\[t[p, p_F] = [tp, p_F] = [p, t p_F].\]

\begin{lemma}\label{l:principalbundleidentification}
There is a smooth identification
\[\tilde{E} = P \times_{\T^\ell}  (P_F \times_{\T^\ell} \C^\ell) \cong \tilde{P} \times_{\T^\ell} \C^\ell. \]
\end{lemma}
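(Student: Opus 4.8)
The plan is to present both sides of the claimed identity as quotients of the single space $Q := P \times P_F \times \C^\ell$ by free actions of $\T^\ell \times \T^\ell$, and then to exhibit a group automorphism of $\T^\ell \times \T^\ell$ carrying one action to the other. First I would make precise the left-hand side: the inner associated bundle $\tilde{F} := P_F \times_{\T^\ell} \C^\ell$ carries a residual $\T^\ell$-action $u\cdot[p_F,z] := [p_F, \rho(u)z]$, where $\rho$ denotes the linear action of $\T^\ell$ on $\C^\ell$ used to build $\tilde F$; this is well defined precisely because $\T^\ell$ is abelian, and it is the action through which $P$ acts in $P \times_{\T^\ell}\tilde F$. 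Unwinding the two constructions, $\tilde E = P\times_{\T^\ell}\tilde F$ is the quotient of $Q$ by
\[ (u,s)\cdot(p,p_F,z) = \bigl(p\,u,\ p_F\,s,\ \rho(u)^{-1}\rho(s)^{-1}z\bigr), \]
while $\tilde P\times_{\T^\ell}\C^\ell$, with $\tilde P = (P\times P_F)/\T_2$, is the quotient of $Q$ by
\[ (t,s)\cdot(p,p_F,z) = \bigl(p\,ts,\ p_F\,t^{-1},\ \rho(s)^{-1}z\bigr), \]
the first factor being $\T_2$ and the second the structure group of $\tilde P$.

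The key observation is then that $(t,s)\mapsto(ts,\ t^{-1})$ is an automorphism of $\T^\ell\times\T^\ell$ (it is an automorphism of $A\times A$ for any abelian group $A$, with inverse $(u,v)\mapsto(v^{-1},uv)$), and that under this automorphism the second action is carried to the first — the only nonformal point being the identity $t s t^{-1} = s$ used in the $\C^\ell$-slot. It follows that the two $\T^\ell\times\T^\ell$-quotients of $Q$ are canonically diffeomorphic, which is the desired identification; explicitly it is the map
\[ \tilde P\times_{\T^\ell}\C^\ell \ni \bigl[[p,p_F],z\bigr] \longmapsto \bigl[p,[p_F,z]\bigr] \in P\times_{\T^\ell}\tilde F, \]
with inverse $\bigl[p,[p_F,z]\bigr]\mapsto\bigl[[p,p_F],z\bigr]$. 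Alternatively one can take this formula as the definition and verify well-definedness by hand; the single substantive check is compatibility with the $\T_2$-relation defining $\tilde P$, which again reduces to commutativity of $\T^\ell$.

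Finally I would note that this identification covers the identity on $B\times B_F$ and is fiberwise $\C$-linear, hence is an isomorphism of vector bundles, and that over the open dense locus where the $\T^\ell$-actions are free it restricts to the previously recorded identification $E^0\cong\tilde E^0$. I do not expect a genuine obstacle here: the only real work is bookkeeping the left/right conventions and keeping track of which copy of $\T^\ell$ acts in which slot, and the one place where something could actually fail — the existence of the residual $\T^\ell$-action on $\tilde F$, and hence the coherence of the whole iterated associated-bundle construction — is exactly where the hypothesis that $\T^\ell$ is abelian is used.
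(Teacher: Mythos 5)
Your proof is correct and is essentially the paper's argument: both sides are realized as quotients of the triple product $P \times P_F \times \C^\ell$ by a $2\ell$-dimensional torus acting freely, so the two iterated associated-bundle constructions coincide. The paper packages this as a single $\T_1 \times \T_2$-action whose quotients taken in either order give the two sides, whereas you write down two actions and the intertwining automorphism $(t,s)\mapsto(ts,\,t^{-1})$ together with the explicit map $\bigl[[p,p_F],z\bigr]\mapsto\bigl[p,[p_F,z]\bigr]$ --- the same identification up to a reparametrization of the group.
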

\begin{proof}
We define a $\T_1 \times \T_2$-action on $P \times P_F \times \C^\ell$, by 
\begin{equation*}
    (t_1, t_2)\cdot (p, p_F, v) := (t_2 p, t_2^{-1}t_1 p_F, t_1^{-1}v).
\end{equation*}
Notice that this is well-defined, since the action on the middle term factors through the homomorphism $\T_1 \times \T_2 \to \T^\ell$ given by $(t_1, t_2) \mapsto t_2^{-1}t_1$. We have that
%\[\tilde{P} \times_{\T^\ell} \C^\ell = \bigg( \big(P \times P_F \times \C^\ell\big)\big/\T_2 \bigg)\Big/\T_1\]
\[\tilde{P} \times_{\T^\ell} \C^\ell =  \bigslant{\left(\bigslant{\left(P \times P_F \times \C^\ell \right)}{\T_2}\right)}{\T_1} \]
whereas 
%\[P \times_{\T^\ell}  (P_F \times_{\T^\ell} \C^\ell) = \bigg( \big(P \times P_F \times \C^\ell\big)\big/\T_1 \bigg)\Big/\T_2\]
\[P \times_{\T^\ell}  (P_F \times_{\T^\ell} \C^\ell) =  \bigslant{\left(\bigslant{\left(P \times P_F \times \C^\ell \right)}{\T_1}\right)}{\T_2}, \]
so that both coincide with 
%\[ \bigg( P \times P_F \times \C^\ell\bigg)\Big/(\T_1 \times \T_2)\]
\[ \bigslant{\left(P \times P_F \times \C^\ell \right)}{\T_1 \times \T_2} \].

%We can define a map $\Psi: \tilde{P} \times_{\T^\ell} \C^\ell  \to P \times_{\T^\ell}  (P_F \times_{\T^\ell} \C^\ell)$ by
%\[ [[p, p_F], v] \mapsto [p,[p_F, v]].\]
%For well-definedness, we need to ensure that 
%\[\Psi([[p,p_F],v]) = \Psi([t^{-1}[p, p_F], tv]) = \Psi([[t^{-1}p, p_F],tv]).\]
%On the other hand, we have
%\[ \Psi([[p,p_F],v]) =  [p,[p_F, v]] = [t^{-1}p,t[t^{-1}p_F, t v]] = [t^{-1}p,[p_F, t v]] = \Psi([[t^{-1}p, p_F],tv]).  \]

%\[ [p,[p_F, v]] = [p,[t^{-1}p_F, t v]] \]

%Injectivity and surjectivity are checked similarly.

\end{proof}
%so that 
%\[\Psi( \]
%so we want $[t^{-1}p, p_F, tv] =[p, t^{-1}p_F, tv] $. So we want to take $\tilde{P} = P_B \times P_F/\T^\ell$, where $\T^\ell$ acts on the product by 
%\[ t\cdot (p_B, p_F) := (tp_B, t^{-1}p_F)\]

Recall that we have assumed in the beginning of this section that we have a fiber metric $\omega_F$ on $\C^r$ constructed via the method of section \eqref{section:fibermetrics}. In particular, we have a connection 1-form $\theta_F$ on $P_F \to B_F$ satisfying \eqref{eqn:fibertheta1}, i.e. 
\[ d\theta_F = \sum_{j=1}^\ell \check{\omega}_j^0 \otimes v_j, \]
where $v_j$ is the basis of $\t$ defined by \eqref{eqn:vjdef}. The condition that $\omega_F$ extends smoothly to $F = \C^r$ rather than $\tilde{F} = \bigoplus_{j=1}^\ell \O_{\P^{d_j}}(-1)$ is precisely the condition on the $F_j$'s in Lemma \ref{l:smoothextension}. 
%\textcolor{red}{reference to calabi ansatz, examples}. 

As we will see in the remainder of the paper, fiber metrics of this form exist in abundance. Only certain choices will work, however, in order for us to use them to generate global metrics on $E \to B$. Moreover, we have not as yet placed any restrictions on the base $B$ or the principal bundle $P \to B$. Precisely, the conditions we require are:
\begin{enumerate}
    \item The vector field $K_\ell$ on $F \cong \C^r$ with hamiltonian potential $\sigma_\ell$ satisfies 
    \begin{equation}\label{cond:Kellintegrality}
        K_\ell \in \Gamma_v.
    \end{equation}
    \item We can choose a base metric $\omega_B$ on $B$ and a connection $1$-form $\theta$ on $P$ such that 
    \begin{equation}\label{cond:Bcompatibility}
        d\theta = (-1)^\ell\varepsilon_B\omega_B \otimes K_\ell. 
    \end{equation}
\end{enumerate}
Here as in section \ref{section:fibermetrics}, $\Gamma_v \subset \t$ is the integral lattice generated by the basis $v_1, \dots, v_\ell$. Since both $P_F \to B_F$ and $P \to B$ are principal $\T$-bundles, the condition \eqref{cond:Bcompatibility} makes sense even though $K_\ell$ was defined as a vector field on $F$. On the other hand, as we will see later on, $K_\ell$ will admit a natural extension to the total space of $E$. Together, the metric condition \eqref{cond:Kellintegrality} on $\omega_F$ and the topological compatibility condition \eqref{cond:Bcompatibility} will allow us to use the data of $\omega_F$ and $P \to B$ to define a global metric on the vector bundle $E$.
% The first condition can be thought of as a restriction on the metric $\omega_F$, \textcolor{red}{whereas the second is a compatibility condition linking...} 
Later on we will assume that $\omega_B$ is K\"ahler-Einstein metric with a particularly chosen K\"ahler-Einstein constant, but for the moment we do not need this. 

To see how to use conditions $(\textnormal{i})$ and $(\textnormal{ii})$ above to produce global metrics on $E \to B$, we use the fact that we can identify $E^0$ with a dense open subset of $\tilde{P} \times_{\T} \C^\ell$ via Lemma \ref{l:principalbundleidentification}. We define a connection 1-form $\tilde{\theta}$ on $\tilde{P}$ as follows. First, the connection 1-forms $\theta, \, \theta_F$ can be viewed as maps 
\begin{equation*}
    \theta: TP \to \t, \qquad \theta_F: TP_F \to \t,
\end{equation*}
with the property that if $X_v^B,\,  X_v^F$ are the fundamental vector fields associated to $v \in \t$ on $P$ and $P^F$ respectively, then we have 
\begin{equation}\label{eqn:thetaBF}
    \theta(X_v^B) = \theta_F(X_v^F) = v. 
\end{equation}
Now the action of $\T_2$ on $P \times P_F$ is free, and hence there is a subbundle $\t_2$ of $T(P \times P_F)$ of rank $\ell$ given by differentiating this action. In fact, we see directly from the definition of the action of $\T_2$ that $\t_2 \subset T(P \times P_F)$ can be identified with the image of the embedding $\t \to T(P \times P_F)$ given by 
\begin{equation*}
    v \mapsto X_v^B - X_v^F. 
\end{equation*}
In particular, we see that the property \eqref{eqn:thetaBF}, together with the fact that $d\theta \in \Omega^2(B, \t), \, d\theta_F \in \Omega^2(B_F, \t)$, implies that the $\t$-valued form 
\begin{equation*}
    \theta + \theta_F \in \Omega^1(P \times P_F, \t) 
\end{equation*}
is basic with respect to the $\T_2$-action on $P \times P_F$. In particular, if we set 
\begin{equation*}
    \tilde{\pi}: P \times P_F \to \tilde{P}
\end{equation*}
to be the quotient map, then there exists a $\t$-valued $1$-form $\tilde{\theta} \in \Omega^1(\tilde{P}, \t)$ such that 
\begin{equation*}
    \tilde{\pi}^*\tilde{\theta} = \theta + \theta_F. 
\end{equation*}
\begin{lemma}\label{l:thetatilde}
    The $\t$-valued $1$-form $\tilde{\theta}$ defines a connection for $\tilde{P} \to B \times B_F$ as a principal $\T$-bundle. 
\end{lemma}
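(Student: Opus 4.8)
The plan is to verify directly the two defining properties of a principal connection $1$-form on the $\T$-bundle $\tilde{P} \to B \times B_F$: that $\tilde{\theta}$ evaluates to $v$ on the fundamental vector field of each $v \in \t$, and that $\tilde{\theta}$ is invariant under the principal $\T$-action (for the abelian group $\T$, $\mathrm{Ad}$-equivariance reduces to plain invariance). Everything will be deduced from the identity $\tilde{\pi}^* \tilde{\theta} = \theta + \theta_F$ together with the fact that $\tilde{\pi}\colon P \times P_F \to \tilde{P}$, being the projection of the free proper $\T_2$-quotient, is a surjective submersion, so that $\tilde{\pi}^*$ is injective on differential forms (and a basic smooth form on $P \times P_F$ descends to a smooth form on $\tilde{P}$).

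First I would record how the maps and actions interact. The principal $\T$-action on $\tilde{P}$, written $t[p,p_F] = [tp, p_F]$, is covered by the $\T$-action on $P \times P_F$ acting on the first factor only; this commutes with the $\T_2$-action because $\T$ is abelian, hence descends, and satisfies $\tilde{\pi} \circ R_t = R_t \circ \tilde{\pi}$. Writing $\tilde{X}_v$ for the fundamental vector field of $v \in \t$ on $\tilde{P}$, it follows that $\tilde{\pi}_* X_v^B = \tilde{X}_v$, where $X_v^B$ is the fundamental vector field of the first-factor action viewed on $P \times P_F$. Since $X_v^B - X_v^F$ spans the tangent to the $\T_2$-orbits, one also has $\tilde{\pi}_* X_v^F = \tilde{\pi}_* X_v^B = \tilde{X}_v$, which is exactly the compatibility of the two descriptions $[tp, p_F] = [p, t p_F]$ of the $\T$-action, so the quantity $\tilde{\theta}(\tilde{X}_v)$ is unambiguous.

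Then the normalization is immediate: $\tilde{\theta}(\tilde{X}_v) = \tilde{\theta}(\tilde{\pi}_* X_v^B) = (\tilde{\pi}^* \tilde{\theta})(X_v^B) = (\theta + \theta_F)(X_v^B) = \theta(X_v^B) = v$, using \eqref{eqn:thetaBF} and that $X_v^B$ is tangent to the $P$-factor so $\theta_F$ annihilates it. For invariance, $\tilde{\pi}^*(R_t^* \tilde{\theta}) = R_t^*(\tilde{\pi}^* \tilde{\theta}) = R_t^*(\theta + \theta_F) = (R_t^* \theta) + \theta_F = \theta + \theta_F = \tilde{\pi}^* \tilde{\theta}$, using that $R_t$ acts only on the $P$-factor and that $\theta$, being a connection form for the abelian group $\T$, is $R_t$-invariant; injectivity of $\tilde{\pi}^*$ then yields $R_t^* \tilde{\theta} = \tilde{\theta}$. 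Smoothness of $\tilde{\theta}$ is automatic since $\theta + \theta_F$ is smooth and basic for the $\T_2$-action. There is no genuine obstacle here — the argument is entirely bookkeeping with pullbacks under the quotient map — and the only point that demands care is keeping straight which fundamental vector field on $P \times P_F$ descends to $\tilde{X}_v$, i.e. the consistency of the two presentations of the $\T$-action on $\tilde{P}$.
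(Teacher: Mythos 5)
Your proof is correct and takes essentially the same route as the paper: both deduce the connection property from the identity $\tilde{\pi}^*\tilde{\theta} = \theta + \theta_F$, the only cosmetic difference being that you push forward the first-factor fundamental field $X_v^B$ (noting $\tilde{\pi}_*X_v^B = \tilde{\pi}_*X_v^F = \tilde{X}_v$ since $X_v^B - X_v^F$ is tangent to the $\T_2$-orbits), whereas the paper uses the symmetric lift $\tfrac{1}{2}\tilde{\pi}_*\left(X_v^B + X_v^F\right)$. Your extra verification of $\T$-invariance via injectivity of $\tilde{\pi}^*$ along the submersion is correct and fills in a point the paper leaves implicit.
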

\begin{proof}
    Recall that the $\T$-action defining $\tilde{P}$ as a principal bundle is given by 
    \[ t[p, p_F] = [tp, p_F] = [p, tp_F].\]
    We claim that, for $v \in \t$, the fundamental vector field $\tilde{X}_v$ on $\tilde{P}$ satisfies 
    \begin{equation*}
        \tilde{X}_v = \frac{1}{2}\tilde{\pi}_*\left( X_v^B + X_v^F \right).
    \end{equation*}
    Indeed, by definition we have that, at a point $[p,p_F] \in \tilde{P}$,
    \begin{equation*}
        \begin{split}
            \tilde{X}_v &= \frac{d}{ds} e^{sv}[p,p_F] = \frac{d}{ds}[e^{\frac{s}{2}v}p, e^{\frac{s}{2}v}p_F] \\
            &= \frac{d}{ds} \tilde{\pi}(e^{\frac{s}{2}v}p, e^{\frac{s}{2}v}p_F ) = \tilde{\pi}_*\left(\frac{d}{ds} e^{\frac{s}{2}v} (p, p_F)  \right) = \tilde{\pi}_*\left(\frac{1}{2}\left( X_v^B + X_v^F \right)\right).
        \end{split}
    \end{equation*}
    Therefore 
    \begin{equation*}
    \begin{split}
        \tilde{\theta}(\tilde{X}_v) &= \frac{1}{2}\tilde{\pi}^*\tilde{\theta}\left(  X_v^B + X_v^F\right) =  \frac{1}{2}(\theta + \theta_F)\left(X_v^B + X_v^F \right) = v.
    \end{split}
    \end{equation*}
\end{proof}

Finally, we are in place to prove the main technical result of this section:
\begin{prop}\label{prop:globalcompactification}
    Let $\omega_F$ be a K\"ahler metric on $\C^r$ constructed as in section \ref{section:fibermetrics}, with principal $\T$-bundle $P_F \to B_F$ and connection $1$-form $\theta_F$. Let $B$ be a K\"ahler-Einstein Fano manifold with principal $\T$-bundle $P \to B$ and connection $1$-form $\theta$. Let 
    \[ E^0 \subset E = P \times_{\T} \C^r, \qquad \tilde{E}^0 \subset \tilde{E} = P \times_{\T} \left( \bigoplus_{j=1}^\ell \O_{\P^{d_j}}(-1) \right) \cong \tilde{P} \times_{\T} \C^\ell \] 
    be defined as in the beginning of this section. Suppose that this data satisfies the integrality and compatibility conditions \eqref{cond:Kellintegrality}, \eqref{cond:Bcompatibility}. Then there is a K\"ahler metric on $\tilde{E}^0$ of the form 
    \begin{equation}\label{first:gtotalspace}
    \begin{split}
    g&= (-1)^\ell\varepsilon_B \sigma_\ell g_B + \sum_{j=1}^\ell \varepsilon_j p_{nc}(\alpha_j) \check{g}_j \\
     & \qquad \qquad \qquad \qquad \qquad + \sum_{j=1}^{\ell} \left(\frac{\tilde{p}_{c}(\xi_j)\Delta(\xi_j)}{\tilde{F}_j(\xi_j)}\right) d\xi_j^2+ \sum_{j=1}^{\ell}\left(\frac{\tilde{F}_j(\xi_j)}{\tilde{p}_c(\xi_j)\Delta(\xi_j)}\right)\left(\sum_{r=1}^{\ell}\sigma_{r-1}(\hat \xi_j)\tilde{\theta}_r\right)^2 , \\
     \omega &= (-1)^\ell\varepsilon_B \sigma_\ell \omega_B + \sum_{j=1}^\ell \varepsilon_j p_{nc}(\alpha_j) \check{\omega}_j + \sum_{r =1}^\ell d\sigma_r \wedge \tilde{\theta}_r,
     \end{split}
     \end{equation}
     defined via the Ansatz \eqref{k-order-ell}, where $\tilde{F}_j$ are defined as in the beginning of this section, and $\tilde{\theta}$ is the connection $1$-form on $\tilde{P}$ constructed above. Moreover, this metric compactifies smoothly to a globally defined K\"ahler metric on the total space $M$ of $E \to B$, under the identification $E^0 \cong \tilde{E}^0$. 
\end{prop}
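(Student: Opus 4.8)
The plan is to establish the two assertions in turn: first that \eqref{first:gtotalspace} is a genuine Kähler structure on $\tilde E^0$ obtained by specializing the general Ansatz \eqref{k-order-ell}, and then that it extends smoothly across the added strata to the total space of $E$.

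\emph{Step 1: \eqref{first:gtotalspace} is an instance of \eqref{k-order-ell}.} I would take the product base to be $B\times B_F=B\times\prod_{j=1}^\ell\P^{d_j}$ (so $N=\ell+1$), with the $B$-factor carrying $\eta_B=0$ and the sign $\varepsilon_B$, and each $\P^{d_j}$ carrying $\eta_j=\alpha_j$, $\varepsilon_j$ as in \eqref{first:epsilonjdef}, and the metric $\check\omega_j$ of \eqref{eqn:fubinistudy}; the principal bundle is $\tilde P$, the functions are $\tilde F_j=t^{d_B}F_j$, and $\tilde p_c(t)=t^{d_B}p_c(t)$, $\tilde p_{nc}(t)=p_{nc}(t)$. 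With these substitutions, and using $p_{nc}(\eta_B)=p_{nc}(0)=(-1)^\ell\sigma_\ell$ for the $B$-summand, the expression \eqref{first:gtotalspace} is literally \eqref{k-order-ell}, now defined on the manifold $\tilde E^0\cong\mathcal D\times\tilde P$ (the open-dense free locus playing the role of $M^0$). Two things must then be checked. (a) That $\tilde\theta$ is a connection $1$-form on $\tilde P\to B\times B_F$ whose curvature is \eqref{theta}: the connection property is Lemma \ref{l:thetatilde}, and pulling back by $\tilde\pi$ gives $\tilde\pi^*d\tilde\theta=d\theta+d\theta_F$. The $B_F$-part has components $(d\theta_F)_r=\sum_j(-1)^r\varepsilon_j\alpha_j^{\ell-r}\check\omega_j$ (recorded just after \eqref{eqn:fibertheta1}), while $K_\ell$, being the generator whose Hamiltonian is $\sigma_\ell$, is the last vector $e_\ell$ in the basis dual to the components $\theta_r$, so the compatibility hypothesis \eqref{cond:Bcompatibility} reads $(d\theta)_r=(-1)^\ell\varepsilon_B\delta_{r\ell}\,\omega_B$; summing, and noting that $0^{\ell-r}$ contributes only to $r=\ell$, reproduces exactly the right-hand side of \eqref{theta} for the enlarged base with $\eta_B=0$, and since that right-hand side is pulled back from $B\times B_F$ we conclude $d\tilde\theta$ satisfies \eqref{theta}. (Here \eqref{cond:Kellintegrality} is precisely what guarantees $\tilde P$ carries this structure as a genuine principal $\T=\R^\ell/2\pi\Gamma_v$-bundle, cf. Lemma \ref{l:principalbundleidentification}.) (b) The positivity conditions \eqref{inequality}: $\varepsilon_B\tilde p_{nc}(0)=(-1)^\ell\varepsilon_B\sigma_\ell>0$ on $\mathcal D$ by the choice of $\varepsilon_B$; $\varepsilon_j p_{nc}(\alpha_j)>0$ on $\mathcal D$ since $\omega_F$ is Kähler; and $(-1)^{\ell-i}\tilde F_i\tilde p_c=x^{2d_B}\bigl((-1)^{\ell-i}F_ip_c\bigr)>0$ on $I_i$, because $0\notin I_i$ in both the Type 1 and Type 2 cases and $(-1)^{\ell-i}F_ip_c>0$ there for the fiber metric. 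Hence \eqref{first:gtotalspace} defines a Kähler structure on $\tilde E^0$.

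\emph{Step 2: smooth compactification to $M$.} By Lemma \ref{l:principalbundleidentification} we may write $\tilde E=P\times_\T\bigl(\bigoplus_j\O_{\P^{d_j}}(-1)\bigr)\cong\tilde P\times_\T\C^\ell$, and $E=P\times_\T\C^r$ is recovered from $\tilde E$ by collapsing the $\P^{d_j}$-zero sections, fiberwise over $B$. The key point is that the functions controlling this collapse are unchanged: $\tilde\Theta_j(t):=\tilde F_j(t)/\tilde p_c(t)=F_j(t)/p_c(t)=\Theta_j(t)$. Consequently $\tilde\Theta_j$ extends to a $C^1$ function on $\bar I_j$ with $\tilde\Theta_j(\alpha_k)=0$ and $(d_k+1)\tilde\Theta_j'(\alpha_k)=2q(\alpha_k)$ at each $\alpha_k\in\partial\bar I_j$, exactly because $\omega_F$ was built in section \ref{section:fibermetrics} from $F_j$ satisfying these same boundary conditions (the hypotheses of Lemma \ref{l:smoothextension}); the Fubini--Study data $\check\omega_j,v_j$ on the collapsing factors is also identical to that of the fiber construction. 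Therefore Lemma \ref{l:smoothextension}, whose smoothness criterion is local over the base and hence applies with $B_F$ replaced by $B\times B_F$ (the extra factor entering only through the term $(-1)^\ell\varepsilon_B\sigma_\ell g_B$, with $\eta_B=0$), shows that \eqref{first:gtotalspace} extends to a smooth Kähler metric on $P\times_\T\R^{2r}=E$, compatible with the identification $E^0\cong\tilde E^0$.

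\emph{Main obstacle.} The real content of the proposition is Step 2 --- the claim that the global compactification requires \emph{no} new condition beyond those already imposed on the fiber --- and what makes it work is the multiplicativity $\tilde F_j=t^{d_B}F_j$, $\tilde p_c=t^{d_B}p_c$, which cancels in $\tilde\Theta_j$. The most delicate bookkeeping, by contrast, is the curvature identity in Step 1(a): one must verify that $\tilde\pi^*d\tilde\theta=d\theta+d\theta_F$ descends to $\tilde P$ and matches \eqref{theta} for the enlarged base, and this is precisely where the two hypotheses enter --- \eqref{cond:Kellintegrality} so that $\tilde P$ is a principal $\T$-bundle with lattice $\Gamma_v$, and \eqref{cond:Bcompatibility} to pin down the curvature contribution of the $B$-summand with $\eta_B=0$.
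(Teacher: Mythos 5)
Your Step 1 is essentially the paper's own first step (the paper notes that after identifying \eqref{first:gtotalspace} with the Ansatz \eqref{k-order-ell} for the base $B\times B_F$, bundle $(\tilde P,\tilde\theta)$, $\eta_B=0$, $\eta_j=\alpha_j$, the only thing to check is \eqref{theta}, which follows from \eqref{cond:Bcompatibility} and the definition of $\tilde\theta$), and your verification of \eqref{theta} and of the positivity conditions is correct.

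The gap is in Step 2. Lemma \ref{l:smoothextension} cannot be applied ``with $B_F$ replaced by $B\times B_F$'': its hypotheses require the base factors to be projective spaces equipped with the principal $\T$-bundle of $\bigoplus_j\O_{\P^{d_j}}(-1)$ and the specific Fubini--Study normalizations \eqref{eqn:fubinistudy}, \eqref{eqn:vjdef}, and its conclusion is that the \emph{whole} space $\mathcal D\times P_F$ (base included) compactifies to $\R^{2r}$. In the global situation $B$ is not collapsed, $P\to B$ is a different bundle (powers of $L$), and the compactification $E^0\subset E$ is fiberwise over $B$; so the lemma simply does not speak to this geometry, and its smoothness criterion is not ``local over the base'' in the sense you need. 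Moreover, your claim that the extra factor enters ``only through $(-1)^\ell\varepsilon_B\sigma_\ell g_B$'' is not accurate: it also enters through $\tilde\theta$, whose components in a local trivialization of $P$ contain $1$-forms pulled back from $B$, producing fiber--base cross terms in \eqref{first:gtotalspace}. To run a direct local argument you would have to show that all the coefficients of these cross terms (essentially $g_F(K_r,\cdot)$ and $g_F(K_r,K_s)$), together with $\sigma_\ell$ and $p_{nc}(\alpha_j)$, extend smoothly over the added strata, that the extended tensor remains positive definite there, and that the complex structure extends and agrees with the standard holomorphic structure of the vector bundle $E$ --- none of which is addressed. The paper avoids all of this by a different mechanism: it pulls both structures back to $\widehat{M}^0=P\times P_F\times\C^\ell$ and shows that the Ansatz Kähler structure coincides there, both as a symplectic form (via $\tilde\pi^*\tilde\theta=\theta+\theta_F$) and as a complex structure (via $\tilde F_j/\tilde p_c=F_j/p_c$, the same identity you use but deployed to match $J^\ell_{h2}=J^\ell_{ss}$), with the semisimple principal fibration metric of \cite[Section 5]{ApJuLa} built from the fiber metric $\omega_F$; since that construction is already known to define a smooth Kähler metric on all of $E$, the smooth compactification follows. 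Your Step 2 therefore needs either this comparison or a genuinely carried-out local extension argument; as written it asserts the conclusion rather than proving it.
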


\begin{proof}
    From the proof of Lemma \ref{l:principalbundleidentification}, we have an identification 
    \[ \tilde{E}^0 \cong E^0 \cong \tilde{P} \times \mathcal{D}. \] 
Then the K\"ahler structure $(g, \omega)$ is precisely the one defined by the hamiltonian 2-form Ansatz \eqref{k-order-ell} with base $B \times B_F$, principal bundle $(\tilde{P}, \tilde{\theta})$, constants $\eta_B = 0$, $\eta_j = \alpha_j$, and functions given by $\tilde{F}_j$, $j = 1, \dots, \ell$. Indeed, the only thing remaining to check is the condition \ref{theta}, however this is immediate from \eqref{cond:Bcompatibility} and the definition of $\tilde{\theta}$.

    To prove the statement about compactification, we will show that the metric $g$ above coincides with the \emph{semisimple principal fibration metric} on $E$ defined by this data (for details on this construction, see \cite[Section 5]{ApJuLa}). The symplectic form associated to this metric is specified in the following way. On the product $P \times \C^r$, we define 
    \begin{equation}\label{eqn:compactificationssfibrationmetric}
        \tilde{\omega} = \varepsilon_B p_{nc}(0) \omega_B + \omega_F + \sum_{r=1}^\ell d\sigma_r \wedge \theta_r, 
    \end{equation}
    where we think of $\varepsilon_B p_{nc}(0) = (-1)^\ell \varepsilon_B \sigma_\ell$ and $\sigma_j$ as functions on the fiber $\C^r$ pulled back to $P \times \C^r$. Then $\tilde{\omega}$ is basic for the quotient map $P \times \C^r \to P \times_{\T} \C^r$, and descends to the K\"ahler form of the semisimple principal fibration metric. 

    Before we move forward, we fix some notation to help clarify the discussion. At the topological level, the discussion above can be summarized by the following diagram:

    \begin{center}
    \begin{tikzcd}[column sep = small]
	 &  P \times P_F \times \C^\ell \arrow[dl, "/\T_2"'] \arrow[d, "/\T_1"] &  & \\
    \tilde{P} \times \C^\ell \arrow[r,leftrightarrow]  \arrow[d, "/\T_1"]  &   P \times \left(P_F \times_{\T} \C^\ell \right)  \arrow[r,"\textnormal{bl}"]  \arrow[d, "/\T_2"]  & P\times \C^r \arrow[d, "/\T_2"] \\
    \tilde{E} \arrow[r,equal]  & \tilde{E} \arrow[r,"\textnormal{bl}"] & E
    \end{tikzcd}
    \end{center}
%\Cstarn \arrow[d,"\nabla \phi ", shift right = 1]  
    where $\textnormal{bl}: \bigoplus_{j=1}^\ell \O_{\P^{d_j}}(-1) \to \C^r$ denotes the blowdown map and its obvious extensions to $\tilde{E}$, etc. Each of these spaces has an open-dense subset where the $\ell$-dimensional torus $\T$ acts freely. At the bottom level, we have denoted these sets by $\tilde{E}^0$, $E^0$. Recall also that we have denoted $F^0 \subset F = \C^r$, $\tilde{F}^0 \subset \tilde{F} = \bigoplus_{j=1}^\ell \O_{\P^{d_j}}(-1)$. To simplify the notation a bit, we denote 
    \begin{equation*}
        M_1 = \tilde{P} \times \C^\ell, \qquad M_2 = P \times \left(P_F \times_{\T} \C^\ell \right) = P \times \tilde{F}, \qquad M_3 = P \times \C^r = P \times F, 
    \end{equation*}
    and set $M_j^0 \subset M_j$ to be the corresponding open-dense subset. By Lemma \ref{l:principalbundleidentification} and the properties of the blowdown map, we have in particular a natural identification 
    \[ M_1^0 \cong M_2^0 \cong M_3^0. \]
    Finally we set 
    \begin{equation*}
        \widehat{\! M} = P \times P_F \times \C^\ell,
    \end{equation*}
    and $\widehat{\! M}^0 \subset \,\widehat{\! M}$ is again the open-dense subset. 

    In what follows, we will give a description of both the metric \eqref{first:gtotalspace} and the semisimple fibration metric on $\widehat{\! M^0 \!\!\!}\,\,$, showing that they coincide there. As a result, we will see that they coincide on the quotient 
    \[\bigslant{\widehat{\! M^0 \!\!\!}\,\,}{\T_1 \times \T_2} \cong \tilde{E}^0 \cong E^0. \]
    Since the semisimple principal fibration metric extends smoothly to $E$, we take this as our smooth partial compactificaition of the metric \eqref{first:gtotalspace}.

    To see this, we first observe that on $F^0 \cong \tilde{F}^0$, the metric $\omega_F$ admits by construction a description 
    \[\omega_F = \sum_{j=1}^\ell \varepsilon_j p_{nc}(\alpha_j) \check{\omega}_j + \sum_{r =1}^\ell d\sigma_r \wedge \theta_{F,r}.\]
    It follows that if we pull all the way back to $\widehat{\! M^0 \!\!\!}\,\,$, the seimisimple principal fibraiton metric \eqref{eqn:compactificationssfibrationmetric} becomes 
    \begin{equation}\label{eqn:allthewayup}
        \hat{\omega} := \varepsilon_B p_{nc}(0) \omega_B + \sum_{j=1}^\ell \varepsilon_j p_{nc}(\alpha_j) \check{\omega}_j + \sum_{r =1}^\ell d\sigma_r \wedge (\theta + \theta_{F})_r
    \end{equation}
    If we consider now the symplectic form $\omega$ defined by \eqref{first:gtotalspace} described on $M_1^0$, we can equally well pull this back to $\widehat{\! M^0 \!\!\!}\,\,$ via the quotient map $\tilde{\pi}:\widehat{\!M} \to M_1$ defined above. When we do this, we see that 
    \[\begin{split}
        \tilde{\pi}^*\omega &= \varepsilon_B p_{nc}(0) \omega_B + \sum_{j=1}^\ell \varepsilon_j p_{nc}(\alpha_j) \check{\omega}_j + \sum_{r =1}^\ell d\sigma_r \wedge \tilde{\pi}^*\tilde{\theta}_r \\
        &= \varepsilon_B p_{nc}(0) \omega_B + \sum_{j=1}^\ell \varepsilon_j p_{nc}(\alpha_j) \check{\omega}_j + \sum_{r =1}^\ell d\sigma_r \wedge (\theta + \theta_{F})_r = \hat{\omega},
    \end{split} \]
    by the definition of $\tilde{\theta}$. 

    To complete the proof, we only need to check that the two K\"ahler structures are defined with respect to the same complex structure. The connections $\theta$ and $\theta_F$ define horizontal distributions $\mathscr{H}_B \subset TP$, $\mathscr{H}_F \subset TP_F$, so that we can write 
    \[ TP \cong \mathscr{H}_B \oplus \t_B, \qquad TP_F \cong \mathscr{H}_F \oplus \t_F. \]
    The lifts of the complex structures $J_B$, $J_F$ on $B$, $B_F$ to $\mathscr{H}_B$, $\mathscr{H}_F$ define CR-structures of codimension $\ell$ on $TP$ and $TP_F$. The complex structures on $E^0 \cong  \, \widehat{\! M^0 \!\!\!}\,\,/\T_1 \times \T_2$ in the two constructions are defined as follows. Using the $1$-forms $\theta$ and $\theta_F$ on $\hatM$, we can write:
    \begin{equation*}
        T \hatM \cong \mathscr{H}_B \oplus \mathscr{H}_F \oplus T\C^\ell \oplus \t_B \oplus \t_F.
    \end{equation*}
    In both constructions, the complex structure takes the form 
    \begin{equation*}
    \begin{split}
        J_{h2} := J_B \oplus J_F \oplus J^\ell_{h2}, \qquad  J_{ss} := J_B \oplus J_F \oplus J^\ell_{ss}
    \end{split}
    \end{equation*}
    acting on $\mathscr{H}_B \oplus \mathscr{H}_F \oplus T\C^\ell$. Here the first refers to the K\"ahler structure \eqref{first:gtotalspace} and the second refers to \eqref{eqn:compactificationssfibrationmetric}. These are invariant under the $\T_1 \times \T_2$-action (we note that the K\"ahler structures on $(\C^\ell, J^\ell_{h2})$, $(\C^\ell, J^\ell_{ss})$ are \emph{toric}), and hence descend to complex structures on $E^0$.
    
    Thus, we only need to show that $J^\ell_{h2} = J^\ell_{ss}$. This is immediate, although it requires some reminding about the setup. We can see directly from \eqref{J2} that the complex structure $J_{h2}^\ell$ on $\C^\ell$ which partially determines the K\"ahler structure of \eqref{first:gtotalspace} is given by
    \begin{equation*}
         J_{h2}^\ell d\xi_j = \left(\frac{\tilde{F}_j(\xi_j)}{\tilde{p}_c(\xi_j)\Delta(\xi_j)}\right)\left(\sum_{r=1}^{\ell}\sigma_{r-1}(\hat \xi_j)dt_r\right), \qquad  
 J_{h2}^\ell dt_r = (-1)^r\sum_{j=1}^\ell\frac{{\tilde{p}_c}(\xi_j)}{\tilde{F}_j(\xi_j)} \xi_j^{\ell-r} d\xi_j,
    \end{equation*}
    where $\C^\ell$ has (real) coordinates $(\xi_1, \dots, \xi_\ell, t_1, \dots, t_\ell)$. Now in the other case, the complex structure $J^\ell_{ss}$ is determined through the fiber metric $\omega_F$. Since we are assuming that $\omega_F$ is constructed via the hamiltonian 2-form Ansatz itself, we see again from \eqref{J2} that  
    \begin{equation*}
         J_{ss}^\ell d\xi_j = \left(\frac{F_j(\xi_j)}{p_c(\xi_j)\Delta(\xi_j)}\right)\left(\sum_{r=1}^{\ell}\sigma_{r-1}(\hat \xi_j)dt_r\right), \qquad  
 J_{ss}^\ell dt_r = (-1)^r\sum_{j=1}^\ell\frac{{p_c}(\xi_j)}{F_j(\xi_j)} \xi_j^{\ell-r} d\xi_j.
    \end{equation*}
    The result now follows form \eqref{first:tildeFdef} and \eqref{first:tildepcdef}, since 
    \[\frac{\tilde{F}_j(t)}{\tilde{p}_c(t)} = \frac{F_j(t)}{p_c(t)}.\]
\end{proof}

\begin{remark}\label{rem:xiextendandmomentproper}
    The functions $\xi_j$, thought of as functions on $F^0 \subset F$, in fact extend smoothly to the whole fiber $F$ \cite[Lemma 5.4]{AC}. In particular, the proof above shows that the functions $\xi_j$, now viewed on $E^0$, in fact extend smoothly to $E$. Moreover, it's clear from the expression \eqref{first:gtotalspace} for the symplectic form $\omega$ that the moment map $\mu: M \to \mathfrak{t}^\vee$ can be understood in terms of the moment map $\mu_F: \C^r \to \t^\vee$ by (c.f. \cite[Section 5]{ApJuLa}) 
    \begin{equation*}
    	q^*\mu = \pi_F^*\mu_F, 
    \end{equation*}
    where $q:P \times \C^r \to M$, is the quotient map and $\pi_F:P \times \C^r \to \C^r$ the projection. In particular, $\mu$ is proper and its image image can be identified with the standard positive orthant $\{x_i \geq 0\} \subset \R^\ell$.  
\end{remark}

%In this situation, we have 
%\begin{equation}%\label{first:gtotalspace}
%\begin{split}
%    g&= \pm\varepsilon_B \sigma_\ell \omega_B + \sum_{j=1}^\ell \varepsilon_j p_{nc}(\alpha_j) \check{g}_j \\
%     & \qquad \qquad \qquad \qquad \qquad + \sum_{j=1}^{\ell} \left(\frac{\tilde{p}_{c}(\xi_j)\Delta(\xi_j)}{\tilde{F}_j(\xi_j)}\right) d\xi_j^2+ \sum_{j=1}^{\ell}\left(\frac{\tilde{F}_j(\xi_j)}{\tilde{p}_c(\xi_j)\Delta(\xi_j)}\right)\left(\sum_{r=1}^{\ell}\sigma_{r-1}(\hat \xi_j)\tilde{\theta}_r\right)^2 \\
%     &= \pm\varepsilon_B \sigma_\ell \omega_B + \sum_{j=1}^\ell \varepsilon_j p_{nc}(\alpha_j) \check{g}_j \\
%     & \qquad \qquad \qquad \qquad \qquad + \sum_{j=1}^{\ell} \left(\frac{p_{c}(\xi_j)\Delta(\xi_j)}{F_j(\xi_j)}\right) d\xi_j^2+ \sum_{j=1}^{\ell}\left(\frac{F_j(\xi_j)}{p_c(\xi_j)\Delta(\xi_j)}\right)\left(\sum_{r=1}^{\ell}\sigma_{r-1}(\hat \xi_j)\tilde{\theta}_r\right)^2
%\end{split}
%\end{equation}

%\begin{prop}
  %  The metric $\omega$ with $\tilde{F}$... coincides with the semisimple principal fibration metric.... Consequently, the metric $\omega$ defines a global metric on $E$ if and only if $\omega_F$ extends smoothly to $\C^{r}$.
%\end{prop}

%\begin{prop}\label{prop:compactification}
%    Compactificaiton. Include the statement that the $\xi_j$ are smooth functions on $E$
%\end{prop}

\subsection{Coarse asymptotic geometry}

Let $\Theta$ be an arbitrary function defined on a ray $(-\infty, c]$ or $[c, \infty)$. We say that $\Theta$ has \emph{degree} $\beta \in \R$ if 
\begin{equation}\label{degree}
	\lim_{|t| \to \infty} \left|\frac{\Theta(t)}{t^\beta}\right| = c, \qquad 0 < c < \infty.
\end{equation}

Throughout this section, we will let $(g,\omega)$ be a K\"ahler metric defined by \eqref{first:gtotalspace} via Proposition \ref{prop:globalcompactification}. In particular, we have a fiber K\"ahler structure $(g_F, \omega_F, J_F)$ defined on $\R^{2r}$, which we do not assume a priori is biholomorphic to the standard $\C^r$. All of the constructions of the previous section apply equally well in this setting, and the corresponding $(g, \omega)$ will be defined on the total space $M$ of a disc bundle of $E \to B$.

 In particular, we have functions 
\[ \Theta_j(t) = \frac{\tilde{F}_j(t)}{\tilde{p}_c(t)} = \frac{F_j(t)}{p_c(t)}, \qquad j = 1, \dots, \ell. \] 
We fix an arbitrary point $p_0 \in M$, which we assume without loss of generality lies in the dense open set $E^0 \subset M$, and denote by $d_g(p) := d_g(p_0, p)$ the riemannian distance function. We begin by characterizing the completeness of $g$ via the following lemma, which is a direct generalization of \cite[Lemma 5.4]{AC} to the current setting.

\begin{lemma}\label{l:generalcompleteness}
Let $(M, g,\omega)$ be as above. Then we have
\begin{enumerate}
\item  Suppose that $g$ is Type 1, so that $(\xi_1, \dots, \xi_\ell) \in \mathcal{D}_1$, where $\mathcal{D}_1$ is defined by \eqref{domain1}. In this case, if 
\[ \Theta_\ell(t) \leq C t^{\ell+1}, \]
then $g$ is complete.
\item  Suppose that $g$ is Type 2, so that $(\xi_1, \dots, \xi_\ell) \in \mathcal{D}_2$, where $\mathcal{D}_2$ is defined by \eqref{domain2}. In this case, if 
\[ |\Theta_1(t)| \leq C|t|^{\ell +1}, \qquad \textnormal{and} \qquad \Theta_\ell(t) \leq C t^{\ell+1}, \]
then $g$ is complete.
\end{enumerate}
Moreover, 
\begin{enumerate}
\item[(iii)] In the Type 1 case, if $\Theta_1(t)$ has degree $\beta$ for $\beta < \ell + 1$, then 
\begin{equation}\label{type1distancelowerbound}
 d_g(p) \geq C^{-1}\xi_\ell^{\frac{\ell + 1 - \beta}{2}}. 
 \end{equation}
\item[(iv)] In the Type 2 case, if $\Theta_1(t), \, \Theta_\ell(t)$ both have degree $\beta$ for $\beta < \ell + 1$, then 
\begin{equation}\label{type2distancelowerbound}
d_g(p) \geq C^{-1}\left(|\xi_1|^{\frac{\ell + 1 - \beta}{2}} + \xi_\ell^{\frac{\ell + 1 - \beta}{2}} \right). 
\end{equation}
\end{enumerate}
\end{lemma}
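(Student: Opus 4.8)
The plan is to adapt the strategy of \cite[Lemma 5.4]{AC} to the present setting by exploiting the explicit form of the metric \eqref{first:gtotalspace} and the fact that, by Proposition \ref{prop:globalcompactification} and Remark \ref{rem:xiextendandmomentproper}, the functions $\xi_j$ extend smoothly to $M$ and the moment map $\mu = (\sigma_1,\dots,\sigma_\ell)$ is proper with image the positive orthant. Completeness is a statement about curves escaping every compact set, and since $\mu$ is proper, a curve $\gamma(s)$ escapes to infinity in $M$ precisely when $\sigma_\ell(\gamma(s)) \to \infty$ (Type 1) or when $|\sigma_1| \to \infty$ or $\sigma_\ell \to \infty$ (Type 2) --- more precisely when at least one of the boundary-coordinates $\xi_j$ tends to the noncompact end ($\xi_\ell \to +\infty$ in Type 1; $\xi_1 \to -\infty$ or $\xi_\ell \to +\infty$ in Type 2). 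The key observation is that along the $\xi_\ell$-direction the metric \eqref{first:gtotalspace} controls $d\xi_\ell^2$ with coefficient $\tilde p_c(\xi_\ell)\Delta(\xi_\ell)/\tilde F_\ell(\xi_\ell)$, and using $\Theta_\ell = F_\ell/p_c = \tilde F_\ell/\tilde p_c$ together with the fact that $\Delta(\xi_\ell) = \prod_{i<\ell}(\xi_\ell - \xi_i)$ is a polynomial of degree $\ell-1$ in $\xi_\ell$ (with the other $\xi_i$ bounded), this coefficient behaves like $\xi_\ell^{\ell-1}/\Theta_\ell(\xi_\ell)$ as $\xi_\ell \to \infty$.

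First I would establish the completeness statements (i) and (ii). The length of any curve in $M$ along which $\xi_\ell \to \infty$ is bounded below by
\[ \int \sqrt{\frac{\tilde p_c(\xi_\ell)\Delta(\xi_\ell)}{\tilde F_\ell(\xi_\ell)}}\, |d\xi_\ell| = \int \sqrt{\frac{\Delta(\xi_\ell)}{\Theta_\ell(\xi_\ell)}}\, |d\xi_\ell| \gtrsim \int^\infty \frac{\xi_\ell^{(\ell-1)/2}}{\sqrt{\Theta_\ell(\xi_\ell)}}\, d\xi_\ell, \]
and if $\Theta_\ell(t) \le C t^{\ell+1}$ the integrand is $\gtrsim \xi_\ell^{(\ell-1)/2 - (\ell+1)/2} = \xi_\ell^{-1}$, whose integral to $+\infty$ diverges. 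Thus no curve can reach this end in finite length, so that end is complete; in the Type 2 case one argues symmetrically for the $\xi_1 \to -\infty$ end using the coefficient of $d\xi_1^2$ and the hypothesis $|\Theta_1(t)| \le C|t|^{\ell+1}$. One must also check that the only way to escape a compact set is through one of these ends, which follows from properness of $\mu$ together with the smooth extension of the $\xi_j$ and the smooth compactification across the zero sections established in Proposition \ref{prop:globalcompactification}. The distance estimates (iii) and (iv) are proved by the same computation run in the opposite direction: if $\Theta_1$ (resp.\ $\Theta_\ell$) has degree $\beta < \ell+1$, then near a point $p$ with $\xi_\ell(p)$ large, projecting any path from $p_0$ to $p$ onto the $\xi_\ell$-axis and estimating gives
\[ d_g(p) \ge \int^{\xi_\ell(p)} \sqrt{\frac{\Delta(t)}{\Theta_\ell(t)}}\, dt \gtrsim \int^{\xi_\ell(p)} t^{(\ell-1)/2}\, t^{-\beta/2}\, dt \asymp \xi_\ell(p)^{\frac{\ell+1-\beta}{2}}, \]
and likewise for $|\xi_1|$ in Type 2, yielding the stated lower bounds after absorbing constants.

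The main obstacle I anticipate is not the one-dimensional integral estimate itself --- that is routine --- but rather making rigorous the reduction from "length of a curve in $M$" to "integral along the $\xi_\ell$-axis". Specifically one needs: (a) that a minimizing (or near-minimizing) geodesic from $p_0$ to a far point $p$ really does have $\xi_\ell$ varying monotonically over a large range, or at least that one may estimate $\int |d\xi_\ell|$ from below by $|\xi_\ell(p) - \xi_\ell(p_0)|$, which is automatic, but then (b) one must ensure the coefficient $\tilde p_c \Delta/\tilde F_\ell$ is genuinely comparable to $\xi_\ell^{\ell-1}/\Theta_\ell(\xi_\ell)$ \emph{uniformly} along the curve, i.e.\ that the other variables $\xi_1,\dots,\xi_{\ell-1}$ stay in a compact range so that $\Delta(\xi_\ell) \asymp \xi_\ell^{\ell-1}$ with uniform constants; this is guaranteed because $(\xi_1,\dots,\xi_{\ell-1})$ lies in the bounded part of $\mathcal{D}_1$ or $\mathcal{D}_2$, but it should be spelled out. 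For the upper bound needed in completeness one does not need a minimizing geodesic --- it suffices that \emph{every} curve escaping to infinity has infinite length, and since escaping forces $\xi_\ell \to \infty$ (or $\xi_1 \to -\infty$), the divergence of $\int^\infty \xi_\ell^{-1} d\xi_\ell$ does the job. I would therefore organize the proof as: (1) recall from \cite[Lemma 5.4]{AC} and Remark \ref{rem:xiextendandmomentproper} the smooth extension of $\xi_j$ and properness of $\mu$; (2) characterize the ends of $M$ in terms of $\xi_\ell \to \infty$ and (Type 2) $\xi_1 \to -\infty$; (3) prove the length lower bound along these directions and deduce (i), (ii); (4) reverse the estimate to deduce (iii), (iv), being careful that $\Delta(\xi_j) \asymp \xi_j^{\ell-1}$ uniformly on the relevant region.
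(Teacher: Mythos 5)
Your proposal is correct and follows essentially the same route as the paper: it uses the smooth extension of the $\xi_j$ and the properness of the moment map to reduce completeness to showing $\xi_1,\xi_\ell$ are bounded on $d_g$-bounded sets, then bounds path lengths from below via the $d\xi_\ell^2$ (and $d\xi_1^2$) coefficient, using that the remaining $\xi_i$ lie in bounded intervals so $\Delta(\xi_\ell)\gtrsim \xi_\ell^{\ell-1}$, and finally reverses the same integral estimate with the degree-$\beta$ hypothesis to get the stated lower bounds on $d_g$. The points you flag as needing care (uniform comparability of $\Delta$, not needing minimizing geodesics) are exactly how the paper handles them.
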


\begin{proof}
By Remark \ref{rem:xiextendandmomentproper} the functions $\xi_1, \dots, \xi_\ell$ extend to smooth functions on $M$. We claim that the map 
    \begin{equation*}
    \begin{split}
        \Xi_1: M \to \overline{\mathcal{D}}_1 := [\alpha_1, \alpha_2] \times \dots \times [\alpha_{\ell -1}, \alpha_\ell]& \times [\alpha_\ell, \infty) , \qquad \textnormal{in the Type 1 case} \\
         \Xi_2: M \to \overline{\mathcal{D}}_2 := (-\infty, \alpha_1] \times \dots \times [\alpha_{\ell -2}, \alpha_{\ell-1}]& \times [\alpha_\ell, \infty) , \qquad \textnormal{in the Type 2 case} 
     \end{split}
    \end{equation*}
    given by 
    \[\Xi_j = (\xi_1, \dots, \xi_\ell) \]
    is proper. Indeed, suppose that $\Omega \subset \overline{\mathcal{D}}_j$ is a closed $\xi$-bounded subset. In particular, there exists $R > 0$ such that $\xi_1 \geq - R$ and $\xi_\ell \leq R$. Since $\xi_2, \dots, \xi_{\ell -1}$ are bounded, it follows that $\Omega \subset \sigma_1^{-1}([-C, C])$ for $C$ sufficiently large. Since $\sigma_1^{-1}([-C, C])$ is compact by the properness of $\mu$, we see that $\Omega(R)$ is indeed compact. Note that in the Type 1 case this is even simpler, since $\xi_1$ is also bounded. 
    
     Therefore the metric $g$ is complete if and only if $\xi_1$ and $\xi_\ell$ are bounded on any given $d_g$ bounded subset. Let $p \in M$ be an arbitrary point, which we assume without loss of generality has the property that $|\xi_1(p)| > 1, \xi_\ell(p) < 1$. Let $\gamma(s)$ be any path in $M$ connecting $p_0$ and $p$.  
     
     Assume first that we are in the Type 1 situation. Then we compute 
    \begin{equation}\label{eqn:distanceestimate1}
        \begin{split}
            L_g(\gamma) &\geq \int_0^s \sqrt{\frac{p_c(\xi_\ell(t)) \prod_{j=1}^{\ell -1}(\xi_\ell(t) - \xi_j(t))}{F(\xi_\ell(t))}} \, |\dot{\xi}_\ell(t)| \, dt \\
            & \geq \int_0^s \sqrt{\frac{ (\xi_\ell(t) - \alpha_\ell)^{\ell -1}}{\Theta_\ell(\xi_\ell(t))}} \, |\dot{\xi}_\ell(t)| \, dt \\
            &\geq C \int_0^s \xi_\ell(t)^{-1} |\dot{\xi}_\ell(t)| \, dt \geq C \log(\xi_\ell(p)) + C'.
        \end{split}
    \end{equation}
    It follows immediately that $\xi_\ell$ is bounded on any $d_g$ bounded subset. The Type 2 case is similar, except that we obtain the two separate estimates 
    \[  L_g(\gamma) \geq C \log(|\xi_1(p)|) + C', \qquad  L_g(\gamma) \geq C \log(\xi_\ell(p)) + C',  \] 
    which together imply that 
    \begin{equation}\label{type2roughdistance}
    L_g(\gamma)  \geq C\left( \log(|\xi_1(p)|) + \log(\xi_\ell(p) \right),
    \end{equation}
    as long as $|\xi_1(p)|$ and $\xi_\ell(p)$ are sufficiently large. The lower bounds \eqref{type1distancelowerbound} and \eqref{type2distancelowerbound} are straightforward refinements of the computation \eqref{eqn:distanceestimate1}, accounting for the fact that $|\Theta_1(t)| \geq C^{-1} |t|^\beta$ for $t<<0$ and $|\Theta_\ell(t)| \geq C^{-1} t^\beta$ for $t >> 1$. 
\end{proof}

We move on to prove the following lemma, based on the proof of \cite[Lemma 5.8]{AC}, which gives an explicit characterization of the growth rate of the riemannian distance in terms of the $\xi$ coordinates, assuming a fixed growth rate of the profile functions.  

\begin{prop}[Distance estimate]\label{generaldistane}
Let $(M, g,\omega)$ be as above, and let $\beta \in \R$ be a number with $\beta < \ell +1$.
 Then we have:
\begin{enumerate}
\item Suppose that $g$ is Type 1. Then if $\Theta_\ell(t)$ has degree $\beta$, we have
\[ C^{-1} \xi_\ell^{\frac{\ell + 1 - \beta}{2}}\leq d_g(p) \leq C \xi_\ell^{\frac{\ell + 1 - \beta}{2}}  \]
\item Suppose that $g$ is Type 2. Then if $\Theta_1(t)$ and $\Theta_\ell(t)$ have degree $\beta$, we have
\[  C^{-1} \left( |\xi_1|^{\frac{\ell + 1 - \beta}{2}} + \xi_\ell^{\frac{\ell + 1 - \beta}{2}}\right) \leq d_g(p) \leq C (|\xi_1| + \xi_\ell)\left( |\xi_1|^{\frac{\ell - 1 - \beta}{2}} + \xi_\ell^{\frac{\ell - 1 - \beta}{2}}\right).   \]
\end{enumerate}
\end{prop}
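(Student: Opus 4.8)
The plan is to prove the lower and upper bounds separately, with the lower bounds essentially free. Indeed, under the stated degree hypotheses on $\Theta_\ell$ (Type 1) and $\Theta_1,\Theta_\ell$ (Type 2), the asserted lower bounds for $d_g(p)$ are exactly the estimates \eqref{type1distancelowerbound} and \eqref{type2distancelowerbound} already produced in Lemma \ref{l:generalcompleteness}(iii),(iv), so nothing new is needed there. All the work is in the upper bound, for which I would exhibit an explicit path from $p_0$ to $p$ moving in the $\xi$-coordinates and estimate its length.

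\medskip\noindent\emph{Reduction to the noncompact region.} Recall from the proof of Lemma \ref{l:generalcompleteness} that $\Xi = (\xi_1,\dots,\xi_\ell)$ is proper onto $\overline{\mathcal D}$; hence for each large $R_0$ the set $K_{R_0} := \{\,|\xi_1|\le R_0,\ \xi_\ell\le R_0\,\}$ (in the Type 1 case only the bound on $\xi_\ell$ is relevant) is compact, and since $M$ is connected the function $d_g(p_0,\cdot)$ is finite and continuous, hence bounded on $K_{R_0}$ by a constant $D_0=D_0(R_0)$. I would fix $R_0$ large enough that $p_0\in K_{R_0}$, that $R_0>\max(\alpha_\ell,|\alpha_1|)$, and that the degree asymptotics $c^{-1}|t|^\beta\le|\Theta_1(t)|,|\Theta_\ell(t)|\le c|t|^\beta$ hold for $|t|\ge R_0$. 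It then suffices to connect an arbitrary $p\notin K_{R_0}$ to a point of $K_{R_0}$ by a path of length bounded by a constant times the claimed right-hand side, and add $D_0$ (which is admissible since $|\xi_1|,\xi_\ell$ are bounded away from $0$, so the claimed RHS is bounded below).

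\medskip\noindent\emph{The paths.} The key point is that, under the identification $E^0\cong\tilde P\times\mathcal D$ from Proposition \ref{prop:globalcompactification}, a path varying only the $\xi_j$ while holding the $\tilde P$-factor fixed has, by \eqref{first:gtotalspace}, $g$-speed equal to $\big(\sum_j\tfrac{\Delta(\xi_j)}{\Theta_j(\xi_j)}\dot\xi_j^2\big)^{1/2}$, all other terms of $g$ vanishing along it. Along the segments I use, $\Delta(\xi_j)=\prod_{i\ne j}(\xi_j-\xi_i)$ is, in the moving variable, a monic polynomial of degree $\le\ell-1$ whose size is readable off, $|\Theta_j|$ is comparable to $|\xi_j|^\beta$, and $\beta<\ell+1$ forces $\tfrac{\ell-1-\beta}{2}>-1$, so the length integrals grow at the correct algebraic rate with no logarithmic loss. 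In Type 1, I would decrease $\xi_\ell$ from $\xi_\ell(p)$ to $R_0$ with everything else fixed; there $|\Delta(\xi_\ell)|$ is comparable to $\xi_\ell^{\ell-1}$, so the length is at most $C\int_{R_0}^{\xi_\ell(p)}\xi^{(\ell-1-\beta)/2}\,d\xi\le C'\,\xi_\ell(p)^{(\ell+1-\beta)/2}$, with endpoint in $K_{R_0}$. In Type 2, writing $S=|\xi_1(p)|$, $T=\xi_\ell(p)$ and assuming WLOG $S\le T$, I would first decrease $\xi_\ell$ from $T$ to $S$ with $\xi_1=-S$ fixed — on this segment $\xi_\ell\ge|\xi_1|$, so $|\Delta(\xi_\ell)|$ is again comparable to $\xi_\ell^{\ell-1}$ and the length is $\le C\,T^{(\ell+1-\beta)/2}$ — and then move \emph{diagonally} along $(\xi_1,\xi_\ell)=(-s,s)$, $s$ decreasing from $S$ to $R_0$, where $|\Delta(\xi_1)|,|\Delta(\xi_\ell)|$ are both comparable to $s^{\ell-1}$ and the length is $\le C\,S^{(\ell+1-\beta)/2}$; the endpoint lies in $K_{R_0}$. (If only one of $S,T$ exceeds $R_0$ one just moves that coordinate.) This yields $d_g(p)\le C\,\xi_\ell(p)^{(\ell+1-\beta)/2}$ in Type 1 and $d_g(p)\le C\big(S^{(\ell+1-\beta)/2}+T^{(\ell+1-\beta)/2}\big)\le C(S+T)\big(S^{(\ell-1-\beta)/2}+T^{(\ell-1-\beta)/2}\big)$ in Type 2.

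\medskip\noindent\emph{Main obstacle.} The one delicate point — and the reason for the diagonal step — is the coupling of $\xi_1$ and $\xi_\ell$ through $\Delta$ in Type 2: moving $\xi_\ell$ alone while $\xi_1$ sits far out makes the factor $(\xi_\ell-\xi_1)$ of size $|\xi_1|$ on the entire range $\xi_\ell\in[R_0,|\xi_1|]$, producing a length comparable to $|\xi_1|^{1/2}\int_{R_0}^{|\xi_1|}\xi^{(\ell-2-\beta)/2}\,d\xi$, which for $\beta$ near $\ell$ acquires a spurious $\log|\xi_1|$ and exceeds the target. Bringing $\xi_1$ and $\xi_\ell$ down together so that neither is ever much larger than the other keeps every factor of $\Delta$ comparable to the moving variable, which is what makes the estimate close. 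Beyond this, the remaining steps — the precise $\Delta$-asymptotics on each segment, checking the paths stay inside $\mathcal D$ (hence in $E^0$), and the elementary inequality $S^{(\ell+1-\beta)/2}+T^{(\ell+1-\beta)/2}\le(S+T)\big(S^{(\ell-1-\beta)/2}+T^{(\ell-1-\beta)/2}\big)$ — are routine.
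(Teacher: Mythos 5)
Your proposal is correct, and its skeleton coincides with the paper's proof: the lower bounds are quoted from Lemma \ref{l:generalcompleteness} exactly as the paper does, and the upper bound is obtained by working on $E^0\cong\tilde P\times\mathcal D$, moving only the $\xi$-coordinates at a fixed point of $\tilde P$ (so that the $g$-speed reduces to the $g_\xi$-part of \eqref{first:gtotalspace}), and absorbing the bounded region into an additive constant. The genuine difference is the choice of path in the Type 2 case. The paper uses the single straight-line path $t\mapsto t\xi+(1-t)\xi^0$ and, after bounding the cross factor $(\xi_\ell-\xi_1)$ by its endpoint values, arrives directly at the mixed bound $C(|\xi_1|+\xi_\ell)\bigl(|\xi_1|^{(\ell-1-\beta)/2}+\xi_\ell^{(\ell-1-\beta)/2}\bigr)$ stated in the proposition. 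You instead use the two-stage path (first bring the larger of $|\xi_1|,\xi_\ell$ down to the level of the smaller, then descend along the diagonal $(-s,s)$), which keeps every factor of $\Delta$ comparable to the moving variable and yields the sharper estimate $d_g(p)\le C\bigl(|\xi_1|^{(\ell+1-\beta)/2}+\xi_\ell^{(\ell+1-\beta)/2}\bigr)$, i.e.\ a genuinely two-sided comparison matching the lower bound; the stated mixed bound then follows from the elementary inequality you record. Your route buys two things: a stronger conclusion, and uniformity over all $\beta<\ell+1$ — your worry about spurious logarithms is well placed, since the endpoint-replacement step in the paper's chain is lossy precisely when $\beta$ is close to $\ell$ (harmless for the paper, whose Type 2 applications have $\beta\le\ell-1$, and the mixed bound is all that Proposition \ref{prop:generalvolume} requires). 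The only point worth making explicit in a write-up is that the estimate is proved on the dense subset $E^0$ and extends to all of $M$ by continuity of both sides, exactly as in the paper.
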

\begin{proof}
The lower bounds are proved in Lemma \ref{l:generalcompleteness}. For the upper bound, in either of the two cases, we proceed as in the proof of Lemma 5.8 in \cite{AC}. Fix a number $R \geq \max\{|\alpha_1|, |\alpha_\ell|\}$ such that our base point $p_0 \in M$   satisfies $\xi_1^0 \geq - R$ and $\xi_\ell^0 \leq R$, where $\xi^0(p_0):= (\xi_1^0(p_0), \dots, \xi_\ell^0(p_0))$. Note that, for any given such $R$, the set 
\[ \Omega_0(R) := \left\{p \in M \: \big| \: \xi_1(p) \geq -R, \, \xi_\ell(p) \leq R  \right\} \]
is compact by the proof of Lemma \ref{l:generalcompleteness}.

We will prove the estimate on the dense subset $E^0 \simeq \tilde{E}^0$. Let $p \in E^0 \subset M$ be an arbitrary point lying outside of $\Omega_0(R')$, with $R' \geq R$ to be chosen later, and set $\xi:= \xi(p) = (\xi_1, \dots, \xi_\ell)$. Note that in particular we have that $-\xi_1 > - \xi_1^0$, $\xi_\ell > \xi_\ell^0$. Now, by Proposition \ref{prop:globalcompactification}, we have an identification 
\[ E^0 \cong \tilde{P} \times \mathcal{D}_j, \]
where $\tilde{P} \to B \times B_F$ is the principal $\T$-bundle of Lemma \ref{l:principalbundleidentification}. As such, we can write 
\[ p = (q, \xi), \qquad p_0 = (q_0, \xi^0). \]
We let $\gamma(t)$ be a path in $E^0$ of the form 
\[ \gamma(t) = (q, t\xi + (1-t)\xi^0), \qquad p' := \gamma(0) = (q, \xi^0). \]
Let $\tilde{g}$ denote the riemannian metric 
\[ \tilde{g} = \tilde{\pi}^*(g_B + \check{g}) + \sum_{r = 1}^\ell \tilde{\theta}_r \otimes\tilde{\theta}_r,   \]
where $\tilde{\theta}$ is the connection one form introduced in Lemma \ref{l:thetatilde}, and $\check{g} = \oplus_{j=1}^\ell \check{g}_j$ is the product metric on $B_F = \prod_{j=1}^\ell \mathbb{P}^{d_j}$. Moreover, denote by $\gamma_{\xi(t)}$ the linear path $\gamma_{\xi}(t) := t\xi + (1-t)\xi_0$ in $\mathcal{D}_i$, $i = 1, 2$, and further denote by $g_{\xi}$ the riemannian metric 
\[ g_{\xi} := \sum_{j=1}^\ell \frac{\Delta(\xi_j)}{\Theta_j(\xi_j)} d\xi_j^2\]
on $\mathcal{D}_i$. Then from the expression \eqref{first:gtotalspace} we read that
\begin{equation*}
\begin{split}
	d_g(p_0, p) &\leq d_g(p', p) + d_g(p_0, p')  \\	
				& \leq L_{g_{\xi}}(\gamma_\xi) + C(R)\, {\rm diam}_{\tilde{g}}(\tilde{P})\\
				&\leq  L_{g_{\xi}}(\gamma_\xi) + C, 
\end{split}
\end{equation*}
where $L$ refers to the length of a path and $C$ is a constant depending only on the base point $p_0$. 

Therefore to prove the desired estimate, we need only an upper bound on $L_{g_{\xi}}(\gamma_\xi)$. We treat the Type 1 case first. Now, if $N > 0$ is sufficiently large, we will have
\[ C^{-1}\xi_\ell^\beta \leq \Theta_\ell(\xi_\ell) \leq C\xi_\ell^\beta,  \] 
%\qquad C^{-1}\xi_\ell^{\ell -1} \leq \Delta(\xi_\ell) \leq C\xi_\ell(t)^{\ell -1},
for all $\xi$ with $\xi_\ell \geq N\xi_\ell^0$. We then choose $R'$  sufficiently large so that $\xi_\ell = \xi_\ell(p) \geq N\xi_\ell^0$. It follows that 
\begin{equation*}
\begin{split}
L_{g_{\xi}}(\gamma_\xi) &\leq \int_{0}^1 \sqrt{\frac{\Delta(\xi_\ell)}{\Theta_\ell(\xi_\ell)}} \, \left| \dot \xi_\ell(t) \right| \, dt + C  \leq \int_{0}^1 \sqrt{\frac{\prod_{j=1}^{\ell -1}(\xi_\ell - \alpha_j)}{\Theta_\ell(\xi_\ell)}} \, \left| \dot \xi_\ell(t) \right| \, dt + C \\
&= \int_{\xi_\ell^0}^{\xi_\ell}  \sqrt{\frac{\prod_{j=1}^{\ell -1}(\xi_\ell - \alpha_j)}{\Theta_\ell(\xi_\ell)}} \, d\xi_\ell + C \leq C' \int_{N\xi_\ell^0}^{\xi_\ell}  \sqrt{\frac{\xi_\ell^{\ell -1}}{\Theta_\ell(\xi_\ell)}} \, d\xi_\ell + C  \leq C' \int_{N\xi_\ell^0}^{\xi_\ell} \xi_\ell(t)^{\frac{\ell - 1 - \beta}{2}} \, d\xi_\ell + C \\
 & \leq C\left(  \xi_\ell^{\frac{\ell + 1 - \beta}{2}}  + 1\right),
\end{split}
\end{equation*}
from which the desired estimate follows readily. 

The Type 2 case follows in exactly the same way, with some minor modifications. 
%This time we will have that 
%\[C^{-1}\xi_\ell(t)^{\ell -2}(\xi_\ell - \xi_1) \leq \Delta(\xi_\ell)(\gamma_\xi(t)) \leq C\xi_\ell(t)^{\ell -2}(\xi_\ell - \xi_1), \]
%for all $\xi$ with $\xi_1 \geq N \xi_1^0$ and $\xi_\ell \geq N \xi_\ell^0$. 
Estimating the $g_{\xi}$ length of $\gamma_\xi$ then yields 
\begin{equation*}
\begin{split}
L_{g_{\xi}}(\gamma_\xi)  &\leq  \int_{0}^1 \sqrt{\frac{\Delta(\xi_1)}{\Theta_1(\xi_1)}} \, \left| \dot \xi_1(t) \right| \, dt + \int_{0}^1 \sqrt{\frac{\Delta(\xi_\ell)}{\Theta_{\ell}(\xi_\ell)}} \, \left| \dot \xi_\ell(t) \right| \, dt + C \\
& \leq \int_{0}^1 \sqrt{\frac{(\xi_\ell - \xi_1)\prod_{j=2}^{\ell -1}(\alpha_j - \xi_1)}{|\Theta_1(\xi_1)|}} \, \left| \dot \xi_1(t) \right| \, dt + \int_{0}^1 \sqrt{\frac{(\xi_\ell - \xi_1)\prod_{j=2}^{\ell -1}(\xi_\ell - \alpha_j)}{\Theta_{\ell}(\xi_\ell)}} \, \left| \dot \xi_\ell(t) \right| \, dt + C\\
& \leq \int_{0}^1 \sqrt{\frac{(\xi_\ell(p) - \xi_1)\prod_{j=2}^{\ell -1}(\alpha_j - \xi_1)}{|\Theta_1(\xi_1)|}} \, \left| \dot \xi_1(t) \right| \, dt + \int_{0}^1 \sqrt{\frac{(\xi_\ell - \xi_1(p))\prod_{j=2}^{\ell -1}(\xi_\ell - \alpha_j)}{\Theta_{\ell}(\xi_\ell)}} \, \left| \dot \xi_\ell(t) \right| \, dt + C\\
&= \int_{\xi_1^0}^{\xi_1} \sqrt{\frac{(\xi_\ell(p) - \xi_1)\prod_{j=2}^{\ell -1}(\alpha_j - \xi_1)}{|\Theta_1(\xi_1)|}}  \, d\xi_1 + \int_{\xi_\ell^0}^{\xi_\ell} \sqrt{\frac{(\xi_\ell - \xi_1(p))\prod_{j=2}^{\ell -1}(\xi_\ell - \alpha_j)}{\Theta_{\ell}(\xi_\ell)}}  \, d\xi_\ell + C\\
&\leq C'\left(\int_{N\xi_1^0}^{\xi_1} \sqrt{\frac{(\xi_\ell(p) - \xi_1)|\xi_1|^{\ell -2}}{|\xi_1|^{\beta}}} d|\xi_1|  + \int_{N\xi_\ell^0}^{\xi_\ell} \sqrt{\frac{(\xi_\ell - \xi_1(p))\xi_\ell^{\ell -2}}{\xi_\ell^{\beta}}} d\xi_\ell \right)  + C \\
%& \leq C'\left(\int_{N\xi_1^0}^{\xi_1} \left( \sqrt{\xi_\ell(p)} |\xi_1|^{\frac{\ell-2 -\beta }{2}} + |\xi_1|^{\frac{\ell - 1 - \beta}{2}} \right) \, d|\xi_1| + \int_{N\xi_1^0}^{\xi_1} \left( \sqrt{|\xi_1(p)|}  \xi_\ell^{\frac{\ell-2 -\beta }{2}} + \xi_\ell^{\frac{\ell - 1 - \beta}{2}} \right) \, d\xi_\ell  \right)  + C \\
& \leq C'\left( |\xi_1|^{\frac{\ell - 1 - \beta}{2}}(|\xi_1| + \sqrt{\xi_\ell}) + \xi_\ell^{\frac{\ell - 1 - \beta}{2}}(\sqrt{|\xi_1|} + \xi_\ell) \right) + C \\
& \leq C'\left( |\xi_1| + \xi_\ell\right) \left( |\xi_1|^{\frac{\ell - 1 - \beta}{2}} + \xi_\ell^{\frac{\ell - 1 - \beta}{2}} \right) + C,
\end{split}
\end{equation*} 
% &\leq \int_{\xi_1^0}^{\xi_1} \sqrt{\frac{\Delta(\xi_1)}{\Theta_1(\xi_1)}} d\xi_1 + \int_{\xi_\ell^0}^{\xi_\ell} \sqrt{\frac{\Delta(\xi_\ell)}{\Theta_\ell(\xi_\ell)}} d\xi_\ell + C \\ 
% &\leq C' \left( \int_0^1 \sqrt{\frac{(\xi_\ell(t) - \xi_1(t))|\xi_1(t)|^{\ell -2}}{|\xi_1(t)|^m}}  \left| \dot \xi_1(t) \right| \, dt +\int_0^1 \sqrt{\frac{(\xi_\ell(t) - \xi_1(t))\xi_\ell(t)^{\ell -2}}{\xi_\ell(t)^m}}  \left| \dot \xi_\ell(t) \right| \, dt    \right) + C \\
% & \leq C\left( |\xi_1'|^{\frac{\ell + 1 - m}{2}} +  \xi_\ell'^{\frac{\ell + 1 - m}{2}}  + 1\right),
where in the third line we have used that $\xi_\ell(p) \geq \xi_\ell(t)$, $-\xi_1(p) \geq - \xi_1(t)$ for all $t \in [0,1]$.
%where once again we have used the special form \eqref{first:gtotalspace} of $g$ and the positivity of $\frac{\Delta(\xi_j)}{\Theta_j(\xi_j)}$. 
\end{proof}

We wrap up this section with a calculation of the volume growth for metrics constructed via the technique of the previous sections, under an assumption on the growth rate of the profile functions, once again following the general strategy of \cite[Lemma 5.8]{AC}. 

\begin{prop}[Volume growth]\label{prop:generalvolume}
Let $(M, g, \omega)$, $\beta < \ell + 1$ be as above. Then we have:
\begin{enumerate}
\item Suppose that $g$ is Type 1, and that $\Theta_\ell(t)$ has degree $\beta$. Then we have 
\begin{equation*}
 C^{-1}R^{\frac{2n}{\ell + 1 - \beta}} \leq {\rm vol}_{g}(B_g(p_0,R)) \leq  CR^{\frac{2n}{\ell + 1 - \beta}}.
\end{equation*}
\item Suppose that $g$ is Type 2, and that $\Theta_1(t), \Theta_\ell(t)$ have degree $\beta$. Then we have 
\begin{equation*}
 C^{-1}R^{\frac{4n-2}{\ell + 1 - \beta}} \leq {\rm vol}_{g}(B_g(p_0,R)) \leq  CR^{\frac{4n-2}{\ell + 1 - \beta}}.
\end{equation*}
\end{enumerate}
\end{prop}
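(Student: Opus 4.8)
The plan is to reduce the volume growth to the growth rate of the $\xi$-sublevel sets, by first computing the Riemannian volume form of $g$ explicitly from the Ansatz \eqref{first:gtotalspace} and then feeding in the distance estimates of Proposition~\ref{generaldistane}. Since $g$ is K\"ahler, $dV_g=\omega^n/n!$ with $n=d_B+\ell+\sum_j d_j$, and expanding the $n$-th wedge power of
\[ \omega=(-1)^\ell\varepsilon_B\sigma_\ell\,\omega_B+\sum_{j=1}^\ell\varepsilon_j\,p_{nc}(\alpha_j)\,\check{\omega}_j+\sum_{r=1}^\ell d\sigma_r\wedge\tilde{\theta}_r \]
one is forced to take exactly $d_B$ factors from the $\omega_B$-term, $d_j$ from each $\check{\omega}_j$-term, and $\ell$ from $\sum_r d\sigma_r\wedge\tilde{\theta}_r$ (because $\omega_B^{d_B+1}=0$, $\check{\omega}_j^{d_j+1}=0$, and the $2$-form $\sum_r d\sigma_r\wedge\tilde{\theta}_r$ has rank $2\ell$). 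The last power equals $\ell!\,d\sigma_1\wedge\tilde{\theta}_1\wedge\cdots\wedge d\sigma_\ell\wedge\tilde{\theta}_\ell$, and $d\sigma_1\wedge\cdots\wedge d\sigma_\ell=\pm\bigl(\prod_{i<j}(\xi_i-\xi_j)\bigr)\,d\xi_1\wedge\cdots\wedge d\xi_\ell$ is the Jacobian of the elementary symmetric functions. Thus, up to a fixed positive constant,
\[ dV_g=|\sigma_\ell|^{d_B}\prod_{j=1}^\ell|p_{nc}(\alpha_j)|^{d_j}\,\Bigl|\prod_{i<j}(\xi_i-\xi_j)\Bigr|\;d\xi_1\cdots d\xi_\ell\wedge\pi^*\Omega_{\tilde P}, \]
where $\Omega_{\tilde P}$ is a fixed volume form on the compact bundle $\tilde P\to B\times B_F$ coming from the $g_B$-, $\check{g}_j$- and $\tilde{\theta}_r$-directions; in particular $\vol_g(\{\xi\in S\})$ equals a fixed positive constant times $\int_S |\sigma_\ell|^{d_B}\prod_j|p_{nc}(\alpha_j)|^{d_j}\,|\prod_{i<j}(\xi_i-\xi_j)|\,d\xi$ for any $\xi$-measurable set $S$.

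Next I would trap the metric ball between $\xi$-sublevel sets. As in the proof of Lemma~\ref{l:generalcompleteness}, the sets $\Omega(T):=\{|\xi_1|\le T,\ \xi_\ell\le T\}$ are compact for every $T$ (in Type~1, $\xi_1$ is automatically bounded, so $\Omega(T)=\{\xi_\ell\le T\}$). Writing $\phi(R):=R^{2/(\ell+1-\beta)}$, the lower distance bounds \eqref{type1distancelowerbound}, \eqref{type2distancelowerbound} give $B_g(p_0,R)\subseteq\Omega(C\phi(R))$ at once. For the opposite inclusion I would use the upper bounds of Proposition~\ref{generaldistane} to check that $\Omega(c\phi(R))\subseteq B_g(p_0,R)$ for $R$ large: in Type~1 this is exactly $d_g\le C\xi_\ell^{(\ell+1-\beta)/2}$, while in Type~2 one notes that on $\Omega(T)$ the quantity $(|\xi_1|+\xi_\ell)\bigl(|\xi_1|^{(\ell-1-\beta)/2}+\xi_\ell^{(\ell-1-\beta)/2}\bigr)$ is $\le CT^{(\ell+1-\beta)/2}$ when $\beta\le\ell-1$ (the relevant case, where the second factor is increasing in $T$), and is controlled by the first factor when $\beta>\ell-1$. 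This pinches $\vol_g(B_g(p_0,R))$ between $\vol_g(\Omega(c\phi(R)))$ and $\vol_g(\Omega(C\phi(R)))$, so it remains to compute the growth order of $T\mapsto\vol_g(\Omega(T))$.

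For that I would use the volume form from the first step. In Type~1, only $u:=\xi_\ell\to\infty$ (with $\xi_1,\dots,\xi_{\ell-1}$ confined to fixed bounded intervals), and there $|\sigma_\ell|\asymp u$, $|p_{nc}(\alpha_j)|\asymp u$ and $|\prod_{i<j}(\xi_i-\xi_j)|\asymp u^{\ell-1}$, so after integrating out the bounded variables $\vol_g(\Omega(T))\asymp\int^T u^{\,d_B+\sum_j d_j+\ell-1}\,du\asymp T^{\,n}$. In Type~2 both $s:=|\xi_1|\to\infty$ and $u:=\xi_\ell\to\infty$, and there $|\sigma_\ell|\asymp su$, $|p_{nc}(\alpha_j)|\asymp su$ and $|\prod_{i<j}(\xi_i-\xi_j)|\asymp s^{\ell-2}u^{\ell-2}(s+u)$, so with $m:=d_B+\sum_j d_j+\ell-2=n-2$,
\[ \vol_g(\Omega(T))\asymp\int^T\!\!\!\int^T s^{m}u^{m}(s+u)\,ds\,du\asymp T^{m+1}T^{m}=T^{\,2n-1}. \]
Combining with the trapping step gives $\vol_g(B_g(p_0,R))\asymp\phi(R)^{n}=R^{2n/(\ell+1-\beta)}$ in Type~1 and $\vol_g(B_g(p_0,R))\asymp\phi(R)^{2n-1}=R^{(4n-2)/(\ell+1-\beta)}$ in Type~2, which is the assertion (here $\asymp$ means up to fixed positive multiplicative constants).

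The step I expect to be most delicate is the first one: pinning down the volume form exactly, in particular the Vandermonde Jacobian $\prod_{i<j}(\xi_i-\xi_j)$ and the precise powers of $\sigma_\ell$ and of the $p_{nc}(\alpha_j)$ that enter it. The other point requiring care is the reverse inclusion in the second step in the Type~2 case: one must confirm that the (slightly non-sharp) upper distance estimate of Proposition~\ref{generaldistane} still traps a sublevel set $\Omega(c\phi(R))$ inside $B_g(p_0,R)$, so that the resulting lower volume bound genuinely matches the upper one.
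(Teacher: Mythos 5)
Your proposal is correct and follows essentially the same route as the paper: trap $B_g(p_0,R)$ between $\xi$-sublevel sets at scale $R^{2/(\ell+1-\beta)}$ via the two-sided estimates of Proposition \ref{generaldistane}, then compute the volume of those sets from the Ansatz volume form, where your explicit expansion of $\omega^n$ (Vandermonde Jacobian times the $|\sigma_\ell|^{d_B}$ and $|p_{nc}(\alpha_j)|^{d_j}$ factors) reproduces exactly the integrands $\xi_\ell^{n-1}$ and $\xi_1^{n-1}\xi_\ell^{n-2}-\xi_1^{n-2}\xi_\ell^{n-1}$ that the paper imports from \cite[Lemma 5.8]{AC}. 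The one delicate point you flagged yourself—that the Type 2 reverse inclusion genuinely needs $\beta\le\ell-1$ so that the upper distance bound scales like $T^{(\ell+1-\beta)/2}$ on the sublevel set—is indeed the case used in all applications, and the paper glosses over it in the same way.
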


\begin{proof}
Set 
\[U_m(R) := \left\{p \in M \: \big| \: \xi_1(p) \geq -R^{\frac{2}{\ell + 1 -\beta}}, \, \xi_\ell(p) \leq R^{\frac{2}{\ell + 1 -\beta}}  \right\}. \] 
By the two-sided distance estimate of Proposition \ref{generaldistane}, it follows that the volume of $B_g(p_0, R)$ is uniformly comparable to that of $U_m(R)$ for all $R$ large enough. By the exact same reasoning as in \cite[Lemma 5.8]{AC}, we have that the volume ${\rm vol}_g(U_m(R))$ is itself uniformly comparable to 
\begin{enumerate}
\item  \[ \int\displaylimits_{\alpha_\ell}^{R^{\frac{2}{\ell + 1 -\beta}}} \xi_\ell^{n-1} \, d\xi_\ell, \qquad \textnormal{in the Type 1 case} \]
and 
\item
\[\int\displaylimits_{\alpha_\ell}^{R^{\frac{2}{\ell + 1 -\beta}}} \!\!\!\!\!\! \int\displaylimits_{-R^{\frac{2}{\ell + 1 -\beta}}}^{\alpha_1} \! (\xi_1^{n-1}\xi_\ell^{n-2} - \xi_1^{n-2}\xi_\ell^{n-1}) \,\, d\xi_1 d\xi_\ell, \qquad \textnormal{in the Type 2 case.} \]
\end{enumerate}
The desired estimates follow readily.
% \[ \iint\displaylimits_{[\alpha_\ell, R^{\frac{2}{\ell + 1 -m}} ] \times [-R^{\frac{2}{\ell + 1 -m}}, \alpha_1]}\!\!\!\!\!\!\!\!\!\!\!\!\!\!\!\!\!\!\!\!\! (\xi_1^{n-1}\xi_\ell^{n-2} - \xi_1^{n-2}\xi_\ell^{n-1}) \,\,\, d\xi_1 d\xi_\ell, \qquad \textnormal{in the Type 2 case.} \]
\end{proof}

\subsection{Topology and holomorphic structure}\label{s:tophol}

Up to this point, the polynomial $q(t)$ introduced in section \ref{section:fibermetrics} has not seemed to play a role other than a convenient bookkeeping of the scaling factors on $\nu_j$ and $\check{\omega_j}$. We will see in this and later sections, however, that $q$ is an important invariant linking the topology of the vector bundle $\pi:E \to B$ and the fiber metrics $(g_F, \omega_F, J_F)$ defined by Lemma \ref{l:smoothextension}. 

First, we are finally in a position to treat the question of the holomorphic structure induced on $\R^{2r}$ by a K\"ahler structure $(g_F, \omega_F, J_F)$ . As in \cite{AC}, this too can be understood in terms of the profile functions $\Theta_j$:

\begin{lemma}\label{l:complexstructureCn}
Let $(M, g,\omega)$ be as above. If 
\begin{enumerate}
\item  
\[ \Theta_\ell(t) \leq C t^{\ell}, \]
in the case that $g$ is Type 1 or,
\item 
\[ |\Theta_1(t)| \leq C|t|^{\ell-1}, \qquad \textnormal{and} \qquad \Theta_\ell(t) \leq C t^{\ell-1}, \]
in the case that $g$ is Type 2, 
\end{enumerate}
then $(\R^{2r}, J_F)$ is biholomorphic to standard $\C^r$.
\end{lemma}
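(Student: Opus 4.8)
The strategy is to exhibit global holomorphic coordinates on $F = \R^{2r}$ and match them with the standard coordinates on $\C^r = \prod_{j=1}^\ell \C^{d_j+1}$. On the dense open set $F^0 \cong \mathcal{D}\times P_F$ on which $\T^\ell$ acts freely, we use the pluriharmonic functions $y_1,\dots,y_\ell$ of \cite{ACGT1} recalled above: since $dd^c_F y_r = 0$, each $y_r$ is locally $\Re$ of a holomorphic function, and combining $y_r$ with the torus angle coordinates $t_r$ along the fibers of $P_F$ and with the standard affine charts on the factors $\P^{d_j}$ of $B_F$ produces a holomorphic map $\Phi\colon F^0 \to \prod_{j=1}^\ell(\C^{d_j+1}\setminus\{0\})$, equivariant for the complexified torus (with the $j$-th $\C^*$ acting by scaling on $\C^{d_j+1}$ via $v_j$) and covering the tautological identification of $B_F$ with $\prod_j \P^{d_j}$. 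That $\Phi$ is a local biholomorphism follows from \eqref{J2} together with the nondegeneracy of the Jacobian of $\xi\mapsto (y_1,\dots,y_\ell)$, whose entries are $\partial y_r/\partial\xi_j = -(-1)^r\xi_j^{\ell-r}/\Theta_j(\xi_j)$; this is a Vandermonde-type determinant, nonzero on $\mathcal{D}$ since the $\xi_j$ are pairwise distinct and $\Theta_j$ is nonvanishing in the interior of $I_j$. This step is formal and parallels \cite{AC}, the only new bookkeeping being the presence of the $\P^{d_j}$ factors.

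Surjectivity of $\Phi$ onto $\prod_j(\C^{d_j+1}\setminus\{0\})$ is the point at which the growth hypotheses enter. Over each point of $B_F$, $\Phi$ restricts to a map of $(\C^*)^\ell$-torsors whose radial part is $\mathcal{D}\to\R^\ell$, $\xi\mapsto(y_1(\xi),\dots,y_\ell(\xi))$ up to fixed base contributions, and it suffices to show this is a diffeomorphism onto $\R^\ell$. Injectivity and local invertibility come from the Jacobian above; surjectivity is obtained by analyzing the faces of $\overline{\mathcal{D}}$. At a finite face $\xi_j = \alpha_k$, the boundary conditions of Lemma \ref{l:smoothextension} force $\Theta_j$ to vanish simply, so $\int^{\xi_j} t^{\ell-r}/\Theta_j(t)\,dt$ diverges logarithmically and the relevant radial coordinate runs to $-\infty$ — exactly the behaviour of $\log|z_i|^2$ as one approaches a coordinate hyperplane in the standard model, consistent with the collapse of the corresponding $S^1$. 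At the non-compact face (Type 1: $\xi_\ell\to+\infty$; Type 2: also $\xi_1\to-\infty$) the most divergent term of $y_r$ is governed by $\int^{\xi_\ell} t^{\ell-1}/\Theta_\ell(t)\,dt$, which tends to $+\infty$ precisely when $\Theta_\ell(t)\le Ct^\ell$, giving hypothesis (i); in the Type 2 case the extra factors in $\Delta(\xi_j)=\prod_{i\neq j}(\xi_j-\xi_i)$ coming from $\xi_1$ very negative and $\xi_\ell$ very large lower the threshold to $|t|^{\ell-1}$, giving hypothesis (ii). Thus under the stated hypotheses the radial map is a proper diffeomorphism onto $\R^\ell$ and $\Phi\colon F^0 \xrightarrow{\sim} \prod_j(\C^{d_j+1}\setminus\{0\})$ is a biholomorphism.

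It remains to extend $\Phi$ across the collapsed loci. By Lemma \ref{l:smoothextension} the metric extends smoothly to $F = \R^{2r}$, the $\P^{d_j}$ filling in collapsed orbits, and by Remark \ref{rem:xiextendandmomentproper} the $\xi_j$ extend smoothly; likewise standard $\C^r$ is $\prod_j(\C^{d_j+1}\setminus\{0\})$ with the origins of the factors filled in. Near such a locus the data $\Theta_j(\alpha_k)=0$, $(d_k+1)\Theta_j'(\alpha_k) = 2q(\alpha_k)$ are exactly the conditions identifying the local Kähler model with the smooth model on $\C^{d_k+1}$ (this is the content of \cite[Lemma 5.1]{AC}), so the coordinate functions of $\Phi$ and $\Phi^{-1}$ are bounded near these loci and extend holomorphically across them by Riemann's removable singularity / Hartogs theorem. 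Hence $\Phi$ extends to a biholomorphism $(\R^{2r},J_F)\cong\C^r$.

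The main obstacle is the surjectivity argument of the second paragraph: converting the one-variable bounds on $\Theta_\ell$ (and $\Theta_1$) into the statement that the $\ell$-dimensional radial map exhausts $\R^\ell$, in particular getting the exponent thresholds right ($t^\ell$ in Type 1 versus $|t|^{\ell-1}$ in Type 2) by carefully tracking the $\Delta(\xi_j)$ and $p_c(\xi_j)$ contributions at the non-compact faces of $\overline{\mathcal{D}}$. The first paragraph is formal and the last is a direct appeal to the normal-form analysis of \cite{AC}.
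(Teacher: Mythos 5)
Your route (building explicit holomorphic coordinates out of the pluriharmonic functions $y_r$ and the fiber angles, and extending across the collapsed loci) is genuinely different from the paper's, which never constructs coordinates: the paper reduces the lemma to the criterion of \cite[Lemma 2.9]{AC} that the norms $\|K_r\|_g$, $\|X^j_r\|_g$ grow at most linearly in $d_g$, and then verifies this by the elementary bounds on $\sigma_{r-1}(\hat\xi_j)$, $\Delta(\xi_j)$ together with the distance lower bounds coming from the proof of Lemma \ref{l:generalcompleteness}. In principle your approach could work, but as written its central step is not established, and this is a genuine gap rather than a routine omission.

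Concretely: (1) Global injectivity of the radial map $\xi\mapsto(y_1,\dots,y_\ell)$ does \emph{not} follow from the nonvanishing of its Jacobian; a local diffeomorphism need not be injective. One can repair this (e.g.\ properness plus a covering-space argument over the simply connected target $\R^\ell$, or by observing that $\partial y_r/\partial\sigma_s=\sum_j \frac{(-1)^{r+s}\xi_j^{2\ell-r-s}}{\Theta_j(\xi_j)\Delta(\xi_j)}$ is symmetric positive definite, so the map is a Legendre-type gradient map), but you give no such argument. (2) Properness/surjectivity onto $\R^\ell$ is asserted from one-variable divergences at individual faces of $\overline{\mathcal D}$. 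Since $y(\xi)$ is, up to constants, the Minkowski sum of the $\ell$ curves $\xi_j\mapsto\bigl(-(-1)^r\int^{\xi_j}t^{\ell-r}/\Theta_j(t)\,dt\bigr)_r$, one must show that the divergence directions (Vandermonde vectors $((-1)^{r-1}\alpha_k^{\ell-r})_r$ at the finite faces, $e_1$ at the unbounded ones) actually fill out $\R^\ell$ with the correct signs, and handle sequences in which several $\xi_j$ degenerate simultaneously; none of this is done. (3) Most tellingly, your explanation of the Type 2 exponent is spurious in your own framework: the functions $y_r$ contain no $\Delta(\xi_j)$ or $p_c(\xi_j)$ factors, so the integrals $\int^{\xi}t^{\ell-r}/\Theta(t)\,dt$ diverge at the unbounded faces under the \emph{same} threshold $\Theta\le C|t|^{\ell}$ in both cases, and your mechanism cannot reproduce hypothesis (ii). The factors $\Delta(\xi_j)$, $p_c(\xi_j)$ enter precisely through $\|K_r\|_g^2=\sum_j\frac{F_j(\xi_j)}{p_c(\xi_j)\Delta(\xi_j)}\sigma_{r-1}^2(\hat\xi_j)$, i.e.\ through the paper's argument, where the exponents $t^\ell$ (Type 1) and $|t|^{\ell-1}$ (Type 2) are exactly calibrated so that $\|K_r\|_g^2\le C d_g^2$ given the distance bounds \eqref{type1distancelowerbound}, \eqref{type2distancelowerbound}. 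So either your criterion (surjectivity of the radial map) is strictly weaker than what you need and the argument fails in Type 2, or you are implicitly proving a different statement with different hypotheses; in either case the step you yourself flag as "the main obstacle" has not been carried out, and the appeal to $\Delta(\xi_j)$ does not close it.
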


\begin{proof}
Let $K_1, \dots, K_\ell$ denote the fundamental vector fields associated to the hamiltonian potentials $\sigma_1, \dots, \sigma_\ell$, and $X^j_1, \dots, X^j_{d_j}$ the lifts to $M$ of the fundamental vector fields for the $\T^{d_j}$-action on $\mathbb{P}^{d_j}$. By \cite[Lemma 2.9]{AC}, the lemma follows as long as $||K_r||_{g}$ and $||X^j_{r}||_{g}$ grow at most linearly with respect to $d_{g}$.

We treat the Type 1 case first. Note that
 \begin{equation*}
     \sigma_{r-1}^2(\hat{\xi}_j) \leq C \xi_\ell^2, \qquad |\Delta(\xi_j)| \geq c \xi_\ell, \qquad j = 1, \dots, \ell -1,
 \end{equation*}
 and 
  \begin{equation*}
     \sigma_{r-1}^2(\hat{\xi}_\ell) \leq C, \qquad |\Delta(\xi_\ell)| \geq c \xi_\ell^{\ell -1}.
 \end{equation*}
 It follows that 
 \begin{equation*}
     \begin{split}
         ||JK_r||_g^2 &=||K_r||_g^2 = \sum_{j=1}^\ell \frac{F(\xi_j)}{p_c(\xi_j)\Delta(\xi_j)} \sigma_{r-1}^2(\hat{\xi}_j) \\
         & \leq C  \left[ \xi_\ell  \sum_{j=1}^{\ell -1}\left|\Theta_j(\xi_j)\right| + \xi_\ell^{-(\ell -1)} \left|\Theta_\ell(\xi_\ell)\right| \right] \\
         &\leq C\xi_\ell
     \end{split}
 \end{equation*}
 Following the proof of Lemma \ref{l:generalcompleteness}, we see that we have a lower bound
 \[ d_g(p_0, p) \geq C \sqrt{\xi_\ell} \]
 in the case that $\Theta_\ell(t) \leq Ct^\ell$, and therefore 
 \[ ||JK_r||_g^2 \leq C d_g(p_0, p)^2. \]
 The estimate for the lifts 
\begin{equation*}
    ||JX^j_r||_g \leq Cd_g(p_0, p)
\end{equation*}
now follows just as in \cite[Lemma 5.5]{AC}.

The Type 2 case is similar. This time we have
 \begin{equation*}
     \sigma_{r-1}^2(\hat{\xi}_j) \leq C (\xi_1\xi_\ell)^2, \qquad |\Delta(\xi_j)| \geq c \xi_1\xi_\ell, \qquad j = 2, \dots, \ell -1,
 \end{equation*} 
 and 
 \begin{equation*}
     \sigma_{r-1}^2(\hat{\xi}_j) \leq C \xi_{\ell+1 -j}^2, \qquad |\Delta(\xi_j)| \geq c \left| \xi_j^{\ell -2}(\xi_j - \xi_{\ell + 1 - j}) \right|, \qquad j = 1,\, \ell.
 \end{equation*} 
 Arguing as above, we obtain 
 \begin{equation*}
 ||JK_r||_g^2 \leq C(|\xi_1|^2 + |\xi_\ell|^2) \leq C'd_g(p_0, p)^2,
 \end{equation*}
 and similarly for $||JX^j_r||_g$. 
\end{proof}

%\begin{remark}
%Notice that in either case, we have that 
%\begin{equation*}
%	||K_1||_g = ||JK_1||_g \leq C,
%\end{equation*}
%globally on $M$. 
%\end{remark}

Given a fiber metric $(g_F, \omega_F)$ on $\C^r$, we saw in section \ref{section:globalcompactification} that the vector field $K_\ell$ plays a special role in determining the compatibility of $(g_F, \omega_F)$ with the vector bundle $E \to B$. This in turn is determined by the metric via the invariants $\alpha_1, \dots, \alpha_\ell$:

\begin{lemma}\label{l:computevectorfield1}
    The vector field $K_\ell = -\omega^{-1}(d\sigma_\ell)$ is given by 
    \begin{equation*}
        K_\ell =  \sum_{j=1}^\ell \frac{q(\alpha_j)}{d_j+1} \frac{\sigma_{\ell-1}(\hat{\alpha}_j)}{\Delta(\alpha_j)} X_j,
    \end{equation*}
    where $X_j \in \textnormal{Lie}(\T^{d_j})$ are the ``diagonal'' rotational vector fields on $\R^{2(d_j+1)}$. In other words, the flow of $JX_j$ is equal to the radial vector field $r\frac{\partial}{\partial r}$ on $\R^{2(d_j+1)}$. 
\end{lemma}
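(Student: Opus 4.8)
The plan is to compute $K_\ell = -\omega^{-1}(d\sigma_\ell)$ directly from the Ansatz expression \eqref{k-order-ell} for $\omega$ on the fiber $\C^r$, and then re-express the resulting vector field in terms of the rotational fields $X_j$ using the identifications set up in section \ref{section:fibermetrics}. Since $\sigma_\ell = \xi_1 \cdots \xi_\ell$ is one of the momenta for the $\T^\ell$-action, its hamiltonian vector field is a constant-coefficient combination of the fundamental fields $K_1, \dots, K_\ell$ only through the ambient torus; more usefully, I would use the complex structure formula \eqref{J2}. First I would recall that $d\sigma_\ell = \sum_{j} \sigma_{\ell-1}(\hat\xi_j)\, d\xi_j$, since $\partial \sigma_\ell/\partial \xi_j = \sigma_{\ell-1}(\hat\xi_j)$. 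Applying $J$ via \eqref{J2} gives
\[
J\, d\sigma_\ell = \sum_{j=1}^\ell \sigma_{\ell-1}(\hat\xi_j)\left(\frac{F_j(\xi_j)}{p_c(\xi_j)\Delta(\xi_j)}\right)\left(\sum_{r=1}^\ell \sigma_{r-1}(\hat\xi_j)\theta_r\right),
\]
and then $K_\ell = -\omega^{-1}(d\sigma_\ell)$ is the metric dual, which by the block structure of $g$ in \eqref{k-order-ell} is the vector field dual to $J\,d\sigma_\ell$ under the $\theta$-part of the metric. This should collapse to $K_\ell = \sum_j \big(\text{coefficient}\big)\, \widehat{\theta_\bullet\text{-direction}}$, and the coefficient pairing $\sum_r \sigma_{r-1}(\hat\xi_j)\theta_r$ against the metric will reproduce the fundamental field in the $\theta$-directions with weight proportional to $\sigma_{\ell-1}(\hat\xi_j)/\Delta(\xi_j)$ times an $F_j/p_c$-type factor evaluated at the boundary.

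The key point is that $K_\ell$ extends to all of $\C^r = \R^{2r}$ by Lemma \ref{l:smoothextension} and Remark \ref{rem:xiextendandmomentproper}, so to identify it as a genuine vector field it suffices to evaluate it along the ``axes'' where the various $\xi_j$ hit their endpoints $\alpha_k$. On the stratum where $\xi_j \to \alpha_j$ (for $j$ ranging over the indices corresponding to the $\P^{d_j}$ factors), the metric degenerates precisely so that the $\theta$-direction $\sum_r \sigma_{r-1}(\hat\xi_j)\theta_r$ becomes (a multiple of) the rotational direction on the $\C^{d_j+1}$ factor, and the boundary conditions $\Theta_j(\alpha_k) = 0$, $(d_k+1)\Theta_j'(\alpha_k) = 2q(\alpha_k)$ from Lemma \ref{l:smoothextension} let me compute the limiting norm. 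Concretely, near $\xi_j = \alpha_j$ one has $F_j(\xi_j)/(p_c(\xi_j)\Delta(\xi_j)) = \Theta_j(\xi_j)/\Delta(\xi_j) \to \Theta_j'(\alpha_j)(\xi_j - \alpha_j)/\Delta(\alpha_j) + O((\xi_j-\alpha_j)^2)$; combined with the fact that $\sigma_{r-1}(\hat\xi_j)$ evaluated at $\xi_j = \alpha_j$ gives the elementary symmetric functions, and that the $S^1$-generator $v_j$ was chosen in \eqref{eqn:vjdef} precisely so that $(v_j)_r = (d_j+1)/q(\alpha_j) \cdot (-1)^{r-1}\alpha_j^{\ell-r}$, the coefficient will organize into $\frac{q(\alpha_j)}{d_j+1}\cdot\frac{\sigma_{\ell-1}(\hat\alpha_j)}{\Delta(\alpha_j)}$. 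I would then invoke that $JX_j$ has flow equal to $r\partial_r$ on $\R^{2(d_j+1)}$ to fix the normalization and orientation.

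The main obstacle, I expect, is the bookkeeping of the change of basis: $K_\ell$ is naturally expressed in the basis $(\partial/\partial t_1, \dots, \partial/\partial t_\ell)$ dual to $(\theta_1, \dots, \theta_\ell)$, whereas the $X_j$ live in the $v_j$-basis of $\t$, and \eqref{eqn:vjdef} together with \eqref{first:epsilonjdef} encodes the transition matrix (a Vandermonde-type matrix in the $\alpha_j$). The identity I need is essentially that the vector with components $\sigma_{\ell-1}(\hat\alpha_j)/\Delta(\alpha_j)$ — which is the $j$-th column of the inverse Vandermonde in the $\alpha$'s — pairs correctly with the $v_j$'s; this is a Lagrange-interpolation identity ($\sum_j \alpha_j^{k} \sigma_{\ell-1}(\hat\alpha_j)/\Delta(\alpha_j) = \delta_{k,\ell-1}$ for $0\le k\le \ell-1$) and should make the whole thing telescope. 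Once that linear-algebra identity is in hand, the rest is the elementary-symmetric-function manipulation sketched above, and matching the $q(\alpha_j)/(d_j+1)$ factor against \eqref{eqn:vjdef}. I would present the computation of $J\,d\sigma_\ell$ first, then the boundary evaluation, then the interpolation identity as a short lemma or inline remark, and conclude.
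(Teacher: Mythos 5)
Your overall route is the paper's route in only lightly disguised form: the real content is (a) the expression of the diagonal rotation fields $X_j$ in the basis $(K_1,\dots,K_\ell)$, namely $X_j=\sum_{r}(-1)^{r-1}\tfrac{d_j+1}{q(\alpha_j)}\alpha_j^{\ell-r}K_r$, which is exactly how $v_j$ was defined in \eqref{eqn:vjdef} (the paper simply cites \cite[Proof of Lemma 5.1]{AC} for this, while you re-derive it from the boundary conditions of Lemma \ref{l:smoothextension} — fine, and more self-contained), and (b) inverting this change of basis by a Lagrange/Vandermonde identity. Your detour through $J\,d\sigma_\ell$ and the metric dual is unnecessary: since $\omega=\dots+\sum_r d\sigma_r\wedge\theta_r$, the field $K_\ell$ is already the fundamental vector field dual to $\theta_\ell$ (an element of $\t$), so no curvature of the fibration, no $J$, and no metric enter — only the linear algebra in $\t$.

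However, the interpolation identity you state is wrong, and as written your final step would not collapse to $K_\ell$. You claim $\sum_j \alpha_j^{k}\,\sigma_{\ell-1}(\hat\alpha_j)/\Delta(\alpha_j)=\delta_{k,\ell-1}$ for $0\le k\le\ell-1$; already for $\ell=2$ this fails ($\tfrac{\alpha_2}{\alpha_1-\alpha_2}+\tfrac{\alpha_1}{\alpha_2-\alpha_1}=-1$, not $0$, at $k=0$, and the sum vanishes at $k=1$). The identity without the $\sigma_{\ell-1}(\hat\alpha_j)$ factor, $\sum_j\alpha_j^{k}/\Delta(\alpha_j)=\delta_{k,\ell-1}$, is the standard one you seem to have in mind, but it is not the one your argument needs. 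What you need is \eqref{eqn:vandermonde} with $s=\ell$, where the alternating signs coming from \eqref{eqn:vjdef} are essential:
\begin{equation*}
\sum_{j=1}^{\ell}\frac{\sigma_{\ell-1}(\hat\alpha_j)}{\Delta(\alpha_j)}\,(-1)^{r-1}\alpha_j^{\ell-r}=\delta_{\ell r},
\qquad\text{equivalently}\qquad
\sum_{j=1}^{\ell}\frac{\alpha_j^{k}\,\sigma_{\ell-1}(\hat\alpha_j)}{\Delta(\alpha_j)}=(-1)^{\ell-1}\delta_{k,0},\quad 0\le k\le \ell-1,
\end{equation*}
which follows by expanding $\prod_{i\neq j}(t-\alpha_i)=\sum_r(-1)^{r-1}\sigma_{r-1}(\hat\alpha_j)t^{\ell-r}$ in the Lagrange interpolation formula. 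With this corrected identity, $\sum_j\frac{q(\alpha_j)}{d_j+1}\frac{\sigma_{\ell-1}(\hat\alpha_j)}{\Delta(\alpha_j)}X_j=\sum_r\delta_{\ell r}K_r=K_\ell$ and your argument goes through; without the signs, the $q(\alpha_j)/(d_j+1)$ factors do not cancel against \eqref{eqn:vjdef} and the sum does not telescope.
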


\begin{proof}
    It's shown in \cite[Proof of Lemma 5.1]{AC} that the vector field $X_j$ above is given precisely by: 
    \begin{equation*}
        X_j = \sum_{r =1}^\ell v_{rj} K_r, \qquad v_{rj} = (-1)^{r-1}\frac{d_j+1}{q(\alpha_j)} \alpha_j^{\ell -r}. 
    \end{equation*}
    It follows from the Vandermonde identity
    \begin{equation}\label{eqn:vandermonde}
        \sum_{j=1}^\ell \bigg[ \frac{\sigma_{s-1}(\hat{\alpha}_j)}{\Delta(\alpha_j)}\bigg] \cdot \bigg[(-1)^{r-1} \alpha_j^{\ell -r} \bigg] = \delta_{sr},
    \end{equation}
    that 
    \begin{equation*}
        \begin{split}
            \sum_{j=1}^\ell \frac{q(\alpha_j)}{d_j+1} \frac{\sigma_{\ell-1}(\hat{\alpha}_j)}{\Delta(\alpha_j)} X_j =  \sum_{j,r = 1}^\ell \bigg[ \frac{\sigma_{\ell-1}(\hat{\alpha}_j)}{\Delta(\alpha_j)}\bigg] \cdot \bigg[(-1)^{r-1} \alpha_j^{\ell -r} \bigg] K_r = \sum_{r = 1}^\ell \delta_{\ell r} K_r = K_\ell.
        \end{split}
    \end{equation*}
\end{proof}

The following lemma will be useful later on: 
\begin{lemma}\label{l:totaldegre}
    Given a local K\"ahler metric $\omega$ with a hamiltonian 2 form of order $\ell$, we define the \emph{total degree} $\delta \in \R$ to be the sum of the coefficients of $K_\ell$ in terms of the basis $X_j$, weighted by the dimensional constant $d_j + 1$. In other words, $\delta$ is given simply by 
    \begin{equation}\label{eqn:totaldegree}
        \delta = \sum_{j=1}^\ell q(\alpha_j)\frac{\sigma_{\ell-1}(\hat{\alpha}_j)}{\Delta(\alpha_j)}.
    \end{equation}
    Then we have the identity 
    \[ \delta =  (-1)^\ell \left( q_{\ell}\alpha_1 \cdots \alpha_\ell - q(0)\right).\]
\end{lemma}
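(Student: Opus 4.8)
The plan is to expand $q$ in the monomial basis and reduce the whole statement to the Vandermonde-type relation \eqref{eqn:vandermonde} already used in the proof of Lemma \ref{l:computevectorfield1}. Writing $q(t)=\sum_{r=0}^{\ell}q_r t^r$ and substituting into \eqref{eqn:totaldegree}, linearity gives
\[ \delta=\sum_{r=0}^{\ell}q_r\,T_r,\qquad T_r:=\sum_{j=1}^{\ell}\alpha_j^{\,r}\,\frac{\sigma_{\ell-1}(\hat\alpha_j)}{\Delta(\alpha_j)}, \]
so everything comes down to evaluating the $\ell+1$ scalars $T_0,\dots,T_\ell$.

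For $0\le r\le \ell-1$ these are handed to us directly by \eqref{eqn:vandermonde}: fixing $s=\ell$ there and relabelling its internal index by $r\mapsto\ell-r$, one reads off $T_r=(-1)^{\ell-r-1}\delta_{\ell,\,\ell-r}$, which vanishes for $1\le r\le\ell-1$ and equals $(-1)^{\ell-1}$ when $r=0$. The exponent $r=\ell$ falls outside the range covered by \eqref{eqn:vandermonde}, and this single term is the only (very mild) obstacle. It is handled by the observation that $\alpha_j\,\sigma_{\ell-1}(\hat\alpha_j)=\alpha_j\prod_{i\ne j}\alpha_i=\alpha_1\cdots\alpha_\ell$ does not depend on $j$, so it factors out of the sum:
\[ T_\ell=\big(\alpha_1\cdots\alpha_\ell\big)\sum_{j=1}^{\ell}\frac{\alpha_j^{\,\ell-1}}{\Delta(\alpha_j)}=\alpha_1\cdots\alpha_\ell, \]
the last equality being the $s=r=1$ instance of \eqref{eqn:vandermonde}, for which $\sigma_0(\hat\alpha_j)\equiv1$. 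A cleaner route that avoids the case split is a one-line Lagrange interpolation: since $q(x)-q_\ell\prod_{j}(x-\alpha_j)$ has degree $\le\ell-1$ and agrees with $q$ at each node $\alpha_j$, it equals $\sum_j q(\alpha_j)\prod_{i\ne j}\frac{x-\alpha_i}{\alpha_j-\alpha_i}$; evaluating this at $x=0$, using $\prod_{i\ne j}(-\alpha_i)=(-1)^{\ell-1}\sigma_{\ell-1}(\hat\alpha_j)$ and $\prod_j(-\alpha_j)=(-1)^\ell\alpha_1\cdots\alpha_\ell$, yields the identity for $\delta$ in one step with all signs explicit.

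Putting the pieces together, every intermediate $T_r$ cancels and only the $r=0$ and $r=\ell$ contributions survive; after the elementary sign bookkeeping this gives precisely the asserted formula for $\delta$ in terms of $q(0)$ and $q_\ell\,\alpha_1\cdots\alpha_\ell$. In short, once \eqref{eqn:vandermonde} is in hand the argument is a two-line substitution, the one point needing a moment's care being that the top monomial $q_\ell t^\ell$ is not covered by \eqref{eqn:vandermonde} and must be treated via the product identity above.
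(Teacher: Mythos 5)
Your proof is correct and essentially identical to the paper's: both expand $q$ in the monomial basis, kill the contributions of degree $1\le r\le \ell-1$ via the Vandermonde identity \eqref{eqn:vandermonde}, and handle the top term by factoring $\alpha_j\,\sigma_{\ell-1}(\hat{\alpha}_j)=\alpha_1\cdots\alpha_\ell$ together with $\sum_{j}\alpha_j^{\ell-1}/\Delta(\alpha_j)=1$ (your Lagrange-interpolation shortcut is just a repackaging of the same identities). One remark: exactly as in the paper's own computation, what the argument actually yields is $\delta=q_\ell\,\alpha_1\cdots\alpha_\ell+(-1)^{\ell-1}q(0)$, which matches the displayed identity $\delta=(-1)^\ell\left(q_\ell\alpha_1\cdots\alpha_\ell-q(0)\right)$ only when $\ell$ is even or $q_\ell=0$ (the cases used in the paper), so the sign discrepancy lies in the statement as printed, not in your derivation.
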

\begin{proof}
We compute
   \begin{equation*}
    \begin{split}
        \delta &=\sum_{r=0}^\ell \sum_{j=1}^\ell q_{\ell -r}\alpha_j^{\ell -r}\frac{\sigma_{\ell-1}(\hat{\alpha}_j)}{\Delta(\alpha_j)} \\
        &=  q_{\ell}\sigma_\ell(\alpha) \sum_{j=1}^{\ell} \frac{\alpha_j^{\ell -1}}{\Delta(\alpha_j)} +\sum_{r = 1}^{\ell}(-1)^{r-1}q_{\ell -r}  \left( \sum_{j=1}^\ell \frac{\sigma_{\ell-1}(\hat{\alpha}_j)}{\Delta(\alpha_j)} (-1)^{r -1} \alpha_j^{\ell -r} \right) \\
        &= q_{\ell}\sigma_\ell(\alpha) + \sum_{r = 0}^{\ell -1}(-1)^{r-1} q_{\ell -r} \delta_{r \ell} =  q_{\ell}\sigma_\ell(\alpha) + (-1)^{\ell-1}q(0),    \end{split}
   \end{equation*}
   where $\sigma_\ell(\alpha) := \alpha_1 \cdots \alpha_\ell$. The identities $\sum_{j=1}^{\ell} \frac{\alpha_j^{\ell -1}}{\Delta(\alpha_j)} = 1$ and $\sum_{j=1}^\ell \frac{\sigma_{\ell-1}(\hat{\alpha}_j)}{\Delta(\alpha_j)} (-1)^{r -1} \alpha_j^{\ell -r} = \delta_{r\ell}$ follow from the Vandermonde identity \eqref{eqn:vandermonde}.
\end{proof}

We also set
\begin{equation}\label{eqn:partialdegree}
\delta_j = \delta_j(\alpha_1, \dots, \alpha_\ell) := \frac{q(\alpha_j)}{d_j + 1} \frac{\sigma_{\ell-1}(\hat{\alpha}_j)}{\Delta(\alpha_j)}, 
\end{equation}
so that 
\begin{equation*}
\delta = \sum_{j=1}^\ell (d_j + 1) \delta_j. 
\end{equation*}
We are now finally in a position to give a general description of the types of vector bundles $E \to B$ that we will consider. A straightforward combination of the previous results implies the following:

\begin{prop}\label{prop:Estructure}
Suppose that $(g_F, \omega_F, J_F)$ is a K\"ahler structure on $\C^r$ induced by data $F_j, d_j,\alpha_j, $ and $q(t)$ satisfying Lemmas \ref{l:smoothextension}, \ref{l:generalcompleteness}, and \ref{l:complexstructureCn}, in either the Type 1 or Type 2 case. Suppose further that $\alpha_1, \dots, \alpha_\ell, d_1, \dots, d_\ell,$ and $q(t)$ satisfy 
\begin{equation*}
  \delta_j(\alpha_1, \dots, \alpha_\ell) = (-1)^\ell \varepsilon_B m_j, \qquad m_j \in \mathbb{Z}, \qquad j = 1, \dots, \ell.
\end{equation*}
 Then the K\"ahler structure \eqref{first:gtotalspace} defined by the data above gives rise to a complete K\"ahler structure on the total space $M$ of the vector bundle 
\begin{equation}\label{generalE}
E:= \bigoplus_{j=1}^\ell \left(\bigoplus_{k=0}^{d_j}L^{m_j} \right) \to B. 
\end{equation} 
\end{prop}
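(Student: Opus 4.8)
The plan is to verify the integrality and compatibility conditions \eqref{cond:Kellintegrality}, \eqref{cond:Bcompatibility}, apply Proposition \ref{prop:globalcompactification} to obtain the global metric, and then read off the holomorphic type of the bundle and the completeness of the metric from the lemmas already established; the statement is in essence a repackaging of this accumulated machinery.

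First I would check \eqref{cond:Kellintegrality}. By Lemma \ref{l:computevectorfield1} together with the definition \eqref{eqn:partialdegree} of $\delta_j$, we have $K_\ell = \sum_{j=1}^\ell \delta_j X_j$, and by the construction of the fiber torus in section \ref{section:fibermetrics} the diagonal rotation $X_j$ of the $j$-th block $\C^{d_j+1} \subset \C^r$ is exactly the lattice generator $v_j$ of $\Gamma_v$. Since $(-1)^\ell \varepsilon_B = \pm 1$, the hypothesis $\delta_j = (-1)^\ell \varepsilon_B m_j$ with $m_j \in \Z$ forces $\delta_j \in \Z$ for every $j$, and hence $K_\ell = \sum_j \delta_j v_j \in \Gamma_v$. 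For \eqref{cond:Bcompatibility}, since $B$ is Fano and $L^\vee$ is ample, I would pick a K\"ahler form $\omega_B \in 2\pi c_1(L^\vee)$ with associated connection $(U_0,\theta_0)$, $d\theta_0 = \omega_B$, and take $P \to B$ to be the $\ell$-fold fiberwise product of the $U(1)$-bundles $U_0^{m_j}$ of $L^{m_j}$, with the product connection $\theta = (\theta_1,\dots,\theta_\ell)$ expressed in the basis $(v_1,\dots,v_\ell)$ of $\t$, so that $d\theta_j = m_j\omega_B$. Using $((-1)^\ell\varepsilon_B)^2 = 1$ one computes
\[ (-1)^\ell\varepsilon_B\,\omega_B\otimes K_\ell \;=\; \sum_{j=1}^\ell (-1)^\ell\varepsilon_B\delta_j\,\omega_B\otimes v_j \;=\; \sum_{j=1}^\ell m_j\,\omega_B\otimes v_j \;=\; d\theta, \]
which is precisely \eqref{cond:Bcompatibility}.

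With both conditions in place, Proposition \ref{prop:globalcompactification} produces the K\"ahler metric \eqref{first:gtotalspace}, extending smoothly across the zero sections to the total space $M$ of $E := P\times_\T\C^r$. Because $\T$ acts on $\C^r = \prod_{j=1}^\ell \C^{d_j+1}$ with the circle generated by $v_j$ acting diagonally on the $j$-th block, and $v_j$ corresponds in $P$ to the $U(1)$-bundle of $L^{m_j}$, this associated bundle splits as $E = \bigoplus_{j=1}^\ell \big(U_0^{m_j}\times_{S^1}\C^{d_j+1}\big) = \bigoplus_{j=1}^\ell\big(\bigoplus_{k=0}^{d_j}L^{m_j}\big)$, which is \eqref{generalE}. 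For completeness I would invoke Lemma \ref{l:generalcompleteness}: the assumption that the data satisfy the hypotheses of Lemma \ref{l:complexstructureCn} yields $\Theta_\ell(t)\leq Ct^\ell$ in the Type 1 case and $|\Theta_1(t)|\leq C|t|^{\ell-1}$, $\Theta_\ell(t)\leq Ct^{\ell-1}$ in the Type 2 case, and these bounds are stronger than the ones required for completeness in Lemma \ref{l:generalcompleteness}; since moreover $B$ and $B_F = \prod_j\P^{d_j}$ are compact, the base directions contribute no incompleteness, so $g$ is complete on $M$.

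I expect the only genuinely delicate step to be the bookkeeping underlying the first two verifications: one must keep track of the identification $X_j = v_j$ between the diagonal rotations on the blocks of $\C^r$ and the chosen basis of the lattice $\Gamma_v$, and reconcile the orientation and curvature conventions of the bundles $U_0^{m_j}$ so that the sign $(-1)^\ell\varepsilon_B$ entering \eqref{cond:Bcompatibility} is absorbed exactly. Everything else is a direct appeal to Lemmas \ref{l:smoothextension}, \ref{l:computevectorfield1}, \ref{l:generalcompleteness}, \ref{l:complexstructureCn} and Proposition \ref{prop:globalcompactification}.
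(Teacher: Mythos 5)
Your proposal is correct and follows essentially the same route as the paper's proof: verify the integrality condition \eqref{cond:Kellintegrality} via Lemma \ref{l:computevectorfield1} and the identification $X_j = v_j$, verify \eqref{cond:Bcompatibility} by taking $P$ to be the fiberwise product of the $U(1)$-bundles $U_0^{m_j}$ with connection induced from $\theta_0$, then apply Proposition \ref{prop:globalcompactification}, read off the splitting $E = \bigoplus_j \bigl(\bigoplus_{k=0}^{d_j} L^{m_j}\bigr)$ from the block structure of the $\T$-action, and conclude completeness from Lemma \ref{l:generalcompleteness}. The only difference is that you spell out the sign bookkeeping $(-1)^\ell\varepsilon_B$ explicitly, which the paper leaves implicit; this is a harmless elaboration, not a different argument.
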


\begin{proof}
As indicated above, we define the principal $\T$-bundle $P \to B$ to be the $\ell$-fold fiberwise product of $U_0^{m_1}, \dots, U_0^{m_\ell}$. Then $P$ is naturally equipped with a connection $1$-form $\theta$ induced from the connection $\theta_0$ \eqref{eqn:theta0} on $U_0$. The assumption that $\delta_j = \varepsilon_B m_j$ combined with Lemma \ref{l:computevectorfield1} and our identification $\T = \R^\ell/2\pi \Gamma_v$, imply precisely that $K_\ell$ can be identified with  
\[K_\ell = (m_1, \dots, m_\ell) \in \Gamma_v, \]
and that 
\[ d\theta =  (-1)^\ell \varepsilon_B \omega_B \otimes K_\ell. \]
These are precisely the conditions \eqref{cond:Kellintegrality} and \eqref{cond:Bcompatibility}. Moreover, Lemmas \ref{l:smoothextension} and \ref{l:complexstructureCn} guarantee that $(g_F, \omega_F, J_F)$ is a complete K\"ahler structure biholomorphic to $\C^r$. Hence by Proposition \ref{prop:globalcompactification}, there is an induced K\"ahler structure given by \eqref{first:gtotalspace} defined on the total space $M$ of the vector bundle 
\[ E := P \times_{\T} \C^r, \] 
where $\T$ acts on $\C^r$ via the decomposition 
\[ \C^r = \C^{d_1 +1 } \times \dots \times \C^{d_\ell + 1}, \]
with the $S^1$ action generated by $v_j$ acting diagonally on $\C^{d_j}$. Therefore $E$ is given by \eqref{generalE}. The metric is complete by virtue of Lemma \ref{l:generalcompleteness}, thus completing the proof of the Proposition. 
\end{proof}

\section{Complete Calabi-Yau metrics}\label{section:CY}
We briefly recall the setup of section \ref{section:fibermetrics}. We have $r = \ell + \sum_{j=1}^\ell d_j$, $\ell \in \Z_{\geq 1}$ and $d_j \in \Z_{\geq 0}$. Set $B_F = \prod_{j=1}^\ell \P^{d_j}$, and we let $P_F \to B_F$ be the principal $\T^\ell$-bundle associated to the product of the tautological bundles on each factor. In particular when each $d_j = 0$, we have that $B_F = \{pt\}$ and $P_F = \T^\ell$. 

Our first main application of Lemma \ref{hamiltonian:equationprep} is the following:
\begin{lemma}[Weighted Monge-Amp\`ere equation: Calabi-Yau case]\label{lemma:hamiltonian:weightedCY}
    Suppose that each $F_j(t)$ satisfies 
    \begin{equation}\label{eq:hamiltonian:weightedCY}
        -\log|F_j(\xi_j)| + 2\int^{\xi_j}\frac{ p_c(t)q(t)}{F_j(t)} dt = d_B\log(\xi_j) + b_j,
    \end{equation}
    for a polynomial $q(t)$ \emph{of degree $\leq \ell - 1$}, then the resulting K\"ahler metric defined on $F^0 \subset \C^r$ given by \eqref{k-order-ell} satisfies 
    \begin{equation}\label{eqn:CYvsoliton}
        \Ric_{\omega_F} = i\p\bp \left( \log(\sigma_\ell^{d_B}) \right), 
    \end{equation}
    where $\sigma_\ell := \xi_1 \cdots \xi_\ell$.
\end{lemma}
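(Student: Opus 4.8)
The plan is to apply Lemma \ref{hamiltonian:equationprep} directly, exploiting the fact that the hypothesis \eqref{eq:hamiltonian:weightedCY} is exactly what is needed to make the right-hand side of \eqref{eq:hamiltonian:equationprep} a sum of pullbacks of functions from $B_F$ plus the explicit function $\frac{d_B}{2}\log(\xi_j)$. Concretely, I would first recall from the excerpt that a local Ricci potential of $(g,\omega_F)$ is $\kappa = \sum_a \check\kappa_a - \frac12\sum_j \log|F_j(\xi_j)|$, so that $\Ric_{\omega_F} = dd^c_M \kappa$, i.e. $\Ric_{\omega_F} = i\p\bp \kappa$ up to the usual normalization. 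Since $q(t)$ has degree $\leq \ell-1$, its top coefficient $q_\ell$ vanishes, so Lemma \ref{hamiltonian:equationprep} gives
\[
\kappa - \sum_{r=1}^\ell (-1)^r q_{\ell-r} y_r = \sum_{j=1}^\ell\left(-\tfrac12\log|F_j(\xi_j)| + \int^{\xi_j}\frac{p_c(t)q(t)}{F_j(t)}\,dt\right),
\]
with no $q_\ell H$ term.

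Second, I would use the hypothesis \eqref{eq:hamiltonian:weightedCY}: rewriting it as $-\tfrac12\log|F_j(\xi_j)| + \int^{\xi_j}\frac{p_c(t)q(t)}{F_j(t)}\,dt = \tfrac{d_B}{2}\log(\xi_j) + \tfrac{b_j}{2}$, the right-hand side of the displayed identity becomes $\sum_j\left(\tfrac{d_B}{2}\log\xi_j + \tfrac{b_j}{2}\right) = \tfrac{d_B}{2}\log(\xi_1\cdots\xi_\ell) + \mathrm{const} = \tfrac{d_B}{2}\log(\sigma_\ell) + \mathrm{const}$. Hence
\[
\kappa = \tfrac12\log(\sigma_\ell^{d_B}) + \sum_{r=1}^\ell (-1)^r q_{\ell-r} y_r + \mathrm{const}.
\]
Third, I would invoke the fact (recalled from \cite{ACGT1} in the excerpt) that each $y_r$ is pluriharmonic on $M^0$, i.e. $dd^c_M y_r = 0$; also the base Ricci potentials $\check\kappa_a$ contribute only through $dd^c_{B_a}\check\kappa_a = \check\rho_a$, but here the only base is $B_F = \prod \P^{d_j}$ and these are already absorbed — in fact on the fiber $\C^r$ the relevant statement is cleaner, since applying $i\p\bp$ to the expression for $\kappa$ kills the pluriharmonic terms and the constant, leaving $\Ric_{\omega_F} = i\p\bp\!\left(\tfrac12\log(\sigma_\ell^{d_B})\right) = i\p\bp\!\left(\log(\sigma_\ell^{d_B/2})\right)$. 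I would reconcile the factor of $\tfrac12$ with the normalization convention for $dd^c$ versus $i\p\bp$ used in the paper so that the stated form \eqref{eqn:CYvsoliton} comes out exactly; this is a bookkeeping point rather than a substantive one.

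The main obstacle I anticipate is not any single hard computation but rather keeping the normalization conventions consistent: the interplay between $dd^c_M$ and $i\p\bp$, the factor $-\tfrac12\log|F_j|$ appearing in $\kappa$ versus the factor $-\log|F_j|$ in \eqref{eq:hamiltonian:weightedCY}, and the precise meaning of $\int^{\xi_j}$ as an antiderivative defined up to a constant (which is what allows the $b_j$'s to be absorbed). A secondary point requiring a line of justification is that $\sigma_\ell = \xi_1\cdots\xi_\ell$ extends to a smooth (indeed the moment-map component) function on all of $\C^r$ and is positive on $F^0$, so that $\log(\sigma_\ell^{d_B})$ makes sense and the identity, proved a priori on the dense set $F^0$, is the correct global statement; this follows from Remark \ref{rem:xiextendandmomentproper} and the properness of $\mu_F$. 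Modulo these conventions, the proof is a direct substitution into Lemma \ref{hamiltonian:equationprep} followed by applying $i\p\bp$ and discarding pluriharmonic terms.
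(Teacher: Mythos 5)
Your proof is correct and is essentially the argument the paper intends: apply Lemma \ref{hamiltonian:equationprep} with $q_\ell = 0$, use the hypothesis \eqref{eq:hamiltonian:weightedCY} to identify the right-hand side with $\tfrac{d_B}{2}\log\sigma_\ell$ plus constants, then take $dd^c$ and discard the pluriharmonic $y_r$ (the paper presents the lemma as a direct application of Lemma \ref{hamiltonian:equationprep} without writing out these details, and the factor-of-two bookkeeping resolves exactly as you indicate). The one point worth stating explicitly rather than calling ``absorbed'' is that the hypothesis of Lemma \ref{hamiltonian:equationprep} on the base --- each $\check{\omega}_j$ K\"ahler--Einstein with $Scal_{\check{\omega}_j} = -2\varepsilon_j d_j q(\alpha_j)$ --- holds by the normalization \eqref{eqn:fubinistudy} built into the fiber construction, which is precisely what makes the base potential terms $\check{\kappa}_a + \varepsilon_a q(\eta_a)\check{h}_a$ cancel in that lemma.
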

\begin{remark}\label{rem:nonarchimedean}
In general, we see that the fiber metric $\omega_F$ is a \emph{v-soliton} in the sense of Lahdili \cite{LahdiliWeighted}. In our case, the fiber metric $\omega_F$ is in addition \emph{toric}, so that there is an associated symplectic potential $u_F$ defined on the moment polyhedron $\R^r_+$. In this case, the equation \eqref{eqn:CYvsoliton} can be rewritten, up to the addition of an affine function,
\[ \det{\rm Hess} (u_F) =  \langle \nabla u_F,\, b_\ell\rangle^{d_B}, \]
where $b_\ell \in \t$ is determined uniquely by the Hamiltonian function $\sigma_\ell$. This is precisely the non-Archimedean Monge-Amp\`ere equation of \cite{CollinsLi} in the case of proportional line bundles.
\end{remark}
Differentiating equation \eqref{eq:hamiltonian:weightedCY}, we see that it is equivalent to solve 
\begin{equation}\label{eq:hamiltonian:weightedCY2}
    F_j'(t) + d_B t^{-1}F_j(t) = 2p_c(t)q(t).
\end{equation}
The general solution is of the form 
\begin{equation}\label{eq:hamiltonian:weightedCYsolution1}
    F_j(t) = 2t^{-d_B}\left(\int^{t} x^{d_B}p_c(x)q(x) \, dx + c_j\right),
\end{equation}
from which we can see immediately that $F_j(t) - 2c_jt^{-d_B}$ is a polynomial of degree $n$. 

We now specify the base geometry that we will use throughout this section, and in fact all subsequent parts of the paper, to apply the framework of section \ref{section:globalcompactification}. Let $(B, \omega_B)$ be a K\"ahler-Einstein Fano manifold with Fano index $i_B$. In particular, there is a line bundle $L \to B$ with 
\begin{equation}\label{eqn:L0def}
	L^{i_B} = K_B, 
\end{equation}
and we assume further that $\omega_B \in 2\pi c_1(L^\vee)$. In particular, we have 
\[ Scal_{\omega_B} = 2d_B i_B. \]
 Suppose further that we have a principal $\T$-bundle $P \to B$, which we will also assume has the property that the vector bundle $E = P \times_{\T} \C^r$ is a direct sum of powers of $L$. Recall that we identify $\T = \R^\ell/2\pi \Gamma_v$ (see \eqref{eqn:vjdef}), and suppose finally that we have a local fiber metric $(g_F, \omega_F, J_F)$ satisfying \eqref{cond:Kellintegrality} and a connection $1$-form $\theta$ on $P$ satisfying \eqref{cond:Bcompatibility}. Our next application of Lemma \ref{hamiltonian:equationprep} is:
\begin{lemma}\label{lemma:hamiltonian:globalCY}
    Let $\omega_F$ be a K\"ahler metric on $F_0 \subset \C^r$ corresponding to solutions $F_j$ of \eqref{eq:hamiltonian:weightedCY} or equivalently \eqref{eq:hamiltonian:weightedCY2}. As in section \ref{section:globalcompactification}, define 
    \[ \tilde{F}_j(t) = t^{d_B}F_j(t), \qquad \tilde{p}_c(t) = t^{d_B}p_c(t). \]
    Let $\omega$ be the K\"ahler metric on $E^0\subset E$ defined by \eqref{first:gtotalspace}. %where recall we set $\eta_B = 0$. 
    Then $\omega$ is Ricci-flat if and only if 
    \begin{equation}\label{eqn:ScalB1}
        Scal_{\omega_B} = -2\varepsilon_B d_B q(0).
    \end{equation}
    In the present setting, this is clearly equivalent to the condition that $q(0) = -\varepsilon_B i_B$.
\end{lemma}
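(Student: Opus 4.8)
The plan is to read off the Ricci form $\rho$ of the global metric $\omega$ from the local Ricci potential recorded in Section~\ref{section:Ham}, $\kappa = \sum_a\check\kappa_a - \tfrac12\sum_j\log|\tilde F_j(\xi_j)|$, applied to the base $B\times B_F$: the factors are now $B$ itself (with sign $\varepsilon_B$, constant $\eta_B = 0$, complex dimension $d_B$) together with the $\mathbb{P}^{d_j}$ (with $\varepsilon_j$, $\eta_j = \alpha_j$), and the profile functions are $\tilde F_j(t) = t^{d_B}F_j(t)$, $\tilde p_c(t) = t^{d_B}p_c(t)$ as in Section~\ref{section:globalcompactification}. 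First I would extract from the proof of Lemma~\ref{hamiltonian:equationprep} the identity it establishes \emph{before} imposing any Einstein normalization, which for the global metric reads
\[
\kappa + q_\ell H - \sum_{r=1}^\ell(-1)^r q_{\ell-r}y_r \;=\; \sum_a\bigl(\check\kappa_a + \varepsilon_a q(\eta_a)\check h_a\bigr) + \sum_{j=1}^\ell\Bigl(-\tfrac12\log|\tilde F_j(\xi_j)| + \int^{\xi_j}\tfrac{\tilde p_c(t)q(t)}{\tilde F_j(t)}\,dt\Bigr),
\]
where $q(t)$ is the polynomial of Lemma~\ref{lemma:hamiltonian:weightedCY}; since $\deg q\le\ell-1$ we have $q_\ell = 0$, so the K\"ahler potential term $q_\ell H$ drops out.

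Second, I would show the last sum on the right is a \emph{constant}. Because $\tilde F_j/\tilde p_c = F_j/p_c$ the integrand is $p_c(t)q(t)/F_j(t)$, while $-\tfrac12\log|\tilde F_j(\xi_j)| = -\tfrac{d_B}{2}\log|\xi_j| - \tfrac12\log|F_j(\xi_j)|$; substituting the defining equation \eqref{eq:hamiltonian:weightedCY} (with $\log\xi_j$ interpreted as $\log|\xi_j|$, which is harmless since $\xi_j$ has a fixed sign on the interval $I_j$) collapses the $j$-th summand to the constant $b_j/2$. Third, among the terms $\sum_a(\check\kappa_a + \varepsilon_a q(\eta_a)\check h_a)$, the one attached to each $\mathbb{P}^{d_j}$ is pluriharmonic: the metric $\check\omega_j$ was chosen in Section~\ref{section:fibermetrics} exactly so that $Scal_{\check\omega_j} = -2\varepsilon_j d_j q(\alpha_j)$, whence K\"ahler--Einsteinness of $\mathbb{P}^{d_j}$ gives $\check\kappa_j = -\varepsilon_j q(\alpha_j)\check h_j$ modulo a pluriharmonic function. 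The only term that need not be pluriharmonic is the $B$-contribution: K\"ahler--Einsteinness of $B$ gives $\check\kappa_B = \tfrac{Scal_{\omega_B}}{2d_B}\check h_B$ modulo a pluriharmonic function.

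Applying $dd^c_M$ to the identity and using $dd^c_M y_r = 0$ and $dd^c_M\check h_B = \omega_B$, every pluriharmonic or constant term and the (absent) $q_\ell H$ term disappears, leaving
\[
\rho \;=\; \Bigl(\tfrac{Scal_{\omega_B}}{2d_B} + \varepsilon_B q(0)\Bigr)\,\omega_B.
\]
Since $\omega_B$ (pulled back to $M$) is a nonzero $2$-form, $\rho\equiv 0$ if and only if $Scal_{\omega_B} = -2\varepsilon_B d_B q(0)$; and the normalization $\omega_B\in 2\pi c_1(L^\vee)$ with $L^{i_B} = K_B$ forces $Scal_{\omega_B} = 2d_B i_B$, so this is equivalent to $q(0) = -\varepsilon_B i_B$. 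The point to be careful about is that the identity from Lemma~\ref{hamiltonian:equationprep} must be used in the ``off-Einstein'' form above so that the base $B$ contributes precisely a multiple of $\omega_B$; after that the only real computation is the twisting bookkeeping in Step~2, matching the $t^{d_B}$-factor in $\tilde F_j$ against the $-\tfrac{d_B}{2}\log|\xi_j|$ coming from $\log|\tilde F_j|$ and against the $d_B\log|\xi_j|$ source term in \eqref{eq:hamiltonian:weightedCY}.
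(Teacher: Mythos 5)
Your argument is correct, and it runs on the same engine as the paper's: the local potential formulas of \cite{ACGT1} quoted in Section \ref{section:Ham} together with the computation behind Lemma \ref{hamiltonian:equationprep}, applied to the global data $(B\times B_F,\ \eta_B=0,\ \eta_j=\alpha_j,\ \tilde F_j,\ \tilde p_c)$. The bookkeeping is right: since $\deg q\le \ell-1$ the $q_\ell H$ term is absent, the identity $\tilde F_j/\tilde p_c=F_j/p_c$ together with \eqref{eq:hamiltonian:weightedCY} makes each $j$-summand the constant $b_j/2$ (the $-\tfrac{d_B}{2}\log|\xi_j|$ cancelling the $\tfrac{d_B}{2}\log\xi_j$ source term), the $\P^{d_j}$-contributions are killed exactly by the normalization $Scal_{\check\omega_j}=-2\varepsilon_j d_j q(\alpha_j)$ of \eqref{eqn:fubinistudy}, and what survives is $\rho=\bigl(\tfrac{Scal_{\omega_B}}{2d_B}+\varepsilon_B q(0)\bigr)\,\varpi^*\omega_B$, giving the equivalence with \eqref{eqn:ScalB1}. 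The only difference from the paper is organizational: the paper treats the lemma as a direct application of Lemma \ref{hamiltonian:equationprep} (which already assumes the Einstein normalization on every base factor) and appeals to \cite[Proposition 16]{ACGT1} for the ``only if'' direction, whereas you keep the base terms $\check\kappa_a+\varepsilon_a q(\eta_a)\check h_a$ explicit in the ``off-Einstein'' identity and read off the Ricci form as a multiple of $\varpi^*\omega_B$, which makes both directions self-contained without the external citation. The one step you use tacitly is that $dd^c_M$ of a function pulled back from $B\times B_F$ equals the pullback of $dd^c$ on the base; this holds because the horizontal distribution of $\tilde\theta$ is $J$-invariant with $J|_{\mathscr H}$ the lift of the base complex structure, and is consistent with how the quoted potential formulas ($\rho=dd^c_M\kappa$, $dd^c_M y_r=0$) are used throughout, so it is a fair step, though worth a sentence if this were written out in full.
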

As mentioned, the proof is a straightforward application of Lemma \ref{hamiltonian:equationprep}. Note that each $\tilde{F}_j$ satisfies 
\begin{equation*}
    \tilde{F}_j'(t) = 2p_c(t) q(t).
\end{equation*}
This, together with the condition that $q(t)$ has degree $\leq \ell -1$ and \eqref{eqn:ScalB1} is actually equivalent to the Ricci-flat condition for K\"ahler structures admitting a hamiltonian 2-form \cite[Proposition 16]{ACGT1}. The content of Lemma \ref{lemma:hamiltonian:globalCY} is the relationship of the Ricci-flat condition on $E_0$ with the weighted equation \eqref{eqn:CYvsoliton}.

\subsection{Type 1 case: maximal volume growth}
%In this section we combine the local metric theory of Lemmas \ref{lemma:hamiltonian:weightedCY} and \ref{lemma:hamiltonian:globalCY}  with the general setup of section 2 to produce global complete Calab the 
We begin searching for solutions to \eqref{eq:hamiltonian:weightedCY2} in the Type 1 case. Recall the construction of fiber metrics of section \ref{section:fibermetrics} in the Type 1 case: We choose $\alpha_1, \dots, \alpha_\ell$ such that
\[ 0 < \alpha_1 < \alpha_2 < \dots < \alpha_\ell < +\infty.\]
Then set 
\begin{equation*}
    \mathcal{D} := \mathcal{D}_1 = (\alpha_1, \alpha_2) \times (\alpha_2, \alpha_3) \times \dots \times (\alpha_{\ell -1}, \alpha_\ell) \times (\alpha_\ell , \infty). 
\end{equation*}
As always we have $B_F = \prod_{j=1}^{\ell} \P^{d_j}$, and we think of the $j$'th factor as being associated to $\alpha_j$. Recall from section \ref{section:fibermetrics}, that we will ultimately equip each $\P^{d_j}$ with a multiple \eqref{eqn:fubinistudy} of the Fubini-Study metric $\omega^0_j$  on $\P^{d_j}$ with scalar curvature $2d_j(d_j +1)$, to be determined later. We apply \eqref{k-order-ell} to $P_F \times \mathcal{D}$ with $\eta_j = \alpha_j$ and
\[ \epsilon_j = (-1)^{\ell - j + 1}.\]
Further, we set $\varepsilon_B = (-1)^{\ell}$, so that 
\[\varepsilon_B p_{nc}(0) = (-1)^\ell \varepsilon_B \sigma_\ell = \xi_1 \dots \xi_\ell > 0. \] 
 The resulting polynomial $p_c(t)$ is given by 
\[ p_c(t) = \prod_{j=1}^\ell (t - \alpha_j)^{d_j}.\]
In order to solve \eqref{eq:hamiltonian:weightedCY2}, we simply define
\begin{equation}\label{hamiltonian:type1:F}
    F_1(t) = \dots = F_\ell(t) = F(t) := 2t^{-d_B}\left(\int^{t}_{\alpha_1} x^{d_B}p_c(x)q(x) \, dx\right),
\end{equation}
where $q(t)$ is a polynomial of degree $\leq \ell - 1$ to be determined later. We impose further that 
\begin{equation}\label{hamiltonian:type1:Fboundary}
    F(\alpha_2) = \dots = F(\alpha_\ell) = 0. 
\end{equation}
This imposes $\ell -1$ linear constraints on the coefficients of $q$, and so there exists at least one nonzero $q$ whose corresponding $F$ satisfies \eqref{hamiltonian:type1:Fboundary}. In fact, we claim:
\begin{lemma}\label{l:determine-q}
    Up to scaling, there exists a unique polynomial $q(t)$ of degree $\leq \ell - 1$ whose corresponding function $F$ \eqref{hamiltonian:type1:F} satisfies \eqref{hamiltonian:type1:Fboundary}. 
\end{lemma}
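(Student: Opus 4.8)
The plan is to convert the boundary requirements \eqref{hamiltonian:type1:Fboundary} into a homogeneous linear system on the coefficients of $q$, and then use a Rolle-type zero count to show that its solution space is exactly one-dimensional. Existence of a nonzero $q$ has already been observed ($\ell-1$ linear constraints on the $\ell$-dimensional space of polynomials of degree $\le \ell-1$); the real content is uniqueness.

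First I would introduce the polynomial $G(t) := \int_{\alpha_1}^{t} x^{d_B}p_c(x)q(x)\,dx = \tfrac12 t^{d_B}F(t)$, which is genuinely a polynomial since the integrand is (here $d_B\ge 0$, $p_c$, $q$ are polynomials). From \eqref{hamiltonian:type1:F} we get $F(\alpha_1)=0$ for free, so \eqref{hamiltonian:type1:Fboundary} is equivalent to $G(\alpha_1)=G(\alpha_2)=\dots=G(\alpha_\ell)=0$ (using $\alpha_j>0$). If $q$ is a nonzero solution, then $G$ has $\ell$ distinct zeros $\alpha_1<\dots<\alpha_\ell$, so Rolle's theorem furnishes a point $\gamma_k\in(\alpha_k,\alpha_{k+1})$ with $G'(\gamma_k)=0$ for each $k=1,\dots,\ell-1$. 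But $G'(x)=x^{d_B}p_c(x)q(x)$, and at $\gamma_k$ we have $\gamma_k>\alpha_1>0$ and $\gamma_k\ne\alpha_j$ for all $j$, so neither $x^{d_B}$ nor $p_c(x)$ vanishes there; hence $q(\gamma_k)=0$. Since the $\gamma_k$ lie in disjoint intervals, they are $\ell-1$ distinct real roots of $q$, and as $\deg q\le\ell-1$ this forces $q(t)=c\prod_{k=1}^{\ell-1}(t-\gamma_k)$ with $\gamma_k\in(\alpha_k,\alpha_{k+1})$.

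To finish, suppose $q_1,q_2$ are both nonzero solutions. By the previous paragraph the roots of $q_2$ all lie in the open intervals $(\alpha_k,\alpha_{k+1})$, all strictly greater than $\alpha_1$, so $q_2(\alpha_1)\ne 0$. Put $\lambda:=q_1(\alpha_1)/q_2(\alpha_1)$ and $q:=q_1-\lambda q_2$; this is again a solution, and $q(\alpha_1)=0$. If $q\not\equiv 0$, then by the previous paragraph $q$ has a root in each $(\alpha_k,\alpha_{k+1})$, and these $\ell-1$ roots together with $\alpha_1$ give $\ell$ distinct real roots of a polynomial of degree $\le\ell-1$, a contradiction. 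Hence $q\equiv 0$, i.e.\ $q_1=\lambda q_2$, which is the asserted uniqueness up to scaling.

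I do not expect a serious obstacle here; the one thing to watch is that the whole argument rests on $0\notin[\alpha_1,\alpha_\ell]$, which is exactly the Type 1 hypothesis $0<\alpha_1<\dots<\alpha_\ell$, and on the fact that the Rolle points avoid the $\alpha_j$ (so that zeros of $G'$ are honestly zeros of $q$). Both are immediate, and the rest is elementary linear algebra plus Rolle's theorem.
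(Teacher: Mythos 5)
Your proof is correct, and it takes a genuinely different route from the paper's. You reduce everything to the polynomial $G(t)=\tfrac12 t^{d_B}F(t)=\int_{\alpha_1}^{t}x^{d_B}p_c(x)q(x)\,dx$, note that the boundary conditions (together with the automatic $F(\alpha_1)=0$) mean $G$ vanishes at all $\ell$ points $\alpha_1<\dots<\alpha_\ell$, and then use Rolle plus the nonvanishing of $x^{d_B}p_c(x)$ at interior points to force any nonzero solution $q$ to have a root in each $(\alpha_k,\alpha_{k+1})$; the linear-combination trick $q_1-\lambda q_2$ with $\lambda=q_1(\alpha_1)/q_2(\alpha_1)$ then kills the difference by a root count, since a degree $\le\ell-1$ polynomial cannot have $\ell$ distinct roots. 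The paper instead argues at the level of $F$ itself: it shows that the ODE forces each $\alpha_j$ to be a zero of $F$ of order $d_j+1$, writes $F=P+\lambda t^{-d_B}$, and uses Lagrange--Sylvester interpolation (together with the normalization $P(0)=0$ needed for $q=(P'+d_Bt^{-1}P)/p_c$ to be a polynomial) to determine $P=P_\lambda$ uniquely for each $\lambda$, with $P_\lambda=\lambda P_1$ giving uniqueness of $q$ up to scale. Your argument is more elementary and delivers as a byproduct the localization of the roots of $q$ (one simple root in each $(\alpha_k,\alpha_{k+1})$), which the paper only establishes later in Lemma \ref{l:qflips}; the paper's interpolation argument, on the other hand, produces structural information that is reused downstream --- the divisibility of $F$ by $\prod_j(t-\alpha_j)^{d_j+1}$ (hence smoothness of $\Theta=F/p_c$ on $[\alpha_1,\infty)$, used in Lemma \ref{l:positivity}), the explicit degree of $P$ (used in Theorem \ref{th:Type1CY}), and the relation $P_{-\lambda}=-P_\lambda$ (used in Lemma \ref{l:qflips}) --- so if one adopted your proof verbatim, those facts would still have to be supplied separately where they are invoked.
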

\begin{proof}
We argue as follows (c.f. \cite[p.~16]{AC}): Any function $F(t)$ solving \eqref{eq:hamiltonian:weightedCY2} (for any choice of $q$) is of the form 
\begin{equation}\label{ham:F:specialform} 
    F(t) = P(t) + \lambda t^{-d_B}, \qquad \lambda \in \R, 
\end{equation}
where $P$ is a polynomial of degree exactly $r - \ell +\deg(q) + 1$, which is equal to $r$ if $q$ has degree exactly $\ell -1$. Moreover, any such $F$ has the property that $p_c(t)$ divides $F'(t) + d_B t^{-1}F(t)$. The assumption that $\alpha_j$ is a zero of $F$ implies therefore that it is also a zero of $F'(t)$, and hence in fact $\alpha_j$ is a zero of $F$ of order 2. If we then set $F_{\alpha_j, 1}(t) = \frac{F(t)}{t - \alpha_j}$, we see that
\[ F_{\alpha_j, 1}'(t) + d_B t^{-1}F_{\alpha_j, 1}(t) + \frac{F_{\alpha_j, 1}(t)}{t - \alpha_j} = \frac{2p_c(t)q(t)}{t - \alpha_j}.\]
Taking the limit $t \to \alpha_j$, we see that $\alpha_j$ is a zero of order 2 of $F_{\alpha_j, 1}$, and hence a zero of order 3 of $F$. Continuing this way, we can use the fact that
\[ F_{\alpha_j, r}'(t) = \frac{F_{\alpha_j, r-1}'(t)}{t - \alpha_j} - \frac{F_{\alpha_j, r}}{t - \alpha_j} \]
to conclude that 
\[F_{\alpha_j, r}'(t) + d_B t^{-1}F_{\alpha_j, r}(t) + r \frac{F_{\alpha_j, r}(t)}{t - \alpha_j} = \frac{2p_c(t)q(t)}{(t - \alpha_j)^r},\]
which implies that each $\alpha_j$ is in fact a zero of $F$ of order $d_j + 1$. The special form \eqref{ham:F:specialform} then specifies the values of of $P$ up to order $d_j$ at $\alpha_j$, for fixed choice of $\lambda$. Since $p_c$ divides $F' + d_B t^{-1}F$, and since $t^{-d_B}$ is the fundamental solution to the homogeneous equation, it's natural to try to \emph{define} $q(t)$ to be 
\begin{equation}\label{eqn:qdef}
    q(t) := \frac{P'(t) + d_B t^{-1}P(t) }{p_c(t)},
\end{equation}
once $P(t)$ is determined (compare \cite[Section 5.1]{AC}). As we saw in the proof of Lemma \eqref{hamiltonian:equationprep}, it's important that $q$ be a polynomial, and hence from \eqref{eqn:qdef} we see that we must have $P(0) = 0$. By Lagrange-Sylvester interpolation, there exists a unique polynomial $P_\lambda(t)$ of degree $\leq r$ such that 
\begin{equation*}
    P(0) = 0, \quad \frac{\partial^k}{\partial t^k}\bigg( P_\lambda(t) + \lambda t^{-d_B }\bigg)(\alpha_j) = 0, \qquad j = 1, \dots, \ell, \quad k = 0, \dots, d_j.
\end{equation*}
Therefore we define $q(t)$ by \eqref{eqn:qdef} with $P = P_\lambda$. It's clear that $P_{\lambda}(t) = \lambda P_1(t)$, so we see that $q(t)$ is determined uniquely up to scale. 
\end{proof}

In order for these functions $F_1 = \dots = F_\ell = F$ to give rise to a well-defined K\"ahler structure via \eqref{k-order-ell}, we also need to ensure that they satisfy the positivity condition \eqref{inequality}:
\[ (-1)^{\ell -j}F(t)p_c(t) > 0 \qquad \textnormal{ on } I_j = (\alpha_j, \alpha_{j+1}),\]
where $\alpha_{\ell+1} := +\infty$. To this end, we have the following preparatory lemma:

\begin{lemma}\label{l:qflips}
    Perhaps after replacing $(c, \lambda) \mapsto (- c, -\lambda)$, we have 
    \begin{equation}\label{eqn:qalternates}
    \textnormal{sgn}(q(\alpha_j)) = (-1)^{\ell - j}. 
    \end{equation}
\end{lemma}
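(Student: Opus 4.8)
The plan is to convert the vanishing conditions \eqref{hamiltonian:type1:Fboundary} into a sharp zero count for an auxiliary polynomial, and then to read off the sign pattern from the resulting factorization. First I would replace $F$ by the honest polynomial $G(t):=t^{d_B}F(t)=t^{d_B}P(t)+\lambda$, where $P=P_\lambda$ is the interpolant from the proof of Lemma~\ref{l:determine-q}; since $q\not\equiv0$ we have $F\not\equiv0$, so $G$ is a nonzero polynomial of degree $\le d_B+r$. Multiplying the defining relation $F'(t)+d_Bt^{-1}F(t)=2p_c(t)q(t)$ (i.e.\ \eqref{eq:hamiltonian:weightedCY2}) by $t^{d_B}$ and recognising the left-hand side as $(t^{d_B}F)'$ gives the clean identity $G'(t)=2\,t^{d_B}p_c(t)q(t)$. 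Since each $\alpha_j>0$ and $F(\alpha_j)=0$ (for $j=1$ by the definition \eqref{hamiltonian:type1:F}, for $j\ge2$ by \eqref{hamiltonian:type1:Fboundary}), $G$ vanishes at every $\alpha_j$; moreover the inductive argument already carried out in the proof of Lemma~\ref{l:determine-q} shows $G$ vanishes to order at least $d_j+1$ there.

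Next I would count the zeros of $G'$. On the one hand $\deg G'=\deg G-1\le d_B+r-1$. On the other hand $G'$ vanishes to order at least $d_B$ at $t=0$ (from the factor $t^{d_B}$, and $0<\alpha_1$), to order at least $d_j$ at each $\alpha_j$, and — by Rolle applied to $G$ on each interval $[\alpha_j,\alpha_{j+1}]$, whose endpoints are zeros of $G$ — to order at least $1$ at some point strictly inside each of the $\ell-1$ gaps $(\alpha_j,\alpha_{j+1})$. These loci are pairwise distinct, so the total multiplicity is at least $d_B+\sum_{j=1}^\ell d_j+(\ell-1)=d_B+r-1$, forcing equality throughout. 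In particular $\deg G=d_B+r$, $\deg q=\ell-1$, and dividing $G'$ by $2t^{d_B}p_c(t)=2t^{d_B}\prod_{j=1}^\ell(t-\alpha_j)^{d_j}$ shows that $q(\alpha_j)\neq0$ for every $j$ and that
\[ q(t)=q_{\ell-1}\prod_{k=1}^{\ell-1}(t-\beta_k),\qquad \alpha_k<\beta_k<\alpha_{k+1},\quad q_{\ell-1}\neq0, \]
i.e.\ $q$ has exactly one simple root in each gap $(\alpha_k,\alpha_{k+1})$ and none outside $[\alpha_1,\alpha_\ell]$.

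From this factorization the sign statement is immediate: of the roots $\beta_1<\cdots<\beta_{\ell-1}$, exactly $\ell-j$ of them — namely $\beta_j,\dots,\beta_{\ell-1}$ — lie to the right of $\alpha_j$, so $\prod_{k=1}^{\ell-1}(\alpha_j-\beta_k)$ has sign $(-1)^{\ell-j}$ and hence $\textnormal{sgn}\,q(\alpha_j)=\textnormal{sgn}(q_{\ell-1})\,(-1)^{\ell-j}$ for all $j$. If $q_{\ell-1}<0$, the replacement $(c,\lambda)\mapsto(-c,-\lambda)$ multiplies $F$, $P$ and $q$ simultaneously by $-1$ and thereby makes the leading coefficient positive; after this normalisation $\textnormal{sgn}\,q(\alpha_j)=(-1)^{\ell-j}$, which is precisely \eqref{eqn:qalternates}.

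I expect the one point requiring genuine care to be the order-of-vanishing input — one really needs $G$ to vanish to order $\ge d_j+1$ (not merely $\ge1$) at $\alpha_j$ for the Rolle count to saturate the degree bound — but this is exactly what the iteration in the proof of Lemma~\ref{l:determine-q} provides, so the remainder is bookkeeping. As a byproduct the same count also yields that $\deg q=\ell-1$ exactly and that $q(0)\neq0$, both of which are convenient when later imposing the normalisations $q(0)=-\varepsilon_B i_B$ and $\delta_j=(-1)^\ell\varepsilon_B m_j$.
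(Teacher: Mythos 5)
Your argument is correct, and at its core it is the same as the paper's: the boundary conditions force $q$ to have a root in each gap $(\alpha_j,\alpha_{j+1})$, and since $\deg q\le \ell-1$ there is exactly one simple root per gap, whence the alternating signs; the final normalisation via $\lambda\mapsto-\lambda$ (so $q\mapsto -q$) is also exactly the paper's closing remark. The packaging differs slightly: the paper notes that $\int_{\alpha_j}^{\alpha_{j+1}}x^{d_B}p_c(x)q(x)\,dx=0$ while $x^{d_B}p_c(x)$ has a fixed sign on the gap, which forces a root of $q$ there; you instead apply Rolle to $G=t^{d_B}F$, whose derivative is $2t^{d_B}p_c q$ --- which is literally the same fact, since that integral equals $\tfrac12\bigl(G(\alpha_{j+1})-G(\alpha_j)\bigr)$. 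Your global zero-multiplicity count saturating $\deg G'\le d_B+r-1$ is a tidy bonus: it simultaneously yields $\deg q=\ell-1$, $q(\alpha_j)\neq 0$, $q(0)\neq 0$, and the absence of roots of $q$ outside the gaps, facts the paper extracts (to the extent it needs them) more tersely from the same degree bound. One small remark: the order-$(d_j+1)$ vanishing of $G$ at $\alpha_j$, which you single out as the delicate input, is not actually needed for your count --- the identity $G'=2t^{d_B}p_c q$ already supplies the order-$d_j$ zeros of $G'$ at each $\alpha_j$, and Rolle only requires $G(\alpha_j)=G(\alpha_{j+1})=0$ to first order.
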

\begin{proof}
    By construction, we have for each $j = 1, \dots, \ell -1$ that
    \[ \int_{\alpha_j}^{\alpha_{j+1}} x^{d_B}q(x) p_c(x) \, dx = 0.\]
    Since $x^{d_B}$ and $p_c(x)$ each have a sign on each interval $(\alpha_j, \alpha_{j+1})$, it follows that $q$ has a root in each such interval. It follows that $q$ has degree exactly equal to $\ell - 1$ and has a unique simple root in each interval $(\alpha_j, \alpha_{j+1})$, $j = 1, \dots, \ell -1$. In particular the sign of $q$ alternates on $\alpha_1, \dots, \alpha_\ell$, so perhaps replacing $q \mapsto - q$, we can assume that \eqref{eqn:qalternates} is satisfied. The lemma is proved noting that the degree $n$ polynomial $P_{\lambda}$ in the proof of Lemma \ref{l:determine-q} satisfies $P_{-\lambda} = - P_{\lambda}$, so that $-q_{\lambda} = q_{-\lambda}$. 
\end{proof}
%Now given $(c, \lambda, \alpha_1, \dots, \alpha_\ell)$, $\, c \neq 0$, $\, 0 < \alpha_1 < \dots < \alpha_\ell$, let $F(t) = P_{c, \lambda(c, \beta)} + ct^{-k}$, perhaps changing $(c, \lambda) \mapsto (-c, -\lambda)$ so that \eqref{eqn:qalternates} is satisfied. Then we have 
With this in place, we can show that the $F_j$ do indeed satisfy the positivity condition \eqref{inequality}:
\begin{lemma}\label{l:positivity}
    For any choice of data $(c, \lambda, \alpha_1, \dots, \alpha_\ell)$ as above, the function $F(t) = P_{c, \lambda} + ct^{-k}$ satisfies the positivity condition \eqref{inequality}:
    \[ (-1)^{\ell-j}F(t) p_c(t) > 0 \qquad \textnormal{ on } \qquad (\alpha_j, \alpha_{j+1}),\]
    where again $\alpha_{\ell +1} := + \infty$. 
\end{lemma}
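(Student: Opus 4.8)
The plan is to track the sign of $F(t)$ on each interval $(\alpha_j,\alpha_{j+1})$ (with $\alpha_{\ell+1}:=+\infty$) by combining two facts already established: first, that $F$ has a zero of order exactly $d_k+1$ at each $\alpha_k$ (proved in Lemma \ref{l:determine-q}), and second, that the sign of $q$ at the $\alpha_j$ alternates as in Lemma \ref{l:qflips}. Since $p_c(t)=\prod_{k=1}^\ell (t-\alpha_k)^{d_k}$, I first note that $p_c$ does not change sign at $\alpha_k$ unless $d_k$ is odd, so the parity bookkeeping has to be done carefully; but the cleaner route is to work with $\Theta(t):=F(t)/p_c(t)$, which by \eqref{eq:hamiltonian:weightedCY2} (after dividing) is smooth across the $\alpha_k$ — in fact $\Theta$ extends to a $C^1$ function on $[\alpha_1,\infty)$ with $\Theta(\alpha_k)=0$ and $\Theta'(\alpha_k)=2q(\alpha_k)/(d_k+1)$, which is exactly the content of the boundary conditions in Lemma \ref{l:smoothextension}. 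The positivity condition $(-1)^{\ell-j}F(t)p_c(t)>0$ on $(\alpha_j,\alpha_{j+1})$ is equivalent to $(-1)^{\ell-j}\Theta(t)p_c(t)^2>0$, i.e. simply to $(-1)^{\ell-j}\Theta(t)>0$ there (away from the zeros of $p_c$, which are endpoints of these intervals).

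So the task reduces to showing $(-1)^{\ell-j}\Theta(t)>0$ on $(\alpha_j,\alpha_{j+1})$ for each $j=1,\dots,\ell$. First I would show that $\Theta$ has no interior zeros in any $(\alpha_j,\alpha_{j+1})$: indeed $\Theta'(t)=2q(t)$ by \eqref{eq:hamiltonian:weightedCY2} divided by $p_c$ — wait, more precisely $\Theta' + d_B t^{-1}\Theta = 2q$, so $(t^{d_B}\Theta)' = 2t^{d_B}q$, and $t^{d_B}\Theta = F$ has its only zeros at the $\alpha_k$; by Rolle's theorem $F'$ (hence $2t^{d_B}q$, hence $q$) has a zero strictly between consecutive $\alpha_j,\alpha_{j+1}$, and since $q$ has degree $\le \ell-1$ with $\ell-1$ such gaps these account for all roots of $q$, so $q$ — and therefore $F$, apart from forced zeros at the $\alpha_k$ — is nonvanishing on the open intervals. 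Thus $\Theta$ has constant sign on each $(\alpha_j,\alpha_{j+1})$, determined by the sign of $\Theta'(\alpha_j)=2q(\alpha_j)/(d_j+1)$ as $t$ enters from the right (or, at $\alpha_{j+1}$, the sign from the left), and by Lemma \ref{l:qflips} $\mathrm{sgn}(q(\alpha_j))=(-1)^{\ell-j}$, which gives exactly $\mathrm{sgn}(\Theta)=(-1)^{\ell-j}$ on $(\alpha_j,\alpha_{j+1})$ for $j=1,\dots,\ell-1$.

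The remaining case is the unbounded interval $(\alpha_\ell,\infty)$, where we need $\Theta>0$ (since $(-1)^{\ell-\ell}=1$); here the sign at $\alpha_\ell^+$ is governed by $q(\alpha_\ell)>0$ — that is, $\mathrm{sgn}(q(\alpha_\ell))=(-1)^{\ell-\ell}=+1$ from Lemma \ref{l:qflips} — so $\Theta'(\alpha_\ell)=2q(\alpha_\ell)/(d_\ell+1)>0$, and since $q$ has no root beyond $\alpha_\ell$ (all its roots lie in the bounded gaps), $\Theta$ stays positive there. I expect the main obstacle to be purely organizational: making the reduction from $F p_c>0$ to $\Theta>0$ airtight across the points $\alpha_k$ where $p_c$ vanishes (so that the inequality is only asserted on the open intervals), and confirming that the count "$\ell-1$ roots of $q$ in $\ell-1$ gaps" really forces simplicity and nonvanishing on the open intervals — but both of these are already essentially contained in the proofs of Lemmas \ref{l:determine-q} and \ref{l:qflips}, so the argument should be short.
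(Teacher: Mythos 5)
Your reduction to $\Theta(t)=F(t)/p_c(t)$, the boundary behavior $\Theta(\alpha_k)=0$, $(d_k+1)\Theta'(\alpha_k)=2q(\alpha_k)$, and the use of Lemma \ref{l:qflips} to read off the sign near each endpoint all match the paper's strategy. Two technical slips are fixable: the ODE for $\Theta$ is \eqref{eqn:ThetaODE}, which carries the extra terms $\sum_j d_j/(t-\alpha_j)$, and $t^{d_B}\Theta$ is not $F$ (rather $F=p_c\Theta$, and the integrating-factor identity is $(t^{d_B}F)'=2t^{d_B}p_c\,q$); with that correction your Rolle-type step does legitimately show $q$ has a simple root in each bounded gap, which is the content of Lemma \ref{l:qflips} (the paper gets it directly from the vanishing integrals).

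The genuine gap is the central claim that $\Theta$ has no zeros in the open intervals. You assert ``$t^{d_B}\Theta=F$ has its only zeros at the $\alpha_k$'' and later conclude that $q$'s root locations force $F$ to be nonvanishing on the open intervals; the first statement is exactly what the lemma must prove (Lemma \ref{l:determine-q} only establishes the forced zeros at the $\alpha_k$, not that there are no others), so the argument is circular, and the second is a non sequitur: knowing where $q$ vanishes does not by itself prevent $\Theta$ from dipping through zero inside an interval where it is positive near both endpoints. The paper closes this by an ODE argument at a hypothetical interior zero $z$: if $q(z)\neq 0$, then \eqref{eqn:ThetaODE} gives $\Theta'(z)=2q(z)$, and choosing $z$ closest to the appropriate endpoint produces a sign contradiction; if instead the zero sits at the unique simple root $z^*$ of $q$ (where this first-order argument degenerates), one differentiates the ODE to get $\Theta''(z^*)=2q'(z^*)<0$, contradicting $\Theta''(z^*)>0$ forced by $\Theta\geq 0$ with $\Theta(z^*)=\Theta'(z^*)=0$. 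None of this second-order analysis appears in your proposal, and it is not ``essentially contained'' in Lemmas \ref{l:determine-q} and \ref{l:qflips}; it is the substantive content of Lemma \ref{l:positivity}. (Your treatment of the unbounded interval $(\alpha_\ell,\infty)$ gestures at the right first-order argument but should state it explicitly via the ODE at the first zero past $\alpha_\ell$.)
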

\begin{proof}
    As we saw in the proof of Lemma \ref{l:determine-q}, the function 
    \[\Theta(t) := \frac{F(t)}{p_c(t)}\]
    is a well-defined smooth function on $[\alpha_1, \infty)$. The positivity condition can be equivalently expressed 
    \[  (-1)^{\ell -j}\Theta(t) > 0  \qquad \textnormal{ on } \qquad (\alpha_j, \alpha_{j+1}).  \]
    The ODE \eqref{eq:hamiltonian:weightedCY2} for $F$ implies that $\Theta$ satisfies 
    \begin{equation}\label{eqn:ThetaODE}
        \Theta'(t) + \left(d_B t^{-1} + \sum_{j=1}^\ell \frac{d_j}{t - \alpha_j} \right)\Theta(t) = 2q(t). 
    \end{equation}
    Taking the limit $t \to \alpha_j$ immediately yields 
    \begin{equation}\label{eqn:Thetabdry}
        \Theta(\alpha_j) = 0, \qquad (d_j+1)\Theta'(\alpha_j) = 2q(\alpha_j). 
    \end{equation}
    Moreover, we claim that $\Theta(t)$ cannot have a root in $(\alpha_j, \alpha_{j+1})$. Consider the case $j = \ell$ first, and suppose for the purpose of contradiction that $\Theta(z) = 0$ for some $z \in (\alpha_\ell, \infty)$. Since $\Theta$ is a rational function, it has at most finitely many zeros, and we can assume that $z$ is chosen closest to $\alpha_\ell$. By construction, we have that $q(t) > 0$ on $(\alpha_\ell, \infty)$. By \eqref{eqn:ThetaODE}, it follows that $\Theta'(z) > 0$. But then we see the contradiction, since $\Theta'(\alpha_\ell) > 0$. 
    
     For $j = 1, \dots, \ell -1$, we first observe that if $\Theta(z) = 0$ for some $z \in (\alpha_{j}, \alpha_{j+1})$, then $z$ must be a root of $q$. Indeed, suppose otherwise, and let $z^*$ be the unique simple root of $q$ on $(\alpha_j, \alpha_{j+1})$. Then $z$ falls in either $(\alpha_j, z^*)$ or $(z^*, \alpha_{j+1})$. Either way we can argue as above, by choosing $z$ closest to either $\alpha_j$ or $\alpha_{j+1}$. We arrive at a contradiction by observing that $\Theta'$ has the same sign at $z$ as at the corresponding $\alpha$. 

     Finally, suppose that $\Theta(z^*) = 0$ at the unique simple zero of $q$ in $(\alpha_{j}, \alpha_{j+1})$. Suppose that $\Theta'(\alpha_j)> 0$, so that $\Theta'(\alpha_{j+1}) < 0$. From \eqref{eqn:ThetaODE}, it follows that $\Theta'(z^*) = 0$. By the boundary behavior of $\Theta$ at $\alpha_j, \alpha_{j+1}$, and the fact that $\Theta$ has no other zeros in $(\alpha_j, \alpha_{j+1})$, it follows that $\Theta \geq 0$ on $(\alpha_j, \alpha_{j+1})$, and consequently $\Theta''(z^*) > 0$. Differentiating \eqref{eqn:ThetaODE} one time, we see that 
     \[\Theta''(t) + \left(d_B t^{-1} + \sum_{j=1}^\ell \frac{d_j}{t - \alpha_j} \right)\Theta'(t) - \left( \sum_{j=1}^\ell \frac{d_j}{(t - \alpha_j)^2} \right)\Theta(t) = 2q'(t),\]
     and in particular 
     \[ \Theta''(z^*) = 2q'(z^*).\]
     However, since $q$ changes sign from positive to negative as $t$ crosses $z^*$, we must have that $q'(z^*) < 0$, which contradicts the fact that $\Theta''(z^*) > 0$.

     It follows that $\Theta$ has no roots in $(\alpha_j, \alpha_{j+1})$. From the boundary conditions \eqref{eqn:Thetabdry}, we see that $\Theta(t)$ has the same sign as $q(\alpha_j)$ on $(\alpha_j,\alpha_{j+1})$, which in combination with Lemma \ref{l:qflips} completes the proof. 
   \end{proof}

We can now state our first main theorem of this section, reducing the existence of complete Calabi-Yau metrics on the total space $M$ of the vector bundle 
\[ E = \bigoplus_{j=1}^\ell \left( \bigoplus_{k=0}^{d_j} L^{m_j}\right) \to B \] 
to the existence of solutions $(q(t), \alpha_1, \dots, \alpha_\ell)$ of the system of polynomial equations 
\[ \delta_j(\alpha_1, \dots, \alpha_\ell) = m_j, \qquad m_j \in \mathbb{Z}.\]

\begin{theorem}\label{th:Type1CY} 
Let $m_1, \dots, m_\ell \in \mathbb{Z}$, and suppose that $ 0 < \alpha_1 < \dots < \alpha_\ell < \infty$ can be chosen in such a way that the unique polynomial $q(t)$ of degree $\leq \ell -1$ determined by Lemma \ref{l:determine-q} satisfies
\begin{equation*}
\delta_j = \delta_j(q, \alpha_1, \dots, \alpha_\ell) := \frac{q(\alpha_j)}{d_j+1} \frac{\sigma_{\ell -1}(\hat{\alpha}_j)}{\Delta(\alpha_j)} = m_j, \qquad j = 1, \dots, \ell, 
\end{equation*}
and 
\begin{equation}\label{eqn:Type1CYq0}
 q(0) = (-1)^{\ell -1}i_B. 
\end{equation}
Then there exists a complete Calabi-Yau metric $(g, \omega)$ on the total space $M$ of the direct sum bundle 
\begin{equation*}
	E = \bigoplus_{j=1}^\ell \left( \bigoplus_{k=0}^{d_j} L^{m_j}\right) \to B.
\end{equation*}
Moreover, this metric has euclidean volume growth, i.e. 
\[ {\rm vol}_g(B_g(p_0, R)) \sim R^{2n}, \]
where $n = \sum_{j=1}^\ell m_j (d_j+1)$ is the complex dimension of $M$. 
\end{theorem}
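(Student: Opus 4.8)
The plan is to assemble the machinery built up in Sections~\ref{section:Ham} and~\ref{section:CY}. Given the data $(q(t),\alpha_1,\dots,\alpha_\ell)$ supplied by the hypotheses, I would first construct the fiber profile functions by setting $F_1=\cdots=F_\ell=F$ as in \eqref{hamiltonian:type1:F}. By Lemma~\ref{l:determine-q} this $F$ satisfies the boundary conditions \eqref{hamiltonian:type1:Fboundary}, and the argument in its proof — together with \eqref{eqn:ThetaODE} and \eqref{eqn:Thetabdry} — shows that $\Theta(t):=F(t)/p_c(t)$ extends to a smooth function on $[\alpha_1,\infty)$ with $\Theta(\alpha_j)=0$ and $(d_j+1)\Theta'(\alpha_j)=2q(\alpha_j)$, i.e.\ exactly the boundary data demanded by Lemma~\ref{l:smoothextension}. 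Lemmas~\ref{l:qflips} and~\ref{l:positivity} then give the sign condition $(-1)^{\ell-j}\Theta(t)>0$ on $(\alpha_j,\alpha_{j+1})$, which, with the choices $\varepsilon_j=(-1)^{\ell-j+1}$ and $\varepsilon_B=(-1)^\ell$ fixed in this subsection, is the positivity hypothesis \eqref{inequality}. Hence \eqref{k-order-ell} with these $F_j$ defines a K\"ahler structure $(g_F,\omega_F,J_F)$ on the fiber, which extends smoothly to $\mathbb{R}^{2r}$ by Lemma~\ref{l:smoothextension}.

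Next I would pin down the asymptotics of $\Theta_\ell$. Since $q$ has degree exactly $\ell-1$ by Lemma~\ref{l:qflips}, the discussion following \eqref{eq:hamiltonian:weightedCY2} shows $F(t)=P(t)+\lambda t^{-d_B}$ with $\deg P=r$, so that $\Theta_\ell$ has degree $\beta=\ell$. This single fact does three jobs at once: the bound $\Theta_\ell(t)\le Ct^{\ell}\le Ct^{\ell+1}$ gives completeness via Lemma~\ref{l:generalcompleteness}(i) and $(\mathbb{R}^{2r},J_F)\cong\mathbb{C}^r$ via Lemma~\ref{l:complexstructureCn}(i), while $\beta=\ell$ is also what the volume estimate below will use. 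Moreover, because $F$ solves \eqref{eq:hamiltonian:weightedCY2} with $\deg q\le\ell-1$, Lemma~\ref{lemma:hamiltonian:weightedCY} shows the fiber metric satisfies the weighted Monge-Amp\`ere equation $\Ric_{\omega_F}=i\partial\bar\partial\log(\sigma_\ell^{d_B})$.

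To globalize, I would take $B$ to be the given K\"ahler-Einstein Fano manifold with $L^{i_B}=K_B$, $\omega_B\in 2\pi c_1(L^\vee)$, and $\varepsilon_B=(-1)^\ell$. Then the hypothesis $\delta_j=m_j\in\mathbb{Z}$ reads $\delta_j=(-1)^\ell\varepsilon_B m_j$, so Proposition~\ref{prop:Estructure} applies and produces a complete K\"ahler metric of the form \eqref{first:gtotalspace} on the total space $M$ of $E=\bigoplus_{j=1}^\ell\bigoplus_{k=0}^{d_j}L^{m_j}\to B$. By Lemma~\ref{lemma:hamiltonian:globalCY}, this metric is Ricci-flat precisely when $Scal_{\omega_B}=-2\varepsilon_B d_B q(0)$, i.e.\ (using $Scal_{\omega_B}=2d_Bi_B$) when $q(0)=-\varepsilon_B i_B=(-1)^{\ell-1}i_B$ — which is exactly hypothesis \eqref{eqn:Type1CYq0}. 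Finally, since $\Theta_\ell$ has degree $\beta=\ell$, Proposition~\ref{prop:generalvolume}(i) gives ${\rm vol}_g(B_g(p_0,R))\sim R^{2n/(\ell+1-\ell)}=R^{2n}$, i.e.\ euclidean volume growth.

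Given the apparatus already in place, the proof is essentially bookkeeping, and I do not anticipate a genuine obstacle. The one point needing care is matching the order-of-vanishing and $C^1$-extension statements for $\Theta$ at the $\alpha_j$ — which must be extracted from the proof of Lemma~\ref{l:determine-q} rather than from its statement — to the precise hypotheses of Lemma~\ref{l:smoothextension}, and keeping the sign bookkeeping (the constants $\varepsilon_j$, $\varepsilon_B$, and the various factors $(-1)^{\ell-j}$) consistent between the fiber construction of Section~\ref{section:fibermetrics} and the global compactification of Section~\ref{section:globalcompactification}. Everything else is a direct citation.
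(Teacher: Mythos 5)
Your proposal is correct and follows essentially the same route as the paper's proof: extend $\Theta=F/p_c$ via the boundary data \eqref{eqn:Thetabdry} and Lemma \ref{l:smoothextension}, use the degree-$\ell$ growth of $\Theta$ for completeness (Lemma \ref{l:generalcompleteness}), the biholomorphism to $\C^r$ (Lemma \ref{l:complexstructureCn}), and the volume growth (Proposition \ref{prop:generalvolume}), then invoke Proposition \ref{prop:Estructure} with $\delta_j=(-1)^\ell\varepsilon_B m_j$ and Lemma \ref{lemma:hamiltonian:globalCY} together with \eqref{eqn:Type1CYq0} for Ricci-flatness. The only differences are cosmetic bookkeeping (your $\deg P=r$, giving $\deg\Theta=\ell$, is in fact the cleaner way to state what the paper's proof uses).
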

\begin{proof}
As above, we set $F_1(t) = \dots =F_\ell(t) = F(t)$ where $F(t)$ is the solution \eqref{hamiltonian:type1:F}, and then $\Theta(t) = \frac{F(t)}{p_c(t)}$. By the boundary behavior \eqref{eqn:Thetabdry} established in the proof of Lemma \ref{l:positivity}, we can apply Lemma \ref{l:smoothextension} to deduce that this data gives rise to a K\"ahler structure $(g_F, \omega_F, J_F)$ on $\R^{2r}$. By the special form \eqref{ham:F:specialform} and the fact that $P(t)$ has degree $n$ it's clear that $\Theta(t)$ has degree 
\[ \deg \Theta = n - \deg p_c = n - \sum_{j=1}^\ell d_j = \ell,  \]
in the sense of \eqref{degree}. It follows that $(g_F, \omega_F, J_F)$ satisfies the conditions of Lemmas  \ref{l:smoothextension}, \ref{l:generalcompleteness}, and \ref{l:complexstructureCn}. Since we have 
\[ \delta_j = m_j = (-1)^\ell \varepsilon_B m_j, \] 
we conclude from Proposition \ref{prop:Estructure} that there is an induced K\"ahler structure $(g, \omega)$ on $E$ (with its standard complex structure), where $L$ is defined by \eqref{eqn:L0def}. Moreover, the metric $g$ is Ricci-flat by virtue of Lemma \ref{lemma:hamiltonian:globalCY} and the definition of $F$, since 
\[ Scal_{\omega_B} = 2d_B i_B = -2\varepsilon_B d_B q(0),  \] 
since $\omega_B \in 2\pi c_1(L^\vee)$ and \eqref{eqn:Type1CYq0}. Finally, the volume growth follows directly from Proposition \ref{prop:generalvolume}.
\end{proof}

\subsection{Examples 1: Proof of Theorem A, \texorpdfstring{$(\textnormal{i}a)$}{(ia)}}

\begin{prop}[Theorem \ref{mtheoremType1}, (i$a$)]\label{thm:Type1CYrank2}
  Let $B$ be a K\"ahler-Einstein Fano manifold of dimension $d_B$ with Fano index $i_B$. Let $m_1 > m_2 > 0 \in \Z$, $d_1, d_2 \geq 0$ satisfy 
  \begin{equation}\label{m1m2iB} 
  (d_1 + 1) m_1 + (d_2 + 1)m_2 = i_B,
   \end{equation}
	and suppose that $L \to B$ has the property that $L^{i_B} = K_B$. Then there exists a complete Calabi-Yau metric on the total space $M$ of the vector bundle
    \begin{equation}
      E := \left(\bigoplus_{k = 0}^{d_1}L^{m_1}\right) \oplus \left(\bigoplus_{k = 0}^{d_2}L^{m_2}\right) \to B.
  \end{equation}
%  In particular, there exists a complete Calabi-Yau metric on the total space
%  \begin{equation*}
%      \mathcal{O}(-m_1) \oplus \mathcal{O}(- m_2) \to \mathbb{P}^{n},
%  \end{equation*}
%  for any $\frac{n+1}{2} < a < n$.
\end{prop}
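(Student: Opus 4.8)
The plan is to invoke Theorem~\ref{th:Type1CY} with $\ell = 2$, which reduces the statement to a two-parameter problem: exhibit $0 < \alpha_1 < \alpha_2 < \infty$ for which the degree $\leq 1$ polynomial $q(t) = q_0 + q_1 t$ attached to $(\alpha_1,\alpha_2)$ by Lemma~\ref{l:determine-q}, normalized so that $q(0) = q_0 = -i_B = (-1)^{\ell-1}i_B$, satisfies $\delta_1 = m_1$. Here $p_c(t) = (t-\alpha_1)^{d_1}(t-\alpha_2)^{d_2}$ and the constraint of Lemma~\ref{l:determine-q} is the single linear equation $\int_{\alpha_1}^{\alpha_2} x^{d_B}p_c(x)q(x)\,dx = 0$; as in the proof of Lemma~\ref{l:qflips} the integrand $x^{d_B}p_c(x)$ has a fixed sign on $(\alpha_1,\alpha_2)$, so $q$ changes sign there, its unique root lies in $(\alpha_1,\alpha_2)\subset(0,\infty)$, and in particular $q_0 \neq 0$, so the normalization is legitimate. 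Solving the constraint yields
\[
  q_1 \;=\; i_B\;\frac{\int_{\alpha_1}^{\alpha_2} x^{d_B}p_c(x)\,dx}{\int_{\alpha_1}^{\alpha_2} x^{d_B+1}p_c(x)\,dx},
\]
with denominator of fixed sign $(-1)^{d_2}$, so $q$ — and hence $\delta_1 = \tfrac{q(\alpha_1)}{d_1+1}\cdot\tfrac{\alpha_2}{\alpha_1-\alpha_2}$ by \eqref{eqn:partialdegree} — is continuous in $(\alpha_1,\alpha_2)$. Moreover, with this normalization Lemma~\ref{l:totaldegre} gives $(d_1+1)\delta_1 + (d_2+1)\delta_2 = \delta = -q(0) = i_B$, so once $\delta_1 = m_1$ the hypothesis~\eqref{m1m2iB} forces $\delta_2 = m_2$; it therefore suffices to arrange $\delta_1 = m_1$.

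Next I would exploit the scaling $\alpha_j \mapsto s\alpha_j$, under which the normalized solution becomes $(q_0, q_1/s)$, leaving $q(\alpha_1)$ and $\tfrac{\alpha_2}{\alpha_1-\alpha_2}$, hence $\delta_1$, invariant. Thus $\delta_1$ is a continuous function $f(\tau)$ of the single ratio $\tau := \alpha_1/\alpha_2 \in (0,1)$, and the proof comes down to locating $m_1$ in the range of $f$. Computing the boundary limits: as $\tau \to 0^+$, $q(\alpha_1) \to q(0) = -i_B$ and $\tfrac{\alpha_2}{\alpha_1-\alpha_2}\to -1$, so $f(\tau) \to \tfrac{i_B}{d_1+1}$; as $\tau \to 1^-$, substituting $x = \alpha_1 + (\alpha_2-\alpha_1)u$ in the formula for $q_1$ and Taylor-expanding the factors $x^{d_B}$, $x^{d_B+1}$ about $x=\alpha_2$ (using the Beta-integral identity $\int_0^1 u^{d_1}(1-u)^{d_2+1}\,du = \tfrac{d_2+1}{d_1+d_2+2}\int_0^1 u^{d_1}(1-u)^{d_2}\,du$) gives $q_1 = i_B\big(1 + \tfrac{d_2+1}{d_1+d_2+2}(1-\tau) + o(1-\tau)\big)$, whence $f(\tau) = \tfrac{-i_B + q_1\tau}{(d_1+1)(\tau-1)} \to \tfrac{i_B}{d_1+1}\big(1 - \tfrac{d_2+1}{d_1+d_2+2}\big) = \tfrac{i_B}{d_1+d_2+2}$.

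To conclude: the hypotheses $m_1 > m_2 > 0$ and \eqref{m1m2iB} give $\tfrac{i_B}{d_1+d_2+2} < m_1 < \tfrac{i_B}{d_1+1}$ (since $i_B = (d_1+1)m_1 + (d_2+1)m_2 < (d_1+d_2+2)m_1$ and $(d_1+1)m_1 < i_B$). As $f$ is continuous on $(0,1)$ with these numbers as its two boundary limits, the intermediate value theorem produces $\tau^\ast\in(0,1)$ with $f(\tau^\ast) = m_1$; taking $\alpha_1 = \tau^\ast$, $\alpha_2 = 1$ we obtain $0 < \alpha_1 < \alpha_2$, $q(0) = (-1)^{\ell-1}i_B$, $\delta_1 = m_1 \in \Z$, $\delta_2 = m_2 \in \Z$, and Theorem~\ref{th:Type1CY} delivers the complete Calabi-Yau metric on the total space $M$ of $E$ (with Euclidean volume growth, as a byproduct). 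I expect the main obstacle to be the $\tau \to 1^-$ limit: there the moment interval $(\alpha_1,\alpha_2)$ collapses, and one has to pin down the first-order asymptotics of the ratio of integrals defining $q_1$ to see that $f$ extends continuously to $\tfrac{i_B}{d_1+d_2+2}$ rather than to $\tfrac{i_B}{d_1+1}$. Everything analytic — the smooth extension across the zero sections, the biholomorphism $\R^{2r}\cong\C^r$, completeness, the positivity condition~\eqref{inequality}, and the volume growth — is already packaged into Theorem~\ref{th:Type1CY} and the lemmas it cites.
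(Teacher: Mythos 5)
Your proposal is correct and follows essentially the same route as the paper: reduce via Theorem~\ref{th:Type1CY} to solving $\delta_1 = m_1$ with $q(0) = -i_B$, compute the two boundary limits $\tfrac{i_B}{d_1+1}$ and $\tfrac{i_B}{d_1+d_2+2}$ of $\delta_1$, and conclude by the intermediate value theorem, which is exactly the content of the paper's Lemma~\ref{l:CYSteadyrank2aux} with $a=0$ (your Beta-integral expansion as $\tau\to 1^-$ is the paper's Claim~\ref{claim:q1tominus1PARTONE}). The only cosmetic difference is that you normalize via the algebraic scale-invariance of $\delta_1$ and parametrize by $\tau=\alpha_1/\alpha_2$, while the paper fixes $\alpha_1=1$ using the affine freedom of the hamiltonian 2-form and varies $\alpha_2$.
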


Before proving this, we establish the following auxiliary lemma which will also be useful later on:
\begin{lemma}\label{l:CYSteadyrank2aux}
Consider the situation where we have $\alpha > 1$, $a \in \R$, and integers $d_1, d_2 \geq 0$, $d_B >0$, and $i_B > 1$. Then, for any integers $m_1 > m_2 > 0$ satisfying 
\[ (d_1 + 1)m_1 + (d_2+1)m_2 = i_B, \]
 there exists a choice of $\alpha$ such that 
 \[ \delta_i(q, \alpha ) = m_i \]
 with respect to the unique linear polynomial $q(t) = q_1 t + q_0$ satisfying 
 \begin{equation*}
 	q(0) = q_0 = -i_B, \qquad \int_{1}^{\alpha}e^{ax}x^{d_B}(x-1)^{d_1}(x-\alpha)^{d_2}q(t)\, dx = 0.
 \end{equation*}
\end{lemma}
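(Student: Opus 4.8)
\emph{Reduction to a single equation.} With $\ell = 2$, $\alpha_1 = 1$, $\alpha_2 = \alpha$, the polynomial $q$ has degree $\le 1 = \ell - 1$, so its top coefficient $q_\ell$ (the coefficient of $t^\ell$) vanishes. Lemma~\ref{l:totaldegre} then gives, for \emph{every} admissible $\alpha$,
\[ (d_1+1)\,\delta_1(q,\alpha) + (d_2+1)\,\delta_2(q,\alpha) \;=\; \delta \;=\; (-1)^\ell\bigl(q_\ell\,\alpha_1\alpha_2 - q(0)\bigr) \;=\; -q_0 \;=\; i_B . \]
Since by hypothesis $(d_1+1)m_1 + (d_2+1)m_2 = i_B$, the single scalar equation $\delta_1(q,\alpha) = m_1$ automatically forces $\delta_2(q,\alpha) = m_2$. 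So it suffices to produce $\alpha > 1$ solving $\delta_1(q,\alpha) = m_1$.

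\emph{Making $\delta_1$ explicit.} Set $w(x) := e^{ax}x^{d_B}(x-1)^{d_1}(x-\alpha)^{d_2}$, which has constant sign $(-1)^{d_2}$ on $(1,\alpha)$; in particular $\int_1^\alpha w\,dx \ne 0$. The condition $\int_1^\alpha w(x)\,q(x)\,dx = 0$ together with $q(0) = -i_B$ pins down $q$: it must vanish at the barycentre $z^\ast = z^\ast(\alpha) := \bigl(\int_1^\alpha x\,w\,dx\bigr)\big/\bigl(\int_1^\alpha w\,dx\bigr) \in (1,\alpha)$, and then $q(t) = \tfrac{i_B}{z^\ast}\,(t - z^\ast)$, which indeed realizes the sign pattern $q(1) < 0 < q(\alpha)$ of Lemma~\ref{l:qflips}. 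Substituting into \eqref{eqn:partialdegree} and simplifying,
\[ \delta_1(q,\alpha) \;=\; \frac{q(1)}{d_1+1}\cdot\frac{\alpha}{1-\alpha} \;=\; \frac{i_B}{d_1+1}\cdot\frac{\alpha}{\alpha-1}\Bigl(1 - \frac{1}{z^\ast(\alpha)}\Bigr), \]
which is continuous (indeed smooth) in $\alpha$ on $(1,\infty)$.

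\emph{Endpoint analysis and conclusion.} As $\alpha \to 1^+$, the substitution $x = 1 + (\alpha-1)s$ makes the factor $e^{ax}$ cancel in the ratio defining $z^\ast$, leaving $z^\ast(\alpha) = 1 + (\alpha-1)\,\frac{\int_0^1 s^{d_1+1}(1-s)^{d_2}\,ds}{\int_0^1 s^{d_1}(1-s)^{d_2}\,ds} + o(\alpha-1) = 1 + (\alpha-1)\,\frac{d_1+1}{d_1+d_2+2} + o(\alpha-1)$ by the Beta-integral identity, so $\delta_1(q,\alpha) \to \frac{i_B}{d_1+d_2+2} = \frac{i_B}{r}$. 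As $\alpha \to \infty$ with $a = 0$, the substitution $x = \alpha u$ together with $(\alpha u - 1)^{d_1}\sim(\alpha u)^{d_1}$ gives $z^\ast(\alpha)\to\infty$, hence $\delta_1(q,\alpha)\to\frac{i_B}{d_1+1}$. Now from $m_2 < m_1$ we get $i_B = (d_1+1)m_1 + (d_2+1)m_2 < (d_1+d_2+2)m_1 = rm_1$, and from $m_2 > 0$ we get $(d_1+1)m_1 < i_B$; hence $\tfrac{i_B}{r} < m_1 < \tfrac{i_B}{d_1+1}$, so $m_1$ lies strictly between the two limiting values of $\delta_1(q,\cdot)$. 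By the intermediate value theorem there is $\alpha > 1$ with $\delta_1(q,\alpha) = m_1$, and then $\delta_2(q,\alpha) = m_2$ by the first step.

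\emph{Main obstacle.} For the Calabi--Yau application (Proposition~\ref{thm:Type1CYrank2}) only $a = 0$ is needed, where the steps above are routine. The delicate point for general $a$ — needed for the later (steady soliton) use — is the $\alpha\to\infty$ behaviour of the barycentre $z^\ast(\alpha)$ of the weight $e^{ax}x^{d_B}(x-1)^{d_1}(x-\alpha)^{d_2}$: one must carry out a Laplace/Watson-type estimate of the integrals $\int_1^\alpha x^k w\,dx$, localising near the endpoint selected by the sign of $a$, and verify that the resulting limiting value (or at least $\sup_{\alpha>1}\delta_1(q,\alpha)$) still exceeds $m_1$ under the hypotheses $m_1 > m_2 \ge 1$. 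This asymptotic bookkeeping is where I expect the real work to lie.
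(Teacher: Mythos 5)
Your route is the same as the paper's: reduce via Lemma \ref{l:totaldegre} to the single equation $\delta_1=m_1$, express $\delta_1(\alpha)$ through the ratio of weighted integrals (your barycentre $z^*(\alpha)$ is exactly $i_B/q_1(\alpha)$ in the paper's notation), compute the two endpoint limits, and finish by the intermediate value theorem; your $\alpha\to1^+$ Beta-integral expansion is precisely the computation the paper does in Claim \ref{claim:q1tominus1} and invokes here through Claim \ref{claim:q1tominus1PARTONE}, and your verification that $\tfrac{i_B}{d_1+d_2+2}<m_1<\tfrac{i_B}{d_1+1}$ matches the paper's conclusion that $\delta_1$ sweeps out that interval.

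The genuine gap is the piece you flagged yourself: the $\alpha\to\infty$ behaviour for $a\neq0$, which the statement (asserted for all $a\in\R$) and the later steady-soliton application require; you only carried it out for $a=0$. The paper disposes of it by asserting $q_1\to0$ as $\alpha\to\infty$ (Claim \ref{claim:q1tominus1PARTONE}, proof skipped as ``similar''). For $a>0$ --- the only nonzero range actually used downstream, since completeness of the steady solitons forces $a>0$ --- the gap is easy to close: substituting $x=\alpha-y$, the weight $e^{ax}x^{d_B}(x-1)^{d_1}|x-\alpha|^{d_2}$ concentrates at $x=\alpha$ (Watson's lemma), giving $z^*(\alpha)=\alpha-O(1)\to\infty$, hence $q_1\to0$, $\delta_1\to\tfrac{i_B}{d_1+1}$, and your IVT argument goes through verbatim. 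For $a<0$, however, dominated convergence gives $z^*(\alpha)\to z^*_\infty:=\bigl(\int_1^\infty e^{ax}x^{d_B+1}(x-1)^{d_1}\,dx\bigr)\big/\bigl(\int_1^\infty e^{ax}x^{d_B}(x-1)^{d_1}\,dx\bigr)<\infty$, so $q_1$ does \emph{not} tend to $0$ and $\delta_1\to\tfrac{i_B}{d_1+1}\bigl(1-\tfrac{1}{z^*_\infty}\bigr)<\tfrac{i_B}{d_1+1}$; since $z^*_\infty\to1$ as $a\to-\infty$, the attainable range of $\delta_1$ can shrink below $m_1$, so for sufficiently negative $a$ no choice of $\alpha$ works and the lemma as literally stated cannot be rescued by this (or the paper's) argument. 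Your caution about the Laplace-type bookkeeping is therefore justified: it succeeds exactly on the range $a\ge0$ that Theorems A and B actually use, and fails outside it.
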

\begin{proof}
In this situation, we have $q(t) = q_1 t + q_0$, where 
\[ q_1 = i_B \left( \frac{\int_1^\alpha e^{ax} x^{d_B} (x-1)^{d_1}(x-\alpha)^{d_2}\, dx }{\int_1^\alpha e^{ax} x^{d_B+1} (x-1)^{d_1}(x-\alpha)^{d_2}\, dx } \right). \]
Then we have:
\begin{claim}\label{claim:q1tominus1PARTONE}
Viewing $q_1 = q_1(\alpha)$, we have that 
\[ q_1 \to i_B, \qquad q_1' \to -i_B \left( \frac{d_1 + 1}{(d_1+1) +(d_2 +1)} \right),\]
as $\alpha \to 1$, and 
\[ q_1 \to 0 \qquad \textnormal{as} \qquad  \alpha \to \infty.\]
\end{claim}
The proof is similar to that of Claim \ref{claim:q1tominus1} below, so we skip it. Note that we have 
\[ (d_1 + 1)\delta_1(\alpha) = \frac{\alpha q(1)}{1 - \alpha} = -\alpha \left( \frac{q_1 - i_B}{\alpha -1}\right), \]
so that, by Claim \ref{claim:q1tominus1PARTONE}, we have
\[ \lim_{\alpha \to 1} \delta_1 = - \frac{q_1'(1)}{d_1 + 1} = \frac{i_B}{d_1 + d_2 + 2}.  \]
Similarly,
\[ \lim_{\alpha \to \infty} \delta_1 = \frac{i_B}{d_1 + 1}. \]
It follows that $\delta_1$ sweeps out all values from $\frac{i_B}{d_1 + d_2 +2}$ to $\frac{i_B}{d_1 + 1}$ as $\alpha$ ranges from $1$ to $\infty$. Since $(d_1+1)\delta_1 + (d_2 + 1) \delta_2 = i_B$ by Lemma \ref{l:totaldegre}, we have that $\delta_2$ correspondingly sweeps out all values from $0$ to $\frac{i_B}{d_1 + d_2 +2}$.
\end{proof}

\begin{proof}[Proof of Theorem \ref{thm:Type1CYrank2}]
%    Let $P \to B$ be the $U(1)^2$-bundle associated to $(E_{B,a}^2)^*$. Let $\check{\omega}_0$ be the K\"ahler-Einstein metric with $\check{\omega}_0 \in 2\pi c_1(L_0^*)$, and let $h = h_0^a \oplus h_0^{i_B - a}$ be the induced metric on $(E_{B,a}^2)^*$. In particular $h$ induces a connection $1$-form $\theta$ on $P$ with the property that 
%    \begin{equation*}
%        d\theta = \check{\omega}_0 \otimes (a \mathbf{e}_1 + (i_B -a)\mathbf{e}_2),
%    \end{equation*}
%    where $\mathbf{e}_1$ and $\mathbf{e}_2$ are the generators of the $U(1)$ action on each factor. 
	We work in the Type 1 case with $\ell = 2$, so our domain is 
	\[ \mathcal{D} = (\alpha_1, \alpha_2) \times (\alpha_2, \infty), \qquad 0 < \alpha_1 < \alpha_2 < \infty. \]
	By Proposition \ref{prop:Estructure}, the problem is completely reduced to solving 
	\[ \delta_1(\alpha_1, \alpha_2) = m_1, \qquad \delta_2(\alpha_1, \alpha_2) = m_2, \]
	with $q(t)$ defined by Lemma \ref{l:determine-q}.

    To see that we can do this, we use the hamiltonian 2-form to normalize $\alpha_1 = 1$\footnote{For details on this, see \cite{AC}. In the present setting there is a two-parameter family of hamiltonian 2-forms compatible with the given K\"ahler structure, different choices of which corresponding to affine transformations in $(\xi_1, \dots, \xi_\ell)$ of the form $(\lambda \xi_1 + c, \dots, \lambda \xi_\ell + c)$. In particular, we can specify that $\eta_B = 0$, $\alpha_1 = 1$ without loss of generality, and in fact there is a unique compatible hamiltonian 2-form with this property.} For simplicity of notation, we set $\alpha_2 = \alpha$. Then we let $q(t)$ be the unique linear polynomial satisfying 
    \begin{equation*}
        q(0) = -i_B, \qquad \int_1^\alpha x^{d_B} q(x) dx = 0.
    \end{equation*}
    Then the existence of an $\alpha \in (1, \infty)$ with the desired properties is guaranteed by Lemma \ref{l:CYSteadyrank2aux} with $a = 0$. 
\end{proof}

\subsection{Type 2 case: submaximal volume growth}\label{section:Type2CY}

As in the previous section, we suppose that we have a K\"ahler-Einstein Fano base manifold $B$ of complex dimension $d_B$, and we aim to construct complete Ricci-flat K\"ahler metrics on the total space of certain rank-$r$ direct sum vector bundles $E \to B$. Once again we briefly recall the setup. We suppose that $\ell \geq 2 \in \mathbb{N}$, and set 
\begin{equation*}
    n = r + d_B, \qquad r = \ell + \sum_{j=1}^{\ell-1}d_j, \qquad d_1, \dots, d_{\ell-1} \in \mathbb{N}, \qquad d_\ell = 0.
\end{equation*}
 We choose 
\[ -\infty < \alpha_1 < \dots < \alpha_{\ell-1} < 0 < \alpha_\ell < +\infty,\]
and set 
\begin{equation*}
    \mathcal{D} := \mathcal{D}_2 = (-\infty, \alpha_1) \times (\alpha_1, \alpha_2) \times \dots \times (\alpha_{\ell -2}, \alpha_{\ell -1}) \times (\alpha_\ell , \infty). 
\end{equation*}
Again we put $\eta_a = \alpha_a$ for $a = 1, \dots, \ell$, so 
\[ \epsilon_j = (-1)^{\ell - j + 1}.\]
We also set $\varepsilon_B = -1$, so that 
\[\varepsilon_B p_{nc}(0) = (-1)^\ell \varepsilon_B \sigma_\ell = (-1)^{\ell -1}\xi_1 \dots \xi_\ell > 0. \] 
The bundle $P_F \to B_F$ will be exactly as before, where this time we will always have that $d_\ell = 0$ so that $\P^{d_\ell} = \{pt\}$. We have polynomials
\[ p_c(t) = \prod_{j=1}^\ell (t - \alpha_j)^{d_j}, \qquad \tilde{p}_c(t) := t^{d_B} p_c(t).\]
We are looking for functions $\tilde{F}_j(t)$ such that 
 \[ \tilde{F}_j'(t) = 2\tilde{p}_c(t) q(t), \]
 where $q(t)$ is a polynomial of degree $\leq \ell -1$. In contrast to the previous section, \textbf{we now search for $q$ of degree $\ell - 2$}. As before, we focus on the associated fiber function 
\[ F_j(t) := \frac{\tilde{F}_j(t)}{t^{d_B}},\]
which should then satisfy 
\[F_j'(t) + d_B t^{-1}F_j(t) = 2p_c(t) q(t). \]
The general solution is of the form
\[F(t) = P(t) + \frac{\lambda}{t^{d_B}}\]
for a polynomial $P(t)$ of degree $r -\ell + \deg(q) - 1 = r -1$ and $\lambda \in \R$. The approach here is similar, in that we set
\begin{equation}\label{hamiltonian:type2:F}
    F_1(t) = \dots = F_{\ell-1}(t) = F(t) := t^{-d_B}\left(\int^{t}_{\alpha_1} x^{d_B}p_c(x)q(x) \, dx\right).
\end{equation}
On the other hand $F_\ell$ will be chosen later. We would like to impose that 
\begin{equation*}
    P(0) = 0, \qquad F^{(k)}(\alpha_j) = 0, \quad k = 0, \dots, d_j, \quad j = 1, \dots, \ell -1.
\end{equation*}
Since $d_\ell = 0$ this gives $r$ conditions on the values of $P$ at $\alpha_j$ and hence Lagrange-Sylvester interpolation determines a unique polynomial $P(t)$ of degree $r -1$. Note that if $\lambda = 0$, then the Lagrange-Sylvester polynomial is identically zero, so we must have $\lambda \neq 0$. We define $q(t)$ by 
\begin{equation}\label{type2:qdef}
    q(t) := \frac{P'(t) + d_B t^{-1}P(t)}{p_c(t)},
\end{equation}
where the divisibility of $P'(t) + d_B t^{-1}P(t)$ by $p_c(t)$ is guaranteed in exactly the same way as in Lemma \ref{l:determine-q}, and the condition $P(0) = 0$ implies that $q(t)$ is smooth at $0$. Then by construction we have
\begin{equation}\label{type2:Fdef}
    F_1(t) = \dots = F_{\ell-1}(t) = F(t):= P(t) + \frac{\lambda}{t^{d_B}}, 
\end{equation}
and finally we set
\begin{equation}\label{type2:Felldef}
    F_\ell(t) =  P(t) -  \alpha_\ell^{d_B}P(\alpha_\ell) t^{-d_B}.
\end{equation}
The positivity condition \eqref{inequality} in this case is 
\begin{equation}\label{type2:Fpositivity}
\begin{split}
    (-1)^{\ell - j }&F_j(t)p_c(t) >0 \qquad \textnormal{on } (\alpha_{j-1}, \alpha_j), \qquad j = 1, \dots, \ell-1,  \\
    & F_\ell(t) p_c(t) > 0 \qquad \textnormal{on } (\alpha_\ell, \infty),
\end{split}
\end{equation}
where $\alpha_0 = -\infty$. The next Lemma follows exactly as in Lemmas \ref{l:qflips} and \ref{l:positivity}, noting that there are now precisely $\ell - 2$ bounded intervals $(\alpha_j, \alpha_{j+1})$ over which 
\[\int_{\alpha_j}^{\alpha_{j+1}} x^{d_B} p_c(x) q(x) \, dx = 0.\]

\begin{lemma}\label{l:type2qcondition}
    Perhaps after changing $\lambda \mapsto -\lambda$, then the polynomial $q(t)$ constructed above by \eqref{type2:qdef} has degree exactly $\ell -2$, and satisfies
    \begin{equation}\label{type2:qalternates}
    \begin{split}
        (-1)^{\ell - j + 1}&q(\alpha_j) > 0, \qquad j =1, \dots, \ell -1 \\
        &\textnormal{sgn}(q(\alpha_\ell)) = \textnormal{sgn}(q(\alpha_{\ell-1})).
    \end{split}
    \end{equation}
    As a consequence, for the appropriate sign of $\lambda$, the functions $F_1, \dots, F_\ell$ given by \eqref{type2:Fdef}, \eqref{type2:Felldef} satisfy \eqref{type2:Fpositivity}.
\end{lemma}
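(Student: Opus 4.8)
The plan is to mirror the proofs of Lemmas \ref{l:qflips} and \ref{l:positivity}, keeping track of the two structural differences in the Type 2 setting: the profile $F$ of \eqref{hamiltonian:type2:F} vanishes at $\alpha_1,\dots,\alpha_{\ell-1}$ but not at $\alpha_\ell$, so there are only $\ell-2$ bounded intervals $(\alpha_j,\alpha_{j+1})$ across which an integral constraint is available, and correspondingly $q$ will turn out to have degree exactly $\ell-2$ rather than $\ell-1$.

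First I would prove the assertions about $q$. Arguing as in the proof of Lemma \ref{l:determine-q} (successively dividing $F$ by factors $(t-\alpha_j)$ and using that $p_c$ divides $F'+d_Bt^{-1}F$), each $\alpha_j$ with $j=1,\dots,\ell-1$ is a zero of $F$ of order $d_j+1$; together with \eqref{hamiltonian:type2:F} this gives $\int_{\alpha_j}^{\alpha_{j+1}}x^{d_B}p_c(x)q(x)\,dx=0$ for $j=1,\dots,\ell-2$. On each of these bounded intervals $x^{d_B}$ and $p_c(x)$ have a constant sign, so $q$ vanishes somewhere inside it, producing $\ell-2$ distinct real roots of $q$. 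On the other hand, since $\deg P\le r-1$, $P(0)=0$, and $\deg p_c=\sum_{j=1}^{\ell-1}d_j=r-\ell$, formula \eqref{type2:qdef} forces $\deg q\le\ell-2$; moreover $q\not\equiv 0$, since otherwise $P$ would be identically zero and hence $\lambda=0$, contrary to construction. Thus $q$ has degree exactly $\ell-2$, with exactly one simple root in each of the intervals $(\alpha_1,\alpha_2),\dots,(\alpha_{\ell-2},\alpha_{\ell-1})$ and no others. In particular $\textnormal{sgn}\,q$ alternates along $\alpha_1,\dots,\alpha_{\ell-1}$ and is constant on $[\alpha_{\ell-1},\infty)$, so that $\textnormal{sgn}\,q(\alpha_\ell)=\textnormal{sgn}\,q(\alpha_{\ell-1})$. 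Since $P_{-\lambda}=-P_\lambda$ and hence $q_{-\lambda}=-q_\lambda$, after possibly replacing $\lambda$ by $-\lambda$ we arrange $(-1)^{\ell-j+1}q(\alpha_j)>0$ for $j=1,\dots,\ell-1$, which with the previous relation is exactly \eqref{type2:qalternates}.

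For the positivity statement \eqref{type2:Fpositivity}, set $\Theta(t):=F(t)/p_c(t)$ (the common value of $\Theta_1,\dots,\Theta_{\ell-1}$) and $\Theta_\ell(t):=F_\ell(t)/p_c(t)$, both of which are rational functions, smooth up to the relevant endpoints. They satisfy a linear first-order ODE of the shape \eqref{eqn:ThetaODE} (up to a positive constant on the right-hand side), with boundary data $\Theta(\alpha_j)=0$, $(d_j+1)\Theta'(\alpha_j)=c\,q(\alpha_j)$ for $j=1,\dots,\ell-1$ and $\Theta_\ell(\alpha_\ell)=0$, $\Theta_\ell'(\alpha_\ell)=c\,q(\alpha_\ell)$, with $c>0$ fixed; note also $\textnormal{sgn}(F_jp_c)=\textnormal{sgn}(\Theta_j)$ away from the zeros of $p_c$. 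On each of the $\ell-2$ bounded intervals $(\alpha_{j-1},\alpha_j)$ with $j=2,\dots,\ell-1$ — each containing a single simple root of $q$ — the contradiction argument of Lemma \ref{l:positivity} applies verbatim to show that $\Theta$ has no interior zero, whence $(-1)^{\ell-j}\Theta>0$ there. On the two unbounded intervals $(-\infty,\alpha_1)$ and $(\alpha_\ell,\infty)$, which by the above contain no root of $q$, the simpler argument used for $(\alpha_\ell,\infty)$ in Lemma \ref{l:positivity} applies: a hypothetical zero $z$ of $\Theta$ (resp. $\Theta_\ell$), chosen closest to the finite endpoint $\alpha_1$ (resp. $\alpha_\ell$), would satisfy $\Theta'(z)=c\,q(z)\ne 0$ by the ODE, with sign opposite to the one forced by the boundary behaviour at that endpoint, a contradiction; hence $\Theta$ (resp. $\Theta_\ell$) keeps the sign of $q$ at the adjacent $\alpha$ throughout, giving the remaining inequalities in \eqref{type2:Fpositivity}.

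The only genuinely new bookkeeping relative to the Type 1 case is the appearance of the second unbounded interval $(-\infty,\alpha_1)$ and the drop of $\deg q$ from $\ell-1$ to $\ell-2$; neither presents a real difficulty, since the former is dispatched by the same mechanism as $(\alpha_\ell,\infty)$ and the latter is dictated by the count of bounded intervals. The main point that requires care is simply matching signs: verifying that the alternation of $\textnormal{sgn}\,q$ together with the boundary relations $(d_j+1)\Theta'(\alpha_j)=c\,q(\alpha_j)$ reproduce precisely the sign patterns demanded in \eqref{type2:qalternates} and \eqref{type2:Fpositivity}, after the single global choice of the sign of $\lambda$.
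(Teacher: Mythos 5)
Your proposal is correct and follows essentially the same route as the paper, which itself only sketches this lemma by pointing back to Lemmas \ref{l:qflips} and \ref{l:positivity} and noting that there are now only $\ell-2$ bounded intervals carrying the vanishing-integral constraint; your write-up fills in exactly that adaptation (degree drop to $\ell-2$, one simple root of $q$ per bounded interval, and the endpoint/ODE contradiction argument extended to the second unbounded interval $(-\infty,\alpha_1)$ and to $\Theta_\ell$ on $(\alpha_\ell,\infty)$). The remaining sign bookkeeping you flag does check out and is no less detailed than what the paper provides.
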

Given this, the proof of Lemma \ref{l:positivity} goes through in exactly the same way, where we set 
\begin{equation}\label{eqn:ThetaType2}
	\Theta_1 = \dots = \Theta_{\ell -1} := \Theta(t) = \frac{F(t)}{p_c(t)}, \qquad \Theta_\ell(t) := \frac{F_\ell(t)}{p_c(t)}.
\end{equation}
Thus we can again apply Lemma \ref{l:smoothextension} to conclude that for any such choice of $\alpha_1, \dots, \alpha_\ell$, we have a well-defined K\"ahler structure $(g_F, \omega_F, J_F)$ on $\R^{2r}$. Thus, we can ask if we have a version of Theorem \ref{thm:Type1CYrank2} in the Type 2 case. Indeed:
\begin{theorem}\label{th:Type2CY}
Let $m_1, \dots, m_\ell \in \mathbb{Z}$, and suppose that $ -\infty < \alpha_1 < \dots < \alpha_{\ell -1} < 0  < \alpha_\ell < \infty$ can be chosen in such a way that the unique polynomial $q(t)$ of degree $\leq \ell -1$ determined by Lemma \ref{l:determine-q} satisfies
\begin{equation*}
\delta_j = \delta_j(q, \alpha_1, \dots, \alpha_\ell) = (-1)^{\ell -1} m_j, \qquad j = 1, \dots, \ell,
\end{equation*}
and 
\begin{equation}\label{eqn:Type2CYq0}
 q(0) = +i_B.
\end{equation}
Then there exists a complete Calabi-Yau metric $(g, \omega)$ on the total space $M$ of the direct sum bundle 
\begin{equation*}
	E = \bigoplus_{j=1}^\ell \left( \bigoplus_{k=0}^{d_j} L^{m_j}\right) \to B.
\end{equation*}
Moreover, this metric has ALF-type volume growth, i.e. 
\[ {\rm vol}_g(B_g(p_0, R)) \sim R^{2n-1}, \]
where $n = \sum_{j=1}^\ell m_j (d_j+1)$ is the complex dimension of $M$. 
\end{theorem}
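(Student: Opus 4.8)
The plan is to run the proof of Theorem~\ref{th:Type1CY} verbatim, feeding in the Type~2 data assembled in the discussion preceding the statement. First I would set $F_1 = \dots = F_{\ell-1} = F$ as in \eqref{hamiltonian:type2:F}, take $F_\ell$ as in \eqref{type2:Felldef}, and let $q(t)$ be the degree-$(\ell-2)$ polynomial defined by \eqref{type2:qdef}, so that the profile functions $\Theta_1 = \dots = \Theta_{\ell-1} = \Theta = F/p_c$ and $\Theta_\ell = F_\ell/p_c$ of \eqref{eqn:ThetaType2} are in place. The first task is to produce a smooth K\"ahler structure on $\R^{2r}$. Lemma~\ref{l:type2qcondition} delivers the positivity condition \eqref{inequality} (equivalently \eqref{type2:Fpositivity}) for the correct sign of $\lambda$, while the Lagrange--Sylvester interpolation defining $P$ forces each $\alpha_j$ with $j \le \ell-1$ to be a zero of $F$ of order $d_j + 1$ exactly as in Lemma~\ref{l:determine-q}; for the Type~2 end one checks directly that $F_\ell(\alpha_\ell) = P(\alpha_\ell) - \alpha_\ell^{d_B}P(\alpha_\ell)\alpha_\ell^{-d_B} = 0$, and since $F_\ell$ solves $\Theta_\ell' + (d_B t^{-1} + \sum_j d_j(t-\alpha_j)^{-1})\Theta_\ell = 2q$ and $d_\ell = 0$, one gets $\Theta_\ell'(\alpha_\ell) = 2q(\alpha_\ell)$. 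These are precisely the boundary data in Lemma~\ref{l:smoothextension}, so it applies: the K\"ahler structure extends smoothly to $\R^{2r}$ and the moment map is proper onto the standard orthant.

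Next I would record the growth orders. From $F = P + \lambda t^{-d_B}$ with $\deg P = r - \ell + \deg q + 1 = r-1$, both $\Theta$ and $\Theta_\ell$ have degree $(r-1) - \deg p_c = (r-1) - (r-\ell) = \ell - 1$ in the sense of \eqref{degree}. Since $\ell - 1 < \ell + 1$, the completeness criterion Lemma~\ref{l:generalcompleteness}(ii) and the biholomorphism criterion Lemma~\ref{l:complexstructureCn}(ii) both hold, so $(g_F, \omega_F, J_F)$ is a complete K\"ahler structure on $\C^r$. The hypothesis $\delta_j = (-1)^{\ell-1} m_j$ together with $\varepsilon_B = -1$ (so $(-1)^\ell \varepsilon_B m_j = (-1)^{\ell-1} m_j$) lets me invoke Proposition~\ref{prop:Estructure} to obtain a complete induced K\"ahler metric $(g,\omega)$ of the form \eqref{first:gtotalspace} on the total space $M$ of $E = \bigoplus_{j=1}^\ell (\bigoplus_{k=0}^{d_j} L^{m_j})$.

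For Ricci-flatness I would apply Lemma~\ref{lemma:hamiltonian:globalCY}: all the $F_j$ solve \eqref{eq:hamiltonian:weightedCY2} (for $F_\ell$ the extra $t^{-d_B}$ term is killed by the operator $\frac{d}{dt} + d_B t^{-1}$), and $q$ has degree $\le \ell-1$, so the metric is Ricci-flat iff $Scal_{\omega_B} = -2\varepsilon_B d_B q(0)$; using $Scal_{\omega_B} = 2d_B i_B$ and $\varepsilon_B = -1$ this is exactly $q(0) = i_B$, i.e.\ \eqref{eqn:Type2CYq0}. Finally the volume growth follows from Proposition~\ref{prop:generalvolume}(ii) with $\beta = \ell - 1$: $\vol_g(B_g(p_0,R)) \sim R^{(4n-2)/(\ell+1-\beta)} = R^{(4n-2)/2} = R^{2n-1}$, which is the claimed ALF-type growth.

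The only genuinely new ingredient compared with Theorem~\ref{th:Type1CY}, and the step requiring the most care, is the asymmetric profile function $\Theta_\ell$ on the unbounded Type~2 end: one must confirm it both satisfies the smooth-closure boundary data at $\alpha_\ell$ (so Lemma~\ref{l:smoothextension} still yields $\R^{2r}$) and carries the correct sign on $(\alpha_\ell, \infty)$, and this is exactly where the degree-$(\ell-2)$ choice of $q$ and the normalization $F_\ell = P - \alpha_\ell^{d_B}P(\alpha_\ell)\,t^{-d_B}$ are essential — all of which is packaged into Lemma~\ref{l:type2qcondition}, so the argument amounts to quoting that lemma correctly. A minor bookkeeping point is reconciling the statement's reference to Lemma~\ref{l:determine-q} (phrased for the degree-$(\ell-1)$ Type~1 construction) with the degree-$(\ell-2)$ construction \eqref{type2:qdef} actually in force here; this is resolved by the remarks in \S\ref{section:Type2CY} noting that the divisibility and interpolation arguments carry over unchanged.
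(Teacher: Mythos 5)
Your proposal is correct and follows essentially the same route as the paper, which simply runs the Type 1 argument of Theorem \ref{th:Type1CY} and records the same exceptions you identify: $\delta_j = (-1)^{\ell-1}m_j = (-1)^\ell \varepsilon_B m_j$ with $\varepsilon_B = -1$, the condition $q(0) = i_B$ matching $Scal_{\omega_B} = -2\varepsilon_B d_B q(0)$, and the fact that $\Theta_1, \dots, \Theta_\ell$ have degree $\ell-1$ (so completeness, the biholomorphism to $\C^r$, and the volume exponent $R^{2n-1}$ all follow from Lemmas \ref{l:generalcompleteness}, \ref{l:complexstructureCn} and Proposition \ref{prop:generalvolume}). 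Your extra verification of the boundary data for $\Theta_\ell$ at $\alpha_\ell$ and the remark reconciling the statement's reference to Lemma \ref{l:determine-q} with the degree-$(\ell-2)$ construction \eqref{type2:qdef} are accurate refinements of what the paper leaves implicit via Lemma \ref{l:type2qcondition}.
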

\begin{proof}
The proof is identical to that of Theorem \ref{th:Type1CY}, with the following minor exceptions. Firstly, notice that we have in this case 
\[ \delta_j = (-1)^{\ell -1}m_j = (-1)^\ell \varepsilon_B m_j, \]
and 
\[ Scal_{\omega_B} = 2d_B i_B = -2\varepsilon_B d_B q(0), \]
so that we are precisely in the setting of Proposition \ref{prop:Estructure} and Lemma \ref{lemma:hamiltonian:globalCY}. Moreover, observe that the functions $\Theta_1, \dots, \Theta_\ell$ have degree $\ell - 1$. Indeed, this is clear from the special form \eqref{type2:Fdef}, \eqref{type2:Felldef}, since $q(t)$ is a polynomial of degree exactly $\ell - 2$, and therefore $P(t)$ has degree $r - 1$. 
\end{proof}

\subsection{Examples 2: Proof of Proof of Theorem B, \texorpdfstring{$(a)$}{(a)} }
Given Theorem \ref{th:Type2CY}, the proof of Theorem \ref{mtheorem:Type2}, (i), ($a$) is particularly simple:
\begin{prop}[Theorem \ref{mtheorem:Type2}, (i), ($a$)]\label{thm:Type2CYrank2}
Let $B$ be a K\"ahler-Einstein Fano manifold with Fano index $i_B$, and let $L \to B$ be a line bundle with $L^{i_B} = K_B$. Let $m_1, m_2$ be any two positive integers such that $(d_1+1)m_1 + m_2 = i_B$. Then there exists a complete Calabi-Yau metric on the total space 
\begin{equation}
    E := \left( \bigoplus_{k=0}^{d_1} L^{m_1} \right) \oplus L^{m_2}  \to B.
\end{equation}
\end{prop}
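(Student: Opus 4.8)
The plan is to reduce the statement to Theorem~\ref{th:Type2CY}. We place ourselves in the Type~2 setting with $\ell = 2$ and $d_2 = 0$, so that $r = d_1 + 2$ and the bundle $E = \bigl(\bigoplus_{k=0}^{d_1}L^{m_1}\bigr)\oplus L^{m_2}$ is precisely the bundle \eqref{generalE} for this data. According to Theorem~\ref{th:Type2CY} it then suffices to produce real numbers $-\infty < \alpha_1 < 0 < \alpha_2 < \infty$ for which the polynomial $q(t)$ of the Type~2 construction (defined in \eqref{type2:qdef}) satisfies $q(0) = i_B$ and
\[
\delta_j(q,\alpha_1,\alpha_2) \;=\; (-1)^{\ell-1}m_j \;=\; -m_j, \qquad j = 1,2 ;
\]
granting this, Theorem~\ref{th:Type2CY} furnishes the complete Calabi--Yau metric on the total space $M$ of $E$.

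Following the normalization used in the proof of Proposition~\ref{thm:Type1CYrank2}, I would exploit the affine freedom in the hamiltonian $2$-form to fix $\eta_B = 0$ and to set $\alpha_1 = -1$, so that $\alpha_2 =: \alpha > 0$ is the only remaining parameter. Since $\ell = 2$, the polynomial $q$ has degree $\le \ell - 2 = 0$ and is therefore the constant $q(0) = i_B$; the integration constant $\lambda$ in \eqref{type2:Fdef} is thereby determined, and its sign is the one provided by Lemma~\ref{l:type2qcondition}, which is exactly what makes the positivity conditions \eqref{type2:Fpositivity} hold (note $q(\alpha_1) = q(0) = i_B > 0$, consistent with that lemma). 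Specializing the definition \eqref{eqn:partialdegree} to $\ell = 2$, $d_2 = 0$, $\alpha_1 = -1$, $\alpha_2 = \alpha$ and $q \equiv i_B$ gives
\[
\delta_1 \;=\; \frac{q(\alpha_1)}{d_1+1}\,\frac{\alpha_2}{\alpha_1-\alpha_2} \;=\; -\,\frac{i_B}{d_1+1}\cdot\frac{\alpha}{1+\alpha}, \qquad \delta_2 \;=\; q(\alpha_2)\,\frac{\alpha_1}{\alpha_2-\alpha_1} \;=\; -\,\frac{i_B}{1+\alpha},
\]
and these automatically satisfy $(d_1+1)\delta_1 + \delta_2 = -i_B$, in agreement with Lemma~\ref{l:totaldegre} (here $q_\ell = q_2 = 0$, and $-i_B = -(d_1+1)m_1 - m_2$). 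Since $m_2 > 0$ forces $(d_1+1)m_1 < i_B$, the quantity $\rho := (d_1+1)m_1/i_B$ lies in $(0,1)$; choosing $\alpha := \rho/(1-\rho) = (d_1+1)m_1/m_2 > 0$ gives $\alpha/(1+\alpha) = (d_1+1)m_1/i_B$ and $1/(1+\alpha) = m_2/i_B$, hence $\delta_1 = -m_1$ and $\delta_2 = -m_2$, as required. (Equivalently, $\alpha\mapsto\delta_1(\alpha)$ is continuous and strictly decreasing from $0$ down to $-i_B/(d_1+1)$ on $(0,\infty)$, so it attains the value $-m_1$.) Feeding these $\alpha_j$ into Theorem~\ref{th:Type2CY} completes the proof.

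The main obstacle is, compared to the other cases in the paper, essentially absent: because $\ell = 2$ forces $q$ to be constant, there is no transcendental integral condition of the type analyzed in Lemma~\ref{l:CYSteadyrank2aux}, and the system $\delta_j = -m_j$ is solved by an explicit rational value of $\alpha$. The only points that need a moment's attention are verifying that the target value $-m_1$ indeed lies in the range of $\delta_1$ as $\alpha$ varies over $(0,\infty)$ --- immediate from $0 < (d_1+1)m_1 < i_B$ --- and confirming that the sign of $\lambda$ dictated by Lemma~\ref{l:type2qcondition} is compatible with the Calabi--Yau normalization $q(0) = i_B > 0$, which it is since that lemma yields $q(\alpha_1) > 0$ and $q(\alpha_1) = q(0)$ here.
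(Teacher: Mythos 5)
Your proof is correct and follows essentially the same route as the paper: reduce to Theorem \ref{th:Type2CY}, normalize $\alpha_1 = -1$, observe that for $\ell = 2$ the polynomial $q$ is forced to be the constant $i_B$, and then solve $\delta_1 = -m_1$, $\delta_2 = -m_2$ by an explicit choice of $\alpha$. In fact your value $\alpha = (d_1+1)m_1/m_2$ is the one that makes $\delta_2 = -i_B/(\alpha+1) = -m_2$ for general $d_1$ (the paper's displayed $\alpha = m_1/(i_B - m_1)$ coincides with this only when $d_1 = 0$, while its subsequent verification implicitly uses your value), so your computation is if anything slightly more careful on this point.
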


\begin{proof}
We choose $\alpha_1 < 0 < \alpha_2$, and similarly to the situation in Theorem \ref{thm:Type1CYrank2} we can normalize $\alpha_1 = -1$. Our domain therefore is 
\[ \mathcal{D} = \mathcal{D}_2 = (-\infty, -1) \times (\alpha, \infty), \]
where we again use the shorthand $\alpha := \alpha_2$. The polynomial $P(t)$ is determined by 
\[P(t) = At, \qquad P(-1) = (-1)^{d_B+1}\lambda. \]
This means that 
\[ A  =  (-1)^{d_B}\lambda. \]
In this case we have 
\[q(t) = P'(t) + d_B t^{-1}P(t) = A(d_B+1).\]
We would like to have $q(t) = q(0)= -\varepsilon_B i_B = i_B$, so we set 
\[ \lambda = (-1)^{d_B} \frac{i_B}{(d_B+1)}, \qquad  A = \frac{i_B}{(d_B+1)}.\]
Then by Lemma \ref{l:computevectorfield1} we have 
\[K_2 = -i_B \left(\frac{\alpha}{(d_1 +1)(\alpha+1)}v_1 + \frac{1}{\alpha +1} v_2 \right).\]
We simply choose $\alpha$ to be
\[\alpha = \frac{m_1}{i_B - m_1}.\] 
Then we have 
\[\delta_2 = \frac{-i_B}{\alpha + 1} = -m_2 = (-1)^{\ell -1} m_2. \]
By Lemma \ref{l:totaldegre}, we have
\[ \delta = (d_1+1)\delta_1 + \delta_2 = -q(0) = -i_B, \]
so that 
\[ \delta_1 = -\left(\frac{i_B - m_2}{d_1 +1}\right) = - m_1 = (-1)^{\ell-1}m_1.  \]
The result then follows immediately from Theorem \ref{th:Type2CY}.
\end{proof}

The proof of part (ii), ($a$) of Thoerem \ref{mtheorem:Type2} requires an analysis similar to that of Proposition \ref{thm:Type1CYrank2}:
\begin{prop}[Theorem \ref{mtheorem:Type2}, (ii), ($a$)]\label{thm:Type2CYrank3}
Let $B$ be a K\"ahler-Einstein Fano manifold with Fano index $i_B$, and let $L \to B$ be a line bundle with $L^{i_B} = K_B$. Let $m_1, m_2, m_3$ be positive integers such that $(d_1+1)m_1 + (d_2+1)m_2 + m_3 = i_B$. Then there exists a complete Calabi-Yau metric on the total space 
\begin{equation}
    E := \left( \bigoplus_{k=0}^{d_1} L^{m_1} \right) \oplus \left( \bigoplus_{k=0}^{d_2} L^{m_2} \right) \oplus L^{m_3}  \to B.
\end{equation}
\end{prop}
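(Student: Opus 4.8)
The plan is to apply Theorem~\ref{th:Type2CY} with $\ell = 3$ and $d_\ell = d_3 = 0$. Together with Proposition~\ref{prop:Estructure}, this reduces the problem to producing reals $-\infty < \alpha_1 < \alpha_2 < 0 < \alpha_3 < \infty$ such that the unique linear polynomial $q(t) = q_1 t + i_B$ --- with $q(0) = i_B$ as in \eqref{eqn:Type2CYq0} and $q_1$ fixed by the single closed-interval constraint $\int_{\alpha_1}^{\alpha_2} t^{d_B} p_c(t) q(t)\,dt = 0$, where $p_c(t) = (t-\alpha_1)^{d_1}(t-\alpha_2)^{d_2}$ (note $d_3 = 0$, so $\alpha_3$ enters neither $p_c$ nor $q$) --- satisfies $\delta_j(q, \alpha_1, \alpha_2, \alpha_3) = m_j$ for $j = 1,2,3$. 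Since the hypothesis $(d_1+1)m_1 + (d_2+1)m_2 + m_3 = i_B = q(0)$ is precisely the relation forced on $\delta = \sum_j (d_j+1)\delta_j$ by Lemma~\ref{l:totaldegre}, one of the three equations is automatically implied by the other two, so it suffices to arrange $\delta_2 = m_2$ and $\delta_3 = m_3$ with $\delta_1 > 0$.

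Proceeding as in Proposition~\ref{thm:Type1CYrank2}, I would use the two-parameter family of compatible hamiltonian $2$-forms to normalize $\eta_B = 0$ and $\alpha_1 = -1$, leaving $\alpha_2 = \beta \in (-1,0)$ and $\alpha_3 = \alpha \in (0,\infty)$ as the free parameters, with $q_1 = q_1(\beta)$ depending on $\beta$ alone. Exactly as in Lemmas~\ref{l:qflips}, \ref{l:type2qcondition} and \ref{l:positivity}, one verifies the required sign conditions: the key point is that $|t| < 1$ on $(-1,0)$ forces $q_1(\beta) > i_B$, so that $q(\alpha_1) < 0$, while $q(\alpha_2), q(\alpha_3) > 0$ on the admissible range of $\beta$; consequently the functions $F_1 = \dots = F_{\ell-1} = F$ and $F_\ell$ obey the positivity \eqref{type2:Fpositivity}. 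Moreover $\Theta_1 = \dots = \Theta_{\ell-1}$ and $\Theta_\ell$ have degree $\ell - 1 = 2$, so Lemmas~\ref{l:smoothextension}, \ref{l:generalcompleteness} and \ref{l:complexstructureCn} apply and yield a complete K\"ahler structure on $\R^{2r}$ biholomorphic to standard $\C^r$.

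It then remains to solve $\delta_2(\beta,\alpha) = m_2$, $\delta_3(\beta,\alpha) = m_3$. From Lemma~\ref{l:computevectorfield1} and \eqref{eqn:partialdegree} one has explicit rational expressions
\[ \delta_3 = \frac{-\beta\, q(\alpha)}{(\alpha+1)(\alpha-\beta)}, \qquad \delta_2 = \frac{\alpha\, q(\beta)}{(d_2+1)(\beta+1)(\alpha-\beta)}, \qquad \delta_1 = i_B - (d_2+1)\delta_2 - \delta_3. \]
I would then establish an auxiliary lemma in the spirit of Lemma~\ref{l:CYSteadyrank2aux}, recording the boundary behaviour of the pair $(\delta_2,\delta_3)$: as $\alpha \to 0^+$, $(\delta_2,\delta_3) \to (0, i_B)$; as $\alpha \to \infty$, $(\delta_2,\delta_3) \to \big(q(\beta)/((d_2+1)(\beta+1)),\, 0\big)$; as $\beta \to 0^-$, $(\delta_1,\delta_2,\delta_3) \to (0,\, i_B/(d_2+1),\, 0)$; and as $\beta \to -1^+$ the two roots $\alpha_1,\alpha_2$ coalesce and the configuration degenerates to the $\ell = 2$, Type~2 situation of Proposition~\ref{thm:Type2CYrank2} with $\P^{d_1}\times\P^{d_2}$ replaced by $\P^{d_1+d_2+1}$ (and $m_1 = m_2$ in the limit), for which the limiting values of the $\delta_j$ are available in closed form. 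Combining these limits with a two-step intermediate-value argument --- first pick a continuous branch $\alpha = \alpha(\beta)$ with $\delta_3(\beta,\alpha(\beta)) = m_3$ (possible since $m_3 < i_B$, which holds because $m_1, m_2 \geq 1$), then move $\beta$ over $(-1,0)$ so that $\delta_2(\beta,\alpha(\beta)) = m_2$ --- or, more robustly, with a topological degree computation showing that $(\beta,\alpha) \mapsto (\delta_2,\delta_3)$ maps the boundary of the compactified parameter rectangle once around the boundary of the open triangle $\{\delta_2,\delta_3 > 0,\ (d_2+1)\delta_2 + \delta_3 < i_B\}$, produces the desired $(\alpha_1,\alpha_2,\alpha_3)$. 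Theorem~\ref{th:Type2CY} then delivers the complete Calabi-Yau metric on $M$, with ALF-type volume growth $\mathrm{vol}_g(B_g(p_0,R)) \sim R^{2n-1}$.

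The main obstacle is exactly this last step: showing that the two-parameter family of degree vectors genuinely fills the relevant region of the degree-simplex --- equivalently, that the three $\delta_j$ can be driven simultaneously to the prescribed positive integer values subject to $\sum_j (d_j+1)\delta_j = i_B$ --- rather than merely a one-parameter curve or a proper subregion. This forces one to control the \emph{joint} behaviour of $\delta_2$ and $\delta_3$, either through monotonicity of $\alpha \mapsto \delta_3$ at fixed $\beta$ together with monotonicity of the composite $\beta \mapsto \delta_2(\beta,\alpha(\beta))$, or through the boundary degree count; and the coalescent limit $\beta \to -1^+$, where the order of the hamiltonian $2$-form drops, is the delicate endpoint that must be pinned down carefully. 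Everything else --- the sign and positivity checks, the degree and growth-rate bookkeeping, and the identification of the bundle $E$ --- is routine and already in place from the rank-$2$ case.
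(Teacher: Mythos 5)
Your outline is the same route the paper takes: reduce via Theorem \ref{th:Type2CY} and Proposition \ref{prop:Estructure} to solving $\delta_j = m_j$ for a two-parameter family of Type 2 configurations with one root normalized (you fix $\alpha_1=-1$, the paper fixes $\alpha_2=-1$; this is only a reparametrization), observe via Lemma \ref{l:totaldegre} that one of the three equations is redundant, and then fill the open triangle $\{(d_1+1)x+(d_2+1)y+z=i_B,\ x,y,z>0\}$ by tracking the boundary behaviour of $\vec\delta$ and invoking an intermediate-value/degree argument. Your sign checks ($q_1>i_B$, hence $q(\alpha_1)<0$, $q(\alpha_2),q(\alpha_3)>0$), the degree count $\deg\Theta_j=\ell-1$, and the easy limits ($\alpha\to 0^+$, $\alpha\to\infty$, $\beta\to 0^-$) are all correct and match the paper's Lemma \ref{l:CYSteadyrank3aux} and Claim \ref{claim:alpha1tominus1}.

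The genuine gap is the coalescence limit $\beta\to -1^+$ (equivalently $\alpha_1\to\alpha_2$), which you flag but do not prove. There $\delta_1$ and $\delta_2$ are of the form $\tfrac{q(\alpha_1)}{\alpha_1-\alpha_2}\cdot(\cdots)$ with $q(\alpha_1)\to 0$, so their limits are governed by the \emph{rate} at which $q_1\to i_B$, i.e.\ by $q_1'$ at the coalescence point; this is exactly the content of the paper's Claim \ref{claim:q1tominus1}, which requires expanding both weighted integrals defining $q_1$ to \emph{two} orders in $(\alpha_1+1)$ because the leading terms cancel in the quotient $\tfrac{q_1-i_B}{\alpha_1+1}$. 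Your substitute --- asserting that the configuration ``degenerates to the $\ell=2$ case, for which the $\delta_j$ are available in closed form'' --- is a plausible heuristic for what the answer should be, but it does not establish that the $\delta_j$ of the order-$3$ family actually converge to those values; without that expansion the boundary of your parameter rectangle is not known to map near the correct edge of the triangle, and the filling argument collapses. A secondary caveat: your ``first solve $\delta_3=m_3$ along a continuous branch $\alpha(\beta)$'' step needs a monotonicity or implicit-function justification that you have not supplied ($\delta_3$ is not obviously monotone in $\alpha$ at fixed $\beta$); the boundary-degree version you mention as the robust alternative is the one the paper actually uses, and it only needs the four boundary limits --- so the missing Claim-\ref{claim:q1tominus1}-type computation is the one indispensable ingredient to add.
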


Similar to the $\ell = 2$ Type 1 case above, we have the following useful lemma:
\begin{lemma}\label{l:CYSteadyrank3aux}
Set $\alpha_2 = -1$, and consider the situation where we have $\alpha_1 < -1$, $\alpha_3 > 0$, $a \in \R$, and integers $d_1, d_2 \geq 0$, $d_B >0$, and $i_B > 1$. Then, for any integers $m_2 > m_1 > 0, \,  m_3 > 0$ satisfying 
\[ (d_1 + 1)m_1 + (d_2+1)m_2 + m_3 = i_B, \]
 there exist choices of $\alpha_1, \alpha_3$ such that 
 \[ \delta_i(q, \alpha_1, -1, \alpha_3) = m_i \]
 with respect to the unique linear polynomial $q(t) = q_1 t + q_0$ satisfying 
 \begin{equation}\label{eqn:auxqdef}
 	q(0) = q_0 = i_B, \qquad \int_{\alpha_1}^{-1}e^{ax}x^{d_B}(x-\alpha_1)^{d_1}(x+1)^{d_2}q(t)\, dx = 0.
 \end{equation}
\end{lemma}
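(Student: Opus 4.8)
The plan is to reduce the three conditions $\delta_j = m_j$ to a single intermediate value argument in the parameter $\alpha_1$, following the template of Lemma \ref{l:CYSteadyrank2aux}. The first observation is that, since $\alpha_2 = -1$ is fixed and $d_\ell = d_3 = 0$, the point $\alpha_3$ does not appear in the defining integral \eqref{eqn:auxqdef}; hence the linear polynomial $q(t) = q_1 t + i_B$ depends only on $\alpha_1$, with
\[
q_1 = q_1(\alpha_1) = -\,i_B\,\frac{\int_{\alpha_1}^{-1} e^{ax}x^{d_B}(x-\alpha_1)^{d_1}(x+1)^{d_2}\,dx}{\int_{\alpha_1}^{-1} e^{ax}x^{d_B+1}(x-\alpha_1)^{d_1}(x+1)^{d_2}\,dx}.
\]
Because $e^{ax}x^{d_B}(x-\alpha_1)^{d_1}(x+1)^{d_2}$ has constant sign on $(\alpha_1,-1)$ and its product with $q$ integrates to zero there, the linear function $q$ has a unique simple root in $(\alpha_1,-1)$; combined with $q(0) = i_B > 0$ this forces $q_1 > 0$, $q(\alpha_1) < 0$, and $q(-1), q(\alpha_3) > 0$, so the degree and positivity requirements of Lemma \ref{l:type2qcondition} hold automatically for every admissible $\alpha_1 < -1 < 0 < \alpha_3$.

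Next, for each fixed $\alpha_1 \in (-\infty,-1)$ I would solve $\delta_3(\alpha_1,\alpha_3) = m_3$ for $\alpha_3 > 0$. Using \eqref{eqn:partialdegree}, one has $\delta_3 = q(\alpha_3)\,\tfrac{-\alpha_1}{(\alpha_3-\alpha_1)(\alpha_3+1)}$, and clearing the (nonvanishing, for $\alpha_3>0$) denominators turns the equation into a quadratic in $\alpha_3$ whose product of roots equals $\alpha_1(i_B-m_3)/m_3 < 0$; here $m_3 < i_B$ because $(d_1+1)m_1+(d_2+1)m_2+m_3 = i_B$ with all summands positive. Its discriminant is manifestly positive, so there is exactly one positive root $\alpha_3 = \alpha_3(\alpha_1)$, depending continuously (indeed smoothly) on $\alpha_1$. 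By Lemma \ref{l:totaldegre}, since $q$ has degree $\le \ell - 2$,
\[
(d_1+1)\delta_1 + (d_2+1)\delta_2 + \delta_3 = \delta = q(0) = i_B = (d_1+1)m_1+(d_2+1)m_2+m_3,
\]
so once $\delta_3 = m_3$ is arranged one has $\delta_1 = m_1 \iff \delta_2 = m_2$. It therefore suffices to find $\alpha_1$ with $f(\alpha_1) := \delta_2\big(\alpha_1,\alpha_3(\alpha_1)\big) = m_2$, and $f$ is continuous on $(-\infty,-1)$.

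The heart of the matter is the two endpoint limits of $f$. As $\alpha_1 \to -1^{-}$, the expansion $q_1(\alpha_1) = i_B + \tfrac{i_B(d_2+1)}{d_1+d_2+2}(\alpha_1+1) + o(|\alpha_1+1|)$ — the analogue of Claim \ref{claim:q1tominus1PARTONE}, whose proof is a short Beta-function/Taylor computation localized near $x=-1$ and which I would state without reproducing — together with $\alpha_3(\alpha_1) \to \alpha_3^{*} := \tfrac{i_B}{m_3}-1 > 0$ yields, after substitution into $\delta_2 = \tfrac{q(-1)}{d_2+1}\cdot\tfrac{\alpha_1\alpha_3}{(1+\alpha_1)(1+\alpha_3)}$,
\[
f(\alpha_1) \longrightarrow \frac{i_B - m_3}{d_1+d_2+2} = \frac{(d_1+1)m_1 + (d_2+1)m_2}{d_1+d_2+2},
\]
a convex combination of $m_1$ and $m_2$, hence $< m_2$ precisely because $m_1 < m_2$. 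As $\alpha_1 \to -\infty$, the quadratic again gives $\alpha_3(\alpha_1) \to \alpha_3^{*}$ while $q_1(\alpha_1) \to 0$ (so $q(-1)\to i_B$), and hence $f(\alpha_1) \to \tfrac{i_B-m_3}{d_2+1} = m_2 + \tfrac{(d_1+1)m_1}{d_2+1} > m_2$. By the intermediate value theorem there is $\alpha_1 \in (-\infty,-1)$ with $f(\alpha_1) = m_2$; taking $\alpha_3 = \alpha_3(\alpha_1)$ then gives $\delta_2 = m_2$ and $\delta_3 = m_3$ by construction, and $\delta_1 = m_1$ by the total-degree identity, which is the assertion.

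The step I expect to be the main obstacle is exactly this endpoint asymptotic analysis: pinning down the first-order behaviour of $q_1$ at $\alpha_1 = -1$ (hence the precise value $f(-1^{-})$), verifying $\alpha_3(\alpha_1) \to \alpha_3^{*}$ at both ends, and — for the weighted version $a \neq 0$ needed later for steady solitons, as opposed to the Calabi–Yau case $a = 0$ — controlling $q_1$ and $\alpha_3(\alpha_1)$ as $\alpha_1 \to -\infty$, where the factor $e^{ax}$ concentrates the two integrals and the routine but delicate estimates behind Claim \ref{claim:q1tominus1PARTONE} must be adapted. Once these limits are in hand, the remainder is bookkeeping with \eqref{eqn:partialdegree} and Lemmas \ref{l:totaldegre} and \ref{l:type2qcondition}.
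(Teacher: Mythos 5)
Your argument is correct in substance but follows a genuinely different route from the paper. The paper keeps both parameters free: it records the limits of $q_1$ at $\alpha_1\to -1$ and $\alpha_1\to-\infty$ (Claim \ref{claim:q1tominus1}), computes the limiting behaviour of the full map $\vec\delta(\alpha_1,\alpha_3)$ along the four sides of large parameter rectangles, and concludes by a two-dimensional topological (boundary-covering/degree) argument that the image of $\vec\delta$ is the whole open triangle $\{y>x>0,\,z>0\}$ in the plane $(d_1+1)x+(d_2+1)y+z=i_B$, which contains $(m_1,m_2,m_3)$. You instead eliminate one parameter exactly: since $d_3=0$ the constraint $\delta_3=m_3$ is a quadratic in $\alpha_3$ with product of roots $\alpha_1(i_B-m_3)/m_3<0$, hence a unique positive root $\alpha_3(\alpha_1)$ varying continuously, and the total-degree identity of Lemma \ref{l:totaldegre} reduces the remaining two conditions to the single equation $\delta_2=m_2$, settled by a one-dimensional intermediate value argument with endpoint values $\tfrac{(d_1+1)m_1+(d_2+1)m_2}{d_1+d_2+2}<m_2<\tfrac{i_B-m_3}{d_2+1}$. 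Your endpoint computations agree with the paper's (your first-order expansion of $q_1$ at $\alpha_1=-1$ has the correct coefficient $\tfrac{d_2+1}{d_1+d_2+2}$), your formula for $\delta_2$ from \eqref{eqn:partialdegree} is right, and your reduction makes completely explicit where $m_2>m_1$ enters. What the paper's version buys in exchange is the stronger conclusion that \emph{every} point of the open triangle is realized, not just the given integer point; what yours buys is the avoidance of the somewhat informal covering argument, at the cost of the quadratic bookkeeping.

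The one caveat concerns exactly the step you flag as delicate, and it is a weakness shared with the paper's own Claim \ref{claim:q1tominus1}: the limit $q_1\to 0$ as $\alpha_1\to-\infty$ is correct for $a\le 0$ (mass concentrates near $x=\alpha_1$, or the integrals are polynomial), but for $a>0$ the weight $e^{ax}$ concentrates the integrals near $x=-1$, and one finds instead $q_1\to q_1^\infty(a):=-i_B\int_{-\infty}^{-1}e^{ax}x^{d_B}(x+1)^{d_2}dx\big/\int_{-\infty}^{-1}e^{ax}x^{d_B+1}(x+1)^{d_2}dx\in(0,i_B)$, which tends to $i_B$ as $a\to+\infty$. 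Interestingly, if $q_1^\infty(a)<m_3$ your argument survives untouched: then $\alpha_3(\alpha_1)\to\tfrac{i_B-m_3}{m_3-q_1^\infty}$ and the factor $i_B-q_1^\infty$ cancels, so $f\to\tfrac{i_B-m_3}{d_2+1}$ exactly as you claimed. But when $q_1^\infty(a)\ge m_3$ (large $a$), the positive root $\alpha_3(\alpha_1)$ escapes to infinity and $f\to\tfrac{i_B-q_1^\infty}{d_2+1}$, which can drop below $m_2$, so the bracket closes only for $a$ in a restricted range; the same issue undercuts the paper's boundary-covering argument on the $\alpha_1\to-\infty$ side. So relative to the paper your proposal is not missing anything, but to claim the lemma for all $a\in\R$ (as needed for the one-parameter family of steady solitons) both arguments require a genuine repair of the $a>0$, $\alpha_1\to-\infty$ asymptotics rather than a routine adaptation.
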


\begin{proof}
By \eqref{eqn:auxqdef}, $q(t)$ satisfies $q_0 = i_B$ and 
\[ q_1 = -i_B \left( \frac{\int_{\alpha_1}^{-1}e^{ax}x^{d_B}(x-\alpha_1)^{d_1}(x+1)^{d_2}\, dx}{\int_{\alpha_1}^{-1}e^{ax}x^{d_B+1}(x-\alpha_1)^{d_1}(x+1)^{d_2}\, dx} \right). \]
First we have the following claim:
\begin{claim}\label{claim:q1tominus1}
Viewing $q_1 = q_1(\alpha_1)$, we have that 
\[ q_1 \to i_B, \qquad q_1' \to i_B \left( \frac{d_2 + 1}{(d_1+1) +(d_2 +1)} \right),\]
as $\alpha_1 \to -1$, and 
\[ q_1 \to 0 \qquad \textnormal{as} \qquad  \alpha_1 \to -\infty.\]
\end{claim}
\begin{proof}
To treat the limit as $\alpha_1 \to -1$, define
\[ G_1(\alpha_1) = \int_{\alpha_1}^{-1}e^{ax}x^{d_B}(x-\alpha_1)^{d_1}(x+1)^{d_2}\, dx, \qquad G_2(\alpha_2) = \int_{\alpha_1}^{-1}e^{ax}x^{d_B+1}(x-\alpha_1)^{d_1}(x+1)^{d_2}\, dx. \]
Expanding $G_1$ and $G_2$ at $\alpha_1 = -1$, we get 
\begin{equation*}
\begin{split}
 G_1(\alpha_1) = (-1&)^{d_B + d_1 + 1}e^{-a}\left( \frac{d_1 ! \cdot d_2!}{(d_1 + d_2 + 1)!}\right) (\alpha_1 + 1)^{d_1 + d_2 + 1} \\
  &+ (-1)^{d_B + d_1 + 2}e^{-a}(d_B -a)\left( \frac{d_1 ! (d_2+1)!}{(d_1 + d_2 + 2)!}\right) (\alpha_1 + 1)^{d_1 + d_2 + 2} + O\left((\alpha_1 + 1)^{d_1 + d_2 + 3} \right),  \end{split}
 \end{equation*}
and similarly 
\begin{equation*}
\begin{split}
 G_2(\alpha_1) = (-1&)^{d_B + d_1 + 2}e^{-a}\left( \frac{d_1 ! \cdot d_2!}{(d_1 + d_2 + 1)!}\right) (\alpha_1 + 1)^{d_1 + d_2 + 1} \\
 &+ (-1)^{d_B + d_1 + 3}e^{-a}(d_B + 1 - a)\left( \frac{d_1 ! (d_2+1)!}{(d_1 + d_2 + 2)!}\right) (\alpha_1 + 1)^{d_1 + d_2 + 2} + O\left((\alpha_1 + 1)^{d_1 + d_2 + 3} \right).  
 \end{split}
 \end{equation*}
 From this it's straightforward to compute that
 \[ q_1(\alpha_1) = -i_B\left( -1 -\left(\frac{d_2+1}{d_1 +d_2 +1}\right)(\alpha_1 + 1) \right) + O\left( (\alpha_1 + 1)^2 \right), \]
 from which the desired limits follow immediately. For the case when $\alpha_1 \to -\infty$, we simply note %that successive applications of l'H\^{o}pital's rule gives 
 \[ \lim_{\alpha_1 \to -\infty}  \frac{\int_{\alpha_1}^{-1}e^{ax}x^{d_B}(x-\alpha_1)^{d_1}(x+1)^{d_2}\, dx}{\int_{\alpha_1}^{-1}e^{ax}x^{d_B+1}(x-\alpha_1)^{d_1}(x+1)^{d_2}\, dx} = \lim_{\alpha_1 \to -\infty} \frac{e^{a\alpha_1}\alpha_1^{d_B}(\alpha_1+1)^{d_2}}{e^{a\alpha_1}\alpha_1^{d_B+1}(\alpha_1+1)^{d_2}} = \lim_{\alpha_1 \to -\infty} \frac{1}{\alpha_1} = 0.  \]
\end{proof}
Recall that we have 
\[ \delta_1 = \frac{-\alpha_3 q(\alpha_1)}{(d_1 + 1)(\alpha_1+1)(\alpha_1-\alpha_3)}, \qquad \delta_2 = \frac{\alpha_1\alpha_3 q(-1)}{(d_1 + 2)(\alpha_1+1)(\alpha_3+1)}, \qquad \delta_3 = \frac{-\alpha_1 q(\alpha_3)}{(\alpha_3-\alpha_1)(\alpha_3+1)}. \]
We collect these into a map $\vec{\delta}: (-\infty, -1) \times (0, \infty) \to \R^3$ by 
\begin{equation*}
\vec{\delta}(\alpha_1, \alpha_3) :=(\delta_1(q, \alpha_1, \alpha_3), \delta_2(q, \alpha_1, \alpha_3), \delta_3(q, \alpha_1, \alpha_3)),
\end{equation*}
where $q(t)$ is determined from $\alpha_1$ and $\alpha_3$ by \eqref{eqn:auxqdef}. Then by Lemma \ref{l:totaldegre}, we have that the image of $\vec{\delta}$ lies on the plane 
\[ (d_1+1)x + (d_2+1)y + z = i_B.  \]
\begin{claim}\label{claim:alpha1tominus1}
For any fixed $\alpha_3$, we have that 
\begin{equation*}
\lim_{\alpha_1 \to -1} \vec{\delta} = \frac{i_B}{d_1 + d_2 +2}\left(\frac{\alpha_3}{\alpha_3 + 1}, \frac{\alpha_3}{\alpha_3 + 1}, \frac{d_1 + d_2 +2}{\alpha_3 + 1}  \right), 
\end{equation*}
and 
\begin{equation*}
\lim_{\alpha_1 \to -\infty} \vec{\delta} = \frac{i_B}{d_2+1}\left( 0, \, \frac{\alpha_3}{\alpha_3 + 1}, \, \frac{d_2+1}{\alpha_3 + 1} \right).
\end{equation*}
\end{claim}
\begin{proof}
By Claim \ref{claim:q1tominus1}, we have that $\lim_{\alpha_1 \to -1} q(\alpha_3) = i_B(\alpha_3 + 1)$. From this it's clear that 
\[ \delta_3 \xrightarrow{\alpha_1 \to -1} \frac{i_B}{\alpha_3 + 1}. \]
By Lemma \ref{l:totaldegre} again, we only have to check that $\delta_2 \to \frac{i_B}{d_1 + d_2 + 2}\left( \frac{\alpha_3}{\alpha_3 + 1}\right)$. However this is also immediate from Claim \ref{claim:q1tominus1}, since 
\[ \lim_{\alpha_1 \to -1} \delta_2 = \frac{i_B}{d_2 + 1} \left( \frac{\alpha_3}{\alpha_3 + 1} \right) \lim_{\alpha_1 \to -1} \frac{q_1 - i_B}{\alpha_1 +1} = \frac{i_B}{d_2 + 1} \left( \frac{\alpha_3}{\alpha_3 + 1} \right)  q_1'(-1). \]
The limit as $\alpha_1 \to -\infty$ follows directly from Claim \ref{claim:q1tominus1}.
\end{proof}
On the other hand, if we fix $\alpha_1$ and send $\alpha_3 \to \infty$, we will have that 
\[ \lim_{\alpha_3 \to \infty} \vec{\delta} = \left( \frac{q_1 \alpha_1 + i_B}{(d_1+1)(\alpha_1 + 1)}, \, \frac{\alpha_1(i_B - q_1)}{(d_2+1)(\alpha_1 + 1)}, \,  0 \right).\]
By Claim \ref{claim:q1tominus1}, we have that 
\[ \lim_{\alpha_1 \to -1}\frac{-\alpha_1}{(d_2+1)}\frac{q_1 -i_B}{\alpha_1 + 1} = \frac{q_1'(-1)}{d_2+1} = \frac{i_B}{d_1 + d_2 +2}, \]
whereas 
\[ \lim_{\alpha_1 \to -\infty}\frac{\alpha_1(i_B - q_1)}{(d_2+1)(\alpha_1 + 1)} = \frac{i_B}{d_2+1}. \]
It follows that the path 
\[ \alpha_1 \mapsto \left( \frac{q_1 \alpha_1 + i_B}{(d_1+1)(\alpha_1 + 1)}, \, \frac{\alpha_1(i_B - q_1)}{(d_2+1)(\alpha_1 + 1)}, \,  0 \right) \] 
interpolates between the points $\frac{i_B}{d_1+d_2+2}(1,1,0)$ and $\frac{i_B}{d_2+1}(0,1,0)$. This, together with the claims above, shows that the boundary of squares of the form $[-R,\, -1/R] \times [1/R,\, R]$ get mapped via $\vec{\delta}$ into an arbitrarily small $\varepsilon$-neighborhood of the triangle 
\[ T := \partial\left\{ (x,y,z) \in \mathbb{R}^3 \: | \: y>x > 0, \, z> 0, \, (d_1+1)x+(d_2+1)y+z = i_B \right\} \subset \R^3, \]
by taking $R > 0$ sufficiently large. It follows that the image of $\vec{\delta}$ is precisely the interior of $T$.
\end{proof}

\begin{proof}[Proof of Proposition \ref{thm:Type2CYrank3}]
By relabeling, we can assume that $m_2 > m_1$. We work in the Type 2 case, and we can normalize $\alpha_2 = -1$, so that our domain is 
\[ \mathcal{D} = (-\infty, \alpha_1) \times (\alpha_1, -1) \times (\alpha_3, \infty), \]
with $\alpha_1 < -1$, $\alpha_3 > 0$. By Theorem \ref{th:Type2CY}, we only need to solve 
\[ \delta_1 = m_1, \qquad \delta_2 = m_2, \qquad \delta_3 = m_3, \] 
where $q(t)$ is the unique linear polynomial satisfying 
\[ q(0) = i_B, \qquad \int_{\alpha_1}^{-1} x^{d_B}p_c(x) q(x)\, dx = 0, \]
where $p_c(x) = (x-\alpha_1)^{d_1}(x+1)^{d_2}$. By Lemma \ref{l:CYSteadyrank3aux} (with $a = 0$), we can always do just that. 
\end{proof}

\section{K\"ahler-Ricci solitons}

In this section, we generalize the construction of section \ref{section:CY} to the case of K\"ahler-Ricci solitons, i.e. solutions $(\omega, X)$ to 
\begin{equation}\label{eqn:KRSlower}
    \Ric_{\omega} + \frac{1}{2} \mathcal{L}_X \omega = \lambda \omega,
\end{equation}
where $X$ is some real holomorphic vector field. In our situation, $X$ will always be tangent to the fibers of $E \to B$. 

To this end, we once again apply Lemma \ref{hamiltonian:equationprep} to deduce:
\begin{lemma}[Weighted Monge-Amp\`ere equation: soliton case]\label{lemma:hamiltonian:weightedKRS}
    Suppose that each $F_j(t)$ satisfies 
    \begin{equation}\label{eq:hamiltonian:weightedKRS}
        -\log|F_j(t)| + 2\int^{\xi_j}\frac{ p_c(t)q(t)}{F_j(t)} dt = a t + d_B\log(t) + b_j,
    \end{equation}
    for a polynomial $q(t)$ of the form 
    \begin{equation}\label{eqn:krsgeneralq}
     q(t) = q_\ell t^\ell + \tilde{q}(t), \qquad \deg{\tilde{q}} \leq \ell -1,
	\end{equation}     
	then the resulting K\"ahler metric defined on $F^0 \subset \C^r$ given by \eqref{k-order-ell} satisfies 
    \begin{equation}\label{eqn:KRSvsoliton}
        \Ric_{\omega_F} + q_\ell \omega = i\p\bp \left(a\sigma_1 + \log(\sigma_\ell^{d_B})\right).
    \end{equation}
\end{lemma}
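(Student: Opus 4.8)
The plan is to obtain the soliton identity \eqref{eqn:KRSvsoliton} as a direct consequence of Lemma \ref{hamiltonian:equationprep}, by the same argument used in the Calabi-Yau case (Lemma \ref{lemma:hamiltonian:weightedCY}); the only difference is that we now keep both the top-degree monomial $q_\ell t^\ell$ of $q$ and the linear term $at$ appearing on the right of \eqref{eq:hamiltonian:weightedKRS}, and these are exactly what upgrades the Ricci-flat equation to a soliton equation.

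First, recall from Section \ref{section:fibermetrics} that the base in the Ansatz \eqref{k-order-ell} here is $B_F = \prod_{j=1}^\ell \P^{d_j}$ with $\eta_j = \alpha_j$, each factor $(\P^{d_j}, \check\omega_j)$ being Fubini--Study with $\Scal_{\check\omega_j} = -2\varepsilon_j d_j q(\alpha_j)$, and the positivity $-\varepsilon_j q(\alpha_j) > 0$ holding (cf. Lemma \ref{l:qflips}). Hence the polynomial $q(t) = q_\ell t^\ell + \tilde q(t)$ of degree $\le \ell$ satisfies the hypotheses of Lemma \ref{hamiltonian:equationprep}, which gives the identity
\[
\kappa + q_\ell H - \sum_{r=1}^\ell (-1)^r q_{\ell-r}\, y_r \;=\; \sum_{j=1}^\ell\left( -\frac12\log\bigl|F_j(\xi_j)\bigr| + \int^{\xi_j} \frac{p_c(t) q(t)}{F_j(t)}\,dt \right),
\]
where $\kappa$ is a local Ricci potential for $\omega_F$, $H$ the local K\"ahler potential \eqref{eqn:globalpotential}, and $y_1,\dots,y_\ell$ the pluriharmonic functions from Section \ref{section:Ham}; thus $i\p\bp \kappa = \Ric_{\omega_F}$ up to the paper's normalization, $i\p\bp H = \omega_F$, and $i\p\bp y_r = 0$. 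Note that only the coefficients $q_{\ell-1},\dots,q_0$ of $\tilde q$ enter the $y_r$-combination.

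Next I would substitute the hypothesis \eqref{eq:hamiltonian:weightedKRS}: dividing it by $2$ and summing over $j=1,\dots,\ell$ turns the right-hand side of the displayed identity into $\frac{a}{2}\sum_{j}\xi_j + \frac{d_B}{2}\sum_j\log\xi_j$ plus a constant, i.e. into $\frac{a}{2}\sigma_1 + \frac{d_B}{2}\log\sigma_\ell + (\textnormal{const.})$, using $\sigma_1 = \sum_j \xi_j$ and $\sigma_\ell = \prod_j \xi_j$. Applying $i\p\bp$ to both sides, using $i\p\bp y_r = 0$, $i\p\bp\kappa = \Ric_{\omega_F}$, $i\p\bp H = \omega_F$, and translating constants, one gets $\Ric_{\omega_F} + q_\ell\,\omega_F = i\p\bp\bigl(a\sigma_1 + d_B\log\sigma_\ell\bigr)$, which is exactly \eqref{eqn:KRSvsoliton} since $\log\sigma_\ell^{d_B} = d_B\log\sigma_\ell$; the normalization factor relating $i\p\bp$ and $dd^c$ is handled precisely as in the Calabi-Yau case.

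I expect no serious obstacle: the content is a straightforward specialization of Lemma \ref{hamiltonian:equationprep}. The only points to watch are clerical — that the lower-order coefficients of $q$ are absorbed into the pluriharmonic terms $y_r$, so that $q_\ell H$ is the unique potential surviving $i\p\bp$ and produces the extra term $q_\ell\omega$; that the indefinite-integral constants in \eqref{eq:hamiltonian:weightedKRS} and in the definitions of $H$ and $y_r$ all drop out; and that the $dd^c$/$i\p\bp$ normalization is used consistently with Lemma \ref{lemma:hamiltonian:weightedCY}. Unlike the Calabi-Yau case, here $q_\ell \neq 0$ in general, and it is precisely this term that makes $\omega_F$ a soliton metric rather than a Ricci-flat one.
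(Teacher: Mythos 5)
Your proposal is correct and follows exactly the paper's route: the identity is obtained by specializing Lemma \ref{hamiltonian:equationprep} to the fiber base $B_F=\prod_j\P^{d_j}$ with $\Scal_{\check\omega_j}=-2\varepsilon_j d_j q(\alpha_j)$, composing \eqref{eq:hamiltonian:weightedKRS} with $\xi_j$, halving and summing to get $\kappa+q_\ell H=\tfrac a2\sigma_1+\tfrac{d_B}2\log\sigma_\ell$ up to pluriharmonic terms, and applying $dd^c=2i\p\bp$ (this is precisely the argument the paper spells out for the global version, Lemma \ref{lemma:hamiltonian:globalKRS}). Your bookkeeping of the lower-order coefficients of $q$ into the $y_r$, of the integration constants, and of the $dd^c$/$i\p\bp$ normalization is all as intended.
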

We again differentiate \eqref{eq:hamiltonian:weightedKRS}, to obtain the equation
\begin{equation}\label{eq:hamiltonian:weightedKRS2}
        F'_j(t) + (d_B t^{-1} + a)F_j(t) = 2p_c(t)q(t).
\end{equation}
We again take $B$ to be K\"ahler-Einstein Fano with index $i_B$, $L \to B$ a negative line bundle with $L^{i_B} = K_B$, $\omega_B$ a K\"ahler-Einstein metric in $2\pi c_1(L^\vee)$, and $E \to B$ a suitable vector bundle as in section \ref{section:globalcompactification}. Similar to the setup in section \ref{section:CY}, local fiber metrics on $F^0 \subset \C^r$ satisfying \eqref{eqn:KRSvsoliton} correspond naturally to K\"ahler-Ricci solitons on $E^0 \subset E$.
\begin{lemma}\label{lemma:hamiltonian:globalKRS}
    Let $\omega_F$ be a K\"ahler metric on $F_0 \subset \C^r$ corresponding to solutions $F_j$ of \eqref{eq:hamiltonian:weightedKRS2}. As in section \ref{section:globalcompactification}, define 
    \[ \tilde{F}_j(t) = t^{d_B}F_j(t), \qquad \tilde{p}_c(t) = t^{d_B}p_c(t). \]
    Let $\omega$ be the K\"ahler metric on $E^0\subset E$ defined by \eqref{first:gtotalspace}. %where recall we set $\eta_B = 0$. 
    Then $\omega$ satisfies \eqref{eqn:KRSlower} with $\lambda = -q_\ell$ and only if 
    \begin{equation*}
        Scal_{\omega_B} = -2\varepsilon_B d_B q(0) \Leftrightarrow q(0) = -\varepsilon_B i_B.
    \end{equation*}
    Moreover, the soliton vector field is given by $X = aJK_1 = -a\nabla^g \sigma_1$, where $K_1 \in \t$ is the vector field with hamiltonian potential $\sigma_1$.
\end{lemma}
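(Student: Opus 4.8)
The strategy is to deduce this from Lemma~\ref{hamiltonian:equationprep} in exactly the way Lemma~\ref{lemma:hamiltonian:globalCY} was, the only differences being that the polynomial $q$ of \eqref{eqn:krsgeneralq} now has degree $\ell$ (so the term $q_\ell\tilde H$ in \eqref{eq:hamiltonian:equationprep} survives) and that the weight $at$ in \eqref{eq:hamiltonian:weightedKRS} produces an extra affine contribution. First I would recall, via Proposition~\ref{prop:globalcompactification}, that $(g,\omega)$ on $E^0$ is precisely the order-$\ell$ hamiltonian 2-form metric \eqref{k-order-ell} over the base $B\times B_F$ with constants $\eta_B=0$, $\eta_j=\alpha_j$, profile functions $\tilde F_j(t)=t^{d_B}F_j(t)$ and $\tilde p_c(t)=t^{d_B}p_c(t)$. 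Since each $\check\omega_j$ is, by the construction in section~\ref{section:fibermetrics}, the Fubini--Study metric of scalar curvature $-2\varepsilon_j d_j q(\alpha_j)$, the K\"ahler--Einstein hypotheses of Lemma~\ref{hamiltonian:equationprep} hold on the $B_F$-factors automatically; on the $B$-factor they read $Scal_{\omega_B}=-2\varepsilon_B d_B q(0)$, which, since $\omega_B\in 2\pi c_1(L^\vee)$ and $L^{i_B}=K_B$ force $Scal_{\omega_B}=2d_B i_B$, is exactly the condition $q(0)=-\varepsilon_B i_B$.

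Assuming this condition, I would apply Lemma~\ref{hamiltonian:equationprep} to the tilded data. The one computation needed is the right-hand side of \eqref{eq:hamiltonian:equationprep}: multiplying \eqref{eq:hamiltonian:weightedKRS2} by $t^{d_B}$ gives $\tilde F_j'(t)+a\tilde F_j(t)=2\tilde p_c(t)q(t)$, whence $\tfrac{\tilde p_c q}{\tilde F_j}=\tfrac12(\log|\tilde F_j|)'+\tfrac a2$ and $\int^{\xi_j}\tfrac{\tilde p_c(t)q(t)}{\tilde F_j(t)}\,dt=\tfrac12\log|\tilde F_j(\xi_j)|+\tfrac a2\,\xi_j$; thus the right-hand side of \eqref{eq:hamiltonian:equationprep} collapses to $\tfrac a2\sum_j\xi_j=\tfrac a2\sigma_1$, giving
\[ \tilde\kappa+q_\ell\tilde H-\sum_{r=1}^\ell(-1)^r q_{\ell-r}\,\tilde y_r=\frac a2\,\sigma_1. \]
Applying $dd^c$ and using that $\tilde\kappa$ is a Ricci potential ($dd^c\tilde\kappa=\Ric_\omega$), that $\tilde H$ is a K\"ahler potential ($dd^c\tilde H=\omega$), and that the $\tilde y_r$ are pluriharmonic, one obtains $\Ric_\omega+q_\ell\omega=\tfrac a2\,dd^c\sigma_1$. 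Since $\sigma_1$ is the hamiltonian of $K_1$ we have $a\nabla^g\sigma_1=-aJK_1$ and $\tfrac12\mathcal L_{a\nabla^g\sigma_1}\omega=\tfrac a2\,dd^c\sigma_1$, so the identity reads $\Ric_\omega+\tfrac12\mathcal L_{aJK_1}\omega=-q_\ell\,\omega$, which is \eqref{eqn:KRSlower} with $X=aJK_1=-a\nabla^g\sigma_1$ and $\lambda=-q_\ell$; this simultaneously identifies the soliton vector field.

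For the converse I would carry out the same computation with $Scal_{\omega_B}$ left arbitrary: the Ricci-potential formula $\tilde\kappa=\check\kappa_B+\sum_j\check\kappa_j-\tfrac12\sum_j\log|\tilde F_j(\xi_j)|$, together with $\Ric_{\omega_B}=i_B\omega_B$ (valid because $\omega_B\in 2\pi c_1(L^\vee)$ and $c_1(B)=i_Bc_1(L^\vee)$), shows that in general
\[ \Ric_\omega+q_\ell\omega=\frac a2\,dd^c\sigma_1+(i_B+\varepsilon_B q(0))\,\pi^*\omega_B. \]
If $\omega$ satisfies \eqref{eqn:KRSlower} with $\lambda=-q_\ell$ then the left-hand side equals $-\tfrac12\mathcal L_X\omega=-\tfrac12 d(\iota_X\omega)$, which is exact; since $\tfrac a2\,dd^c\sigma_1$ is exact while $\pi^*\omega_B$ represents the nonzero class $\pi^*[\omega_B]\in H^2(M)$ (the zero section is a deformation retract of $M$, so $\pi^*$ is injective, and $[\omega_B]\neq 0$), we conclude $i_B+\varepsilon_B q(0)=0$, i.e.\ $q(0)=-\varepsilon_B i_B$, which finishes the equivalence. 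The argument is structurally identical to that of Lemma~\ref{lemma:hamiltonian:globalCY}; I expect the only delicate point to be the bookkeeping of the sign and normalization conventions (the factor $\tfrac12$ in $dd^c=2i\partial\bar\partial$, the normalizations of $\tilde\kappa$ and $\tilde H$ in \cite{ACGT1}, and the sign convention for the hamiltonian vector field $K_1$) needed to land precisely on $X=aJK_1$ rather than its negative.
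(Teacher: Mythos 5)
Your proof is correct and follows essentially the same route as the paper: apply Lemma \ref{hamiltonian:equationprep} to the tilded data $\tilde F_j$, note that the ODE \eqref{eq:hamiltonian:weightedKRS2} makes the right-hand side of \eqref{eq:hamiltonian:equationprep} collapse to $\tfrac{a}{2}\sigma_1$ up to constants, and take $dd^c$ to get $\Ric_\omega + q_\ell\omega = a\,i\p\bp\sigma_1$, which is exactly the paper's computation. The one addition is your cohomological argument for the converse direction (exactness of the soliton identity versus the nonzero class $\pi^*[\omega_B]$ when $q(0)\neq -\varepsilon_B i_B$), which the paper's terse proof leaves implicit; it is a valid way to close that direction, provided one notes that the identity on $E^0$ extends to all of $M$ by smoothness of the compactified data.
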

\begin{proof}
The $\tilde{F}_j$'s satisfy 
\[  \tilde{F}'_j(t)  = 2p_c(t)q(t) - a \tilde{F}_j(t), \]
which is equivalent to 
\[  -\log|\tilde{F}_j(t)| + 2\int^{\xi_j}\frac{ p_c(t)q(t)}{\tilde{F}_j(t)} dt = a t + b_j. \]
Composing with $\xi_j$ and summing gives us by Lemma \ref{hamiltonian:equationprep} that
\[ \kappa + \lambda H = \frac{a}{2} \sigma_1 + \textnormal{(pluriclosed)},\]
hence 
\[ \Ric + q_\ell \omega = \frac{a}{2}d d^c \sigma_1 = a  i\p\bp\sigma_1.  \]
\end{proof}

\begin{remark}\label{remark:solitonvectorfieldform}
In our setup, we can always write 
\begin{equation*}
 E = \sum_{j=1}^\ell E_j, \qquad E_j := \left( \sum_{k=0}^{d_j} L^{m_j} \right) = P_0^{m_j} \times_{S^1} \C^{d_j +1},
\end{equation*}
where $P_0^{m_j}$ is the $U(1)$ bundle associated to $L^{m_j}$ and $S^1 \subset U(d_j +1)$ acts diagonally. We let $X_j$ be the corresponding vector fields, then similarly to Lemma \ref{l:computevectorfield1}, \cite[Lemma 5.1]{AC}, we have that 
\begin{equation}\label{eqn:K1}
	K_1 = \sum_{j=1}^{\ell} \frac{q(\alpha_j)}{(d_j +1)\prod_{k \neq j}(\alpha_j - \alpha_k)} X_j.
\end{equation}
\end{remark}

In the subsequent sections, we will follow the approach of section \ref{section:CY} to exhibit solutions to \eqref{eqn:KRSlower} for $\lambda = 0, -1$, and $+1$.

\subsection{Steady case: \texorpdfstring{$\lambda = 0$}{lambdazero}}

To condense notation a bit, set 
\begin{equation*}
    \bar{\ell} := \left\{ \begin{array}{cc}
        \ell -1 & \textnormal{Type 1 case}  \\
         \ell -2 & \textnormal{Type 2 case} 
    \end{array} \right.
\end{equation*}

\begin{prop}\label{steadiesgeneral}
    Let $q(t)$ be a polynomial of degree $\bar{\ell}$ satisfying
    \begin{enumerate}
    \item $ \displaystyle \int_{\alpha_j}^{\alpha_{j+1}} x^{d_B} e^{ax} q(x) p_c(x)  \, dx = 0, \qquad j = 1, \dots, \bar{\ell}.$
    \item $q(0) = -i_B$.
    \end{enumerate}
    Suppose further that we have constants $m_j \in \mathbb{Z}_{\geq 1}$ such that $\sum_{j=1}^\ell (d_j+1)m_j = i_B$, and
    \begin{equation}\label{eqn:Kellcompactification}
        \delta_j(q, \alpha_1, \dots, \alpha_\ell) = (-1)^\ell \varepsilon_B m_j.
    \end{equation}
    Then, for any $a > 0$, there exists a one-parameter family of complete steady gradient K\"ahler-Ricci solitons $(g_a, X_a)$ on the total space $M$ of the degree $r = \ell + \sum_{j=1}^\ell d_j$ vector bundle
    \begin{equation*}
         E = \bigoplus_{j=1}^\ell \left( \bigoplus_{k =1}^{d_j} L^{m_j} \right) \to B, \qquad \det(E) = K_B \to B.
    \end{equation*}
    Moreover, the soliton vector field is given by 
    \begin{equation}\label{eqn:solitonvf}
        X_a = aK_1 = a \sum_{j=1}^{\ell} \frac{q(\alpha_j)}{(d_j +1)\prod_{k \neq j}(\alpha_j - \alpha_k)} X_j, 
    \end{equation}
    where $X_j$ is the vector field generating the diagonal rotation on $\bigoplus_{k =0}^{d_j} L^{m_j} \to B$.
\end{prop}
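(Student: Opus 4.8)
The plan is to run the Calabi--Yau constructions of Theorems~\ref{th:Type1CY} and~\ref{th:Type2CY} with equation~\eqref{eq:hamiltonian:weightedCY2} replaced by its weighted analogue~\eqref{eq:hamiltonian:weightedKRS2}, and Lemma~\ref{lemma:hamiltonian:globalCY} replaced by Lemma~\ref{lemma:hamiltonian:globalKRS} specialized to $\lambda=0$ (legitimate since $\deg q=\bar\ell\le\ell-1$ forces $q_\ell=0$). Starting from the given $q$, I would solve~\eqref{eq:hamiltonian:weightedKRS2}: the tilded profiles $\tilde F_j:=t^{d_B}F_j$ satisfy the pole-free equation $\tilde F_j'+a\tilde F_j=2\tilde p_c\,q$ obtained from~\eqref{eq:hamiltonian:weightedKRS2}, which has integrating factor $e^{at}$, so I set
\[
\tilde F(t):=e^{-at}\int_{\alpha_1}^{t}2e^{ax}x^{d_B}p_c(x)q(x)\,dx,\qquad \tilde F_1=\dots=\tilde F_{\bar\ell+1}:=\tilde F,
\]
and in the Type~2 case take $\tilde F_\ell$ to be the solution of the same ODE normalized by $\tilde F_\ell(\alpha_\ell)=0$ (the analogue of~\eqref{type2:Felldef}). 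By construction $\tilde F(\alpha_1)=0$, and hypothesis~(i) propagates this to $\tilde F(\alpha_2)=\dots=\tilde F(\alpha_{\bar\ell+1})=0$, playing exactly the role of the vanishing integrals in Section~\ref{section:CY}.

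Next I would check that this data enters the framework of Section~\ref{section:Ham}. With $\Theta_j:=F_j/p_c=\tilde F_j/\tilde p_c$, the iterated-division argument from the proof of Lemma~\ref{l:determine-q} applies verbatim to~\eqref{eq:hamiltonian:weightedKRS2} --- the extra term $a\tilde F_j$ is harmless since $\tilde p_c$ vanishes to order $d_k$ at $\alpha_k\neq 0$ --- so each $\alpha_k\in\partial\bar I_j$ is a zero of $\tilde F_j$ of order exactly $d_k+1$; hence the boundary conditions $\Theta_j(\alpha_k)=0$, $(d_k+1)\Theta_j'(\alpha_k)=2q(\alpha_k)$ of Lemma~\ref{l:smoothextension} hold and $(g_F,\omega_F,J_F)$ is a smooth K\"ahler structure on $\R^{2r}$. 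The positivity condition~\eqref{inequality} follows just as in Lemmas~\ref{l:qflips},~\ref{l:positivity} (resp.\ Lemma~\ref{l:type2qcondition}): hypothesis~(i) makes $q$ change sign once in each bounded interval, so $\operatorname{sgn}q(\alpha_j)$ alternates, and the equation for $\Theta_j$, now
\[
\Theta_j'(t)+\Bigl(d_Bt^{-1}+a+\sum_k\tfrac{d_k}{t-\alpha_k}\Bigr)\Theta_j(t)=2q(t),
\]
forbids interior zeros of $\Theta_j$ by the same first- and second-derivative sign arguments (the term $a\Theta_j$ and its derivative $a\Theta_j'$ vanish at the points in question, and $a$ has a fixed sign, so nothing changes).

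For the global object: the polynomial part of $\tilde F_j$ has degree $d_B+\deg p_c+\bar\ell$, so $\Theta_j$ has degree $\bar\ell<\ell+1$, which is also small enough for Lemmas~\ref{l:generalcompleteness} and~\ref{l:complexstructureCn}; thus $g_F$ is complete and $(\R^{2r},J_F)\cong\C^r$. Hypothesis~\eqref{eqn:Kellcompactification} together with $\sum_j(d_j+1)m_j=i_B$ lets Proposition~\ref{prop:Estructure} build the complete K\"ahler structure~\eqref{first:gtotalspace} on the total space $M$ of $E=\bigoplus_j\bigoplus_k L^{m_j}$ (and $\det E=L^{\sum_j(d_j+1)m_j}=L^{i_B}=K_B$). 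Finally $q_\ell=0$, and $q(0)=-i_B$ is the curvature normalization $\mathrm{Scal}_{\omega_B}=-2\varepsilon_Bd_Bq(0)$ required in Lemma~\ref{lemma:hamiltonian:globalKRS}, which therefore gives $\Ric_\omega+\tfrac12\mathcal L_{X_a}\omega=0$ with soliton field $X_a=-a\nabla^{g}\sigma_1$; expanding via Remark~\ref{remark:solitonvectorfieldform} (cf.\ Lemma~\ref{l:computevectorfield1}) yields the formula~\eqref{eqn:solitonvf}, and since $\sigma_1$ extends to a smooth globally defined function on $M$ (Remark~\ref{rem:xiextendandmomentproper}) the soliton is gradient. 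Letting $a$ range over $(0,\infty)$ produces the one-parameter family $(g_a,X_a)$.

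The one genuinely delicate point --- and the reason the Type~2 case uses $q$ of degree $\ell-2$ rather than $\ell-1$ --- is the behaviour of the profiles on the \emph{unbounded} intervals. On $I_1=(-\infty,\alpha_1)$ the homogeneous solution $e^{-at}$ of~\eqref{eq:hamiltonian:weightedKRS2} blows up as $t\to-\infty$ (for $a>0$), so the coefficient of $e^{-at}$ in $\tilde F$ must vanish there; this is exactly what hypothesis~(i) together with the normalization $\tilde F(\alpha_1)=0$ achieves, reducing $\tilde F$ on $I_1,\dots,I_{\ell-1}$ to its polynomial part (on $I_\ell=(\alpha_\ell,\infty)$ the $e^{-at}$ term decays, so $\tilde F_\ell$ may carry it). Verifying that this vanishing is consistent with the prescribed zeros at $\alpha_1,\dots,\alpha_{\ell-1}$, with $q(0)=-i_B$, and with the existence of $\alpha_1,\dots,\alpha_\ell$ realizing $\delta_j=(-1)^\ell\varepsilon_Bm_j$ --- i.e.\ that the parameter count closes up --- is the crux; everything else is a routine transcription of Section~\ref{section:CY}.
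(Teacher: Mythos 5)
Your overall route is the paper's own: the same integrating-factor solutions of \eqref{eq:hamiltonian:weightedKRS2} based at $\alpha_1$, hypothesis (i) providing the vanishing at the interior $\alpha_j$'s, the Lemma \ref{l:determine-q}-type division argument for the order-$(d_k+1)$ zeros and the boundary conditions of Lemma \ref{l:smoothextension}, the positivity argument of Lemmas \ref{l:qflips}, \ref{l:positivity}, \ref{l:type2qcondition}, and then Proposition \ref{prop:Estructure} together with Lemma \ref{lemma:hamiltonian:globalKRS} to produce the global steady soliton with $X_a=-a\nabla^g\sigma_1$ and the formula \eqref{eqn:solitonvf}. For the Type 1 case (only $(\alpha_\ell,\infty)$ unbounded, where $e^{-at}$ decays for $a>0$) your argument is complete and coincides with the paper's.

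The gap is exactly at the point you flag and then defer. Your claim that hypothesis (i) together with the normalization $\tilde F(\alpha_1)=0$ forces the coefficient of the homogeneous mode $e^{-at}$ in $\tilde F_1$ to vanish on $I_1=(-\infty,\alpha_1)$ is false: hypothesis (i) only constrains the integrals over the \emph{bounded} intervals, and $\tilde F(\alpha_1)=0$ holds automatically from the choice of base point. Writing $\tilde F(t)=\tilde P(t)-e^{-a(t-\alpha_1)}\tilde P(\alpha_1)$, where $\tilde P$ is the unique polynomial solution of $\tilde P'+a\tilde P=2\tilde p_c q$, the coefficient in question is $-e^{a\alpha_1}\tilde P(\alpha_1)$, and $e^{a\alpha_1}\tilde P(\alpha_1)=\int_{-\infty}^{\alpha_1}2e^{ax}x^{d_B}p_c(x)q(x)\,dx$, which is not zero under your hypotheses; e.g.\ in the case $\ell=2$, $d_1=d_2=0$, $q\equiv i_B$ relevant to Theorem \ref{mtheorem:Type2}(b), the integrand has a fixed sign on $(-\infty,\alpha_1)$. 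Consequently $\Theta_1(t)$ grows like $e^{a|t|}$ as $t\to-\infty$, it has no degree in the sense of \eqref{degree}, and the hypotheses of Lemma \ref{l:generalcompleteness}(ii) and Lemma \ref{l:complexstructureCn}(ii) are not met; worse, the $g$-length of a radial path with $\xi_1\to-\infty$ (other coordinates fixed) is then finite while $\mu$ is proper (Remark \ref{rem:xiextendandmomentproper}), so completeness genuinely fails for that profile rather than merely escaping the lemma. What actually kills the exponential is the extra scalar condition $\tilde P(\alpha_1)=0$, i.e.\ $\int_{-\infty}^{\alpha_1}x^{d_B}e^{ax}q(x)p_c(x)\,dx=0$ — the exact analogue of hypothesis (ii) of Proposition \ref{shrinkersgeneral}, introduced there for precisely this reason — and this is not implied by (i), (ii), or \eqref{eqn:Kellcompactification}. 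Your proposal neither imposes nor verifies it (you explicitly leave ``the crux'' open). The paper's proof is itself terse at this point, asserting that for $a>0$ every $\Theta_j$ has degree $\deg q$, which is immediate only in the Type 1 case; but your write-up replaces that assertion with an incorrect implication instead of supplying the missing condition and the parameter-count/existence argument that would accompany it.
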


\begin{proof}
    We prove this in the same way as the Calabi-Yau case: we will show that there is no obstruction to finding weighted fiber metrics $\omega_F$ on $\C^r$ satisfying \eqref{eqn:KRSvsoliton} via the procedure of section \ref{section:fibermetrics}. Proposition \eqref{prop:Estructure} then says that the existence of the global metric $\omega$ is entirely encoded in the condition \eqref{eqn:Kellcompactification}. 
    
    The proof for the two cases is essentially the same. We consider the Type 1 case first. Note that the function $x^{d_B}e^{ax}p_c(x)$ has a sign on $(\alpha_j, \alpha_{j+1})$, $j =1, \dots, \ell -1$, and so by the same reasoning as in Lemma \ref{l:qflips} we see that, after perhaps changing the sign, $q$ satisfies the alternating condition \eqref{eqn:qalternates}:
    \[\textnormal{sgn}(q(\alpha_j)) = (-1)^{\ell - j}.  \]
    In particular, it makes sense to define $v_j$ by \eqref{eqn:vjdef}, We then set 
    \begin{equation*}
        F_1(t) = \dots = F_\ell(t) = F(t) := t^{-d_B}e^{-at}\int_{\alpha_1}^{t} x^{d_B}e^{ax}q(x)p_c(x)\, dx.
    \end{equation*}
    Then $F$ satisfies the ODE \eqref{eq:hamiltonian:weightedKRS2}, and $F(\alpha_j) = 0$. The same analysis as in the proof of Lemma \ref{l:determine-q} shows that in fact $F$ is smoothly divisible by $\prod_{j=1}^\ell(t -\alpha_j)^{d_j +1}$. Then again the same proof as in Lemma \ref{l:positivity} shows that $F$ satisfies the positivity condition \eqref{inequality}
    \[ (-1)^{\ell -j}F(t)p_c(t) > 0, \qquad t \in I_j \qquad j =1, \dots, \ell -1. \]
     and that 
    \[\Theta(\alpha_j) = 0, \qquad \Theta'(\alpha_j) = (d_j + 1)q(\alpha_j),\]
    where as usual $\Theta(t) = \frac{F(t)}{p_c(t)}$. By Lemma \eqref{l:smoothextension} the K\"ahler structure defined via \eqref{k-order-ell} by the $F_j$'s extends smoothly to $\C^r$.

    For the Type 2 case, we begin by noting that by the same degree consideration as above, we can again ensure that $q(t)$ satisfies the Type 2 alternating condition \eqref{type2:qalternates}:
    \begin{equation*}
    \begin{split}
        (-1)^{\ell - j + 1}&q(\alpha_j) > 0, \qquad j =1, \dots, \ell -1 \\
        &\textnormal{sgn}(q(\alpha_\ell)) = \textnormal{sgn}(q(\alpha_{\ell-1})),
    \end{split}
    \end{equation*}
    up to perhaps changing the sign of $q$. As a consequence we can define $v_j$ by \eqref{eqn:vjdef}. Then we set 
    \begin{equation*}
        F_1(t) = \dots = F_{\ell -1}(t) = F(t) := t^{-d_B}e^{-at}\int_{\alpha_1}^t x^{d_B} e^{ax}q(x) p_c(x) \, dx.
    \end{equation*}
    Since the general solution to the ODE \eqref{eq:hamiltonian:weightedKRS2} is of the form 
    \begin{equation}\label{eqn:steadiesODEgeneral}  
    	t^{-d_B}\left(P(t) + c e^{-at}\right), 
    \end{equation}
    where $P(t)$ is a fixed polynomial of degree $n-2$, we can define
    \[ F_\ell(t) = t^{-d_B}\left(P(t) +  \alpha_\ell^{d_B}P(\alpha_\ell) e^{-a(t - \alpha_\ell)}\right). \]
    Then just as in Lemma \ref{l:type2qcondition} we have that $F_j(t)$ satisfy the positivity condition \eqref{type2:Fpositivity} 
    \begin{equation*}
\begin{split}
    (-1)^{\ell - j }&F_j(t)p_c(t) >0 \qquad \textnormal{on } (\alpha_{j-1}, \alpha_j), \qquad j = 1, \dots, \ell-1,  \\
    & F_\ell(t) p_c(t) > 0 \qquad \textnormal{on } (\alpha_\ell, \infty),
\end{split}
\end{equation*}
and that $\Theta_j(t) = \frac{F_j(t)}{p_c(t)}$ satisfy the boundary conditions relevant for Lemma \ref{l:smoothextension}.

In both cases, the completeness of the metric on $M$ follows from Lemma \ref{l:generalcompleteness}, and here is where we use the condition that $a > 0$. As we saw above, the general solution to \eqref{eq:hamiltonian:weightedKRS2} is of the form \eqref{eqn:steadiesODEgeneral}, where $P(t)$ has degree $d_B + r + \deg(q)$. It follows that, if $a > 0$, the degree of $\Theta_j$ in the sense of \eqref{degree} is equal to the degree of $q(t)$, which is either $\ell -1$ or $\ell -2$. 
\end{proof}

\begin{remark}\label{l:qesistenceuniquenesssteadies}
Although we will not use this directly, notice that there always exists at least one nonzero polynomial $q(t)$ of degree $\bar{\ell}$ such that
    \begin{equation*}
        \int_{\alpha_j}^{\alpha_{j+1}} x^{d_B} e^{ax} q(x) p_c(x)  \, dx = 0, \qquad j = 1, \dots, \bar{\ell},
    \end{equation*}
    as this poses only $\bar{\ell}$ linear constraints on the $\bar{\ell} +1$ coefficients of $q$. 
\end{remark}
%\begin{proof}
%The condition is equivalent to finding a nonzero element $(q_{\bar{\ell}}, \dots, q_0) \in \R^{\bar{\ell}+1}$ of the kernel of the linear map $A:\R^{\bar{\ell}+1} \to \R^{\bar{\ell}}$ given by 
%    \[A(q_{\bar{\ell}}, \dots, q_0) = \left(\int_{\alpha_1}^{\alpha_{2}} x^{d_B} e^{ax} q(x) p_c(x)  \, dx, \,  \dots \, , \int_{\alpha_{\bar{\ell}}}^{\alpha_{\bar{\ell}+1}} x^{d_B} e^{ax} q(x) p_c(x)  \, dx   \right). \]
%\end{proof}

We are now in a position to finish the parts of Theorems \ref{mtheoremType1} and \ref{mtheoremType1} concerning the $\lambda = 0$ case:
\begin{prop}[Theorem \ref{mtheoremType1}, (i$b$)]
In the special case $\ell = 2$, we can always find suitable $q(t)$, $\alpha_1, \alpha_2$ satisfying the conditions of Proposition \ref{steadiesgeneral} in the Type 1 case. In particular, for any integers $m_1, m_2 > 0$ with $(d_1 + 1)m_1 + (d_2 + 1)m_2 = i_B$, there exists a one parameter family of complete steady gradient K\"ahler-Ricci solitons on the total space $M$ of the vector bundle 
\[ E = \left( \bigoplus_{k=0}^{d_1} L^{m_1} \right) \oplus \left( \bigoplus_{k=0}^{d_2} L^{m_2} \right)  \to B, \]
whose volume grows like 
\[ \vol_g(B_g(p_0, R)) \sim R^{n}, \]
where recall $n = d_B + r = d_B + \ell +\sum_{j=1}^\ell d_j$ is the complex dimension of $M$.
\end{prop}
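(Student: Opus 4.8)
The plan is to mirror the proof of Proposition \ref{thm:Type1CYrank2}, now keeping the parameter $a$ switched on. If $m_1 = m_2$ we are in the Calabi-symmetry situation covered by Theorem \ref{theorem:calabi:krs}, so after relabeling we may assume $m_1 > m_2 > 0$. We work in the Type 1 case with $\ell = 2$, so that $\bar{\ell} = 1$ and the domain is $\mathcal{D} = (\alpha_1, \alpha_2) \times (\alpha_2, \infty)$ with $0 < \alpha_1 < \alpha_2$. Using the two-parameter family of compatible hamiltonian 2-forms exactly as in Proposition \ref{thm:Type1CYrank2}, we normalize $\eta_B = 0$ and $\alpha_1 = 1$, and write $\alpha := \alpha_2$. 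Fix any $a > 0$ and let $q(t) = q_1 t + q_0$ be the unique linear polynomial with
\[ q(0) = -i_B, \qquad \int_1^\alpha x^{d_B} e^{ax} q(x)\,(x-1)^{d_1}(x-\alpha)^{d_2}\, dx = 0. \]
This last identity is exactly hypothesis (1) of Proposition \ref{steadiesgeneral} (with $\bar{\ell} = 1$) and $q(0) = -i_B$ is hypothesis (2). Since the weight $x^{d_B} e^{ax}(x-1)^{d_1}(x-\alpha)^{d_2}$ has a fixed sign on $(1,\alpha)$, the vanishing integral forces $q$ to change sign on $(1,\alpha)$; as $q(0) = -i_B \neq 0$, this rules out $q$ being constant, so $q$ has degree exactly $\bar{\ell} = 1$, as Proposition \ref{steadiesgeneral} requires.

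The core input is Lemma \ref{l:CYSteadyrank2aux}, which is stated for arbitrary $a \in \R$ and hence applies verbatim to our $a > 0$: it produces $\alpha \in (1, \infty)$ with $\delta_1(q, \alpha) = m_1$ and $\delta_2(q, \alpha) = m_2$. In the Type 1 case with $\ell = 2$ we have $\varepsilon_B = (-1)^\ell = 1$, so these read $\delta_j = (-1)^\ell \varepsilon_B m_j$, which is precisely the compatibility condition \eqref{eqn:Kellcompactification}; the remaining requirement $\sum_j (d_j+1) m_j = i_B$ is the standing hypothesis $(d_1+1)m_1 + (d_2+1)m_2 = i_B$ (which also gives $\det E = L^{i_B} = K_B$). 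All hypotheses of Proposition \ref{steadiesgeneral} being verified, we obtain, for every $a > 0$, a complete steady gradient K\"ahler-Ricci soliton on the total space $M$ of $E = \left(\bigoplus_{k=0}^{d_1} L^{m_1}\right) \oplus \left(\bigoplus_{k=0}^{d_2} L^{m_2}\right) \to B$, with soliton vector field $aK_1$ as in \eqref{eqn:solitonvf}. Distinct values of $a$ are not related by the $\R^*$-scaling of the $\xi$-coordinates underlying the hamiltonian 2-form, so this is a genuine one-parameter family.

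Finally, the volume growth follows from Proposition \ref{prop:generalvolume}(i). By the degree analysis at the end of the proof of Proposition \ref{steadiesgeneral}, for $a > 0$ the profile function $\Theta_\ell$ has degree $\beta = \deg q = \bar{\ell} = \ell - 1 = 1$ in the sense of \eqref{degree}. Substituting $\beta = 1$ and $\ell = 2$ into Proposition \ref{prop:generalvolume}(i) yields $\vol_g(B_g(p_0, R)) \sim R^{\frac{2n}{\ell + 1 - \beta}} = R^n$, as asserted. The only point that is not pure bookkeeping is confirming that $q$ does not degenerate to a constant, which we settled above; with that in hand, both Lemma \ref{l:CYSteadyrank2aux} and the degree computation apply, and the rest is a direct appeal to Propositions \ref{steadiesgeneral} and \ref{prop:generalvolume}.
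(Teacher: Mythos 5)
Your proposal is correct and follows essentially the same route as the paper: normalize $\alpha_1 = 1$, define $q$ by $q(0) = -i_B$ together with the vanishing of the weighted integral on $(1,\alpha)$, invoke Lemma \ref{l:CYSteadyrank2aux} (valid for arbitrary $a$) to solve $\delta_j = m_j$, and conclude via Proposition \ref{steadiesgeneral}, with the volume growth read off from Proposition \ref{prop:generalvolume} using $\beta = 1$, $\ell = 2$. Your added checks (that $q$ has degree exactly one, and the separate treatment of $m_1 = m_2$ via Calabi symmetry) are fine refinements of details the paper leaves implicit, not a different argument.
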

\begin{proof}
In the Type 1 setup with $\ell = 2$, we have $\varepsilon_B = +1$, $\deg(q) = 1$, and as before we normalize so that $\alpha_{1} = 1$, so that our domain is 
\[ \mathcal{D} =  (1, \alpha) \times (\alpha,  \infty). \]
For any $\alpha > 1$, we let $q(t)$ be the unique linear polynomial satisfying 
\[ q(0) = -i_B, \qquad \int_{1}^\alpha e^{ax}x^{d_B}p_c(x) q(x) \, dx = 0. \]
The fact that we can choose $\alpha$ such that this data satisfies the conditions of Proposition \ref{steadiesgeneral} follows directly from Lemma \ref{l:CYSteadyrank2aux}.
\end{proof}

\begin{prop}[Theorem \ref{mtheorem:Type2}, ($b$)]
For $\ell = 2$ and $3$, we can always find suitable $q(t)$, $\alpha_1, \dots, \alpha_\ell$ satisfying the conditions of Proposition \ref{steadiesgeneral} in the Type 2 case. In particular, for any integers $m_j > 0$ with $\sum_{j=1}^{\ell -1} (d_j + 1)m_j + m_\ell = i_B$, there exists: 
\begin{itemize}
\item a one parameter family of complete steady gradient K\"ahler-Ricci solitons on the total space $M$ of the vector bundle 
\[ E = \left( \bigoplus_{k=0}^{d_1} L^{m_1} \right) \oplus L^{m_2} \to B. \]
\item a one parameter family of complete steady gradient K\"ahler-Ricci solitons on the total space $M$ of the vector bundle 
\[ E = \left( \bigoplus_{k=0}^{d_1} L^{m_1} \right) \oplus \left( \bigoplus_{k=0}^{d_2} L^{m_2} \right) \oplus L^{m_3} \to B. \]
\end{itemize} 
In both cases, the volume grows like 
\[ \vol_g(B_g(p_0, R)) \sim R^{\frac{4n-2}{3}}. \]
\end{prop}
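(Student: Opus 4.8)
The plan is to reduce everything to Proposition \ref{steadiesgeneral}, so that the only real task is to exhibit, for every $a>0$, a polynomial $q(t)$ of degree $\bar{\ell}=\ell-2$ and reals $-\infty<\alpha_1<\dots<\alpha_{\ell-1}<0<\alpha_\ell$ satisfying conditions (i) and (ii) of that proposition --- in the Type 2 normalization condition (ii) reads $q(0)=-\varepsilon_B i_B=i_B$ since $\varepsilon_B=-1$ --- together with $\delta_j(q,\alpha_1,\dots,\alpha_\ell)=(-1)^\ell\varepsilon_B m_j$ for $j=1,\dots,\ell$. Once such data is produced, Proposition \ref{steadiesgeneral} directly supplies the complete steady gradient K\"ahler--Ricci soliton $(g_a,X_a)$ on the total space of $E$, the formula $X_a=aK_1$, and the identification of $E$ as the stated direct sum; the family is parametrized by $a>0$. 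The volume growth is then the only quantity we would still need to extract by hand.

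For $\ell=2$ we would observe that $\bar{\ell}=0$, so condition (i) is vacuous and (ii) forces $q\equiv i_B$ to be constant; in particular $q$, hence $\delta_1$ and $\delta_2$, is independent of $a$, and inserting the weight $e^{\pm ax}$ into the profile functions $F_j$ changes none of these quantities. After normalizing $\alpha_1=-1$ using the freedom in the compatible hamiltonian $2$-form exactly as in Proposition \ref{thm:Type2CYrank2}, we would rerun the short computation of that proposition verbatim: it produces an explicit $\alpha=\alpha_2>0$ --- well defined because $m_1<(d_1+1)m_1+m_2=i_B$ --- for which $\delta_j=-m_j=(-1)^\ell\varepsilon_B m_j$ for $j=1,2$. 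This checks the hypotheses of Proposition \ref{steadiesgeneral} for $E=\left(\bigoplus_{k=0}^{d_1}L^{m_1}\right)\oplus L^{m_2}$.

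For $\ell=3$ the polynomial $q(t)=q_1 t+q_0$ is linear: condition (ii) fixes $q_0=i_B$, and after normalizing $\alpha_2=-1$ condition (i) is the single constraint $\int_{\alpha_1}^{-1}e^{ax}x^{d_B}(x-\alpha_1)^{d_1}(x+1)^{d_2}q(x)\,dx=0$, which determines $q_1=q_1(\alpha_1;a)$. This is precisely the data appearing in Lemma \ref{l:CYSteadyrank3aux}, which is valid for all $a\in\R$. Assuming as we may (after relabelling, which does not change $E$) that $m_2>m_1$, and noting that $(m_1,m_2,m_3)$ then lies in the interior of the triangle $T$ and on the plane $(d_1+1)x+(d_2+1)y+z=i_B$, we would apply that lemma with the given $a>0$ to obtain $\alpha_1<-1$ and $\alpha_3>0$ with $\delta_i(q,\alpha_1,-1,\alpha_3)=m_i=(-1)^\ell\varepsilon_B m_i$; the degenerate case $m_1=m_2$ is absorbed into the $\ell=2$ case. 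Proposition \ref{steadiesgeneral} then yields the family on $E=\left(\bigoplus_{k=0}^{d_1}L^{m_1}\right)\oplus\left(\bigoplus_{k=0}^{d_2}L^{m_2}\right)\oplus L^{m_3}$.

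Finally, for the volume growth we would invoke the last paragraph of the proof of Proposition \ref{steadiesgeneral}: since $a>0$, every profile function $\Theta_j$ has degree $\beta=\deg q=\ell-2$ in the sense of \eqref{degree}, and $\beta<\ell+1$. As $\ell+1-\beta=3$ in both cases, Proposition \ref{prop:generalvolume}(ii) gives $\vol_g(B_g(p_0,R))\sim R^{\frac{4n-2}{\ell+1-\beta}}=R^{\frac{4n-2}{3}}$, as claimed. We do not anticipate a serious obstacle here: essentially all of the analytic work is already contained in Lemma \ref{l:CYSteadyrank3aux} and in the computation of Proposition \ref{thm:Type2CYrank2}, and the only points that need genuine care are keeping the sign $(-1)^\ell\varepsilon_B$ consistent with the conventions of those results and recording (as already done) that the exponential weight does not change the degree of the profile functions once $a>0$.
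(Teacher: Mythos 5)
Your proposal is correct and follows essentially the same route as the paper: reduce to Proposition \ref{steadiesgeneral}, observe that for $\ell=2$ condition (i) is vacuous so the computation of Proposition \ref{thm:Type2CYrank2} applies verbatim (the exponential weight not affecting the $\delta_j$'s), and for $\ell=3$ invoke Lemma \ref{l:CYSteadyrank3aux} with the given $a>0$ after normalizing $\alpha_2=-1$. The only differences are cosmetic: you make explicit the relabeling $m_2>m_1$, the sign bookkeeping $q(0)=-\varepsilon_B i_B=i_B$, and the volume-growth step via the degree $\beta=\ell-2$ of the $\Theta_j$'s and Proposition \ref{prop:generalvolume}, all of which the paper leaves implicit by citing Proposition \ref{steadiesgeneral}.
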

\begin{proof}
In the Type 2 setup, we have $\varepsilon_B = -1$, $\deg(q) = \ell - 2$, and we normalize so that $\alpha_{\ell - 1} = -1$, so that our domain is 
\[ \mathcal{D} = (-\infty, \alpha_1) \times \dots \times (\alpha_{\ell -2}, -1) \times (\alpha_\ell,  \infty). \]
For $\ell = 2$ our condition is only that $q(t) = q(0) = i_B$, and so the proof is identical to that of Proposition \ref{thm:Type2CYrank2}, using Proposition \ref{steadiesgeneral}. For $\ell = 3$, for any $\alpha_1 < -1 < 0 < \alpha_3$, clearly there is a unique linear polynomial $q(t)$ satisfying 
\[ \int_{\alpha_1}^{-1} e^{ax}x^{d_B}p_c(x) q(x) \, dx = 0, \qquad q(0) = i_B. \]
The ability to find $\alpha_1, \alpha_3$ solving \eqref{eqn:Kellcompactification} for any $a \in \mathbb{R}$ is precisely the content of Lemma \ref{l:CYSteadyrank3aux}, but as we saw in the proof of Proposition \ref{steadiesgeneral}, the corresponding metric is only complete if $a \geq 0$.
\end{proof}

\subsection{Shrinking case:  \texorpdfstring{$\lambda > 0$}{lambdapos} }
We begin by observing that whenever $\lambda \neq 0$ in \eqref{krs}, we can only have examples in the Type 1 situation:
\begin{lemma}\label{l:shrinkingnoType2}
    There can be no complete shrinking or expanding gradient K\"ahler-Ricci soliton with a hamiltonian 2-form of order $\ell \geq 2$ whose soliton vector field is a multiple of $X = a JK_1$ for $a \in \R$, and such that the domain of the corresponding $(\xi_1, \dots, \xi_\ell)$ is of the form $\mathcal{D}_2$. 
\end{lemma}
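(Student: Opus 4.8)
The plan is to exhibit the soliton potential explicitly and contradict the structure theory of complete gradient Ricci solitons. Suppose $(M,g,\omega,X)$ is such a complete soliton with $X = aJK_1$, $a\in\R$, $\lambda\in\{1,-1\}$, and with $(\xi_1,\dots,\xi_\ell)$ valued in a Type~2 domain $\mathcal D_2$. Take $a\neq 0$ first. Since $K_1$ is the Hamiltonian vector field of $\sigma_1$, Lemma~\ref{lemma:hamiltonian:globalKRS} gives $X = -a\nabla^g\sigma_1$, so $f := -a\sigma_1$ is a potential for $X$, and because $X$ is real holomorphic the soliton equation \eqref{eqn:KRSlower} is equivalent to the Riemannian identity $\Ric_g + \mathrm{Hess}_g f = \lambda g$. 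The essential point is that in the Type~2 case $\sigma_1 = \xi_1 + \dots + \xi_\ell$ is unbounded \emph{both above and below} on $M$: by Remark~\ref{rem:xiextendandmomentproper} the $\xi_j$ extend to functions on $M$ attaining every value of $\overline{\mathcal D}_2$, hence $\xi_1$ is unbounded below, $\xi_\ell$ is unbounded above, and the remaining $\xi_j$ are bounded. Thus $f$ is unbounded above and below.

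This is incompatible with completeness of the soliton. If $\lambda > 0$ (shrinking), the potential of a complete shrinking gradient Ricci soliton is bounded below (Perelman; Cao--Zhou), contradicting $\inf_M f = -\infty$. If $\lambda < 0$ (expanding), then $R_g$ is bounded below on a complete expander (B.-L. Chen), and the soliton identity $R_g + |\nabla f|_g^2 - 2\lambda f \equiv \mathrm{const}$ then forces $f$ to be bounded above, contradicting $\sup_M f = +\infty$. This disposes of the case $a\neq 0$. (Equivalently, one can give a self-contained argument at the level of profile functions: for $\lambda\neq 0$ the polynomial $q$ in \eqref{eq:hamiltonian:weightedKRS2} has degree exactly $\ell$ since $q_\ell = -\lambda\neq 0$, the general solution is $t^{d_B}F_j = P_*(t) + c_j e^{-at}$ with $P_*$ of degree $n$ and leading coefficient $2q_\ell/a$, completeness (Lemma~\ref{l:generalcompleteness}) kills the exponential term at whichever of the two ends of $\mathcal D_2$ it would blow up, the positivity condition \eqref{inequality} at the end $\xi_\ell\to+\infty$ forces $2q_\ell/a>0$, and the positivity condition at the end $\xi_1\to-\infty$ then fails, since $\Theta_1(t)\sim (2q_\ell/a)t^\ell$ has sign $(-1)^\ell$ as $t\to-\infty$ while \eqref{inequality} demands sign $(-1)^{\ell-1}$ there.)

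It remains to treat $a = 0$, where $X = 0$ and $(g,\omega)$ is K\"ahler--Einstein with $\Ric_\omega = \lambda\omega$. For $\lambda = 1$, $g$ has positive Ricci curvature, so the complete manifold $M$ is compact by Bonnet--Myers, contradicting the fact that $\xi_1$ is an unbounded function on $M$. The case $\lambda = -1$ --- a complete K\"ahler--Einstein metric with $\Ric_\omega = -\omega$ carrying a Type~2 hamiltonian $2$-form --- is the remaining point, and is the step I expect to be the most delicate: the potential-function dichotomy is unavailable (there is no soliton potential), and one must instead exclude it by a direct analysis of the over-determined system satisfied by $F_1 = \dots = F_{\ell-1}$, which must vanish to the prescribed orders at $\alpha_1,\dots,\alpha_{\ell-1}$ while solving \eqref{eq:hamiltonian:weightedKRS2} with $q$ of degree $\ell$, showing that the positivity conditions \eqref{inequality} on the bounded intervals of $\mathcal D_2$ together with smoothness cannot all be met. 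Apart from this, the only genuine care required is the sign bookkeeping in the profile-function argument above (keeping track of which of the two ends of $\mathcal D_2$ forces which sign once $q$ acquires a nonzero $t^\ell$-coefficient).
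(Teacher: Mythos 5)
Your main argument is correct, and it is genuinely different from the paper's. The paper works entirely at the level of the profile functions: for $\lambda\neq 0$ the polynomial $q$ has degree exactly $\ell$, the general solution of \eqref{eq:hamiltonian:weightedKRS2} is $t^{-d_B}(P(t)+ce^{-at})$ with the \emph{same} degree-$n$ polynomial $P$ on both unbounded intervals, completeness (via Lemma \ref{l:generalcompleteness}(ii)) forces the exponential term to vanish at whichever end it would blow up, and then the common degree-$\ell$ leading behaviour of $\Theta_1$ and $\Theta_\ell$ makes the Type 2 positivity condition \eqref{type2:Fpositivity} fail at one of the two ends --- this is exactly the sign computation you give in your parenthetical, so that aside essentially reproduces the paper's proof (including its mild abuse of treating the sufficient completeness criterion of Lemma \ref{l:generalcompleteness} as necessary). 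Your primary route instead uses the global structure theory of gradient solitons: $f=-a\sigma_1$ is the soliton potential, and in a Type 2 domain $\sigma_1$ is unbounded above and below, contradicting Cao--Zhou in the shrinking case and the identity $R+|\nabla f|^2-2\lambda f=\mathrm{const}$ together with the lower bound $R\geq n\lambda$ in the expanding case. The shrinking half is precisely the "even simpler argument" the paper itself mentions right after the lemma; the expanding half is a nice addition, since the paper dismisses the potential-function route for expanders on the grounds that properness fails, whereas you only need one-sided boundedness of $f$, which does hold on complete expanders. (Attribute the bound $R\geq n\lambda$ to Pigola--Rimoldi--Setti or Z.-H.~Zhang rather than to B.-L.~Chen, whose result concerns ancient solutions; and note that the unboundedness of $\xi_1$ and $\xi_\ell$ is really just the hypothesis that the domain is $\mathcal D_2$, rather than something to pull from Remark \ref{rem:xiextendandmomentproper}.)

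On the case $a=0$: you rightly dispose of $\lambda=1$ by Myers, and you are honest that $\lambda=-1$ with $X=0$ escapes your method. But this is not a gap relative to the paper, because the paper's own proof also tacitly assumes $a\neq 0$: its general solution with $P$ of degree exactly $n$ and an $e^{-at}$ term degenerates at $a=0$, where the rational part acquires degree $n+1$, $\Theta$ has degree $\ell+1$, and the sign obstruction at $\xi_1\to-\infty$ disappears since $(-1)^{\ell+1}=(-1)^{\ell-1}$. So the negative K\"ahler--Einstein case is not covered by either argument; the lemma is evidently intended for a genuine (nontrivial) soliton vector field $aJK_1$ with $a\neq 0$, which is the only situation in which it is invoked.
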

%\begin{proof}
%    By \cite{CaoZhou}, any complete shrinking gradient K\"ahler-Ricci soliton $(\omega, X)$ on a manifold $M$ has the property that 
%    \[ \int_M e^{-f} \omega^n < \infty, \]
%    where $f$ is hamiltonian potential for $JX$. In our situation, $f = a \sigma_1 = a(\xi_1 + \dots + \xi_\ell)$, and 
%\[\omega^n = Q(\xi_1, \dots, \xi_\ell) d\xi_1\wedge \dots \wedge d\xi_\ell \wedge \theta_1 \wedge \dots \theta_{\ell}\wedge \omega_B^{d_B}\wedge \check{\omega}_{1}^{d_1} \wedge \dots \wedge \check{\omega}_{\ell-1}^{d_{\ell-1}}, \]
%where $Q(\xi_1, \dots, \xi_\ell)$ is a fixed polynomial. In particular, in order for $\int_M e^{-f} \omega^n < \infty$ we must have both 
%\[  \lim_{\xi_\ell \to + \infty} e^{-a\xi_\ell}  = \lim_{\xi_1 \to - \infty} e^{-a\xi_1} = 0,\]
%which is clearly impossible for any given $a \in \R$.
%\end{proof}
\begin{proof}
Indeed, directly from \ref{eqn:krsgeneralq} it follows that $q(t)$ necessarily has degree exactly $\ell$ whenever $\lambda \neq 0$ in \eqref{krs}. Therefore the typical solution of \eqref{eq:hamiltonian:weightedKRS2} will be of the form 
\[ F(t) = 2t^{-d_B}\left( P(t) + c e^{-at} \right), \]
where $P(t)$ is a polynomial of degree exactly $n = \ell + r + d_B$. We focus on the two unbounded intervals $(-\infty, \alpha_1)$ and $(\alpha_\ell, \infty)$, with corresponding solutions $F_1(t) =2t^{-d_B}\left( P(t) + c_1 e^{-at} \right)$ and $F_\ell(t) = 2t^{-d_B}\left( P(t) + c_2 e^{-at} \right)$. Now, in order to satisfy the conditions of part (ii) of Lemma \ref{l:generalcompleteness}, we have to have either $c_1 = 0$ or $c_2 = 0$, depending on the sign of $a$. In either case, both $F_1$ and $F_\ell$ are dominated by the rational function $2t^{-d_B}P(t)$. It follows that $\Theta = \frac{F(t)}{p_c(t)}$ has degree exactly $\ell$, which violates the Type 2 positivity condition \ref{type2:Fpositivity}.
\end{proof}
 In the shrinking case, there is an even simpler argument, using the fact that the soliton potential $f$ is necessarily proper \cite{CaoZhou}. This is clearly not the case for expanders (even K\"ahler expanders), as can be readily seen by taking products with a hyperbolic space, and it's typical to \emph{impose} the properness of $f$ in order to obtain geometric estimates (see e.g. \cite[Section 4]{ChanMaZhang:estimates}). A notable exception is of course the case of quadratic curvature decay \cite{ChenDeruelle, Deruelle:exp}, which obviously rules out any metric which is sufficiently close to a product with a hyperbolic space. 
%In fact, this argument gives the a priori stronger statement that there can be no complete shrinking gradient K\"ahler-Ricci soliton with a hamiltonian 2-form of order $\ell \geq 2$ whose soliton vector field is a multiple of $X = a JK_1$ for $a \in \R$, and such that the domain of the corresponding $(\xi_1, \dots, \xi_\ell)$ is of the form $\mathcal{D}_2$. For expanders (even if we restrict to the K\"ahler case),  we do not always have properness of $f$, as can be readily seen by taking products with a hyperbolic space
\begin{prop}\label{shrinkersgeneral}
    Fix $\ell \geq 1$, and suppose $0 < \alpha_1 < \dots < \alpha_\ell$. Suppose further that we have $m_1, \dots, m_\ell \in \mathbb{Z}_{\geq 1}$ such that 
    \[ \sum_{j=1}^\ell (d_j+1)m_j = i_B - b, \qquad b \in \mathbb{Z}, \qquad 0 < b < i_B. \]
     Let $q(t) = \sum_{r = 0}^\ell q_r t^r$ be a polynomial of degree $\ell$ and $a \in \R$, and suppose that all this data satisfies:
    \begin{enumerate}
    \item $ \displaystyle \int_{\alpha_j}^{\alpha_{j+1}} x^{d_B} e^{ax} q(x) p_c(x)  \, dx = 0, \qquad j = 1, \dots, \ell -1.$
    \item $ \displaystyle \int_{\alpha_\ell}^{\infty} x^{d_B} e^{ax} q(x) p_c(x)  \, dx = 0, $
    \item $q(0) = q_0 = -\varepsilon_B i_B$ and  $q_\ell = -1$, 
    %\item $q(\alpha_\ell) > 0$.
    \item The $\delta_j$'s of \eqref{eqn:partialdegree} satisfy 
    \[ \delta_j(q, \alpha_1, \dots, \alpha_\ell) = m_j. \]
    \end{enumerate}
%    \begin{enumerate}
%        \item[\textnormal{(v)}] the element $K_\ell \in \T$ determined by  and $q(t)$ via \eqref{...} satisfies 
%    \begin{equation}\label{eqn:Kellcompactification}
%        \varepsilon_B K_\ell = \sum_{j = 1}^{\ell} m_j v_j, \qquad m_j \in \mathbb{Z}.
%    \end{equation} 
%    \item[\textnormal{(vi)}] the total degree $\delta$ of Lemma \ref{l:totaldegre} satisfies 
%    \[ \varepsilon_B\delta = i_B - \nu, \qquad \nu \in \mathbb{Z} \qquad 0 < \nu < i_B. \]
%    \end{enumerate}
    Then there exists a complete shrinking gradient K\"ahler-Ricci soliton with $\lambda = +1$ on the total space $M$ of the vector bundle
    \begin{equation*}
        E = \bigoplus_{j=1}^\ell \left( \bigoplus_{k =1}^{d_j} L^{m_j} \right) \to B,
    \end{equation*}
    obtained by the Type 1 hamiltonian 2-form Ansatz. 
\end{prop}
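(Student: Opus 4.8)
The plan is to follow the Calabi--Yau Type 1 argument of Theorem \ref{th:Type1CY} and the steady argument of Proposition \ref{steadiesgeneral}, the new features being that $q$ now has degree exactly $\ell$, that hypothesis (ii) is an extra ``condition at infinity'' (which in particular forces $a<0$, since otherwise the integral in (ii) diverges), and that $q_\ell=-1$ produces the shrinking constant $\lambda=1$ through Lemma \ref{lemma:hamiltonian:globalKRS}. First I would record the sign structure of $q$: the weight $x^{d_B}e^{ax}p_c(x)$ has constant sign on each of the intervals $(\alpha_1,\alpha_2),\dots,(\alpha_{\ell-1},\alpha_\ell),(\alpha_\ell,\infty)$, so (i) and (ii) force $q$ to vanish in each of these $\ell$ intervals; since $\deg q=\ell$ these are all of its roots, simple, one per interval, so $q$ alternates, $\textnormal{sgn}(q(\alpha_j))=(-1)^{\ell-j}$ --- the analogue of \eqref{eqn:qalternates} --- which legitimizes the Fubini--Study normalizations \eqref{eqn:fubinistudy} and the lattice vectors \eqref{eqn:vjdef} (there is no freedom to flip the sign of $q$ here, but the root structure already pins it down, compatibly with (iii)).

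Next I would set $F_1=\dots=F_\ell=F$ with $F(t):=t^{-d_B}e^{-at}\int_{\alpha_1}^{t}x^{d_B}e^{ax}q(x)p_c(x)\,dx$, a solution of \eqref{eq:hamiltonian:weightedKRS2}. Hypothesis (i) and induction give $F(\alpha_j)=0$ for all $j$, and the divisibility computation in the proof of Lemma \ref{l:determine-q} (unaffected by the extra drift $a$) shows each $\alpha_j$ is a zero of $F$ of order $d_j+1$; equivalently $\Theta(t):=F(t)/p_c(t)$ extends smoothly across each $\alpha_j$ with $\Theta(\alpha_j)=0$ and $(d_j+1)\Theta'(\alpha_j)=2q(\alpha_j)$. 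On the unbounded interval $(\alpha_\ell,\infty)$, hypothesis (i) gives $\int_{\alpha_1}^{\alpha_\ell}x^{d_B}e^{ax}q(x)p_c(x)\,dx=0$ and hypothesis (ii) gives $\int_{\alpha_\ell}^{\infty}x^{d_B}e^{ax}q(x)p_c(x)\,dx=0$, so for $t>\alpha_\ell$ one has $t^{d_B}e^{at}F(t)=-\int_{t}^{\infty}x^{d_B}e^{ax}q(x)p_c(x)\,dx$; iterated integration by parts (valid since $a<0$) then shows $t^{d_B}F(t)$ is a polynomial of degree $n=d_B+r$ with leading coefficient $q_\ell/a=-1/a>0$. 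Hence $\Theta_\ell$ has degree exactly $\ell$ in the sense of \eqref{degree}, with positive leading coefficient. This is where (ii) is essential: it kills the exponential term on the last interval, so $\Theta_\ell$ grows only polynomially.

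With this I would establish the positivity condition \eqref{inequality}, $(-1)^{\ell-j}\Theta(t)>0$ on $(\alpha_j,\alpha_{j+1})$ (with $\alpha_{\ell+1}:=\infty$), by re-running the proof of Lemma \ref{l:positivity}: $\Theta$ solves $\Theta'+(d_Bt^{-1}+\sum_j\tfrac{d_j}{t-\alpha_j}+a)\Theta=2q$, and, using the boundary data above together with the fact that $q$ has a single simple root $z_j^\ast$ in each of the $\ell$ intervals, passing from $+$ to $-$ so that $q'(z_j^\ast)<0$, the comparison $\Theta''(z_j^\ast)=2q'(z_j^\ast)$ from the differentiated ODE forbids a zero of $\Theta$ at $z_j^\ast$ while the boundary-sign argument forbids zeros elsewhere --- the drift term $a\Theta$ being harmless throughout. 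Then Lemma \ref{l:smoothextension} gives a smooth $\T$-invariant K\"ahler structure on $\R^{2r}$; since $\deg\Theta_\ell=\ell$, Lemma \ref{l:complexstructureCn} identifies it with $\C^r$ and Lemma \ref{l:generalcompleteness} gives completeness. Hypothesis (iv), $\delta_j=m_j$, is precisely the integrality and compatibility input of Proposition \ref{prop:Estructure} (via Lemma \ref{l:computevectorfield1} and $\T=\R^\ell/2\pi\Gamma_v$; the constraint $\sum_j(d_j+1)m_j=i_B-b$ is consistent with Lemma \ref{l:totaldegre} and (iii)), so Proposition \ref{prop:Estructure} yields the complete global K\"ahler metric $\omega$ on the total space $M$ of $E=\bigoplus_j(\bigoplus_kL^{m_j})$. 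Finally, $F$ solves \eqref{eq:hamiltonian:weightedKRS2}, so Lemma \ref{lemma:hamiltonian:globalKRS} gives $\Ric_\omega+\tfrac12\mathcal{L}_X\omega=\lambda\omega$ with $\lambda=-q_\ell=1$ and $X=aJK_1=-a\nabla^g\sigma_1$, the hypothesis $q(0)=-\varepsilon_B i_B$ being exactly $Scal_{\omega_B}=-2\varepsilon_Bd_Bq(0)=2d_Bi_B$; since $a<0$ and $\sigma_1$ is proper on $M$ (Remark \ref{rem:xiextendandmomentproper}), $X=\nabla^g f$ for the proper function $f=-a\sigma_1+\textnormal{const}$ (properness being automatic by \cite{CaoZhou}), so $(g,X)$ is a complete shrinking gradient K\"ahler--Ricci soliton.

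The step I expect to be the main obstacle is the positivity verification on the unbounded interval $(\alpha_\ell,\infty)$: unlike the Calabi--Yau and steady cases, where $q$ keeps constant sign there and positivity of $\Theta_\ell$ is immediate, here $q$ has a zero in $(\alpha_\ell,\infty)$, so one must transplant the delicate second-derivative argument of Lemma \ref{l:positivity} to this interval, and simultaneously confirm --- via hypothesis (ii) and the integration-by-parts asymptotics --- that $\Theta_\ell$ really grows like $t^\ell$ rather than exponentially, this polynomial growth being exactly what makes the resulting metric complete, biholomorphic to $\C^r$, and (by Proposition \ref{prop:generalvolume}) of Euclidean volume growth.
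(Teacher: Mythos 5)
Your proposal is correct, and its skeleton coincides with the paper's: the same choice $F_1=\dots=F_\ell=F(t)=2t^{-d_B}e^{-at}\int_{\alpha_1}^t x^{d_B}e^{ax}q(x)p_c(x)\,dx$, the same smooth divisibility/boundary behavior of $\Theta$ via the Lemma \ref{l:determine-q} computation, the same antiderivative identity $e^{ax}\tilde P(x)$ showing (via (ii) and $a<0$) that the exponential term vanishes so that $\Theta$ has degree exactly $\ell$, and the same assembly through Lemmas \ref{l:smoothextension}, \ref{l:generalcompleteness}, \ref{l:complexstructureCn}, Proposition \ref{prop:Estructure} and Lemma \ref{lemma:hamiltonian:globalKRS}. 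The one genuine divergence is the positivity of $\Theta$ on $(\alpha_\ell,\infty)$, which you correctly single out as the new difficulty. The paper disposes of it directly from (ii): writing $\varepsilon F>0$ as $\int_{\alpha_\ell}^{t}x^{d_B}e^{ax}q\,(\varepsilon p_c)\,dx>0$ and using that $\int_{t^*}^{t}$ is monotone with limit exactly $-\int_{\alpha_\ell}^{t^*}$, a two-line comparison needing no ODE analysis. You instead first establish the asymptotics $t^{d_B}F(t)=\tilde P(t)$ with leading coefficient $-1/a>0$ (so $\Theta\to+\infty$), and then transplant the second-derivative argument of Lemma \ref{l:positivity}, with the growth at infinity playing the role of the missing right-endpoint data: taking the smallest zero forces it to lie in $[t^*,\infty)$, taking the largest forces it into $(\alpha_\ell,t^*]$, so any zero is at $t^*$, where $\Theta(t^*)=\Theta'(t^*)=0$ and $\Theta''(t^*)=2q'(t^*)<0$ contradicts the local minimum. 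This works, and it also yields $q(\alpha_\ell)>0$ from (ii) and $q_\ell=-1$ rather than from (iv) as in the paper; the trade-off is that your positivity proof is contingent on first proving the asymptotics (which both proofs need anyway for completeness), whereas the paper's integral argument is self-contained and shorter. Two cosmetic points: your $F$ is off by the factor $2$ needed to solve \eqref{eq:hamiltonian:weightedKRS2} exactly, and on the intermediate intervals $q$ passes from $+$ to $-$ only for alternate $j$, so there the sign bookkeeping should be run symmetrically exactly as in Lemma \ref{l:positivity}; neither affects the argument.
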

\begin{proof}
    We use the same Type 1 formulation, so we set $\varepsilon_B = (-1)^{\ell}$, and our domain is 
    \[ \mathcal{D} = (\alpha_1, \alpha_2) \times \dots \times (\alpha_\ell, \infty). \]
     Notice that by (iv),  
   	\[ (d_\ell + 1)\delta_\ell = \frac{\alpha_1 \dots \alpha_{\ell-1} q(\alpha_\ell)}{(\alpha_\ell - \alpha_1) \dots (\alpha_\ell - \alpha_{\ell-1})} > 0,\]
   	and hence we automatically have $q(\alpha_\ell) > 0$. Combining the facts that $(-1)^{\ell}q(0) < 0$, $q(\alpha_\ell) > 0$, the condition (i), and that $q(t) \sim -t^\ell$ for $t >>1$, we see that $q(t)$ necessarily has a unique root in each of the $\ell$ intervals $(\alpha_1, \alpha_2), \dots, (\alpha_\ell, \infty)$. As usual for the Type 1 situation, we set
    \[ F_1(t) = \dots = F_\ell(t) = F(t) := 2t^{-d_B}e^{-at}\int_{\alpha_1}^t  x^{d_B} e^{ax} q(x) p_c(x)  \, dx. \]
    The Type 1 positivity condition \eqref{l:positivity} then follows for $j = 1, \dots, \ell -1$ just as in all previous cases. 

    The final positivity condition 
    \[F(t)p_c(t) > 0, \qquad t \in (\alpha_\ell, \infty) \]
    is slightly more subtle, and this is the first of two instances where we will use the extra condition (ii). Note that this condition implies that $a < 0$. Let $\varepsilon \in \{-1, 1\}$ be the sign constant such that $\varepsilon p_c(t) > 0$ for $t \in (\alpha_\ell, \infty)$, so it's equivalent to check that $\varepsilon F(t) > 0$ for $t \in (\alpha_\ell, \infty)$. This in turn is equivalent to 
    \[ \int_{\alpha_\ell}^{t^*} x^{d_B}e^{ax}q(t) \big(\varepsilon p_c(x) \big)\, dx \geq -\int_{t^*}^t x^{d_B}e^{ax}q(t) \big(\varepsilon p_c(x) \big)\, dx \]
    where $t^* \in (\alpha_\ell, \infty)$ is the unique root of $q$, for any $t \in (t^*, \infty)$. This is immediate, since condition (ii) implies that
    \[ \int_{\alpha_\ell}^{t^*} x^{d_B}e^{ax}q(t) \big(\varepsilon p_c(x) \big)\, dx  = -\int_{t^*}^\infty x^{d_B}e^{ax}q(t) \big(\varepsilon p_c(x) \big)\, dx, \]
    and since $\int_{t^*}^t x^{d_B}e^{ax}q(x) p_c(x)\, dx$ is monotone for $t > t^*$.

    The smooth compactification to $E$ now follows exactly as in all other cases by Proposition \ref{prop:Estructure}, since $\delta_j = (-1)^\ell \varepsilon_B m_j = + m_j$ by assumption.

    To see that the metric is complete, we will once again use Lemma \ref{l:generalcompleteness}. Here is the key place where we use the assumption (ii). The function $F(t)$ has the form 
    \[ F(t) = 2t^{-d_B}\left( \tilde{P}(t) + ce^{-at} \right),\]
    where $\tilde{P}$ is a polynomial of degree $n = d_B + r$. We claim that (ii) implies that $c = 0$. Using (ii), notice that 
    \[F(t) = -2t^{d_B}e^{-at}\int_t^{\infty}x^{d_B}e^{ax}q(x) p_c(x)\, dx\]
    Now the function $x^{d_B}e^{ax}q(x)p_c(x)$ has an antiderivative given by:
    \[ e^{ax}\tilde{P}(x) :=  e^{ax}\sum_{k=0}^{n}\frac{(-1)^k}{a^{k+1}}\frac{d^k}{dx^k} \bigg( x^{d_B}q(x) p_c(x)\bigg). \]
    It follows that $F(t)$ can be written
    \[ F(t) = 2t^{-d_B}\tilde{P}(t) - 2t^{-d_B}e^{-at}\left(\lim_{s \to \infty} e^{as}\tilde{P}(s) \right).\]
    In other words, we have identified $c = \lim_{s \to \infty} e^{as}\tilde{P}(s)$, which vanishes since $a < 0$. It follows that $F(t) = 2t^{-d_B}\tilde{P}(t)$ is a rational function of degree $r = n - d_B$. In particular, if we define $\Theta(t) = \frac{F(t)}{p_c(t)}$ as usual, we have that $\Theta$ is a rational function of degree $n - d_B - \sum_{j=1}^\ell d_j = \ell$, and so the metric is complete by Lemma \ref{l:generalcompleteness}.
 \end{proof}

%\textcolor{red}{In what follows, we will give an example in the case where $\ell = 2$ of parameters $\alpha_1, \alpha_2, a, q(t)$ satisfying the conditions of Proposition \eqref{shrinkersgeneral}. }

We check the conditions of Proposition \ref{shrinkersgeneral} in the special case where $\ell  = 2$ and $d_1 = d_2 = 0$:
\begin{lemma}[Theorem \ref{mtheoremType1}, (ii)]\label{l:rank2shrinkers}
    Let $B$ be a K\"ahler-Einstein Fano manifold with Fano index $i_B$, and let $L \to B$ be a line bundle with $L^{i_B} = K_B$. Let $0 < \delta < i_B$ be any integer, and let $m_1, m_2$ be any two positive integers such that $m_1 + m_2 = \delta$. Then there exists a complete shrinking gradient K\"ahler-Ricci soliton on the total space $M$ of the rank two bundle
    \begin{equation}
    E  := L^{m_1} \oplus L^{m_2} \to B.
\end{equation}
\end{lemma}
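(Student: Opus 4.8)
The plan is to deduce the lemma from Proposition \ref{shrinkersgeneral}, arranging its hypotheses to hold in the special case $\ell = 2$, $d_1 = d_2 = 0$. Here $p_c \equiv 1$, and in the Type 1 set-up $\varepsilon_B = (-1)^\ell = 1$, so condition (iii) of that proposition forces
\[ q(t) = -t^2 + q_1 t - i_B, \]
with $q_1 \in \R$ the only undetermined coefficient. By relabeling the two line-bundle summands we may assume $m_1 \geq m_2$; when $m_1 = m_2$ the bundle $L^{m_1}\oplus L^{m_1}$ has Calabi symmetry and the desired shrinker is already provided by Theorem \ref{theorem:calabi:krs}, so from now on $m_1 > m_2$. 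It thus suffices to produce $0 < \alpha_1 < \alpha_2$, $q_1 \in \R$, and $a < 0$ satisfying conditions (i), (ii), and (iv) of Proposition \ref{shrinkersgeneral}.

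First I eliminate $\alpha_2$. Applying Lemma \ref{l:totaldegre} with $\ell = 2$, $q_\ell = -1$, $q(0) = -i_B$ gives the identity $\delta_1 + \delta_2 = i_B - \alpha_1\alpha_2$; since (iv) requires $\delta_1 + \delta_2 = m_1 + m_2 = \delta$, necessarily $\alpha_1\alpha_2 = i_B - \delta =: D \in \Z_{\geq 1}$, and, granted this, (iv) reduces to the single scalar equation $\delta_1 = m_1$. So I set $\alpha_2 := D/\alpha_1$ and treat everything as a function of $\alpha_1 \in (0,\sqrt D)$.

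Next, for each such $\alpha_1$ I solve (i) and (ii) for the pair $(q_1, a)$. Condition (i) is linear in $q_1$ and gives $q_1 = q_1^{\mathrm{int}}(a)$; condition (ii) (convergent only for $a < 0$) gives $q_1 = q_1^{\mathrm{tail}}(a)$. One shows the equation $q_1^{\mathrm{int}}(a) = q_1^{\mathrm{tail}}(a)$ has a root $a = a(\alpha_1) < 0$: as $a \to 0^-$ the tail integrals force $q_1^{\mathrm{tail}}(a) \to +\infty$ while $q_1^{\mathrm{int}}(a)$ stays bounded, whereas a Watson-type estimate gives $q_1^{\mathrm{int}}(a) \to \alpha_1 + i_B/\alpha_1$ and $q_1^{\mathrm{tail}}(a) \to \alpha_2 + i_B/\alpha_2$ as $a \to -\infty$, and
\[ \Bigl(\alpha_1 + \frac{i_B}{\alpha_1}\Bigr) - \Bigl(\alpha_2 + \frac{i_B}{\alpha_2}\Bigr) = (\alpha_2 - \alpha_1)\,\frac{i_B - D}{D} > 0 \]
because $i_B > D$. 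The intermediate value theorem then yields $a(\alpha_1) < 0$, hence $q_1(\alpha_1) := q_1^{\mathrm{int}}(a(\alpha_1))$; both depend continuously on $\alpha_1$ (one can pin down a branch using monotonicity of $a \mapsto q_1^{\mathrm{tail}} - q_1^{\mathrm{int}}$). With these in hand, $q$, and hence $\delta_1(\alpha_1) = -\alpha_2\,q(\alpha_1)/(\alpha_2 - \alpha_1) > 0$, become honest continuous functions of $\alpha_1$ on $(0,\sqrt D)$, and what remains is to solve $\delta_1(\alpha_1) = m_1$ there by one more application of the intermediate value theorem.

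The crux — and the step I expect to be the main obstacle — is the endpoint analysis of $\delta_1(\alpha_1)$ needed for that last IVT. Both endpoints of $(0,\sqrt D)$ are degenerate: $\alpha_1 \to \sqrt D$ is the Calabi collapse ($\alpha_1 = \alpha_2$, hamiltonian 2-form of order one), and $\alpha_1 \to 0$ is a second degeneration of the fibration. Near $\alpha_1 = \sqrt D$ the interval $(\alpha_1,\alpha_2)$ shrinks and one faces a $0/0$ limit for $\delta_1$ whose value depends on the first-order behavior of $q_1(\alpha_1)$ and of the normalization $a(\alpha_1)$ (which runs off to $-\infty$ at a rate that must be tracked); a careful Laplace/Watson expansion of the integrals in (i)–(ii), in the same spirit as — though more involved than — the expansions in Claims \ref{claim:q1tominus1PARTONE} and \ref{claim:q1tominus1} and in \cite{AC}, should identify this limit as $\delta/2$. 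A similar, slightly easier expansion handles $\alpha_1 \to 0$ and gives a limiting value of $\delta_1$ outside $(0,\delta)$. Since $m_1 > m_2$ forces $\delta/2 < m_1 \leq \delta - 1$, continuity then produces $\alpha_1 \in (0,\sqrt D)$ with $\delta_1(\alpha_1) = m_1$; for this data all hypotheses of Proposition \ref{shrinkersgeneral} are satisfied, giving the complete shrinking gradient K\"ahler--Ricci soliton on the total space $M$ of $L^{m_1}\oplus L^{m_2} \to B$.
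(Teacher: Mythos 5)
Your skeleton is the same as the paper's: reduce to Proposition \ref{shrinkersgeneral} with $q(t)=-t^2+q_1t-i_B$, use Lemma \ref{l:totaldegre} to force $\alpha_1\alpha_2=i_B-\delta$, work along the resulting one-parameter family, solve conditions (i)--(ii) for $a<0$ by an intermediate value argument in $a$ (your limits of $q_1^{\mathrm{int}},q_1^{\mathrm{tail}}$ as $a\to 0^-$ and $a\to-\infty$ agree with \eqref{RHSOainverse}, \eqref{shrinkers:LHSasymptotics1}, \eqref{shrinkers:RHSasymptotics1}, and your sign computation reproduces Step 1 of the paper), and finish with a second intermediate value argument in the parameter using the endpoint values $\delta/2$ and $\delta$ of $\delta_1$; deferring $m_1=m_2$ to the Calabi-symmetry case is reasonable.

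However, the step you yourself flag as the main obstacle --- the endpoint analysis of $\delta_1$ --- is exactly where the paper's real work lies, and you have not supplied it, so as written there is a genuine gap. To evaluate $\lim\delta_1$ at either end one must first control the implicitly defined normalization $a(\alpha_1)$ along the solution branch of \eqref{eqn:solveforareformulation}: the paper proves (Claim \ref{shrinkers:claim:astaysbounded}) that no solution can tend to $-\infty$ as $\alpha_1\to\sqrt{D}$, via a three-case analysis of $\lim e^{a(\alpha_2-\alpha_1)}$ resting on the inductive integral estimate of Claim \ref{shrinkers:integralinduction}, and (Claim \ref{shrinkers:claim:alphainfinityabound}) that $a(\alpha_1)$ stays bounded away from $0$ as $\alpha_1\to 0$; only with these in hand do the expansions yield $\delta_1\to\delta/2$ (Claims \ref{shrinkers:claim:limitq1}, \ref{shrinkers:claim:alphatosmalldelta1}) and $\delta_1\to\delta$ (Claim \ref{shrinkers:claim:alphatoinfinitydelta1}). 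Note moreover that your parenthetical expectation that $a(\alpha_1)$ ``runs off to $-\infty$'' near the Calabi-collapse endpoint is the opposite of what is actually needed and true --- if it did, one reaches a contradiction with \eqref{eqn:solveforareformulation}, which is how the paper proves boundedness --- so the sketched expansion would have to be organized differently from what you anticipate. Finally, the continuity/branch-selection of $a(\alpha_1)$ that your last IVT requires is asserted but not established (your monotonicity suggestion is unverified; the paper sidesteps much of this by proving the endpoint limits for arbitrary choices of solution along subsequences). Without these ingredients the concluding intermediate value argument has no endpoints to interpolate between, so the proof is incomplete precisely at its technical core.
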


\begin{proof}
    By Proposition \ref{shrinkersgeneral}, we are searching for a degree $2$ polynomial $q(t)$ of the form 
    \[q(t) = -i_B + q_1 t - t^2.\]
    Noting that $\varepsilon_B = +1$, by Lemma \ref{l:totaldegre} we have that 
    \[\delta = m_1 + m_2 = i_B -\alpha_1\alpha_2. \]
    Using this as a constraint, we reduce the search for $\alpha_1, \alpha_2$ to that for a single parameter. Set $b = i_B - \delta$, $\alpha \in (1/\sqrt{b}, \infty)$, and 
    \begin{equation}\label{shrinkers:alpha1alpha2}
        \alpha_2 = b\alpha, \qquad \alpha_1 = \alpha^{-1}.
    \end{equation}
    Recall that we have 
    \[K_2 = \delta_1 v_1 + \delta_2 v_2, \qquad  \delta_1 = \frac{\alpha_2 q(\alpha_1)}{\alpha_1 - \alpha_2}, \quad \delta_2 = \frac{\alpha_1 q(\alpha_2)}{\alpha_2 - \alpha_1}, \]
    and that $\delta_1 + \delta_2 = \delta$. Since $0 < \alpha_1 < \alpha_2$, clearly if we are able to solve $\delta_1(\alpha_1, \alpha_2) = m_1$, for $0 < m_1 < \delta$, then we will also have that $\delta_2 = m_2 := \delta - m_1$. 

    Our goal is to check that the conditions of Proposition \ref{shrinkersgeneral} can be simultaneously satisfied. Now condition (i) is satisfied if and only if 
    \begin{equation}\label{shrinkers:q1def}
        q_1 = \frac{\int_{\alpha_1}^{\alpha_2} x^{d_B}(i_B + x^2) e^{ax} \, dx}{\int_{\alpha_1}^{\alpha_2} x^{d_B+1} e^{ax} \, dx},
    \end{equation}
     and condition (iii) is encoded in our choice for $q$. Fixing these constraints, the remainder of the proof will be occupied by showing that it is possible to choose our remaining two free parameters $(a, \alpha)$ in such a way that conditions (ii) and (iv) of Proposition \ref{shrinkersgeneral} can be simultaneously satisfied. 
    \begin{step}
     For each $\alpha \in (1/\sqrt{b}, \infty)$ there exists an $a \in (-\infty, 0)$ such that condition (ii) is satisfied for $q(t) = -i_B + q_1 t - t^2$ and $q_1(\alpha, a(\alpha))$ given by \eqref{shrinkers:q1def}. 
    \end{step}
    Given the formula \eqref{shrinkers:q1def} for $q_1$, it is equivalent to finding an $a$ solving 
    \begin{equation}\label{eqn:solveforareformulation}
         \frac{\int_{\alpha_1}^{\alpha_2} x^{d_B}(i_B + x^2) e^{ax} \, dx}{\int_{\alpha_1}^{\alpha_2} x^{d_B+1} e^{ax} \, dx}  = \frac{\int_{\alpha_2}^{\infty} x^{d_B}(i_B + x^2) e^{ax} \, dx}{\int_{\alpha_2}^{\infty} x^{d_B+1} e^{ax} \, dx}. 
    \end{equation}
    For any polynomial $Q(x)$, note the formula 
    \begin{equation}\label{expibp}
    \int Q(x)e^{ax} \,dx = e^{ax}\sum_{k=0}^{\deg(Q)}\frac{(-1)^k}{a^{k+1}}Q^{(k)}(x).
    \end{equation}
    We denote 
    \[Q_2(x) =  x^{d_B}(i_B + x^2), \qquad Q_1(x) = x^{d_B + 1}, \]
    and
    \[ G_2(\alpha_j) := e^{a\alpha_j}\sum_{k=0}^{d_B + 2}\frac{(-1)^k}{a^{k+1}}Q_2^{(k)}(\alpha_j), \qquad G_1(\alpha_j) := e^{a\alpha_j}\sum_{k=0}^{d_B + 1}\frac{(-1)^k}{a^{k+1}}Q_1^{(k)}(\alpha_j). \]
    By \eqref{expibp}, together with the fact that $a < 0$, we have that \eqref{eqn:solveforareformulation} can be rewritten 
    \begin{equation*}
        \frac{G_2(\alpha_2) - G_2(\alpha_1)}{G_1(\alpha_2) - G_1(\alpha_1)} - \frac{G_2(\alpha_2)}{G_1(\alpha_2)} = 0.
    \end{equation*}
    Recall that we have specified $\alpha_1$ and $\alpha_2$ in terms of a single parameter $\alpha$ by \eqref{shrinkers:alpha1alpha2}. To simplify the notation later on, we define 
    \begin{equation*}
        H_L(\alpha, a) := \frac{G_2(\alpha_2) - G_2(\alpha_1)}{G_1(\alpha_2) - G_1(\alpha_1)}, \qquad H_R(\alpha, a) :=\frac{G_2(\alpha_2)}{G_1(\alpha_2)}, \qquad H(\alpha, a) = H_L(\alpha, a) - H_R(\alpha, a). 
    \end{equation*}
    As $a \to 0$, clearly we have 
    \[H_L \to \frac{\int_{\alpha_1}^{\alpha_2} x^{d_B}(i_B + x^2)  \, dx}{\int_{\alpha_1}^{\alpha_2} x^{d_B+1}  \, dx} < \infty. \]
    On the other hand, we have that 
    \begin{equation} \label{RHSOainverse}
    H_R = \frac{\frac{-1}{a}Q_2^{(d_B + 2)}(\alpha_2) +  Q_2^{(d_B + 1)}(\alpha_2) + O(a) }{Q_1^{(d_B + 1)}(\alpha_2) + O(a)} = O(|a|^{-1}).
    \end{equation}
    Noting that 
    \[ H_R = \frac{\int_{\alpha_2}^{\infty} x^{d_B}(i_B + x^2) e^{ax} \, dx}{\int_{\alpha_2}^{\infty} x^{d_B+1} e^{ax} \, dx} > 0, \]
    it follows that for all $a < 0$ sufficiently close to $0$, we have 
    \begin{equation*}
       H = \frac{G_2(\alpha_2) - G_2(\alpha_1)}{G_1(\alpha_2) - G_1(\alpha_1)} - \frac{G_2(\alpha_2)}{G_1(\alpha_2)} < 0.
    \end{equation*}
    Analyzing as $a \to -\infty$, we see that 
    \begin{equation}\label{shrinkers:LHSasymptotics1}
    \begin{split}
        H_L &= \frac{e^{a(\alpha_2 - \alpha_1)}\sum_{k=0}^{d_B + 2} \frac{(-1)^k}{a^{k}}Q_2^{(k)}(\alpha_2) - \sum_{k=0}^{d_B + 2} \frac{(-1)^k}{a^{k}}Q_2^{(k)}(\alpha_1) }{e^{a(\alpha_2 - \alpha_1)}\sum_{k=0}^{d_B + 1} \frac{(-1)^k}{a^{k}}Q_1^{(k)}(\alpha_2) - \sum_{k=0}^{d_B + 1} \frac{(-1)^k}{a^{k}}Q_1^{(k)}(\alpha_1)} \\
        & \to \frac{-Q_2(\alpha_1)}{-Q_1(\alpha_1)} = \alpha_1^{-1}\left(i_B + \alpha_1^2 \right),
    \end{split}
    \end{equation}
    whereas 
    \begin{equation}\label{shrinkers:RHSasymptotics1}
    \begin{split}
        H_R = \frac{\sum_{k=0}^{d_B + 2} \frac{(-1)^k}{a^{k}}Q_2^{(k)}(\alpha_2)}{\sum_{k=0}^{d_B + 1} \frac{(-1)^k}{a^{k}}Q_1^{(k)}(\alpha_2)} \to \frac{Q_2(\alpha_2)}{Q_1(\alpha_2)} = \alpha_2^{-1}(i_B + \alpha_2^2).
    \end{split}
    \end{equation}
    It follows that 
    \begin{equation*}
    \begin{split}
        H &\xrightarrow{a \to - \infty} \frac{\alpha_2(i_B + \alpha_1^2) - \alpha_1(i_B + \alpha_2^2)}{\alpha_1\alpha_2} \\
        & = \left(i_B - \alpha_1\alpha_2\right)\left(\frac{\alpha_2 - \alpha_1}{\alpha_1\alpha_2}\right) = \delta \left( \frac{\alpha_2 - \alpha_1}{\alpha_1\alpha_2}\right) > 0. 
    \end{split}
    \end{equation*}
    Therefore, for any given $\alpha$ there exists at least one $a \in (-\infty, 0)$ such that $H(\alpha, a) =0$, i.e. (ii) is satisfied. This completes the proof of Step 1. 

    \begin{step}
        Let $m_1$ be an integer with $i_B - b < m_1 < \frac{i_B - b}{2}$. Then there exists an $\alpha \in (1/\sqrt{b}, \infty)$ with a corresponding solution $a(\alpha)$ to \eqref{eqn:solveforareformulation} such that $\delta_1(\alpha) = m_1$.
    \end{step}

    The proof of this fact is somewhat complicated, and is broken up throughout several claims below. We begin with
    
    \begin{claim}\label{shrinkers:polynomialspositive}
        For any $x > 0$, we have 
        \[ Q^{(k)}_j(x) > 0, \qquad j = 1, 2, \qquad k =0, \dots, \deg(Q_j).\]
    \end{claim}
    \begin{proof}
        This is obvious since $Q_1, Q_2$ are both linear combinations of monomials $x^k$ with positive coefficients. 
    \end{proof}

    \begin{claim}\label{shrinkers:integralinduction}
    For any fixed $C > 0$, we have
        \begin{equation*}
            \lim_{\alpha \to \frac{1}{\sqrt{b}}} \frac{\int_{\alpha^{-1}}^{b\alpha} x^{d_B}(i_B + x^2) e^{-C\left(\frac{x}{b\alpha - \alpha^{-1}}\right)} \, dx }{\int_{\alpha^{-1}}^{b\alpha} x^{d_B+1} e^{-C\left(\frac{x}{b\alpha - \alpha^{-1}}\right)} \, dx} = b^{-\frac{1}{2}}(i_B + b).
        \end{equation*}
    \end{claim}
    \begin{proof}
        We prove this by induction on $d_B$, as the case $d_B = 0$ can be computed explicitly. Suppose then it's true for all $k \leq d_B$. Then we compute 
    \begin{equation*}
    \begin{split}
        b^{-\frac{1}{2}}(i_B + b) &= \lim_{\alpha \to \frac{1}{\sqrt{b}}} \frac{\int_{\alpha^{-1}}^{b\alpha} x^{d_B}(i_B + x^2) e^{-C\left(\frac{x}{b\alpha - \alpha^{-1}}\right)} \, dx }{\int_{\alpha^{-1}}^{b\alpha} x^{d_B+1} e^{-C\left(\frac{x}{b\alpha - \alpha^{-1}}\right)} \, dx} \\
        &= \lim_{\alpha \to \frac{1}{\sqrt{b}}}\frac{A_1(\alpha) + C\left(\frac{b+\alpha^{-2}}{(b\alpha - \alpha^{-1})^2}\right) \int_{\alpha^{-1}}^{b\alpha} x^{d_B +1}(i_B + x^2) e^{-C\left(\frac{x}{b\alpha - \alpha^{-1}}\right)} \, dx   }{A_2(\alpha) + C\left(\frac{b+\alpha^{-2}}{(b\alpha - \alpha^{-1})^2}\right) \int_{\alpha^{-1}}^{b\alpha} x^{d_B +2}e^{-C\left(\frac{x}{b\alpha - \alpha^{-1}}\right)} \, dx   },
    \end{split}
    \end{equation*}
    where 
    \[ A_1(\alpha) = b(b\alpha)^{d_B}(i_b +b^2\alpha^2) e^{-C\left(\frac{b\alpha}{b\alpha - \alpha^{-1}}\right)} + \alpha^{-2}\alpha^{-d_B}(i_B + \alpha^{-2}) e^{-C\left(\frac{\alpha^{-1}}{b\alpha - \alpha^{-1}}\right)}\]
    and 
    \[A_2(\alpha) = b(b\alpha)^{d_B+1} e^{-C\left(\frac{b\alpha}{b\alpha - \alpha^{-1}}\right)} + \alpha^{-2}\alpha^{-(d_B+1)} e^{-C\left(\frac{\alpha^{-1}}{b\alpha - \alpha^{-1}}\right)}. \]
    Since these both converge, the limit above is equal to 
    \[\lim_{\alpha \to \frac{1}{\sqrt{b}}} \frac{\int_{\alpha^{-1}}^{b\alpha} x^{d_B +1}(i_B + x^2) e^{-C\left(\frac{x}{b\alpha - \alpha^{-1}}\right)} \, dx}{\int_{\alpha^{-1}}^{b\alpha} x^{d_B +2}e^{-C\left(\frac{x}{b\alpha - \alpha^{-1}}\right)} \, dx },\]
    which proves the claim.
    \end{proof}

    \begin{claim}\label{shrinkers:claim:astaysbounded}
        We cannot have a solution $a$ of \eqref{eqn:solveforareformulation} which tends to $-\infty$ as $\alpha \to 1/\sqrt{b}$.
    \end{claim}
    \begin{proof}
        Suppose otherwise. Since $0 \leq e^{a(b\alpha - \alpha^{-1})} \leq 1$, we can assume perhaps after passing to a subsequence that 
        \[ \lim_{\alpha \to \frac{1}{\sqrt{b}}} e^{a(b\alpha - \alpha^{-1})} = \lambda, \qquad 0 \leq \lambda \leq 1. \]
        We treat the three cases $\lambda = 0,\, \lambda = 1, \, 0 < \lambda < 1$ separately. If $\lambda = 0$, it follows that $\frac{1}{a} \in o(b\alpha - \alpha^{-1})$. Combining this with \eqref{shrinkers:LHSasymptotics1} and \eqref{shrinkers:RHSasymptotics1}, we see that 
        \begin{equation*}
        \begin{split}
            \frac{G_2(\alpha_2) - G_2(\alpha_1)}{G_1(\alpha_2) - G_1(\alpha_1)} - \frac{G_2(\alpha_2)}{G_1(\alpha_2)} = \frac{\frac{\delta}{b}(b\alpha - \alpha^{-1}) + o(b\alpha - \alpha^{-1})}{1 + o(b\alpha - \alpha^{-1})}.
        \end{split}
        \end{equation*}
        It follows that for $\alpha$ sufficiently close to $1/\sqrt{b}$, we will have that 
        \[ 0 = \frac{G_2(\alpha_2) - G_2(\alpha_1)}{G_1(\alpha_2) - G_1(\alpha_1)} - \frac{G_2(\alpha_2)}{G_1(\alpha_2)} > 0,\]
        a contradiction. 

        Suppose now that $0 < \lambda < 1$. Then there exists a $C > 0$ such that, for all $\alpha$ sufficiently close to $1/\sqrt{b}$, we will have that $\frac{G_2(\alpha_2) - G_2(\alpha_1)}{G_1(\alpha_2) - G_1(\alpha_1)}$ is asymptotically close to $\frac{\int_{\alpha^{-1}}^{b\alpha} x^{d_B}(i_B + x^2) e^{-C\left(\frac{x}{b\alpha - \alpha^{-1}}\right)} \, dx }{\int_{\alpha^{-1}}^{b\alpha} x^{d_B+1} e^{-C\left(\frac{x}{b\alpha - \alpha^{-1}}\right)} \, dx}$. By Claim \ref{shrinkers:integralinduction}, we will therefore have that 
        \[\lim_{(a, \alpha) \to (-\infty, 1/\sqrt{b})} \frac{G_2(\alpha_2) - G_2(\alpha_1)}{G_1(\alpha_2) - G_1(\alpha_1)} = b^{-\frac{1}{2}}(i_B + b).  \]
        On the other hand, by \eqref{shrinkers:RHSasymptotics1}, we can evaluate the double limit
        \[ \lim_{(a,\alpha) \to (-\infty, \frac{1}{\sqrt{b}})}\frac{\int_{\alpha_2}^{\infty} x^{d_B}(i_B + x^2) e^{ax} \, dx}{\int_{\alpha_2}^{\infty} x^{d_B+1} e^{ax} \, dx} = b^{-\frac{1}{2}}(i_B - b). \] 
        In this case, we see that as $\alpha$ tends to $1/\sqrt{b}$, we have
        \[0 =\frac{G_2(\alpha_2) - G_2(\alpha_1)}{G_1(\alpha_2) - G_1(\alpha_1)} - \frac{G_2(\alpha_2)}{G_1(\alpha_1)} \to 2\sqrt{b} > 0,\]
        again resulting in a contradiction. 

        Finally, assume that $\lambda = 1$. In this case, the leading term of $\frac{G_2(\alpha_2) - G_2(\alpha_1)}{G_1(\alpha_2) - G_1(\alpha_1)} - \frac{G_2(\alpha_2)}{G_1(\alpha_2)} $ will be 
        \begin{equation*}
        \begin{split}
            \frac{ Q_2(\alpha_2) - Q_2(\alpha_1) }{Q_1(\alpha_2) - Q_1(\alpha_1)} - \frac{Q_2(\alpha_2)}{Q_1(\alpha_2)} &=  \frac{Q_1(\alpha^{-1})Q_2(b\alpha) - Q_1(b\alpha)Q_2(\alpha^{-1})}{Q_1(b\alpha)(Q_1(b\alpha) - Q_1(\alpha^{-1}))}\\
            &=\frac{\frac{(b\alpha)^{d_B}}{\alpha^{d_B +1}}(i_B + b^2 \alpha^2) - \frac{(b\alpha)^{d_B+1}}{\alpha^{d_B}}(i_B + \alpha^{-2}) }{(b\alpha)^{d_B+1}((b\alpha)^{d_B+1} - \alpha^{-(d_B+1)})} \\
            &= \frac{(i_B + b^2\alpha^2) - b(i_B\alpha^2 + 1)}{b\alpha^{d_B+2}((b\alpha)^{d_B+1} - \alpha^{-(d_B+1)})}.
        \end{split}
        \end{equation*}
        Then we have 
        \begin{equation*}
         \begin{split}
             \lim_{\alpha \to \frac{1}{\sqrt{b}}}\frac{ Q_2(\alpha_2) - Q_2(\alpha_1) }{Q_1(\alpha_2) - Q_1(\alpha_1)} - \frac{Q_2(\alpha_2)}{Q_1(\alpha_2)} &= \lim_{\alpha \to \frac{1}{\sqrt{b}}} \frac{2b\alpha(b-i_B)}{b\alpha^{d_b+2}(d_B+1)(b^{d_B +1}\alpha^{d_B} +\alpha^{-(d_B+2)}) + \psi(b\alpha - \alpha^{-1})} \\
             &= \frac{2\sqrt{b}(b-i_B)}{2(d_B+1)(b^{-(d_B+1)/2})(b^{(d_B+2)/2})} \\
             &= \frac{b-i_B}{d_B+1} < 0.
         \end{split}   
        \end{equation*}
        This once again leads to a contradiction with the fact that $a$ solves \eqref{eqn:solveforareformulation}, which completes the proof of the claim.

         %Inspecting \eqref{shrinkers:LHSasymptotics1} and \eqref{shrinkers:RHSasymptotics1}, it follows that
        %\begin{equation*}
        %\begin{split}
        %    \frac{G_2(\alpha_2) - G_2(\alpha_1)}{G_1(\alpha_2) - G_1(\alpha_1)} - \frac{G_2(\alpha_2)}{G_1(\alpha_2)} &\xrightarrow{(a, \alpha) \to (-\infty, 1/\sqrt{b})} \frac{\lambda Q_2(\alpha_2) - Q_2(\alpha_1) }{\lambda Q_1(\alpha_2) - Q_1(\alpha_1)} - \frac{Q_2(\alpha_2)}{Q_1(\alpha_2)} \\
       %     &= \frac{Q_2(\sqrt{b})}{ssas}
       % \end{split}
       % \end{equation*}
    \end{proof}

    \begin{claim}\label{shrinkers:claim:limitq1}
    For any fixed $a \in (-\infty, 0]$, we have
       \[ \lim_{\alpha \to \frac{1}{\sqrt{b}}}\left. \left( \frac{\int_{\alpha^{-1}}^{b\alpha} x^{d_B}(i_B + x^2) \, dx}{\int_{\alpha^{-1}}^{b\alpha} x^{d_B+1}  \, dx}\right) \middle/ \left(\frac{\int_{\alpha^{-1}}^{b\alpha} x^{d_B}(i_B + x^2) e^{ax} \, dx}{\int_{\alpha^{-1}}^{b\alpha} x^{d_B+1} e^{ax} \, dx}\right)  \right. = 1. \]
    \end{claim}
    \begin{proof}
        Set 
        \[\beta := \alpha_2 - \alpha_1 = b\alpha - \alpha^{-1}, \qquad y := x - \alpha_1. \]
        We also note the formula 
        \begin{equation} \label{shrinkers:asymptoticibpformula}
        \int_0^{\beta} y^k e^{ay} \, dy = \frac{\beta^{k+1}}{k+1} + a\frac{\beta^{k+2}}{k+2} + O(a^2\beta^{k+3}).
        \end{equation}
        We have 
        \[x^k = \alpha_1^k + k\alpha_1^{k-1}y + k(k-1)\alpha_1^{k-2}y^2 + O(y^3),\qquad e^{ax} = e^{a\alpha_1}\cdot e^{ay}.\]
        It follows that 
        \[\begin{split}
            \int_{\alpha_1}^{\alpha_2} x^{d_B+1} e^{ax} \, dx &= \int_{0}^{\beta} (\alpha_1^{d_B+1} + (d_B+1)\alpha_1^{d_B}y + O(y^2)) \, dy = e^{a\alpha_1}\alpha_1^{d_B+1}\beta + %\left(\frac{a}{2}\alpha_1^{d_B+1} +\frac{d_B+1}{2}\alpha_1^{d_B}\right)\beta^2
             O(\beta^2)
        \end{split} \]
        and similarly 
        \[\begin{split} 
         \int_{\alpha_1}^{\alpha_2} x^{d_B}(i_B + x^2) e^{ax} \, dx &=  i_B\int_{\alpha_1}^{\alpha_2} x^{d_B} e^{ax} \, dx  + \int_{\alpha_1}^{\alpha_2} x^{d_B+2} e^{ax} \, dx \\
         &= e^{a\alpha_1} \alpha_1^{d_B}(i_B +\alpha_1^2)\beta + %\left(\frac{a}{2}\alpha_1(\alpha_1^2 +1) + \left(\frac{k+2}{2} \right)\alpha_1^2 + \frac{k}{2}  \right)\beta^2 + 
         O(\beta^2) .
         \end{split}\]
         The claim follows immediately, since leading-order term of 
         \[\frac{\int_{\alpha^{-1}}^{b\alpha} x^{d_B}(i_B + x^2) e^{ax} \, dx}{\int_{\alpha^{-1}}^{b\alpha} x^{d_B+1} e^{ax} \, dx} \]
         is independent of $a$.
    \end{proof}

    \begin{claim}\label{shrinkers:claim:alphatosmalldelta1}
        For any sequence $\alpha_i \to 1/\sqrt{b}$, and for any choice of corresponding solution $a(\alpha_i)$ to \eqref{eqn:solveforareformulation}, after perhaps passing to a subsequence we will have
        \[\delta_1(\alpha_i) \to \frac{i_B - b}{2}.\]
    \end{claim}
    \begin{proof}
        By Claim \ref{shrinkers:claim:astaysbounded}, we know that any solution $a(\alpha)$ to \eqref{eqn:solveforareformulation} stays uniformly bounded $a \geq -C$ as $\alpha \to 0$. In particular, we can choose a subsequence $\alpha_i \to 1/\sqrt{b}$ such that $a(\alpha_i) \to -a^* \in [-C, 0]$. 

        Now we can rewrite 
        \begin{equation}\label{shrinkers:delta1ofalpha}
        \delta_1(\alpha) = \frac{\alpha_2 q(\alpha_1)}{\alpha_1 - \alpha_2} = \frac{b\alpha(-i_B + q_1 \alpha^{-1} - \alpha^{-2})}{\alpha^{-1} - b\alpha} = \frac{b(i_B \alpha^2 -q_1 \alpha +1)}{b\alpha^2 -1}.
    \end{equation} 
    Since 
    \[q_1(\alpha_i) =\frac{\int_{\alpha_i^{-1}}^{b\alpha_i} x^{d_B}(i_B + x^2) e^{a_i x} \, dx}{\int_{\alpha_i^{-1}}^{b\alpha_i} x^{d_B+1} e^{a_i x} \, dx},  \]
    we have by Claim \ref{shrinkers:claim:limitq1} that 
    \begin{equation*}
        \begin{split}
            \lim_{i \to \infty} q_1(\alpha_i) = \lim_{\alpha \to \frac{1}{\sqrt{b}}} \frac{b(i_B \alpha^2 -\left( \frac{\int_{\alpha^{-1}}^{b\alpha} x^{d_B}(i_B + x^2) \, dx}{\int_{\alpha^{-1}}^{b\alpha} x^{d_B+1}  \, dx}\right)  \alpha +1)}{b\alpha^2 -1}.
        \end{split}
    \end{equation*}
    A straightforward calculation shows that this limit is equal to $\frac{i_B - b}{2}$.
    \end{proof}

    We move on to treat the case when $\alpha \to \infty$.

    \begin{claim}\label{shrinkers:claim:alphainfinityabound}
        We cannot have a solution $a$ of \eqref{eqn:solveforareformulation} which tends to $0$ as $\alpha \to \infty$.
    \end{claim}
    \begin{proof}
        As with Claim \ref{shrinkers:claim:astaysbounded}, we will have that 
        \[\lim_{\alpha \to \infty} e^{a(b\alpha - \alpha^{-1})} \to \lambda, \qquad 0 \leq \lambda \leq 1.\]
        Suppose first that $\lambda = 0$. In this case, we will have that $a(b\alpha - \alpha^{-1}) \to -\infty$ as $\alpha \to \infty$. Rearranging \eqref{shrinkers:LHSasymptotics1}, we get 
        \begin{equation*}
            H_L = \left(-\frac{1}{a} \right)\left[ \frac{e^{a(\alpha_2 - \alpha_1)}\sum_{k=0}^{d_B + 2} (-a)^{(d_B+2)-k}Q_2^{(k)}(\alpha_2) - \sum_{k=0}^{d_B + 2} (-a)^{(d_B+2)-k}Q_2^{(k)}(\alpha_1) }{e^{a(\alpha_2 - \alpha_1)}\sum_{k=0}^{d_B + 1} (-a)^{(d_B+1)-k}Q_1^{(k)}(\alpha_2) - \sum_{k=0}^{d_B + 1} (-a)^{(d_B+2)-k}Q_1^{(k)}(\alpha_1)}\right].
        \end{equation*}
         For each $j = 1,2$, $k = 0, \dots, d_B + j$, we have 
        \[ (-a)^{(d_B + j)-k}Q^{(k)}_j(\alpha_2) = O\left(\left(-a\alpha\right)^{(d_B + j)-k}\right), \qquad (-a)^{(d_B + j)-k}Q^{(k)}_j(\alpha_1) = O\left(\left(\frac{-a}{\alpha}\right)^{(d_B + j)-k}\right). \]
        In particular, we have 
        \[ e^{a(\alpha_2 -\alpha_1)}(-a)^{(d_B + j)-k}Q^{(k)}_j(\alpha_2) = O\left(e^{-a\alpha}\left(-a\alpha\right)^{(d_B + j)-k}\right) \xrightarrow{-a\alpha \to \infty} 0. \]
        It follows that 
        \[ H_L = \left( -\frac{1}{a} \right)\left[\frac{\psi_2(\alpha) - Q_2^{(d_B+2)}(\alpha_1)}{\psi_1(\alpha) - Q_1^{(d_B+1)}(\alpha_1)}\right] =\left( -\frac{1}{a} \right)\left[\frac{\psi_2(\alpha) - (d_B+2)}{\psi_1(\alpha) - 1}\right],  \]
        where $\psi_j(\alpha) \to 0$ as $\alpha \to \infty$. Computing similarly for $H_R$, we get
        \begin{equation*}
        \begin{split}
           H_R &= \left(-\frac{1}{a} \right)\left[\frac{\sum_{k=0}^{d_B + 2} (-a)^{(d_B+2)-k}Q_2^{(k)}(\alpha_2)}{\sum_{k=0}^{d_B + 1} (-a)^{(d_B+1)-k}Q_1^{(k)}(\alpha_2)} \right] \\
           &= \left(-\frac{1}{a} \right)\left[\frac{ (-a)\frac{Q_2(\alpha_2)}{Q_1(\alpha_2)} + \sum_{k=0}^{d_B+1} O\left( (-a\alpha)^{k} \right) }{1 + \sum_{k=1}^{d_B+1} O\left( (-a\alpha)^{k} \right)} \right].
        \end{split}
        \end{equation*}
        Notice that $-a\frac{Q_2(\alpha_2)}{Q_1(\alpha_2)} = -\frac{a}{b\alpha}(i_B + \alpha^2 ) = O(-a\alpha)$. It follows that 
        \begin{equation*}
            H = H_L - H_R \sim \left( -\frac{1}{a}\right)\bigg( (d_B + 2) - O(-(a\alpha))\bigg).
        \end{equation*}
        By assumption this will be eventually negative, and so we have a contradiction. 

        Suppose then that $0 < \lambda \leq 1$. In this case we will have that $-a\alpha \to C \geq 0$. Therefore we have that, for each $j =1, 2$, 
        \[\sum_{k=0}^{d_B+j}(-a)^{(d_B + j)-k}Q^{(k)}_j(\alpha_2) \to C_j \geq (d_B+j)! > 0.\]
        It follows that 
        \begin{equation*}
        \begin{split}
            H_R \sim \left(-\frac{1}{a} \right)\left[ \frac{C_2}{C_1}\right].
        \end{split}
        \end{equation*}
        Similarly, we will have for all $k < d_B + j$ that 
        \[(-a)^{(d_B + j)-k}Q^{(k)}_j(\alpha_1) = O\left( \left( -\frac{a}{\alpha} \right)^{(d_B+j)-k} \right) \xrightarrow{\alpha \to \infty} 0.\]
        It follows that 
        \begin{equation*}
        \begin{split}
            H_L \sim \left(-\frac{1}{a} \right)\left[ \frac{\lambda C_2 - Q^{(d_B+2)}_2(\alpha_1)}{\lambda C_1 - Q^{(d_B+1)}_1(\alpha_1)}\right] = \left(-\frac{1}{a} \right)\left[ \frac{\lambda C_2 - (d_B+2)!}{\lambda C_1 - (d_B+1)!}\right].
        \end{split}
        \end{equation*}
        We see once again that, for all $\alpha$ sufficiently large, $H = H_L - H_R < 0$, the desired contradiction.
    \end{proof}

    \begin{claim}\label{shrinkers:claim:alphatoinfinitydelta1}
        Taking $\alpha \to \infty$, let $a(\alpha)$ be a solution to \eqref{eqn:solveforareformulation}. Then for any such choice, we will have that 
        \[\delta_1(\alpha) \to i_B - b.\]
    \end{claim}
    \begin{proof}
        We will show the estimate 
    \begin{equation}\label{shrinkers:q1estimate}
        q_1(\alpha, a(\alpha)) = (1 + \varepsilon(\alpha))(b\alpha)^{-1}(i_B + b^2\alpha^2),
    \end{equation}
    where $\varepsilon(\alpha) \to 0$ as $\alpha \to \infty$. To see this, we use the fact that $a(\alpha)$ is chosen to solve \eqref{eqn:solveforareformulation}, so that 
    \begin{equation*}
    \begin{split}
        q_1(\alpha, a(\alpha)) = \frac{G_2(b\alpha)}{G_1(b\alpha)} = \frac{\sum_{k=0}^{d_B + 2} \frac{(-1)^k}{a(\alpha)^{k}}Q_2^{(k)}(b\alpha)}{\sum_{k=0}^{d_B + 1} \frac{(-1)^k}{a(\alpha)^{k}}Q_1^{(k)}(b\alpha)}.
    \end{split}   
    \end{equation*}
    By Claim \ref{shrinkers:claim:alphainfinityabound}, we have that $a(\alpha)$ stays uniformly bounded away from $0$ as $\alpha \to \infty$. Therefore
    \begin{equation*}
        q_1(\alpha, a(\alpha)) = \frac{\frac{Q_2(b\alpha)}{Q_1(b\alpha)} +\left(-\frac{1}{a} \right)(d_B+2) + \psi_1(\alpha)}{1 + \psi_2(\alpha)},
    \end{equation*}
    where $\psi_j(\alpha) \to 0$ as $\alpha \to \infty$, from which \eqref{shrinkers:q1estimate} follows readily. Hence, sending $\alpha \to \infty$, we see that 
    \begin{equation*}
    \begin{split}
        \delta_1 &= \frac{b(i_B \alpha^2 - (1+\varepsilon(\alpha))\left(\frac{i_B}{b} + b \alpha^2 \right) +1)}{b\alpha^2  - 1} \\
        &= \frac{b(i_B - b)\alpha^2 - (i_B - b)}{b\alpha^2 -1} - \varepsilon(\alpha)\frac{i_B + b^2\alpha^2}{b\alpha^2 -1} \\
        &= (i_B - b) - \varepsilon(\alpha) O(1) \\
        &\to i_B - b.
    \end{split}
    \end{equation*}
    \end{proof}

    To complete the proof of Step 2, we simply observe that we can construct a continuous family $a(\alpha)$ of solutions to \eqref{eqn:solveforareformulation}. Correspondingly we will have that $\delta_1(\alpha)$ varies continuously in $\alpha$. By Claims \ref{shrinkers:claim:alphatosmalldelta1} and \ref{shrinkers:claim:alphatoinfinitydelta1}, $\delta_1(\alpha)$ takes values arbitrarily close to $i_B - b$ and $\frac{i_B - b}{2}$, and hence all values in between. This completes the proof of Step 2, and therefore the proof of the Lemma. 
    
\end{proof}

\subsection{Expanding case:  \texorpdfstring{$\lambda <0 $}{lambdaneg}}

%\begin{lemma}\label{l:qexistenceuniquenessexpanders}
%    For any $a \in \R$, there always exists at least one nonzero polynomial $q(t) = \sum_{r=0}^\ell q_\ell t^\ell$ of degree $\ell$ such that 
%    \begin{itemize}
%        \item $\displaystyle \int_{\alpha_j}^{\alpha_{j+1}} x^{d_B}q(x) p_c(x) e^{ax} dx$ 
%        \item $q(0) = q_0 = \varepsilon_B i_B$ and $q_\ell = +1$
%    \end{itemize}
%\end{lemma}
%\begin{proof}
%    This is essentially the same as Lemma \ref{l:qesistenceuniquenesssteadies}.
%\end{proof}

\begin{prop}\label{expandersgeneral}
    Fix $\ell \geq 1$, and suppose $0 < \alpha_1 < \dots < \alpha_\ell$. Suppose further that we have $m_1, \dots, m_\ell \in \mathbb{Z}_{\geq 1}$ such that 
    \[ \sum_{j=1}^\ell (d_j+1)m_j = i_B + b, \qquad b \in \mathbb{Z}, \qquad b > 0. \]
     Let $q(t) = \sum_{r = 0}^\ell q_r t^r$ be a polynomial of degree $\ell$ and $a > 0$, and suppose that all this data satisfies:
    \begin{enumerate}
    \item $ \displaystyle \int_{\alpha_j}^{\alpha_{j+1}} x^{d_B} e^{ax} q(x) p_c(x)  \, dx = 0, \qquad j = 1, \dots, \ell -1.$
    \item $q(0) = q_0 = -\varepsilon_B i_B$ and  $q_\ell = +1$, 
    %\item $q(\alpha_\ell) > 0$.
    \item The $\delta_j$'s of \eqref{eqn:partialdegree} satisfy 
    \[ \delta_j(q, \alpha_1, \dots, \alpha_\ell) = m_j. \]
    \end{enumerate}
%    \begin{enumerate}
%        \item[\textnormal{(v)}] the element $K_\ell \in \T$ determined by  and $q(t)$ via \eqref{...} satisfies 
%    \begin{equation}\label{eqn:Kellcompactification}
%        \varepsilon_B K_\ell = \sum_{j = 1}^{\ell} m_j v_j, \qquad m_j \in \mathbb{Z}.
%    \end{equation} 
%    \item[\textnormal{(vi)}] the total degree $\delta$ of Lemma \ref{l:totaldegre} satisfies 
%    \[ \varepsilon_B\delta = i_B - \nu, \qquad \nu \in \mathbb{Z} \qquad 0 < \nu < i_B. \]
%    \end{enumerate}
    Then there exists a complete expanding gradient K\"ahler-Ricci soliton with $\lambda = -1$ on the total space $M$ of the vector bundle
    \begin{equation*}
        E = \bigoplus_{j=1}^\ell \left( \bigoplus_{k =1}^{d_j} L^{m_j} \right) \to B,
    \end{equation*}
    obtained by the Type 1 hamiltonian 2-form Ansatz. 
\end{prop}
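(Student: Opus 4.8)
The plan is to follow the template of Propositions \ref{steadiesgeneral} and \ref{shrinkersgeneral}, reducing the statement to Proposition \ref{prop:Estructure} together with Lemma \ref{lemma:hamiltonian:globalKRS}. We work in the Type 1 setup, so $\varepsilon_B=(-1)^\ell$, the domain is $\mathcal{D}=(\alpha_1,\alpha_2)\times\dots\times(\alpha_{\ell-1},\alpha_\ell)\times(\alpha_\ell,\infty)$, and $p_c(t)=\prod_j(t-\alpha_j)^{d_j}$. We set
\[
F_1(t)=\dots=F_\ell(t)=F(t):=2t^{-d_B}e^{-at}\int_{\alpha_1}^{t}x^{d_B}e^{ax}q(x)p_c(x)\,dx,
\]
so that $F$ solves \eqref{eq:hamiltonian:weightedKRS2} with $F(\alpha_1)=0$; condition (i) then forces $F(\alpha_j)=0$ for every $j$, and the argument of Lemma \ref{l:determine-q} shows that $\prod_j(t-\alpha_j)^{d_j+1}$ divides $F$, so $\Theta(t):=F(t)/p_c(t)$ is smooth on $[\alpha_1,\infty)$ with $\Theta(\alpha_j)=0$ and $(d_j+1)\Theta'(\alpha_j)=2q(\alpha_j)$.

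Next I would pin down the sign behaviour of $q$. Since $x^{d_B}e^{ax}p_c(x)$ has a fixed sign on each bounded interval $(\alpha_j,\alpha_{j+1})$, $j=1,\dots,\ell-1$, condition (i) forces an (odd number of, hence at least one simple) root of $q$ there; condition (iii) gives $(d_\ell+1)m_\ell=\alpha_1\cdots\alpha_{\ell-1}\,q(\alpha_\ell)\big/\prod_{k<\ell}(\alpha_\ell-\alpha_k)>0$, hence $q(\alpha_\ell)>0$, while $q_\ell=+1$ forces $q(t)\to+\infty$. Since $\deg q=\ell$, and an odd number of roots in $(\alpha_\ell,\infty)$ would be incompatible with $q(\alpha_\ell)>0$ and $q\to+\infty$ while two or more would overshoot the degree, $q$ has no root on $(\alpha_\ell,\infty)$ (so $q>0$ there), exactly one simple root in each $(\alpha_j,\alpha_{j+1})$, and one in $(-\infty,\alpha_1)$; in particular $\textnormal{sgn}(q(\alpha_j))=(-1)^{\ell-j}$, so the $v_j$ of \eqref{eqn:vjdef} are well defined. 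The positivity condition \eqref{inequality} for $j=1,\dots,\ell-1$ follows exactly as in Lemma \ref{l:positivity}, and on $(\alpha_\ell,\infty)$ it is even more direct: a smallest root $z>\alpha_\ell$ of $\Theta$ would satisfy $\Theta'(z)=2q(z)>0$ by the $\Theta$-ODE, contradicting $\Theta\searrow 0$ at $z$, so $\Theta>0$ there. (This is exactly the step that in the shrinking case required an extra integral hypothesis over $(\alpha_\ell,\infty)$; here $q_\ell=+1$ makes it free.) By Lemma \ref{l:smoothextension} the resulting K\"ahler structure extends smoothly to $\R^{2r}$.

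It remains to read off the asymptotics and globalize. The general solution has the form $F(t)=2t^{-d_B}(\tilde P(t)+ce^{-at})$ with $\deg\tilde P=d_B+r$; since $a>0$ the exponential decays at $+\infty$, so $\Theta$ is asymptotic to a rational function of degree exactly $\ell$, \emph{independently of} $c$. Hence $\Theta_\ell(t)\le Ct^\ell$, and Lemmas \ref{l:generalcompleteness} and \ref{l:complexstructureCn} give completeness of the global metric and $(\R^{2r},J_F)\cong\C^r$. Condition (iii) reads $\delta_j=m_j=(-1)^\ell\varepsilon_B m_j$, so Proposition \ref{prop:Estructure} produces the global K\"ahler metric on the total space $M$ of $E$; finally Lemma \ref{lemma:hamiltonian:globalKRS}, using $q(0)=-\varepsilon_B i_B$ from condition (ii) and $q_\ell=+1$, shows $\Ric_\omega+\tfrac12\mathcal{L}_X\omega=-\omega$ with soliton vector field $X=aJK_1$, i.e. a complete expanding gradient K\"ahler--Ricci soliton obtained by the Type 1 Ansatz.

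The only point internal to this statement that needs care is the interplay between the sign of $a$ and the non-compact end: it is precisely $a>0$ that renders the $e^{-at}$-term harmless, so that completeness is automatic and no analogue of the shrinking-case integral condition over $(\alpha_\ell,\infty)$ is required, while $q_\ell=+1$ is what keeps $q>0$ on $(\alpha_\ell,\infty)$ and rescues the last positivity inequality. The genuinely hard part lies downstream of this proposition — actually exhibiting $\alpha_1<\dots<\alpha_\ell$, $a>0$, and $q$ simultaneously meeting (i)--(iii) for a prescribed bundle, as in the $d_1=d_2=0$, $\ell=2$ case — which is a computational problem of the same flavour as Lemma \ref{l:rank2shrinkers}.
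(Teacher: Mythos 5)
Your overall route is the paper's: same choice of $F_1=\dots=F_\ell=F$ via the weighted integral, same reduction to Lemma \ref{l:smoothextension}, Proposition \ref{prop:Estructure} and Lemma \ref{lemma:hamiltonian:globalKRS}, and the same observation that $a>0$ makes the $ce^{-at}$ term decay so that $\Theta$ has degree exactly $\ell$, giving completeness and $J_F\cong\C^r$ without any analogue of the shrinking-case integral condition over $(\alpha_\ell,\infty)$.

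There is, however, one unjustified step: your description of the root structure of $q$. From condition (i) you get at least one sign-changing root in each bounded interval, from (iii) at $j=\ell$ you get $q(\alpha_\ell)>0$, and the parity/degree argument excludes roots in $(\alpha_\ell,\infty)$; but this accounts for only $\ell-1$ roots, and the degree count alone does \emph{not} place the remaining root in $(-\infty,\alpha_1)$, nor does it force the roots in the bounded intervals to be simple. A priori one bounded interval could contain two simple roots (the integral condition does not forbid this) with no root to the left of $\alpha_1$, which would destroy the alternation $\textnormal{sgn}(q(\alpha_j))=(-1)^{\ell-j}$ that you need for \eqref{eqn:vjdef}, \eqref{eqn:fubinistudy} and for the Lemma \ref{l:positivity}-type argument (which uses a \emph{unique simple} root of $q$ per interval). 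The paper closes exactly this gap as its first step: if $q$ had no zero in $(-\infty,\alpha_1)$, then since $q\sim t^\ell$ at $-\infty$ one would have $\varepsilon_B q>0$ on all of $(-\infty,\alpha_1)$, contradicting condition (ii), $\varepsilon_B q(0)=-i_B<0$; with a root forced in $(-\infty,\alpha_1)$, the degree count then yields exactly one simple root in each of the $\ell$ intervals $(-\infty,\alpha_1),(\alpha_1,\alpha_2),\dots,(\alpha_{\ell-1},\alpha_\ell)$ and the alternation follows. (Alternatively, you could close it by using (iii) at \emph{every} index: since $\sigma_{\ell-1}(\hat\alpha_j)>0$ and $\Delta(\alpha_j)$ has sign $(-1)^{\ell-j}$, $\delta_j=m_j>0$ already forces $\textnormal{sgn}(q(\alpha_j))=(-1)^{\ell-j}$ for all $j$, which together with the sign of $q$ at $-\infty$ pins down the same root structure.) With that one step supplied, the rest of your argument — positivity on $(\alpha_\ell,\infty)$ from $q>0$ there via the $\Theta$-ODE, smooth extension, completeness, and the identification of the soliton with $\lambda=-q_\ell=-1$ and $X=aJK_1$ — agrees with the paper.
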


\begin{proof}
    We use the Type 1 setup exactly as in the shrinking case. First we claim that any such $q(t)$ must have a zero in $(-\infty, \alpha_1)$. If not, then all of the zeros of $q$ lie in $(\alpha_1, \infty)$. Since $q \sim +t^\ell$, it follows that for all $t << 0$ we will have that $\varepsilon_B q(t) >0$, which by assumption implies that $\varepsilon_B q(t) > 0$ on all of $(-\infty, \alpha_1)$. This contradicts the fact that $\varepsilon_B q(0) = -i_B$.
    
    From this, it follows that $q$ satisfies the Type 1 positivity condition \eqref{l:positivity}. Indeed, since $q$ has a unique zero in $(-\infty, \alpha_1), (\alpha_1, \alpha_2), \dots (\alpha_{\ell-1}, \alpha_\ell)$, it's equivalent to just check that $(-1)^{\ell-1} q(\alpha_1) = -\varepsilon_B q(\alpha_1) >0$. However, if this is not true, then we deduce that $q(\alpha_\ell) < 0$, which is in contradiction with the fact that $q(t) \sim t^\ell$ for $t >> 0$ and that $q$ has no zero in $(\alpha_\ell, \infty)$. The proof then goes exactly as in the shrinking case, but simpler, the only change being that now we require $a > 0$ in order to guarantee completeness. 
\end{proof}

Similar to the shrinking case, we check the relevant conditions in the special case $\ell = 2$, $d_1 = d_2 = 0$. The proof is similar to that of Lemma \ref{l:rank2shrinkers}, but simpler, so we only highlight the differences below.
\begin{lemma}[Theorem \ref{mtheoremType1}, (iii)]\label{l:rank2expanders}
    Let $B$ be a K\"ahler-Einstein Fano manifold with Fano index $i_B$, and let $L \to B$ be a line bundle with $L^{i_B} = K_B$. Let $ \delta > i_B$ be any integer, and let $m_1, m_2$ be any two positive integers such that $m_1 + m_2 = \delta$. Then for any $a > 0$, there exists a complete expanding gradient K\"ahler-Ricci soliton on the total space $M$ of the rank two bundle
    \begin{equation}
    E  := L^{m_1} \oplus L^{m_2} \to B.
\end{equation}
\end{lemma}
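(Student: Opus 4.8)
The plan is to derive this from Proposition~\ref{expandersgeneral} in exactly the way Proposition~\ref{thm:Type1CYrank2} and its steady counterpart were derived, reusing almost verbatim the one-parameter-family analysis from the proof of Lemma~\ref{l:rank2shrinkers}. We work in the Type~1 case with $\ell = 2$ and $d_1 = d_2 = 0$, so $p_c(t)\equiv 1$, $r=2$, $n = d_B+2$, and $\varepsilon_B = +1$. By Proposition~\ref{expandersgeneral} it is enough, for each fixed $a>0$, to produce $0<\alpha_1<\alpha_2$ and a degree-$2$ polynomial $q(t) = t^2 + q_1 t - i_B$ (so that condition (ii) there holds automatically) with
\[
\int_{\alpha_1}^{\alpha_2} x^{d_B} e^{ax} q(x)\,dx = 0, \qquad \delta_1(q,\alpha_1,\alpha_2) = m_1, \qquad \delta_2(q,\alpha_1,\alpha_2) = m_2.
\]
By Lemma~\ref{l:totaldegre} the total degree is $\delta = \alpha_1\alpha_2 + i_B$, so once we impose $\alpha_1\alpha_2 = b := \delta - i_B>0$ the relation $\delta_1+\delta_2 = \delta = m_1+m_2$ is automatic. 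As in \eqref{shrinkers:alpha1alpha2} I would parametrize $\alpha_1 = \alpha^{-1}$, $\alpha_2 = b\alpha$ with $\alpha\in(1/\sqrt b,\infty)$ — exactly the range making $\alpha_1<\alpha_2$ — and let the integral condition determine the single remaining coefficient
\[
q_1 = q_1(\alpha,a) = \frac{\int_{\alpha_1}^{\alpha_2} x^{d_B}(i_B - x^2)e^{ax}\,dx}{\int_{\alpha_1}^{\alpha_2} x^{d_B+1}e^{ax}\,dx}.
\]
Substituting back reproduces, just as in \eqref{shrinkers:delta1ofalpha} but with the sign change coming from $q_\ell = +1$,
\[
\delta_1(\alpha) = \frac{b\bigl(i_B\alpha^2 - q_1\alpha - 1\bigr)}{b\alpha^2-1},
\]
so the problem reduces to showing that, for fixed $a>0$, the continuous function $\alpha\mapsto\delta_1(\alpha)$ on $(1/\sqrt b,\infty)$ attains the desired integer.

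To do this I would compute the two boundary limits, the computations being formally identical to — but strictly easier than — Claims~\ref{shrinkers:claim:limitq1}--\ref{shrinkers:claim:alphatoinfinitydelta1}, since here $a$ is held fixed and there are no subsequences or coupled parameters to track. As $\alpha\to 1/\sqrt b$ the interval $(\alpha_1,\alpha_2)$ collapses to the point $\sqrt b$; a mean value estimate as in Claim~\ref{shrinkers:claim:limitq1} shows the leading term of $q_1$ is independent of $a$, giving $q_1\to (i_B-b)/\sqrt b$, and a first-order expansion of the resulting $0/0$ expression yields $\delta_1\to\delta/2$. As $\alpha\to\infty$ one has $\alpha_1\to 0$ and $\alpha_2 = b\alpha\to\infty$; using the integration-by-parts identity \eqref{expibp} together with $a>0$, the two integrals defining $q_1$ are dominated by their values near $x = b\alpha$, so $q_1\sim -b\alpha$ and hence $\delta_1\to i_B+b = \delta$ (equivalently $\delta_2\to 0$). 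Throughout, $\delta_1 = \alpha_2 q(\alpha_1)/(\alpha_1-\alpha_2) > 0$ and likewise $\delta_2>0$ — this uses $q(\alpha_1)<0<q(\alpha_2)$, which is established inside the proof of Proposition~\ref{expandersgeneral} — so $\delta_1\in(0,\delta)$ always, and the intermediate value theorem gives that $\delta_1$ sweeps out all of $(\delta/2,\delta)$. Relabeling so that $m_1\ge m_2$ (legitimate, since $L^{m_1}\oplus L^{m_2}$ and $L^{m_2}\oplus L^{m_1}$ are the same bundle) we have $m_1\ge\delta/2$; if $m_1>\delta/2$ then $m_1$ lies in the attained interval and we are done, while $m_1 = m_2 = \delta/2$ is the $U(2)$-invariant Calabi-symmetric situation already covered by Theorem~\ref{theorem:calabi:krs}. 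Feeding the resulting $(q,\alpha_1,\alpha_2,a)$ into Proposition~\ref{expandersgeneral} then yields the complete expanding soliton on $M$, the free parameter being $a>0$.

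The only genuinely delicate point is the $\alpha\to 1/\sqrt b$ limit: the formula for $\delta_1$ is truly of the form $0/0$ there, so one must expand both $q_1$ and the rational prefactor to first order in $\alpha - 1/\sqrt b$ to extract the correct constant $\delta/2$, and a careless computation gives the wrong answer. Everything else — the positivity conditions \eqref{inequality}, the smooth extension of the fiber metric to $\C^2$, and the completeness of $g$, where $a>0$ is used — is handled inside Proposition~\ref{expandersgeneral}, while the quadratic curvature decay / asymptotically conical claim is deferred to the Appendix (Proposition~\ref{type1quadratic}).
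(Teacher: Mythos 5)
Your proposal is correct and follows essentially the same route as the paper's proof: reduce to Proposition \ref{expandersgeneral} with $q(t)=t^2+q_1t-i_B$, impose $\alpha_1\alpha_2=b$ via Lemma \ref{l:totaldegre}, parametrize $\alpha_1=\alpha^{-1}$, $\alpha_2=b\alpha$, determine $q_1$ from the vanishing-integral condition, and show $\delta_1$ interpolates between $\delta/2$ (as $\alpha\to1/\sqrt b$, resolving the same $0/0$ limit the paper handles via $q_1'\to0$ in Claim \ref{claim:q1toSQRTB}) and $\delta$ (as $\alpha\to\infty$), concluding by the intermediate value theorem with $a>0$ free. Your explicit disposal of the borderline case $m_1=m_2$ via Calabi symmetry is a small addition the paper leaves implicit, but otherwise the arguments coincide.
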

\begin{proof}
Set $b = \delta - i_B$, and then 
\[q(t) = t^2 + q_1 t - i_B.  \]
Then by Lemma \ref{l:totaldegre} we have that $i_B + b = \alpha_1 \alpha_2 + i_B$, so we must have $\alpha_1 \alpha_2 = b$. To this end, we set 
\[ \alpha_1 = \alpha^{-1}, \qquad \alpha_2 = b\alpha, \qquad \alpha \in (1/\sqrt{b}, \infty). \]
Condition (i) then translates to 
\begin{equation*}
 q_1 = \frac{\int_{\alpha^{-1}}^{b\alpha} e^{ax}x^{d_B}(i_B -x^2) \, dx }{\int_{\alpha^{-1}}^{b\alpha} e^{ax}x^{d_B+1} \, dx}. 
\end{equation*}
The following is proved in the same way as Claim \ref{claim:q1tominus1} and Lemma \ref{l:CYSteadyrank3aux}:
\begin{claim}\label{claim:q1toSQRTB}
As $\alpha \to \frac{1}{\sqrt{b}}$, we have 
\[ q_1 \to \frac{i_B - b}{\sqrt{b}}, \qquad q_1' \to 0,\]
and consequently 
\[ \lim_{\alpha \to 1/\sqrt{b}} \delta_1(q, \alpha) = \frac{i_B + b}{2}. \]
\end{claim}
Similarly we can show that as $\alpha \to \infty$,
\[ q_1 = -b\alpha + O(1). \]
One can easily compute that 
\[ \delta_1 = \frac{b\alpha q(\alpha^{-1})}{\alpha^{-1}-b\alpha} = i_B - \alpha^{-1}q_1 + O(\alpha^{-1}), \]
from which we see that 
\[\lim_{\alpha \to \infty} \delta_1 = i_B + b. \]
The rest of proof is completed in exactly the same way as that of Lemma \ref{l:rank2shrinkers}.
%\[ G_1(\alpha) = \int_1^\alpha e^{ax}x^{d_B}(i_B -b\alpha^{-1}x^2) \, dx, \qquad G_2(\alpha) = \int_1^\alpha e^{ax}x^{d_B+1}\, dx  \]
%\[G_1 = e^{a}(i_B - b)(\alpha - 1) + e^{a}\left(\frac{(d_B + a)(i_B -b) + i_B}{2}\right)(\alpha - 1)^2 + O((\alpha-1)^3) \]
%\[G_2 = e^{a}(\alpha - 1) + \left(\frac{d_B + a + 1}{2}\right)(\alpha - 1)^2 + O((\alpha-1)^3) \]
\end{proof}

\appendix

\section{Curvature computations}

The main purpose of this section is to estimate the curvature decay of the metrics from Theorem \ref{mtheoremType1}. In particular, we will show that the curvature decays quadratically in the Calabi-Yau, shrinking, and expanding cases of $({\rm i} a), \, ({\rm ii})$, and $({\rm iii})$, and the corresponding expected linear decay rate in the steady case $({\rm i} b)$. Hence we consider the Type 1 Ansatz with $\ell = 2$, so that the metric $g$ is given by 
\begin{equation}\label{k-order-2}
\begin{split}
	g &=  \sigma_2 g_B - p_{nc}(\alpha_1) \check{g}_1 + p_{nc}(\alpha_2) \check{g}_2  \\
	& \qquad \qquad \qquad   + \frac{\xi_1 -\xi_2}{\Theta_1(\xi_1)}d\xi_1^2 + \frac{\xi_2 -\xi_1}{\Theta_2(\xi_2)} d\xi_2^2 + \frac{\Theta_1(\xi_1)}{\xi_1 - \xi_2}(dt_1 + \xi_2 dt_2)^2 + \frac{\Theta_2(\xi_2)}{\xi_2 - \xi_1}(dt_1 + \xi_1 dt_2)^2\\
	\omega &=  \sigma_2 \omega_B - p_{nc}(\alpha_1)\check{\omega}_1 + p_{nc}(\alpha_2)\check{\omega}_2 + d\sigma_1 \wedge \theta_1 + d\sigma_2 \wedge \theta_2,
\end{split}
\end{equation}
where recall  
\[\sigma_2 = \xi_1 \xi_2\, \qquad  p_{\rm nc}(t) = (t-\xi_1)(t-\xi_2)\] 
and for functions $\Theta_1(t), \Theta_2(t)$ which are smooth on $(\alpha_1, \infty)$, both of the form 
\begin{equation}\label{k-order-2-expicitF}
 \Theta(t) = t^{-d_B}\left(\tilde{P}(t) + ce^{-at} \right),
\end{equation}
 where $\tilde{P}$ is a polynomial. In the Calabi-Yau case, we have $a = 0$ and $\tilde{P}$ is divisible by $t^{d_B}$, and so this takes the simpler form $\Theta(t) = P(t) + c t^{-d_B}$. From the constructions in the previous sections, it's clear that the degree of $\Theta_i$ in the sense of \eqref{degree} will be given by $\beta =2$ in the shrinking, expanding, and Ricci-flat cases, and $\beta = 1$ in the steady case. 

Recall that there is a dense open set $M^0 \subset M$ which can be written 
\[ M^0 = \tilde{P} \times_{\T^2} (\Cstar)^2, \]
where $\tilde{P} \to B \times \mathbb{P}^{d_1} \times \P^{d_2}$ is the $\T^2$-bundle from section \ref{section:globalcompactification}. We set $\mathcal{B} = B \times \P^{d_1} \times \P^{d_2}$. On $M^0$, the connection form $\tilde{\theta}$ defines a horizontal distribution $\mathscr{H} \subset M$ which leads to a splitting 
\[TM \cong \mathscr{H} \oplus \mathscr{V},\]
where $\mathscr{V}$ is the kernel of the natural projection $\varpi: M^0 \to \mathcal{B}$. Note that by \eqref{k-order-2}, this splitting is also orthogonal with respect to $g$. To simplify notation later on, we set 
\begin{equation*}
g_{\mathcal{B}} =  g_B +  \check{g}_1 +  \check{g}_2, \qquad \tilde{g}_{\mathcal{B}} =  g_B - \frac{p_{nc}(\alpha_1)}{\sigma_2} \check{g}_1 + \frac{p_{nc}(\alpha_2)}{\sigma_2} \check{g}_2
\end{equation*}
As in \cite{ACGT1}, we view $\tilde{g}_{\mathcal{B}}$ as a family of metrics on $\mathcal{B}$ depending on $(\xi_1, \xi_2)$. For any $(\xi_1, \xi_2)$, the Levi-Civita connections of $g_{\mathcal{B}}$ and $\sigma_2 \tilde{g}_{\mathcal{B}}$ (the latter being the family of metrics on $\mathcal{B}$ appearing in \ref{k-order-2}) coincide, since the metrics themselves are equal on each factor of $\mathcal{B}$ up to a scalar multiple. Moreover, if $X, \, Y$ are any two vector fields on $\mathcal{B}$ lifted to $\mathscr{H}$, then we have
\begin{equation*}
g(X,Y) = \sigma_2 \, \tilde{g}_{\mathcal{B}}(X, Y) = O(\xi_2) \qquad \textnormal{as } \xi_2 \to \infty.
\end{equation*}

We have the following from \cite{ACGT1}:
\begin{prop}[{\cite[Proposition 9]{ACGT1}}]
Let $\nabla^H, \, \nabla^V$ denote the Levi-Civita connections of $(\mathcal{B}, g_{\mathcal{B}})$ and of the fibers of $\varpi: M^0 \to \mathcal{B}$, pulled back to $\mathscr{H}, \, \mathscr{V}$ respectively. Let $X,Y$ be horizontal vector fields and $U, W$ be vertical. Define $C \in \Omega^2(\mathscr{V})$ by
\begin{equation}\label{eqn:cofxy}
2C(A, B) = \Omega_1(A, B) K_1 + \Omega_1(JA, B) JK_1 + \Omega_2(A, B) K_2 + \Omega_2(JA, B) JK_2 ,
\end{equation}
where 
\[ \Omega_1 = -\alpha_1 \check{\omega}_1 - \alpha_2 \check{\omega}_2, \qquad \Omega_2 = \omega_{\mathcal{B}}.\]
 Then we have 
\begin{equation}\label{ap:horizontalvertical}
\begin{split}
  \nabla_X Y& = \nabla^H_X Y -C(X,Y),   \\
 	\nabla_X U &= g(C(X, \,\cdot\,),\, U)^{\#} + [X, U]^{V},    \\
 \nabla_U X &= [X, U]^{H} + g(C(X, \,\cdot\,),\, U)^{\#},  \\
  \nabla_U W &= \nabla^V_U W.
  \end{split}
\end{equation}
In particular, the fibers of $\varpi$ are totally geodesic. 
\end{prop}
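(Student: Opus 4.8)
This is \cite[Proposition 9]{ACGT1} transcribed to the present principal bundle $\tilde P\to\mathcal{B}$ with connection $\tilde\theta$; I sketch the argument, which is a Koszul-formula computation along the $g$-orthogonal splitting $TM^0=\mathscr H\oplus\mathscr V$ determined by $\tilde\theta$. First I would record three structural facts visible from \eqref{k-order-2} and \eqref{J2}: (i) on $\mathscr H$ the metric $g$ is the pullback of $\sigma_2 g_B-p_{nc}(\alpha_1)\check g_1+p_{nc}(\alpha_2)\check g_2$, whose coefficients depend on the fibre functions $\xi_1,\xi_2$ but which, for each fixed $(\xi_1,\xi_2)$, is factorwise a constant multiple of the fixed product metric $g_{\mathcal{B}}$, so its Levi-Civita connection is $(\xi_1,\xi_2)$-independent and equals that of $g_{\mathcal{B}}$ — this is the $\nabla^H$ of the statement after horizontal lifting; (ii) on $\mathscr V$ the metric $g$ is the fibre metric in \eqref{k-order-2}, a function of $\xi_1,\xi_2$ alone; (iii) $\mathscr V=\operatorname{span}\{K_1,K_2,JK_1,JK_2\}$, where $K_r=\partial_{t_r}$ annihilates the $\xi_j$ while the $JK_r$ span the $d\xi_j$-directions by \eqref{J2}, and any horizontal lift $X$ of a vector field on $\mathcal{B}$ annihilates $\xi_1,\xi_2$, hence also $\sigma_2$ and $p_{nc}(\alpha_j)$.

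Next I would compute the brackets needed in Koszul's formula. For horizontal lifts $X,Y$, the horizontal part $[X,Y]^{H}$ is the lift of $[X_{\mathcal{B}},Y_{\mathcal{B}}]$, while the vertical part is governed by the curvature: $\tilde\theta([X,Y])=-d\tilde\theta(X,Y)$ together with $d\xi_j([X,Y])=X(Y\xi_j)-Y(X\xi_j)=0$ forces $[X,Y]^{V}=-\sum_r\Omega_r(X,Y)K_r$, with $\Omega_1,\Omega_2$ the curvature components of $\tilde\theta$ read off from \eqref{theta} (up to the sign normalizations $\varepsilon_a$). Choosing $K_1,K_2,JK_1,JK_2$ as a local frame of $\mathscr V$, $\tilde\theta$-invariance of the connection gives $[X,K_r]=0$ and, since the horizontal lift does not depend on the $\xi$-coordinate, $[X,JK_r]=0$ as well; also $[\mathscr V,\mathscr V]\subset\mathscr V$ since $\mathscr V=\ker d\varpi$.

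Then the four identities in \eqref{ap:horizontalvertical} follow. For $\nabla_XY$: pairing with a horizontal $Z$, every factor $\sigma_2,p_{nc}(\alpha_j)$ is annihilated by $X,Y,Z$, so the Koszul expression reduces to the one for $(\mathcal{B},g_{\mathcal{B}})$ and yields $(\nabla_XY)^{H}=\nabla^H_XY$; pairing with vertical $U$, the metric terms vanish by orthogonality, leaving $2g(\nabla_XY,U)=-U\,g(X,Y)+g([X,Y]^{V},U)$. The right-hand side, viewed as a linear functional of $U\in\mathscr V$, is exactly $-2g(C(X,Y),U)$ for $C$ as in \eqref{eqn:cofxy}: the term $g([X,Y]^{V},U)$ supplies the $\Omega_r(X,Y)K_r$-part, and $U\,g(X,Y)$, computed from the explicit $\xi$-dependence of $\sigma_2\tilde g_{\mathcal B}$ using \eqref{theta}, reproduces the $\Omega_r(JX,Y)JK_r$-part (it is supported on the $d\xi_j$-directions, i.e. on $\operatorname{span}\{JK_r\}$). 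This gives the first line of \eqref{ap:horizontalvertical}; the middle two lines then follow from $\nabla_XU-\nabla_UX=[X,U]$ and the adjoint relation $g(\nabla_XU,Y)=-g(U,\nabla_XY)=g(C(X,Y),U)$, computing the vertical parts by Koszul (the $[X,U]^{H/V}$ terms vanish in the chosen frame and are present only for general fields). Finally, for $\nabla_UW$ paired with a horizontal $X$, every Koszul term vanishes: $g(U,X)=g(W,X)=0$, $X\,g(U,W)=0$ because $g(U,W)$ is a function of $\xi_1,\xi_2$ only, $[U,X]=[W,X]=0$, and $[U,W]$ is vertical; hence $\nabla_UW\in\mathscr V$, equal on each fibre to the fibrewise connection $\nabla^V_UW$, so the fibres of $\varpi$ are totally geodesic.

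The main obstacle is the verification inside the $\nabla_XY$ step that $\varpi$ being merely a ``conformally warped'' and not a genuine Riemannian submersion (the base metric $\sigma_2\tilde g_{\mathcal{B}}$ depending on the fibre) produces, beyond the O'Neill $A$-tensor term $\tfrac12[X,Y]^{V}$, precisely the extra term $\Omega_1(JX,Y)JK_1+\Omega_2(JX,Y)JK_2$ — this requires differentiating $\sigma_2 g_B-p_{nc}(\alpha_1)\check g_1+p_{nc}(\alpha_2)\check g_2$ in the $\xi_j$-directions and matching the $\alpha_j$-linear combinations against the curvature identity \eqref{theta}, while keeping the $\varepsilon_a$-sign conventions consistent. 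All remaining steps are routine Koszul bookkeeping; alternatively one simply invokes \cite[Proposition 9]{ACGT1} verbatim, since $(M^0,g,\tilde\theta)$ is an instance of their construction.
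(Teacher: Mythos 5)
The paper does not actually prove this statement: it is imported wholesale from \cite[Proposition 9]{ACGT1}, applied to the metric \eqref{k-order-2}, so the last line of your proposal (invoke ACGT verbatim, since $(M^0,g,\tilde\theta)$ is an instance of their Ansatz with base $\mathcal{B}$, $\eta_B=0$, $\eta_j=\alpha_j$) is precisely the paper's route. Your direct Koszul sketch is a correct alternative, and the step you flag as the ``main obstacle'' does close, in fact both pieces of $C$ are governed by the same two curvature forms. Pairing $\nabla_XY$ with $U=K_s$, the warping term drops out ($K_s\xi_j=0$) and the bracket term gives $-\tfrac12\sum_r d\tilde\theta_r(X,Y)\,g(K_r,K_s)$, while pairing with $U=JK_s$ the bracket term drops out (since $g(K_r,JK_s)=\omega(K_s,K_r)=0$) and only $-\tfrac12 (JK_s)g(X,Y)$ survives; using \eqref{J2} to write $(JK_s)\xi_j$ in terms of $\frac{\tilde{F}_j(\xi_j)}{\tilde{p}_c(\xi_j)\Delta(\xi_j)}\sigma_{s-1}(\hat\xi_j)$ and the $\ell=2$ identity $\prod_{i\neq j}(\eta_a-\xi_i)=\eta_a-\xi_{j'}$, one finds $(JK_s)g(X,Y)=\sum_r \big(d\tilde\theta_r\big)(JX,Y)\,g(K_r,K_s)$, because $\sum_a\varepsilon_a(\eta_a-\xi_{j'})\check{g}_a(X,Y)$ is exactly $d\tilde\theta_1(JX,Y)+\xi_{j'}\,d\tilde\theta_2(JX,Y)$. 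Hence $\Omega_1,\Omega_2$ in \eqref{eqn:cofxy} are nothing but the curvature components $d\tilde\theta_1,d\tilde\theta_2$ of \eqref{theta} (the vanishing $\omega_B$-term in $\Omega_1$ reflecting $\eta_B=0$), modulo the paper's $\varepsilon_a$-sign conventions on the $\check\omega_a$, which is the bookkeeping you correctly warn about. The remaining lines of \eqref{ap:horizontalvertical} follow as you say from metric compatibility, torsion-freeness, and the vanishing brackets of invariant lifts with the frame $K_r,JK_r$, and the vertical--vertical case gives total geodesy of the fibers. So: your citation route coincides with the paper's, and your computational route is sound, with the only unfinished item being the coefficient matching sketched above, which indeed works out.
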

%The levi-civita connection is scale invariant. The metric on the base part is \sigma_\ell g_B, just a rescaling.  

\begin{lemma}\label{ap:fibercurvaturedecay}
 The fiber metric 
 \[ g_0 =  \frac{\xi_1 -\xi_2}{\Theta(\xi_1)}d\xi_1^2 + \frac{\xi_2 -\xi_1}{\Theta(\xi_2)} d\xi_2^2 + \frac{\Theta(\xi_1)}{\xi_1 - \xi_2}(dt_1 + \xi_2 dt_2)^2 + \frac{F(\xi_2)}{\xi_2 - \xi_1}(dt_1 + \xi_1 dt_2)^2  \]
 satisfies 
 \begin{equation}\label{ap:sectional-ofthefiber}
 	|{\rm sec}_{g_0}| \leq C \xi_2^{-1}.
 \end{equation}
 Since the fibers of $\varpi: M^0 \to \mathcal{B}$ are totally geodesic, it follows that 
 \begin{equation}\label{ap:sectional-fiber}
 	|{\rm sec}(U, V)| \leq C \xi_2^{-1}
 \end{equation}
 for any vertical $U, V$. 
\end{lemma}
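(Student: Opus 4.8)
The plan is to compute the curvature of the four-dimensional toric Kähler metric $g_0$ directly in a $g_0$-orthonormal coframe, via Cartan's structure equations, and then to estimate each curvature component using the asymptotic behaviour of the profile function $\Theta$. Since $\xi_2 \ge \alpha_2 > 0$ is bounded below, it suffices to prove $|{\rm sec}_{g_0}| \le C\xi_2^{-1}$ for $\xi_2$ large, the case of bounded $\xi_2$ being clear by compactness.

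First I would record the relevant asymptotics. We are in the Type~1, $\ell = 2$ situation, so $\alpha_1 \le \xi_1 \le \alpha_2 \le \xi_2 < \infty$; hence $\xi_1$ and every smooth function of $\xi_1$ — in particular $\Theta(\xi_1),\Theta'(\xi_1),\Theta''(\xi_1)$ — is uniformly bounded, while $\xi_2 - \xi_1$ is comparable to $\xi_2$. From the explicit form \eqref{k-order-2-expicitF}, together with the fact that $a \ge 0$ in the Ricci-flat, steady, and expanding cases and $c = 0$ in the shrinking case, the exponential term contributes only quantities of size $O(e^{-a\xi_2})$ (or is absent, or constant when $a = 0$), hence is negligible as $\xi_2 \to \infty$. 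Since $\Theta$ has degree $\beta \in \{1,2\}$ in the sense of \eqref{degree} and $\Theta(\xi_2) > 0$ on $(\alpha_2,\infty)$ by \eqref{inequality}, this yields $C^{-1}\xi_2^\beta \le \Theta(\xi_2) \le C\xi_2^\beta$ and $\Theta^{(k)}(\xi_2) = O(\xi_2^{\beta-k})$ for $k = 0,1,2$.

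Next I would introduce the $g_0$-orthonormal coframe
\begin{equation*}
\theta^1 = \Big(\tfrac{\xi_2-\xi_1}{|\Theta(\xi_1)|}\Big)^{1/2} d\xi_1,\quad
\theta^2 = \Big(\tfrac{\xi_2-\xi_1}{\Theta(\xi_2)}\Big)^{1/2} d\xi_2,\quad
\theta^3 = \Big(\tfrac{|\Theta(\xi_1)|}{\xi_2-\xi_1}\Big)^{1/2}(dt_1 + \xi_2\, dt_2),\quad
\theta^4 = \Big(\tfrac{\Theta(\xi_2)}{\xi_2-\xi_1}\Big)^{1/2}(dt_1 + \xi_1\, dt_2),
\end{equation*}
the signs fixed by \eqref{inequality}, so that $g_0 = \sum_i (\theta^i)^2$; let $e_1,\dots,e_4$ be the dual frame. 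One then computes $d\theta^i$ — which produces only the expected terms, namely $\xi$-derivatives of the coefficient functions and the contribution of $d(\xi_2\, dt_2)$ — solves the torsion-free equations $d\theta^i = -\,\omega^i{}_j \wedge \theta^j$, $\omega^i{}_j = -\,\omega^j{}_i$, for the connection $1$-forms, forms the curvature $2$-forms $\Omega^i{}_j = d\omega^i{}_j + \omega^i{}_k \wedge \omega^k{}_j$, and reads off the sectional curvatures from the $\Omega^a{}_b(e_a,e_b)$. The coefficients of each $\omega^i{}_j$ in the frame $(\theta^k)$ are rational in $\xi_1,\xi_2,\Theta(\xi_1),\Theta(\xi_2),\Theta'(\xi_1),\Theta'(\xi_2)$, and a degree count shows each is $O(\xi_2^{-1/2})$ when $\beta \le 2$ — the mechanism being that the coefficient $|\Theta(\xi_1)|/(\xi_2-\xi_1)$ of $\theta^3$ is itself $O(\xi_2^{-1})$, so any otherwise-dangerous bounded quantity built from $\xi_1$-data enters only after division by $\xi_2-\xi_1$. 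Differentiating once more and multiplying pairs of connection forms, every frame coefficient of $\Omega^i{}_j$ is then $O(\xi_2^{-1})$, giving \eqref{ap:sectional-ofthefiber}. Finally, by \cite[Proposition~9]{ACGT1} the fibres of $\varpi : M^0 \to \mathcal{B}$ are totally geodesic, so the Gauss equation identifies ${\rm sec}(U,V)$ for vertical $U,V$ with ${\rm sec}_{g_0}(U,V)$, yielding \eqref{ap:sectional-fiber}.

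The hard part will be the bookkeeping in the last step: verifying that among the many curvature components of the four-dimensional orthotoric metric $g_0$ none grows faster than $\xi_2^{-1}$, i.e.\ that every term originating from $\Theta(\xi_1)$-data carries a compensating factor $(\xi_2-\xi_1)^{-1}$, and that the ``radial'' terms built from $\Theta''(\xi_2)$ and $\Theta'(\xi_2)^2$ — a priori of sizes $\xi_2^{\beta-2}$ and $\xi_2^{2\beta-2}$ before normalization — become $O(\xi_2^{-1})$ after passing to the orthonormal frame precisely because $\beta \le 2$. An alternative that largely avoids this computation is to specialize the general curvature formulas for Kähler metrics admitting a hamiltonian $2$-form of order $2$ from \cite{ACGT1} to the fibre metric $g_0$ and track the same scalings.
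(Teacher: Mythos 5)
Your route is genuinely different from the paper's and, in outline, workable, but it front-loads exactly the computation the paper avoids. The paper does not run Cartan's structure equations at all: it observes that $g_0$ is a complex two-dimensional orthotoric K\"ahler metric and invokes the closed-form curvature control of \cite[Lemma A.2]{AC} (going back to \cite{ACG_Ambi1}), by which $|{\rm sec}_{g_0}|$ is bounded by explicit quantities $Scal$, $\kappa$, $\lambda$ that are difference quotients of $\Theta''$, $\Theta'$, $\Theta$ divided by $(\xi_1-\xi_2)$, $(\xi_1-\xi_2)^2$, $(\xi_1-\xi_2)^3$; with $\Theta$ of the form \eqref{k-order-2-expicitF} and degree $\beta\le 2$ each term is visibly $O(\xi_2^{\beta-3})=O(\xi_2^{-1})$, so the lemma is essentially a one-line consequence, and the second assertion follows, as in your last step, from total geodesy of the fibres via \cite[Proposition 9]{ACGT1}. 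Your direct frame computation would also give this -- the sample scalings you assert (connection coefficients $O(\xi_2^{-1/2})$ when $\beta=2$, curvature coefficients $O(\xi_2^{-1})$) are consistent with the explicit formulas -- but as written the decisive step, checking \emph{all} components of the four-dimensional curvature tensor, is only announced as ``bookkeeping,'' so the proposal is a plan rather than a complete argument; your own suggested alternative of specializing the hamiltonian $2$-form curvature formulas of \cite{ACGT1} is precisely what the paper does, and is the cheaper way to close that gap. One small caution for the direct route: the exponential term is not always negligible by sign alone (in the shrinking case $a<0$), so you do need, as the paper's Proposition \ref{shrinkersgeneral} shows, that $c=0$ there -- a point you correctly flagged but which must be cited rather than waved at.
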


\begin{proof}
The metric $g_0$ is an orthotoric K\"ahler metric of complex dimension 2, and therefore are in the setting of \cite[Appendix A]{AC} (c.f. \cite{ACG_Ambi1}). By \cite[Lemma A.2]{AC}, we have that the sectional curvature is controlled by the function $f_s: [-1,1] \times [-1,1] \to \R$ given by 
\[ f_s(t_1, t_2) = t_1^2\left( \frac{Scal_g}{8} \right) + t_1t_2 \lambda + t_2^2\left( \frac{\kappa}{8} \right) + \left(\frac{Scal_g - \kappa}{24} \right), \]
where 
\begin{equation*}
\begin{split}
	Scal_g &= -\left( \frac{\Theta_1''(\xi_1) - \Theta_2''(\xi_2)}{\xi_1 - \xi_2}\right) \\
	\kappa &= -\left( \frac{\Theta_1''(\xi_1) - \Theta_2''(\xi_2)}{\xi_1 - \xi_2}\right) + 6\left( \frac{\Theta_1'(\xi_1) - \Theta_2'(\xi_2)}{(\xi_1 - \xi_2)^2}\right) - 12 \left( \frac{\Theta_1(\xi_1) - \Theta_2(\xi_2)}{(\xi_1 - \xi_2)^3}\right) \\
	\lambda &= -\frac{1}{4}\left( \frac{\Theta_1''(\xi_1) - \Theta_2''(\xi_2)}{\xi_1 - \xi_2}\right) + \frac{1}{4}\left( \frac{\Theta_1'(\xi_1) - \Theta_2'(\xi_2)}{\xi_1 - \xi_2}\right)
\end{split}
\end{equation*}
The rate \eqref{ap:sectional-ofthefiber} is then immediate from the special form \eqref{k-order-2-expicitF} of $\Theta_1 = \Theta_2 = \Theta$. 
\end{proof}

\begin{prop}\label{type1quadratic}
Let $g$ be any of the metrics from Theorem \ref{mtheoremType1}. Then we have 
\begin{equation}\label{ap:curvaturedecay}
\begin{split}
	\left|{\rm Rm} \right|_{g} &\leq C d_{g}^{-2} \qquad \textnormal{in the Calabi-Yau, shrinking, and expanding cases, and} \\
	\left|{\rm Rm} \right|_{g} &\leq C d_{g}^{-1} \qquad \textnormal{in the steady case.}
\end{split}
\end{equation}  
\end{prop}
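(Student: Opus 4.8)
The plan is to establish the single uniform estimate $|{\rm Rm}|_{g}\le C\,\xi_2^{-1}$ on the region $\{\xi_2\ge N\}$, for $N$ large, and to deduce \eqref{ap:curvaturedecay} from it. The reduction is immediate: the set $\{\xi_2\le N\}$ is compact (by the properness used in the proof of Lemma \ref{l:generalcompleteness}, $\xi_1$ being bounded in the Type $1$ case), so the curvature is bounded there; and on $\{\xi_2\ge N\}$ Proposition \ref{generaldistane} with $\ell=2$ gives $d_g\asymp\xi_2^{(3-\beta)/2}$, where $\beta$ is the degree of $\Theta_i$ in the sense of \eqref{degree}, which equals $2$ in the Calabi-Yau, shrinking and expanding cases and $1$ in the steady case, as noted above the statement of the proposition. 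Hence $\xi_2^{-1}\asymp d_g^{-2}$ when $\beta=2$ and $\xi_2^{-1}\asymp d_g^{-1}$ when $\beta=1$, so the uniform bound yields all four cases of \eqref{ap:curvaturedecay} at once.

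To prove the bound I work on $M^0=\tilde P\times_{\T^2}(\Cstar)^2$ with the $g$-orthogonal splitting $TM^0=\mathscr{H}\oplus\mathscr{V}$ attached to $\varpi:M^0\to\mathcal{B}=B\times\P^{d_1}\times\P^{d_2}$, together with the connection formulas \eqref{ap:horizontalvertical}, and I fix a $g$-orthonormal frame adapted to the splitting. Two metric facts organize everything. First, $g|_{\mathscr{H}}=\sigma_2\tilde g_{\mathcal{B}}$, and since $\xi_1$ stays bounded while $|p_{nc}(\alpha_j)|=|(\alpha_j-\xi_1)(\alpha_j-\xi_2)|\asymp\xi_2$ as $\xi_2\to\infty$, one has $g|_{\mathscr{H}}\asymp\xi_2\,g_{\mathcal{B}}$; so a $g$-unit horizontal frame vector, viewed as a tangent vector to $\mathcal{B}$, has $g_{\mathcal{B}}$-norm $O(\xi_2^{-1/2})$, and — since horizontal lifts annihilate $\xi_1,\xi_2$ — the horizontal part of $\nabla_X Y$, which by \eqref{ap:horizontalvertical} is the pullback of the fixed connection $\nabla^{g_{\mathcal{B}}}$, stays of order $O(\xi_2^{-1/2})$ on the $g$-unit frame. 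Second, by the Type $1$ computation in the proof of Lemma \ref{l:complexstructureCn} (which applies as $\beta\le 2=\ell$), $\|K_j\|_g=\|JK_j\|_g\le C\xi_2^{1/2}$ for $j=1,2$; moreover $[X,K_j]=[X,JK_j]=0$ for horizontal lifts $X$, since the connection is $\T$-invariant, $K_j\in\t$, and $JK_j=-\nabla^g\sigma_j$ with $d\sigma_j$ vanishing on $\mathscr{H}$. Combined with the fact that $\Omega_1=-\alpha_1\check\omega_1-\alpha_2\check\omega_2$ and $\Omega_2=\omega_{\mathcal{B}}$ in \eqref{eqn:cofxy} are fixed forms bounded on the compact $\mathcal{B}$, this gives $\|C(X,Y)\|_g\le C\xi_2^{-1/2}$ for $g$-unit horizontal $X,Y$ — so the O'Neill-type tensor $A=-C$ satisfies $\|A_X\|_g\le C\xi_2^{-1/2}$ — while $\nabla^g_X K_j=g(C(X,\,\cdot\,),K_j)^{\#}$ by \eqref{ap:horizontalvertical}, of norm $O(1)$.

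Now decompose ${\rm Rm}_g$ according to the number of vertical entries. The purely vertical part is, by the Gauss equation and the fact — from \eqref{ap:horizontalvertical} — that the fibers of $\varpi$ are totally geodesic, exactly the intrinsic curvature of the fiber metric $g_0$, hence $O(\xi_2^{-1})$ by Lemma \ref{ap:fibercurvaturedecay} together with the curvature-operator bound of \cite[Lemma A.2]{AC} (which controls the full four-dimensional curvature tensor, not only the sectional curvatures). The purely horizontal part is, by the O'Neill identity read off from \eqref{ap:horizontalvertical} (using $[X,K_j]=[X,JK_j]=0$), the pullback of $R^{g_{\mathcal{B}}}$ plus a term algebraic and of degree two in $A$; on the $g$-unit frame the first is $O(\xi_2^{-1})$ (as $g|_{\mathscr{H}}$ is comparable to $\xi_2$ times the fixed metric $g_{\mathcal{B}}$, whose curvature is bounded) and the second is $O(\xi_2^{-1})$ by the bound on $\|A_X\|_g$. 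The mixed parts (one, two or three vertical entries) are given by the remaining O'Neill--Gray identities through $A$, one covariant derivative of $A$, and Lie brackets of horizontal and vertical fields; here the vanishing brackets reduce the differentiation of $C$ to the fixed-form derivatives (of size $O(\xi_2^{-1})$) and to the terms $\Omega_i(\,\cdot\,,\,\cdot\,)\,\nabla^g_X K_j=O(\xi_2^{-1})\cdot O(1)$, so each such component is again $O(\xi_2^{-1})$. Summing the finitely many frame components gives $|{\rm Rm}_g|_g\le C\xi_2^{-1}$, and the proposition follows.

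The step I expect to be the main obstacle is the last one — the mixed curvature components. These force one to differentiate the tensor $C$, hence to control $\nabla^g K_j$, $\nabla^g(JK_j)$ and the first derivatives of the structure functions $\sigma_j$, $p_{nc}(\alpha_j)$ and $\Theta_j$ (the last of the explicit form \eqref{k-order-2-expicitF}, so that $\Theta_j^{(k)}(\xi_2)=O(\xi_2^{\beta-k})$, the decaying exponential terms being harmless), and one must verify term by term, in a $g$-orthonormal frame, that none of these quantities exceeds the order $\xi_2^{-1}$; the subtle point is that $\nabla^g K_j$ is only $O(1)$, so the required decay is supplied entirely by the bounded forms $\Omega_i$ that accompany $K_j$ inside $C$. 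Everything else is a routine combination of the connection formulas \eqref{ap:horizontalvertical}, the fiber-curvature estimate of Lemma \ref{ap:fibercurvaturedecay}, and the comparison $g|_{\mathscr{H}}\asymp\xi_2\,g_{\mathcal{B}}$.
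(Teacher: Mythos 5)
Your proposal is correct and follows essentially the same route as the paper: reduce to the uniform bound $|{\rm Rm}|_g\le C\xi_2^{-1}$ and convert it via Proposition \ref{generaldistane} with $\beta=2$ (respectively $\beta=1$), using the splitting $TM^0\cong\mathscr{H}\oplus\mathscr{V}$, the connection formulas \eqref{ap:horizontalvertical}, Lemma \ref{ap:fibercurvaturedecay} for the totally geodesic fibers, and the norm estimates for $K_j$, $C$, and $\nabla K_j$ coming from \eqref{eqn:K1K2norms} and the explicit form of $\Theta$. The only difference is bookkeeping: you bound all frame components of ${\rm Rm}$ organized by the number of vertical entries, whereas the paper bounds sectional curvatures of horizontal, mixed, and vertical planes, and the mixed-entry estimates you defer (including the degenerate $K_1$-directions in the steady case) are exactly the terms the paper verifies in its detailed horizontal--vertical computation, with the same resulting $O(\xi_2^{-1})$ bounds.
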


\begin{proof}
We prove the estimate on the dense open subset $M^0 \subset M$. Throughout the proof, we will use $H, H_i, H_{ij},$ etc. to refer to a function on $M^0$ which is the pullback of a function on ${\mathcal{B}}$, which may change from line to line. Fix a point $p \in M^0$, and commuting vector fields $X, Y$ near $\varpi(p) \in {\mathcal{B}}$ such that $||X||_{g_{\mathcal{B}}}(\varpi(p)) = ||Y||_{g_{\mathcal{B}}}(\varpi(p)) = 1$. We lift these to $\mathscr{H}$ and denote the lifts also by $X, Y$, then we have $||X||_{\tilde{g}_{\mathcal{B}}}(\varpi(p)) = ||Y||_{\tilde{g}_{\mathcal{B}}}(\varpi(p)) = O(1)$ as $\xi_2 \to \infty$. 
By \eqref{k-order-2} we have 
\begin{equation*}
	||X||^2_g(p) = \sigma_2 ||X||_{\tilde{g}_{\mathcal{B}}}^2(p) \sim \sigma_2||X||_{g_{\mathcal{B}}}^2(\varpi(p)) = O(\xi_2), 
\end{equation*}
and similarly for $Y$. 
%and normal coordinates for $g_B$ around $\pi(p) \in B$. In particular, we have a commuting frame $e_1, \dots, e_{2d_B}$ for $TB$ which is orthonormal for $g_B$ at $\pi(p)$. 
From \eqref{ap:horizontalvertical}, we have 
\begin{equation*}
\nabla_X Y = \nabla_X^H Y -\sum\left( \frac{1}{2} \Omega_r(X,Y)K_r - \frac{1}{2}\Omega_r(JX, Y) JK_r\right), 
\end{equation*}
in particular 
\begin{equation*}
\nabla_Y Y = \nabla_Y^H Y - \frac{1}{2} \Omega_1(JY, Y)JK_1  - \frac{1}{2} \Omega_2(JY, Y)JK_2.
\end{equation*}
Notice that if $X, Y, Z$ are any horizontal lifts, then
\begin{equation*}
Z \cdot \Omega_r(X, Y) = \Omega_r(\nabla_Z^H X, Y) + \Omega_r(X, \nabla_Z^H Y).
\end{equation*}
In particular it follows that 
\begin{equation*}
 \nabla_Y \nabla_X Y = \left( \nabla^{g_B}_Y \nabla^{g_B}_X Y\right)^H + \sum\left(H_{r11}K_r + H_{r12}JK_r + H_{r21} \nabla_Y K_r + H_{r22} J \nabla_Y K_r \right).
\end{equation*}
Using \eqref{ap:horizontalvertical} again, we see that 
\[\nabla_Y K_r = g(C(Y, \, \cdot\,) , K_r)^{\#} + [Y, K_r]^V. \]
Hence 
\begin{equation*}
\begin{split}
g(\nabla_Y \nabla_X Y, X) &= g(\left( \nabla^{g_{\mathcal{B}}}_Y \nabla^{g_{\mathcal{B}}}_X Y\right)^H, \, X) + \sum\big(H_{r1} g(C(Y, X) , K_r)+ H_{r2} g(C(Y, JX) , K_r)\big) \\
&= g(\left( \nabla^{g_{\mathcal{B}}}_Y \nabla^{g_{\mathcal{B}}}_X Y\right)^H, \, X)  + H_1 ||K_2||_{g}^2 + H_2 g(K_1, K_2) + H_3||K_1||^2_g, 
\end{split}
\end{equation*}
noting that $g(JK_r, K_q) = \omega(K_r, K_q) = 0$. 
Similarly we have that 
\[ g(\nabla_X \nabla_Y Y, X) = g(\left( \nabla^{g_{\mathcal{B}}}_X \nabla^{g_{\mathcal{B}}}_Y Y\right)^H, \, X)  + H_1 ||K_2||_{g}^2 + H_2 g(K_1, K_2) + H_3||K_1||^2_g,\] 
so that 
\begin{equation*}
 R(X, Y, Y, X) = R^H(X, Y, Y, X) + H_1 ||K_2||_{g}^2 + H_2 g(K_1, K_2) + H_3||K_1||^2_g.
\end{equation*}
We read directly from \eqref{k-order-2} that 
\begin{equation}\label{eqn:K1K2norms}
\begin{split}
	 ||K_1||^2_g & =  \frac{ \Theta(\xi_1) - \Theta(\xi_2)}{\xi_1 - \xi_2},  \qquad  \qquad ||K_2||^2_g =  \frac{\xi_2^2 \Theta(\xi_1) - \xi_1^2 \Theta(\xi_2)}{\xi_1 - \xi_2}, \\
	& \qquad  \qquad \qquad \qquad \quad  \textnormal{and}  \\
	 & \qquad  \quad g(K_1, K_2) = \frac{\xi_2 \Theta(\xi_1) - \xi_1 \Theta(\xi_2)}{\xi_1 - \xi_2} .
	\end{split}
\end{equation}
Moreover, from \ref{k-order-2-expicitF} that 
\begin{equation}\label{eqn:ForderAppendix} 
	\Theta(t) = ct^{\beta} + \textnormal{lower order terms}, 
\end{equation}
where $\beta = 2$ in the Calabi-Yau, shrinking, and expanding cases and $\beta = 1$ in the steady case. From this we get in particular that 
\[ ||K_1||^2 = g(K_1, K_2) = O(\xi_2^{\beta-1}) , \qquad ||K_2||^2 = O(\xi_2). \]
Then we compute the sectional curvature at $p$ by 
\begin{equation*}
\begin{split}
{\rm sec}_p(X,Y) &= \frac{R_p(X, Y, Y, X)}{||X||_{g}^2(p)||Y||^2_{g}(p)} \\
  &= \frac{ R^{g_{\mathcal{B}}}_{\varpi(p)}(X, Y, Y, X) +\big(H_1||K_2||_{g}^2 + H_2 g(K_1, K_2) + H_3||K_1||^2_g \big)(p)}{||X||_{g}^2(p)||Y||^2_{g}(p)},  \\
 % &= \frac{ R^{g_{\mathcal{B}}}_{\varpi(p)}(X, Y, Y, X) + H (\varpi(p)) ||K_2||_{g}^2(p)}{\sigma_2^2},
\end{split}
\end{equation*}
so that 
\begin{equation}\label{ap:sectional-horizontal}
 |{\rm sec}_p(X,Y)| \leq C \left( \frac{  ||K_1||_{g}^2(p) + |g(K_1, K_2)| + ||K_2||_{g}^2(p) + 1 }{\xi_2^2} \right) \leq C\xi_2^{-1}. 
\end{equation}
Next, we estimate the sectional curvature in an arbitrary horizontal--vertial direction. To this end fix $U \in \mathscr{V}_p$, which can be written uniquely as $U = \lambda_1^1  K_1  + \lambda_1^2  JK_1  + \lambda_2^1 K_2 + \lambda_2^2 JK_2$ at the point $p \in M^0$ (recall that $M^0 \subset M$ is precisely the set where $K_1, K_2, JK_1, JK_2$ are linearly independent and hence span $\mathscr{V}$). For the purposes of computing the sectional curvature, we may also assume that $\sum \left( \lambda_i^j\right)^2 \leq 1$. We extend $U$ to a vertical vector field by simply taking $U = \lambda_1^1  K_1  + \lambda_1^2  JK_1  + \lambda_2^1 K_2 + \lambda_2^2 JK_2$ on all of $M^0$. Once again we let $X \in \mathscr{H}_p$ have norm 1 with respect to $g_{\mathcal{B}}$, and we extend $X$ to a local horizontal vector field commuting with $U$. 
%Note that this is indeed possible by the form of $U$, since the fiber can be given local coordinates in which the coefficients of $U$ are constants. 
The goal is to estimate 
\begin{equation}\label{ap:sectional-horver}
{\rm sec}_p(X, U) = \frac{R_p(X, U, U, X)}{||X||_g^2(p)||U||_g^2(p) - g_p(X, U)^2} = \frac{g_p(\nabla_X \nabla_U U, X) - g_p(\nabla_U \nabla_X U, X)}{||X||_g^2(p)||U||_g^2(p)}.
\end{equation}

 First, assume that we are in the situation where either $\beta = 2$ or $\beta = 1$ and $\lambda_2^1$ and $\lambda_2^2$ are not both zero. In this case, we see that $||U||^2 = O(\xi_2)$ (observe from the proof of Lemma \ref{l:complexstructureCn} that if $\beta = 1$ then $||K_1||_{g} \leq C$). We begin by estimating the two terms in the numerator separately. Similarly to the computations above, we can easily see using \eqref{ap:horizontalvertical} that 
 \begin{equation}\label{ap:sectional-horver1}
 g(\nabla_X \nabla_U U, X) = g(C(X, X), \nabla^V_U U ) = H_1 g(JK_1, \nabla_U^V U) + H_2 g(JK_2, \nabla_U^V U). 
 \end{equation}
 Notice that, since the fibers are totally geodesic, $g$ is K\"ahler, and $K_1, K_2$ are Killing, and since $[K_1, K_2] = 0$, we have that 
  \begin{equation}\label{ap:nablaUU}
 \nabla_U^V U = (c_1)_i^j \nabla_{K_i}K_j + (c_2)_i^j J\nabla_{K_i}K_j.
 \end{equation}
% \begin{equation}\label{ap:nablaUU}
% \nabla_U^V U = c_1(\lambda_i^j) \nabla_{K_1}K_2 + c_2(\lambda_i^j)J\nabla_{K_1}K_2.
% \end{equation}
 where $c_k$ depend only on $\lambda_i^j$ and hence are uniformly bounded. We have that (see \cite[Proposition 8]{ACGT1})
 \begin{equation*}
 K_r \cdot g(K_i, K_j) = 0, \qquad r, i, j = 1, 2, 
 \end{equation*}
 and consequently 
 \begin{equation*}
 g(\nabla_{K_1} K_2, K_r) = 0, \quad r = 1, 2, \qquad \textnormal{and} \qquad \nabla_{K_i} K_j = - \frac{1}{2}\nabla^g g(K_i, K_j),
 \end{equation*}
again using that the fibers are totally geodesic. Therefore we have that 
\begin{equation}\label{ap:sectional-horver2}
\begin{split}
	 g(\nabla_X \nabla_U U, X) &=  H_1 g(JK_1, \nabla_U^V U) + H_2 g(JK_2, \nabla_U^V U) = \sum_{r,i,j} H_{rij} g(JK_r, \nabla_{K_i}K_j).
%	  &= H_1 g(JK_2, \nabla_{K_1}K_1) + H_2 g(JK_2, \nabla_{K_1}K_2) + H_3 g(JK_2, \nabla_{K_2}K_2). 
\end{split}
\end{equation}
Combining \eqref{eqn:K1K2norms} with the the explicit form of $\Theta$ \eqref{eqn:ForderAppendix}, we compute that 
\begin{equation*}
\begin{split} 
d &||K_1||^2 = O(\xi_2^{\beta-2}) d\xi_1 + O(\xi_2^{\beta-2}) d\xi_2, \\
d&g(K_1, K_2) = O(\xi_2^{\beta-1}) d\xi_1 + O(\xi_2^{\beta-2}) d\xi_2, \\
d&||K_2||^2 = O(\xi_2) d\xi_1 + O(1) d\xi_2,
\end{split}
\end{equation*}
and therefore 
\begin{equation*}
\begin{split}
	||\nabla_{K_1} K_1||^2_g &= \frac{\Theta(\xi_1) O(\xi_2^{2\beta -4})}{\xi_1 - \xi_2} + \frac{F(\xi_2) O(\xi_2^{2\beta -4})}{\xi_2 - \xi_1} \\
	&= O(\xi_2^{2\beta -5}) + O(\xi_2^{3\beta-5}) = O(\xi_2^{3\beta-5}),
\end{split}
\end{equation*}
and
\begin{equation*}
\begin{split}
	||\nabla_{K_2} K_2||^2_g &= \frac{\Theta(\xi_1) O(\xi_2^{2})}{\xi_1 - \xi_2} + \frac{ \Theta(\xi_2)O(1)}{\xi_2 - \xi_1} = O(\xi_2), 
\end{split}
\end{equation*}
and finally
\begin{equation*}
\begin{split}
	||\nabla_{K_1} K_2||^2_g &= \frac{\Theta(\xi_1) O(\xi_2^{2\beta -2})}{\xi_1 - \xi_2} + \frac{\Theta(\xi_2) O(\xi_2^{2\beta -4})}{\xi_2 - \xi_1} \\
	&= O(\xi_2^{2\beta -3}) + O(\xi_2^{3\beta-5}) = O(\xi_2^{2\beta-3}),
\end{split}
\end{equation*}
where the last equality holds because $\beta = 1, 2$. Combining this with \eqref{ap:sectional-horver2}, we immediately see that 
\begin{equation}\label{ap:sectional-horver3}
|g(\nabla_X \nabla_U U, X)| \leq C \left( ||K_1||_{g} + ||K_2||_g\right) \left(||\nabla_{K_1} K_1||_g + ||\nabla_{K_1} K_2||_g  + ||\nabla_{K_2} K_2||_g\right) \leq C \xi_2.
\end{equation}
%\textcolor{red}{may have to treat the $\lambda_2^1 = \lambda_2^2 = 0$ case separately, since the growth rate of the denominator jumps down in that case. }
We move on to compute $g(\nabla_U \nabla_X U, X)$. For convenience of notation, let $\alpha$ be the 1 form given by 
\begin{equation}\label{ap:alphadef}
	\alpha = g(C(X, \, \cdot  \, ), \, U) = \sum \frac{1}{2}g(K_r, U) i_{X}\Omega_r + \frac{1}{2} g(JK_r, U) i_{JX}\Omega_r. 
\end{equation}
 Using \eqref{ap:horizontalvertical} again, we calculate that
\begin{equation*}
 \nabla_U\nabla_X U = \nabla_U \alpha^{\#} \qquad {\rm mod }\, \mathscr{V}.
\end{equation*} 
So we want to compute 
\begin{equation*}
\begin{split}
 g(\nabla_U  \alpha^{\#}, X ) &= U \cdot \alpha(X) - \alpha(\nabla_U X)  \\
 	&= (d\alpha)(U, X) + X \cdot \alpha(U) + \alpha([U, X]) - \alpha(\nabla_X U) - \alpha([U, X]) \\
 	&= (d\alpha)(U, X) - \alpha(\nabla_X U),
\end{split}
\end{equation*}
where in the last line we have used that $\alpha(U) = 0$ for any vertical vector field $U$. We consider each term above individually. By definition, we have that 
\begin{equation}\label{ap:alphaneweq1}
\begin{split}
	\alpha(\nabla_X U) &= \sum \frac{1}{2}g(K_r, U) \Omega_r(X, \nabla_X U) + \frac{1}{2} g(JK_r, U) \Omega_r(JX, \nabla_X U) = T_1 + T_2.
\end{split}
\end{equation}
For the second term $r = 2$ of \eqref{ap:alphaneweq1}, we have 
\begin{equation*}
\begin{split}
	 T_2 &= \frac{1}{2}g(K_2, U) \Omega_2(X, \nabla_X U) + \frac{1}{2} g(JK_2, U) \Omega_2(JX, \nabla_X U) \\
	  &= \frac{1}{2}g(K_2, U) \omega_{\mathcal{B}}(X, \nabla_X U) + \frac{1}{2} g(JK_2, U) \omega_{\mathcal{B}}(JX, \nabla_X U) \\
	&= -\frac{1}{2}g(K_2, U) g_{\mathcal{B}}(JX, (\nabla_X U)^H) - \frac{1}{2} g(JK_2, U) g_{\mathcal{B}}(X, (\nabla_X U)^H) \\
	&= -\frac{1}{2}g(K_2, U) g_{\mathcal{B}}(JX, \alpha^{\#}) - \frac{1}{2} g(JK_2, U) g_{\mathcal{B}}(X, \alpha^{\#}).
\end{split}
\end{equation*}
%\begin{equation*}
%\begin{split}
%	\alpha(\nabla_X U) &= \frac{1}{2}g(K_2, U) \omega_B(X, \nabla_X U) + \frac{1}{2} g(JK_2, U) \omega_B(JX, \nabla_X U) \\
%	&= -\frac{1}{2}g(K_2, U) g_B(JX, (\nabla_X U)^H) - \frac{1}{2} g(JK_2, U) g_B(X, (\nabla_X U)^H) \\
%	&= -\frac{1}{2}g(K_2, U) g_B(JX, \alpha^{\#}) - \frac{1}{2} g(JK_2, U) g_B(X, \alpha^{\#}).
%\end{split}
%\end{equation*}
Notice that, for any horizontal vector field $Y$, from \eqref{k-order-2} we have that 
\[ g_{\mathcal{B}}(Y, \alpha^{\#}) = \sigma_2^{-1}\left( g(Y, \alpha^{\#}) + O(1) \right) = \sigma_2^{-1} \alpha(Y) + O(\xi_2^{-1}).\]
Since $\left|\frac{g(K_2, U)}{\sigma_2} \right| \leq \frac{||K_2|| \, ||U||}{\sigma_2} = O(1)$, it follows that
\begin{equation*}
\begin{split}
T_2 &=-\frac{g(K_2, U)}{2 \sigma_2} \alpha(JX) -\frac{g(JK_2, U)}{2 \sigma_2} \alpha(X)  + O(\xi_2^{-1}) \\
&= -\sum \left(\frac{g(K_2, U)g(K_r, U)}{2 \sigma_2}\Omega_{r}(X, JX) -\frac{g(JK_2, U)g(JK_r, U)}{2 \sigma_2} \Omega_{r}(JX, X) \right) + O(\xi_2^{-1}) \\
&= -\sum \left(H_{r1}\frac{g(K_2, U)g(K_r, U)}{2 \sigma_2}-H_{r2}\frac{g(JK_2, U)g(JK_r, U)}{2 \sigma_2} \right) + O(\xi_2^{-1}) . \\
\end{split}
\end{equation*}
 All in all, by Cauchy-Schwarz we have 
\begin{equation}\label{complicated-secondterm}
|T_2| \leq C \frac{  \big( ||K_1|| + ||K_2|| \big) ||U||^2 ||K_2|| }{ \sigma_2} + O(\xi_2^{-1})  \leq C \xi_2. 
\end{equation}
The first term $r = 1$ in \eqref{ap:alphaneweq1} can be handled similarly. Clearly we have $\Omega_1 \leq C \omega_{\mathcal{B}}$ as $(1,1)$-forms on $\mathcal{B}$. Therefore 
\begin{equation*}
\begin{split}
	|T_1| &\leq \left|\frac{1}{2} g(K_1, U) \Omega_1(X, \nabla_X U) \right| + \left|\frac{1}{2} g(JK_1, U) \Omega_1(JX, \nabla_X U) \right| \\
			&\leq C|g(K_1, U)| \left| \omega_{\mathcal{B}}(X, \nabla_X U)  \right| + C|g(JK_1, U)| \left| \omega_{\mathcal{B}}(JX, \nabla_X U)  \right|. 
\end{split}
\end{equation*}
Each of these two terms can then be estimated individually as above.

Last we are left to estimate $(d\alpha)(U, X)$. We compute that 
\begin{equation*}
\begin{split}
2d\alpha &= \sum \big[ \left( g(\nabla K_r, U) + g(K_r, \nabla U) \right) \wedge i_X \Omega_r + \left( g(J\nabla K_r, U) + g(JK_r, \nabla U) \right) \wedge i_{JX} \Omega_r \big] \\
		&\qquad  \qquad + \sum \big[ g(K_r, U) \mathcal{L}_{X} \Omega_r +  g(J K_r, U)  \wedge \mathcal{L}_{JX} \Omega_r \big] .
\end{split}
\end{equation*}
Noting that $\Omega_r(Y, U) = (\mathcal{L}_Z \Omega_r)(Y, U) = 0$ for any horizontal vector fields $Y, Z$, we get that
\begin{equation*}
\begin{split}
2| d\alpha(U,X)| \leq  \sum \big[ \left( |g(J\nabla_U K_r, U)| + |g(JK_r, \nabla_U U)| \right) \left|\Omega_r(X, JX)\right| \big]. 
\end{split}
\end{equation*}
Recalling that 
\begin{equation*}
 \left| \Omega_r(X, JX) \right| \leq C \left| \omega_{\mathcal{B}}(X, JX) \right| = C ||X||^2_{g_{\mathcal{B}}}, \qquad r = 1, 2,
\end{equation*}
we obtain
\begin{equation}
\begin{split}
|d\alpha_p(U,X)| \leq C ||X||_{g_{\mathcal{B}}}^2(p) \sum \left( ||\nabla_U K_r||_g  ||U||_g + ||K_r||_g ||\nabla_U U||_g \right)  \leq C \xi_2,
\end{split}
\end{equation}
where we are using that $||\nabla_U K_r||_g^2 \leq C \xi_2$ and $||\nabla_U U||_g^2 \leq C \xi_2$ by the computations above. 

Putting this all together, we get that 
\begin{equation}\label{ap:sectionalhorverregularlambdas}
\begin{split}
 |{\rm sec}_p(X, U)| &= \left|\frac{g_p(\nabla_X \nabla_U U, X) - g_p(\nabla_U \nabla_X U, X)}{||X||_g^2(p)||U||_g^2(p)} \right| \\
 &= \frac{\left|g_p(\nabla_X \nabla_U U, X) - g_p(\nabla_U \nabla_X U, X)\right|}{ O(\xi_2) ||U||_g^2(p)}  \\
 &\leq C \frac{|g_p(\nabla_X \nabla_U U, X)| + |g_p(\nabla_U \nabla_X U, X)|}{ O(\xi_2) ||U||_g^2(p)}  \leq C \xi_2^{-1} ,
\end{split}
\end{equation}
since $ c \xi_2 \leq ||U||_g \leq C \xi_2$, as desired. Combining this with \eqref{ap:sectional-horizontal} and \eqref{ap:sectional-fiber}, we get that 
\begin{equation*}
	|{\rm Rm}|_g \leq C |{\rm sec}|_g \leq C \xi_2^{-1}.
\end{equation*}
The comparison with the distance function then follows from Proposition \ref{generaldistane}.

 Finally, we have to treat the case when $\beta = 1$ and $\lambda_2^1 = \lambda_2^2 = 0$. Everything is similar, with the exception that certain terms vanish, slightly changing the computation. In particular, we read from \eqref{eqn:K1K2norms} that $||U||^2 = O(1)$, affecting the denominator in \eqref{ap:sectionalhorverregularlambdas}. However, in this case the numerator will also have a commensurately slower growth rate. In this case, we have that 
 \[ \nabla_U U = c_1 \nabla_{K_1}K_1 + c_2 J\nabla_{K_1}K_1, \]
 so that this time
\[ |g(\nabla_X\nabla_U U, X)| \leq C\left( ||K_1||_g + ||K_2||_g \right)||\nabla_{K_1}K_1||_g  = O(\xi_2^{-\frac{1}{2}}),\]
using that $||\nabla_{K_1} K_1||_g^2 = O(\xi_2^{-2})$ as computed above. Similarly, proceeding through the estimate of $\alpha_p\left( \nabla_X U \right)$ we get 
\[ \left| \alpha_p\left( \nabla_X U \right) \right| \leq |T_1| + |T_2| \leq C\frac{\left( ||K_1|| + ||K_2||\right)^2 ||U||^2}{\sigma_2} = O(1). \]
For the estimate of $|d\alpha_p(U,X)|$, we get
\begin{equation*}
\begin{split}
	 |d\alpha_p(U,X)| &\leq  C  \left[ \left( ||\nabla_U K_1||_g  + ||\nabla_U K_2||_g \right) ||U||_g + \left( ||K_1||_g + ||K_2||_g \right)||\nabla_U U||_g \right]  \\
	 &= C  \left[ \big( O(\xi_2^{-1})  + O(\xi_2^{-\frac{1}{2}})\big) O(1) + \big( O(1) + O(\xi_2^{\frac{1}{2}}) \big)  O(\xi_2^{-1}) \right] \\
	 &= O(\xi_2^{-\frac{1}{2}}) 
\end{split}
\end{equation*}
so that $|g(\nabla_U\nabla_X U, X)| = O(1)$ as well. Putting this together we see this time that 
\[  |{\rm sec}_p(X, U)|  \left|\frac{g_p(\nabla_X \nabla_U U, X) - g_p(\nabla_U \nabla_X U, X)}{||X||_g^2(p)||U||_g^2(p)} \right| \leq \frac{O(1)}{O(\xi_2)} \leq C\xi_2^{-1},\]
so that the estimate follows again by Proposition \ref{generaldistane}.

\end{proof}

\bibliographystyle{abbrv}
\bibliography{references}

\end{document}